 \newtheorem{theorem}{Theorem}[section]
 \newtheorem{lemma}[theorem]{Lemma}
 \newtheorem{proposition}[theorem]{Proposition}
 \newtheorem{corollary}[theorem]{Corollary}
 \newtheorem*{theorem*}{Theorem}
\newtheorem*{proposition*}{Proposition}
\newtheorem*{lemma*}{Lemma}
\theoremstyle{definition}
\newtheorem{definition}[theorem]{Definition}
 \theoremstyle{remark} 
 \newtheorem{example}[theorem]{Example}
 \newtheorem{remark}[theorem]{Remark}
  \newtheorem*{claim*}{Claim}
\newcommand{\op}[1]{\operatorname{#1}}
\newcommand{\acou}[2]{\ensuremath{\left\langle #1 , #2 \right\rangle}}
\def\XXint#1#2#3{{\setbox0=\hbox{$#1{#2#3}{\int}$}
\vcenter{\hbox{$#2#3$}}\kern-.5\wd0}}
\newcommand{\C}{\ensuremath{\mathbb{C}}} 
\newcommand{\Q}{\ensuremath{\mathbb{Q}}}
\newcommand{\Z}{\ensuremath{\mathbb{Z}}}
\newcommand{\Ca}[1]{\ensuremath{\mathcal{#1}}}
\newcommand{\cA}{\Ca{A}}
\newcommand{\sE}{\mathscr{E}}
\newcommand{\Hom}{\op{Hom}}
\newcommand{\ran}{\op{ran}}
\newcommand{\bt}{\bullet}
\newcommand{\HC}{\op{HC}}
\newcommand{\HP}{\op{HP}}
\newcommand{\wbC}{\widebar{C}} 
\newcommand{\wb}[1]{\widebar{#1}}
\newcommand{\dl}{\partial} 
\newcommand{\nnatural}{{\natural\natural}}
\numberwithin{equation}{section}
\begin{document}

\title{Periodicity and Cyclic Homology. Para-$S$-Modules and Perturbation Lemmas}
 \author{Rapha\"el Ponge}
 \address{Department of Mathematical Sciences, Seoul National University, 1 Gwanak-ro, Gwanak-gu, Seoul 08826, Republic of Korea}
\curraddr{School of Mathematics, Sichuan University, No.~24 South Section 1, Yihuan Road, Chengdu, PR China, 610065}
 \email{ponge.math@icloud.com}

 \thanks{Research supported by Basic Research Grant 2016R1D1A1B01015971
 of National Research Foundation of Korea (South Korea).}
 
  \keywords{Cyclic homology, perturbation lemmas}

\subjclass[2010]{19D55}

\begin{abstract}
In this paper, we introduce a paracyclic version of $S$-modules. These new objects are called \emph{para-$S$-modules}. Paracyclic modules and parachain complexes give rise to para-$S$-modules much in the same way as cyclic modules and mixed complexes give rise to $S$-modules. More generally, para-$S$-modules provide us with a natural framework to get analogues for paracyclic modules and parachain complexes of various constructions and equivalence results for cyclic modules or mixed complexes. The datum of a para-$S$-module does not provide us with a chain complex, and so notions of homology and quasi-isomorphisms do not make sense. We establish some generalizations for para-$S$-modules and parachain complexes of the basic perturbation lemma of differential homological algebra. These generalizations provide us with general recipes for converting deformation retracts of Hoschschild chain complexes into deformation retracts of para-$S$-modules. By using ideas of Kassel this then allows us to get comparison results between the various para-$S$-modules associated with para-precyclic modules, and between them and Connes' cyclic chain complex. These comparison results lead us to alternative descriptions of Connes' periodicity operator. This has some applications in periodic cyclic homology. We also describe the counterparts of these results in cyclic cohomology. In particular, we obtain an explicit way to convert a periodic $(b,B)$-cocycle into a cohomologous periodic cyclic cocycle.
\end{abstract}

\maketitle

\section{Introduction} 
In the terminology of Jones-Kassel~\cite{JK:KT89, Ka:Crelle90} an  $S$-module is  the datum of a chain complex $(C_\bt,d)$ of modules over some ring $k$ together with a degree~$-2$ chain map $S:C_\bt \rightarrow C_{\bt -2}$. This notion has its roots in the seminal note of Connes~\cite{Co:CRAS83}. It naturally comes out in bivariant cyclic theory~\cite{Co:CRAS83, CQ:JAMS95, JK:KT89, Ka:Crelle90, Ni:AM93}. Incidentally, it encapsulates various approaches to cyclic homology. In particular, we obtain $S$-modules from the cyclic complex and $(b,B)$-bicomplex of Connes~\cite{Co:MFO81,  Co:CRAS83, Co:IHES85}, from any mixed  mixed complex~\cite{Bu:CM86, Ka:JAlg87}, and from the $CC$-bicomplex of Connes~\cite{Co:CRAS83} and Tsygan~\cite{Ts:UMN83} with the operators $(b,-b', 1-\tau, N)$. 

A para-$S$-module is similar to an $S$-module where the chain map property $[d,S]=0$ is preserved, but the condition $d^2=0$ is relaxed into 
\begin{equation}
 d^2=(1-T)S,
 \label{eq:Intro.para-S-condition}
\end{equation}
where $T:C_\bt \rightarrow C_{\bt}$ is a $k$-linear isomorphism which is compatible with both $d$ and $S$ (see Section~\ref{sec:Para-S-Mod} for the precise definition). When $T=1$ we recover the definition of an $S$-module. Para-$S$-modules can be also thought of as the unperiodic version of the para-complexes of~\cite{Vo:JIMJ07}. Indeed, periodizing a para-$S$-module with respect to the $S$-operator precisely gives rise to a para-complex (see Section~\ref{sec:Para-S-Mod}). 

The aim of this paper is to lay down the main ground of a paracyclic version of $S$-modules~\cite{Co:CRAS83, JK:KT89, Ka:Crelle90}. We call these objects \emph{para-$S$-modules}. In particular, para-$S$-modules are naturally associated with paracyclic modules and parachain complexes much like $S$-modules are associated with cyclic modules and mixed complexes. 

In order to obtain a general apparatus for establishing equivalence results for para-$S$-modules, we generalize to this setting the  basic perturbation lemma of differential homological algebra~\cite{Br:Messina64, EM:AM47, Gu:IJM72,  Sh:IHES62}. As we shall see in this paper, this allows us to extend to the paracyclic category various of well known constructions and equivalence results in cyclic homology. In particular, at the level of cyclic cohomology this will lead us to an explicit transformation to convert $(b,B)$-cocycles into cohomologous \emph{periodic} cocycles.

In the follow-up paper~\cite{Po:EZ} the generalized perturbation theory of this paper is used extensively to obtain a constructive version of the Eilenberg-Zilber theorem for bi-paracyclic modules (see also~\cite{KR:CMB04}). This allows us to get explicit cup and cap products for parachain complexes associated with paracyclic modules. In particular, these constructions are expected to have applications in Hopf cyclic cohomology. 

The main results of this paper and of the follow-up paper~\cite{Po:EZ} are important ingredients in the construction of explicit quasi-isomorphisms that compute the cyclic homology of crossed-product algebras associated with actions of (discrete) groups~\cite{Po:CRAS17a, Po:CRAS17b, Po:JGP18}.

\subsection*{Examples of para-$S$-modules} 
A first set of examples of para-$S$-modules consists of para-$S$-modules associated with the parachain complexes of Getzler-Jones~\cite{GJ:Crelle93} (see Section~\ref{sec:parachain}). The  cyclic complex $(C^\natural_\bt, b+Bu^{-1})$ of a mixed complex $(C_\bt, b,B)$ (see~\cite{Co:MFO81, Co:CRAS83, Co:IHES85, Bu:CM86, Ka:JAlg87}) carries a natural $S$-module structure given by the projection $u^{-1}: C^\natural_\bt \rightarrow C^\natural_{\bt-2}$. The para-$S$-module of a parachain complex is defined similarly. Thus, although for a parachain complex we don't get a chain complex in general, we still get a para-$S$-module.   

Getzler-Jones~\cite{GJ:Crelle93} also observed that any paracyclic module gives rise to a parachain complex in essentially the same way as a cyclic module gives rise to a mixed complex. This construction actually makes sense for any {$H$-unital para-precyclic module} (see Proposition~\ref{prop:parachain.paracomplex}). In our terminology, a \emph{para-precyclic module} is just a paracyclic module without degeneracies (see Section~\ref{sec:para-precyclic} for the precise definition).  Following~\cite{Ka:Crelle90, Wo:AM89} a precyclic module is called \emph{$H$-unital} when its bar complex is contractible. We use a similar notion for para-precyclic modules (see Section~\ref{sec:paracyclic} for the precise definition). In any case, with any $H$-unital para-precyclic module $C$ is associated a para-$S$-module $C^\natural$ as the para-$S$-module of its parachain complex. In the cyclic case we recover the total complex of Connes' $(b,B)$-bicomplex of a cyclic module~\cite{Co:CRAS83, Co:IHES85}.  

The cyclic homology of a cyclic module $C$ can be also defined by using the total chain complex $(C^\nnatural_\bt, \dl +\delta)$ of the $CC$-bicomplex 
This chain complex is turned into an $S$-module by using the shift $u^{-2}: C^\nnatural_\bt\rightarrow C^\nnatural_{\bt-2}$. This construction actually makes sense in full generality for precyclic modules. In particular, this allows us to define the cyclic homology of non-unital algebras. Given any para-precyclic module $C$, the $C^\nnatural$-construction does not produce a chain complex in general anymore. However, we observe that it still gives rise to a para-$S$-module $C^\nnatural$ (see Proposition~\ref{prop:para-precyclic.para-S-module}).  Examples of para-precyclic $k$-modules include twisted precyclic modules of non-unital $k$-algebras (see Section~\ref{sec:paracyclic-modules}). Thus, this construction allows us to associate para-$S$-modules with these twisted precyclic modules. 

\subsection*{$S$-homotopy equivalences}  Because of the relaxation of the condition $d^2=0$ into~(\ref{eq:Intro.para-S-condition}), 
the notions of  homology and quasi-isomorphisms do not make sense for general para-$S$-module. However, notions of chain maps, chain homotopies, and chain homotopy equivalences do make sense even when $d^2\neq 0$. In the setting of para-$S$-modules it is natural to require some compatibility with the $(S,T)$-operators. This leads us to define \emph{$S$-maps} and \emph{$S$-homotopies} as chain maps and chain homotopies that are compatible with the $(S,T)$-operators. 
Using  these notions we define \emph{$S$-homotopy equivalences} and \emph{$S$-deformation retracts} of para-$S$-modules (see Section~\ref{sec:Para-S-Mod}). 
Furthermore, any $S$-homotopy equivalence between para-$S$-modules gives rise to a homotopy equivalence between the corresponding periodic para-complexes (see Proposition~\ref{prop:Para-S-Mod.periodic-homotopy}). 

Chain homotopy equivalences are stronger notions of equivalence and enjoy better functorial properties than quasi-isomorphisms. Therefore, even between $S$-modules, it is quite significant to have chain homotopy equivalences. 

\subsection*{Quasi-$S$-modules} 
It is natural to look at para-$S$-modules that are equivalent to $S$-modules. We single out a sub-class of para-$S$-modules, called \emph{quasi-$S$-modules}, which admit $S$-deformation retracts to $S$-modules (see Section~\ref{sec:Para-S-Mod} for their precise definition). This leads us to define \emph{quasi-mixed complexes} and \emph{quasi-precyclic modules} as the parachain complexes and para-precyclic modules whose para-$S$-modules are quasi-$S$-modules. In fact, for these notions the sole splitting $C_\bt=\ker (1-T)\oplus (C_\bt/ \ran (1-T))$ ensures us to get quasi-$S$-modules (see Proposition~\ref{prop:parachain.quasi-mixed-quasi-S} and Proposition~\ref{prop:para-precyclic.Cnn-quasi}). In particular, we recover the quasi-cyclic modules of~\cite{KK:Crelle14, KM:HHA18} (\emph{cf}.~Section~\ref{sec:paracyclic-modules}). 

Examples of quasi-(pre)cyclic modules include the $r$-(pre)cyclic modules of Feigin-Tsygan~\cite[Appendix]{FT:LNM87a}, the twisted (pre)cyclic modules associated with periodic automorphisms and the twisted group cyclic modules associated with finite order central elements (see Section~\ref{sec:paracyclic-modules}). More generally, we obtain a quasi-mixed complex or a quasi-(pre)cyclic module as soon the $T$-operator solves a polynomial equation $Q(T)=0$ with $Q(1)=1$ (see Lemma~\ref{lem:Parachain.Q(T)-quasi} and Lemma~\ref{lem:paracyclic.Q(T)-quasi-precyclic}). 

Given any quasi-mixed complex $C=(C_\bt, b,B)$, we observe that a simple modification of the $B$-differential allows us to get a mixed complex $C=(C_\bt, b,\tilde{B})$  
whose $S$-module is $S$-homotopy equivalent to the para-$S$-module $C^\natural$ (see Proposition~\ref{prop:Parachain.tC-mixed-complex}). 
Similarly, when $C$ is a quasi-precyclic module, we can modify the construction of the para-$S$-module $C^\nnatural$ to get an $S$-module $\widetilde{C}^\nnatural$ which is $S$-homotopy equivalent to $C^\nnatural$ (Proposition~\ref{prop:para-precyclic.CC-quasi-precyclic}). In fact, this $S$-module appears of the total chain complex of a chain bicomplex, which is constructed like the $CC$-bicomplex only by modifying the $N$-operator. In particular, for $r$-(pre)cyclic modules we recover the  bicomplex of Feigin-Tsygan~\cite[Appendix]{FT:LNM87a}. 

\subsection*{Perturbation lemmas} An important tool in cyclic homology is Connes' long exact sequence, which relates cyclic homology to Hochschild homology~\cite{Co:MFO81, Co:CRAS83, Co:IHES85}. In particular, a map of mixed complexes (and more generally an $S$-map) is a quasi-isomorphism at the cyclic and periodic levels as soon as it is a quasi-isomorphism at the Hochschild level. As observed by Kassel~\cite{Ka:Crelle90}, in the setting of mixed complexes the basic perturbation lemma~\cite{Br:Messina64, EM:AM47, Gu:IJM72,  Sh:IHES62} often allows us to convert a deformation retract of Hochschild complexes into an $S$-deformation retract of cyclic complexes. In particular, a number of  well known quasi-isomorphisms in cyclic homology can be reformulated as $S$-deformation retracts of $S$-modules (see~\cite{Ba:Preprint98, Ka:Crelle90}). 

We seek for extending Kassel's approach to parachain complexes and para-(pre)cyclic modules. To this end we generalize the basic perturbation lemma to ``para-twin'' complexes (Lemma~\ref{lem:perturbation-lemma-special}). By a para-twin complex we just mean a graded $k$-module together with a pair of $k$-linear maps of degree~$-1$, which are not assumed to be differentials (see Section~\ref{sec:Perturbation} for the precise definition).

In this generalized setting the input data include a mere pair of left/right chain homotopy inverses which need not be chain maps or anti-sided inverses, whereas for the basic perturbation lemma the input data actually involve a true deformation retract provided by a pair of chain maps. However, as for the basic perturbation lemma, the chain homotopy equivalences are required to be ``special'' (see Section for the precise meaning). Unlike with the basic perturbation lemma this assumption is essential 
(\emph{cf.}\ Remark~\ref{rmk:Perturbation.special}). Nevertheless, under suitable conditions this requirement can be relaxed (see Lemma~\ref{lem:perturbation.Delta-zero}). 

Specializing these generalized perturbation lemmas to parachain complexes provides us with general recipes for converting deformation retracts of Hoschschild chain complexes into $S$-deformation retracts of para-$S$-modules (see~Lemma~\ref{lem:Perturbation.parachain-complexes}, Lemma~\ref{lem:Lifting.co-extension-deformation-retract-fBg=B}, Lemma~\ref{lem:Lifting.co-extension-deformation-retract-f-parachain}, and Lemma~\ref{lem:Lifting.co-extension-deformation-retract-f-mixed}). In particular, this provides us with a substitute for Connes' long exact sequence in the framework of parachain complexes. 

\subsection*{Comparing $C^\nnatural$ and $C^\natural$} For any  $H$-unital precyclic $k$-module $C$, we have three cyclic chain complexes: the cyclic complex $C^\lambda$, the $C^\natural$-complex of its mixed complex, and the total complex $C^\nnatural$ of the $CC$-bicomplex . The last two chain complexes are quasi-isomorphic, and, when $k\supset \Q$, they are both quasi-isomorphic to the $C^\lambda$-complex (see~\cite{Co:CRAS83, Co:IHES85, LQ:CMH84, Ts:UMN83}). By using suitable versions of the basic perturbation lemma, Kassel~\cite{Ka:Crelle90} further exhibited a deformation retract of $C^\nnatural$ to $C^\natural$ and, when $k\supset \Q$, he also constructed a deformation retract of $C^\nnatural$ to $C^\lambda$. 

We seek for extending Kassel's results to the paracyclic setting by using the generalized perturbation theory of this paper.  We start by showing that, when $C$ is an $H$-unital para-precyclic $k$-module, the para-$S$-module $C^\natural$ is an $S$-deformation retract of $C^\nnatural$ (Proposition~\ref{prop:CnnCn.deformation-retract}). In particular, as pointed out in~\cite{Ka:Crelle90}, the $B$-operator naturally re-appears from this process. In the precyclic case, we recover the deformation retract of~\cite{Ka:Crelle90}. 

\subsection*{Comparing $C^\nnatural$ and $C^\lambda$} We also compare the para-$S$-module $C^\nnatural$ to the cyclic complex $C^\lambda$ when $k\supset \Q$. The latter actually makes sense for any para-precyclic module. Given any para-precyclic $k$-module $C$ (with $k\supset \Q$), we show that $C^\lambda$ is a deformation retract of $C^\nnatural_T$, where $C_T$ is the pre-cyclic $k$-module obtained by mod-outing $C$ by $\ran(1-T)$ (see Proposition~\ref{prop:Cn-Cl.CTnn}). 

A difference with the approach of~\cite{Ka:Crelle90} lies on the use of the generalized perturbation theory of this paper. This  allows us to construct a $k$-linear map 
$\nu_0^\nnatural: C_\bt \rightarrow C_\bt^\nnatural$,  which is ``almost'' a chain homotopy inverse of the canonical projection 
$\pi_0^\nnatural: C_\bt^\nnatural \rightarrow C_\bt$. Namely, this is a chain homotopy left-inverse, and a right-inverse modulo $\ran(1-\tau)$, as well as 
a chain map modulo $\ran(1-T)$ (see Lemma~\ref{lem:Cn-Cl.Cnn-C}). It then descends to a chain map $\wb{\nu}^\nnatural: C_\bt^\lambda \rightarrow C^\nnatural_{T,\bt}$, which is a chain homotopy left-inverse and a right-inverse of the canonical chain map $\wb{\pi}^\nnatural: C^\nnatural_{T,\bt} \rightarrow C_\bt^\lambda$. 
We thus obtain a deformation retract $C^\nnatural_T$ to $C^\lambda$. Incidentally, our approach avoid using the cyclic relation $T=1$, which is used in~\cite{Ka:Crelle90}, but is not available in general in the para-precyclic setting.

When $C$ is quasi-precyclic the map $\nu_0^\nnatural$ also gives rise to a right-inverse and chain homotopy left-inverse $\nu^\nnatural: C_\bt^\lambda \rightarrow C^\nnatural_{\bt}$ of the canonical chain map $\pi^\nnatural: C^\nnatural_{\bt} \rightarrow C_\bt^\lambda$, and so get a deformation retract of $C^\nnatural$ to $C^\lambda$ (see Proposition~\ref{prop:Cn-Cl.CTnn}). 

\subsection*{Comparing $C^\natural$ and $C^\lambda$} 
When $C$ is a $H$-unital pre-paracyclic $k$-module with $k\supset \Q$, we can combine the previous comparison results to compare the para-$S$-module $C^\natural$ to the cyclic complex $C^\lambda$. More precisely, we obtain a $k$-linear map $\nu_0^\natural: C_\bt \rightarrow C_\bt^\natural$ which, in the same way as the map $\nu^\nnatural_0$ above, is an almost chain homotopy inverse of the canonical projection $\pi_0^\natural: C_\bt^\natural \rightarrow C_\bt$ (see Lemma~\ref{lem:Cn-Cl.nu0}). It then descends to a chain map $\wb{\nu}^\natural: C^\lambda \rightarrow  C^\natural_{T,\bt}$, which a right-inverse and chain homotopy left-inverse of the canonical map  $\wb{\pi}^\natural: C^\natural_{T,\bt} \rightarrow C^\lambda$. This provides us with a deformation retract of $C^\natural_T$ to $C^\lambda$ (Proposition~\ref{prop:CnCl.CTn}).  

When $C$ is quasi-cyclic the map  $\nu_0^\natural$ also gives rise to a right-inverse and chain homotopy left-inverse $\nu^\natural: C^\lambda \rightarrow  C^\natural_{\bt}$ of the canonical chain map $\pi^\natural: C^\natural_{\bt} \rightarrow C_\bt^\lambda$, and so this gives a deformation retract of $C^\natural$ to $C^\lambda$ (Proposition~\ref{prop:CnCl.Cn-quasi}). 

\subsection*{Periodicity operator} 
When $C$ is a precyclic $k$-module with  $k\supset \Q$,  Kassel~\cite{Ka:Crelle90} used the deformation retract of $C^\nnatural$ to $C^\lambda$ to get an alternative description of periodicity operator of Connes~\cite{Co:MFO81, Co:IHES85} in cyclic homology. 
We seek for extending Kassel's approach to the pre-paracyclic setting. One feature of Kassel's approach is the use of a nilpotent chain homotopy. Such a chain homotopy need not be available in the paracyclic setting (see Remark~\ref{rmk:Perturbation.special} on this point). This issue is bypassed by using the almost chain homotopy inverse $\nu_0^\nnatural$ described above. Namely, let $S_0:C_\bt \rightarrow C_{\bt-2}$ be the $k$-linear map  defined by 
\begin{equation*}
 S_0(x)= \pi_0^\nnatural \big(u^{-2} \nu_0^\nnatural (x) \big), \qquad x\in C_\bt. 
\end{equation*}
Then $S_0$ descends to a chain map  $S:C_\bt^\lambda \rightarrow C_{\bt-2}^\lambda$ that turns the cyclic complex $C^\lambda$ into an $S$-module and with respect to which the chain homotopy inverse $\wb{\nu}^\nnatural:C_\bt^\lambda \rightarrow C^\nnatural_{T,\bt}$ is an $S$-map and the canonical chain map $\pi^\nnatural: C_\bt^\nnatural \rightarrow C^\lambda_\bt$ is an $S$-map up to chain homotopy (Proposition~\ref{prop:S.para-precyclic}). There is a unique such map (\emph{loc.\ cit.}). When $C$ is quasi-precyclic it can be also shown that the chain homotopy inverse $\nu^\nnatural:C_\bt^\lambda \rightarrow C^\nnatural_{\bt}$ is an $S$-map (\emph{loc.\ cit.}).  

We also compute the $S$-operator and shows it actually agrees with Connes' periodicity operator in the precyclic case (Proposition~\ref{prop:S.formulas}). This equality occurs at the level of chains, rather than at the homology level (compare~\cite{Lo:CH, Ka:Crelle90}). In addition,  we obtain a new, and somewhat concise, formula for the $S$-operator~(\emph{loc.\ cit.}).

When $C$ is $H$-unital, it can be shown that the $S$-operator is induced on $C_\bt^\lambda$ by the operator $x\rightarrow \pi_0^\natural (u^{-1} \nu_0^\natural(x))$ (Proposition~\ref{prop:S.nun}).  The chain homotopy inverse $\wb{\nu}^\natural: C^\lambda_\bt \rightarrow  C_{T,\bt}^\natural$ is an $S$-map and the canonical chain map $\pi^\natural:C^\natural_\bt \rightarrow C^\lambda_\bt$ is an $S$-map up to chain homotopy (\emph{loc.\ cit.}). When $C$ is quasi-precyclic  (and $H$-unital) the chain homotopy inverse $\nu^\natural: C^\lambda_\bt \rightarrow  C_\bt^\natural$ is an $S$-map as well (\emph{loc.\ cit.}).

The description of the periodicity operator for $H$-unital precyclic modules above has some interesting implications in periodic cyclic homology. First, Connes~\cite{Co:IHES85} proved a formula that relates explicitly the periodicity operator to the $(b,B)$-differentials in cyclic cohomology. By using the description the $S$-operator in terms of the map $\nu_ 0^\natural$  allows us to a new proof of a dual version of Connes' formula (see Proposition~\ref{prop:S.Connes-formula}). 

Second, any $S$-map between $S$-modules that is a quasi-isomorphism gives rise to a quasi-isomorphism in periodic cyclic homology (see Section~\ref{sec:Para-S-Mod}). Thus, the very fact that the quasi-isomorphism $\wb{\nu}^\natural: C^\lambda_\bt  \rightarrow  C_{T,\bt}^\natural$ is an $S$-map ensures us it extends to a quasi-isomorphism 
$\wb{\nu}^\sharp: C^{\lambda, \sharp}_\bt \rightarrow  C_{T,\bt}^\sharp$, where $C_{T}^\sharp$ is the periodic complex of the $H$-unital precyclic module $C_T$ and $C^{\lambda, \sharp}_\bt$ is the inverse limit of the  modules $C^\lambda_\bt$ under the periodicity operator (see Proposition~\ref{prop:periodic-homology.quasi-isom}).

\subsection*{Applications in cyclic cohomology} 
By duality, the above results have interesting counterparts in cyclic cohomology and periodic cyclic cohomology.  First, we obtain an explicit way to convert a $(b,B)$-cocycle into a cohomologous cyclic cocycle (see Proposition~\ref{prop:cohomology.cyclic} and Corollary~\ref{cor:cohomology.cyclic}). 

Second, we exhibit an explicit quasi-isomorphism that realizes Connes' isomorphism between periodic cyclic cohomology and the direct limit of cyclic cohomology under the periodicity operator (Proposition~\ref{prop:periodic-cohomology.HP}). More precisely, this quasi-isomorphism allows us to invert the natural map in cohomology provided by the inclusion of cyclic cochain complex into the periodic cyclic cochain complex. This might have some applications to the representation of the Connes-Chern character by $(b.B)$-cocycles (\emph{cf}.\ Remark~\ref{rmk:S.Connes-Chern}). 
 
\subsection*{Organization of the paper} The rest of the paper is organized as follows. In Section~\ref{sec:Para-S-Mod}, we present the main definitions regarding para-$S$-modules, quasi-$S$-modules, and $S$-homotopy equivalences, as well as some of their properties.  In Section~\ref{sec:parachain}, we present the construction of the $S$-module of a parachain complex by following~\cite{GJ:Crelle93} and show that we obtain a quasi-$S$-module in the case of a quasi-mixed complex. 
In Section~\ref{sec:paracyclic-modules}, after recalling basic facts on paracyclic modules, we introduce para-precyclic modules and  quasi-precyclic modules. In Section~\ref{sec:paracyclic}, by elaborating on the considerations of~\cite{GJ:Crelle93, Ka:Crelle90} we explain how to associate a parachain complex with any $H$-unital para-precyclic module. 
In Section~\ref{sec:para-precyclic}, we construct the para-$S$-module $C^\nnatural$ of an arbitrary para-precyclic module $C$ and show that we obtain a quasi-$S$-module when $C$ is quasi-precyclic. 
In Section~\ref{sec:Perturbation}, we establish some generalizations of the basic perturbation lemma to para-twin complexes and parachain complexes. In Section~\ref{sec:Cn-Cnn}, we show that when $C$ is an $H$-unital para-precyclic module, the para-$S$-module $C^\natural$ is an $S$-deformation retract of $C^\nnatural$. In Section~\ref{sec:Cnn-Cl}, we show that, when $C$ is a para-precyclic module and the ground ring contains $\Q$, we compare the para-$S$-module $C^\nnatural$ to the cyclic complex $C^\lambda$. In Section~\ref{sec:Cn-Cl}, we show that, when $C$ is an $H$-unital para-precyclic module and the ground ring contains $\Q$, we also can compare the para-$S$-module $C^\natural$ to $C^\lambda$. In Section~\ref{sec:S}, we use these comparison results to describe Connes' periodicity operator and gives some applications in periodic cyclic homology. Finally, in Section~\ref{sec:cohomology} we describe the counterparts of these results in cyclic cohomology and periodic cyclic cohomology. 

\subsubsection*{Notation} Throughout this paper we denote by $k$ a ring with unit which is not assumed to be commutative. Unless otherwise mentioned in the last section, by a $k$-module we shall mean a \emph{left} $k$-module. 

\subsubsection*{Acknowledgements} The author would like to thank Alain Connes, Ulrich Kr\"ahmer, Henri Moscovici, Bahram Rangipour, and Christian Voigt for discussions related to the subject matter of this paper. He also thanks McGill University, University of New South Wales and University of Qu\'ebec at Montr\'eal for their hospitality during the preparation of this paper.  

\section{Para-$S$-Modules}\label{sec:Para-S-Mod} 
In this section, we introduce para-$S$-modules as the natural ``para'' version of the $S$-modules of Jones-Kassel~\cite{JK:KT89, Ka:Crelle90}. We shall also look at  chain homotopy equivalences and deformation retracts between para-$S$-modules. This will provide us with a natural notion of equivalence between para-$S$-modules. 
 
\subsection{$S$-modules}
In the terminology of Jones-Kassel~\cite{JK:KT89, Ka:Crelle90} an \emph{$S$-module} is given by chain complex of $k$-modules $(C_\bt, d)$ and a chain map $S:C_{\bt}\rightarrow C_{\bt-2}$. This notion has its roots in the seminal note of Connes~\cite{Co:CRAS83} and it naturally comes into play in bivariant cyclic homology~\cite{Co:CRAS83, CQ:JAMS95, JK:KT89, Ka:Crelle90, Ni:AM93}. Incidentally, it encapsulates various approaches to cyclic homology. In particular, examples of $S$-modules include the following:
\begin{itemize}
\item The cyclic complex of a (pre-)cyclic module of Connes~\cite{Co:MFO81, Co:CRAS83, Co:IHES85} (see also Section~\ref{sec:Cnn-Cl}). 

\item The total complex of the $(b,B)$-bicomplex of Connes~\cite{Co:MFO81, Co:CRAS83, Co:IHES85}, and more generally the cyclic complex of a mixed complex in the sense of Burghelea~\cite{Bu:CM86} and Kassel~\cite{Ka:JAlg87} (see also Section~\ref{sec:parachain}).  

\item The total complex of the $CC$-bicomplex with the operators $(b,-b',1-\tau, N)$ of Connes~\cite[Section~4]{Co:CRAS83} and Tsygan~\cite[Proposition~1]{Ts:UMN83} 
(see also Section~\ref{sec:para-precyclic}). 

\end{itemize}
We will describe generalizations for these examples in subsequent sections. Following is a further example of an $S$-module. 

\begin{example}
Given a group $\Gamma$, let us denote by $k\Gamma$ its group ring over $k$. We let $C(\Gamma)=(C_\bt(\Gamma),\partial)$ be its standard complex of $k\Gamma$-modules, where $C_m(\Gamma)=k\Gamma^{m+1}$ and the boundary $\partial: C_\bt(\Gamma)\rightarrow C_{\bt+1}(\Gamma)$ is given by 
\begin{equation}
 \partial(\gamma_0, \ldots, \gamma_m)= \sum_{0\leq j \leq m} (-1)^j (\gamma_0, \ldots, \widehat{\gamma_j}, \ldots, \gamma_m), \qquad \gamma_j\in \Gamma, 
 \label{eq:Para-S-Mod.group-differential}
\end{equation}
where $\widehat{\cdot}$ denotes omission. Let $u$ be a group 2-cocycle, i.e., a $\Gamma$-equivariant map $u: C_2(\Gamma) \rightarrow k$ such that $u\circ \partial=0$. The cap product $u\frown -: C_{\bt}(\Gamma) \rightarrow C_{\bt-2}(\Gamma)$ is given by
\begin{equation*}
 u\frown (\gamma_0,\ldots, \gamma_m)= u(\gamma_0, \gamma_1, \gamma_2)(\gamma_2, \ldots, \gamma_m), \qquad \gamma_j\in \Gamma. 
\end{equation*}
This is a chain map, and so $(C_\bt(\Gamma), \partial, u\frown -)$ is an $S$-module. This kind of $S$-module naturally appears in the description of the cyclic homology of the group ring $k\Gamma$ (see~\cite{Bu:CMH85, Ji:KT95, Ni:InvM90}). 
\end{example}

Any $S$-module $C=(C_\bt, d,S)$ gives rise to a chain complex $(C_\bt^\sharp, d)$, where $C_m^\sharp = {\varprojlim}_S C_{m+2q}$, $m\geq 0$, and ${\varprojlim}_S$ is the limit of the inverse system defined by the operators  $S:C_{m+2q+2}\rightarrow C_{m+2q}$, $q\geq 0$. Namely, 
\begin{equation}
 C_m^\sharp = \left\{ (x_{m+2q})_{q \geq 0}; \ Sx_{m+2q+2}= x_{m+2q} \ \forall q \geq 0\right\}.
  \label{eq:Para-S-Mod.Csharp2}  
\end{equation}
Moreover, as $S$ is a chain map,  the differential $d$ is compatible with the $S$-operator, and so it gives rise to a $k$-linear differential $d:C_\bt^\sharp \rightarrow C_{\bt-1}^\sharp$. 
 
We have an identification $C_{m}^\sharp \simeq C_{m+2}^\sharp$ given by the shift   
$(x_{m},x_{m+2}, \ldots)  \rightarrow  (x_{m+2},x_{m+4}, \ldots)$ and its inverse $(x_{m+2}, \ldots) \rightarrow  (Sx_{m+2}, x_{m+2}, \ldots)$. Thus, if we set $m=2p+i$ with $i\in \{0,1\}$, then we have a natural identification $C_m^\sharp \simeq C_i^\sharp$. Under this identification the chain complex $(C_\bt^\sharp, d)$ is $2$-periodic and is naturally identified with the $\Z_2$-graded chain complex $C^\sharp:=(C_0^\sharp\oplus C_1^\sharp, d)$, where $d$ is regarded as an \emph{odd}  $k$-linear operator on $C_0^\sharp\oplus C_1^\sharp$, i.e., it maps $C_0^\sharp$ (resp., $C^\sharp_1$) to $C_1^\sharp$ (resp., $C^\sharp_0$). (This uses the identification $C_2^\sharp\simeq C^\sharp_0$ described above.) 

We call the $\Z_2$-graded complex $C^\sharp$ the \emph{periodic chain complex} of the $S$-module $C$.  If we denote by $H_\bt(C)$ (resp., $H_\bt(C^\sharp)$) the homology of $C$ (resp., $C^\sharp$), then the operator $S$ induces a $k$-linear map $S:H_\bt(C)\rightarrow H_{\bt-2}(C)$, and we have an exact sequence, 
\begin{equation}
 0 \longrightarrow {\varprojlim}^1_{S} H_\bt(C) \longrightarrow H_\bt(C^\sharp)  \longrightarrow {\varprojlim}_{S} H_\bt(C) \longrightarrow 0,  
 \label{eq:Para-S-Mod.exact-sequence-periodicity} 
\end{equation}
where ${\varprojlim}^1$ is the first derived functor of the inverse limit functor $\varprojlim$ (see, e.g., \cite{Lo:CH, We:IHA}). 

In the terminology of Kassel~\cite{Ka:JAlg87, Ka:Crelle90}, given $S$-modules $C=(C_\bt, d, S)$ and $\wbC=(\wbC_\bt, d,S)$, an \emph{$S$-map} is any chain map $f:C_\bt \rightarrow \wbC_{\bt}$ that is compatible with the $S$-operators. Any $S$-map $f:C_\bt \rightarrow \wbC_{\bt}$ gives rise to a chain map $f^\sharp: C_\bt^\sharp \rightarrow \wbC_\bt^\sharp$ between the corresponding periodic complexes. More generally, any $S$-compatible $k$-linear map $f:C_\bt \rightarrow \wbC_{\bt+d}$ of degree $d$, $d\in \Z$, gives rise to $k$-linear map between the periodic complexes which is even or odd according to the parity of $d$. By using the functoriality of the exact sequence~(\ref{eq:Para-S-Mod.exact-sequence-periodicity}) and the 5-lemma we obtain the following result. 

\begin{proposition}\label{prop:Para-S-Mod.quasi-isomorphism-periodic}
 Let $f:C_\bt\rightarrow \wb{C}_\bt$ be an $S$-map and a quasi-isomorphism. Then the chain map $f^\sharp: C_\bt^\sharp \rightarrow \wbC_\bt^\sharp$ is a quasi-isomorphism as well. 
\end{proposition}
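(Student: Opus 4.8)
The plan is to deduce this exactly along the lines announced just before the statement, namely from the naturality of the exact sequence~(\ref{eq:Para-S-Mod.exact-sequence-periodicity}) together with the short five lemma. The proof is formal once the right functoriality is in place, so the emphasis should be on making that functoriality explicit rather than on any calculation.

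First I would record the input at the level of homology. Since $f:C_\bt\rightarrow\wbC_\bt$ is an $S$-map, it commutes with $S$ on the nose, hence the induced map $f_*:H_\bt(C)\rightarrow H_\bt(\wbC)$ intertwines the periodicity operators. Fixing $m$, this means $f_*$ is a morphism of inverse systems between $\{S:H_{m+2q+2}(C)\rightarrow H_{m+2q}(C)\}_{q\geq 0}$ and $\{S:H_{m+2q+2}(\wbC)\rightarrow H_{m+2q}(\wbC)\}_{q\geq 0}$. As $f$ is a quasi-isomorphism, $f_*$ is an isomorphism in every degree, so this is an \emph{isomorphism} of inverse systems. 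Applying the functors $\varprojlim$ and $\varprojlim^1$ then yields isomorphisms ${\varprojlim}_S f_*:{\varprojlim}_S H_\bt(C)\xrightarrow{\ \sim\ }{\varprojlim}_S H_\bt(\wbC)$ and ${\varprojlim}^1_S f_*:{\varprojlim}^1_S H_\bt(C)\xrightarrow{\ \sim\ }{\varprojlim}^1_S H_\bt(\wbC)$ — here I would simply note that $\varprojlim$ and $\varprojlim^1$, being functors, carry isomorphisms of towers to isomorphisms, an inverse morphism of towers providing the inverse on the derived limits.

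Next I would invoke the functoriality of~(\ref{eq:Para-S-Mod.exact-sequence-periodicity}) in the $S$-module. The assignment $C\mapsto C^\sharp$ is functorial: an $S$-map $f$ induces a chain map $f^\sharp:C^\sharp_\bt\rightarrow\wbC^\sharp_\bt$ of periodic complexes, as recalled in the excerpt, and all three terms of~(\ref{eq:Para-S-Mod.exact-sequence-periodicity}) depend functorially on $C$ with the two maps of the sequence being natural transformations. Thus $f$ gives rise to a commutative ladder with exact rows whose outer vertical arrows are ${\varprojlim}^1_S f_*$ and ${\varprojlim}_S f_*$, and whose middle vertical arrow is $(f^\sharp)_*:H_\bt(C^\sharp)\rightarrow H_\bt(\wbC^\sharp)$. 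Since the two outer arrows are isomorphisms, the short five lemma shows that $(f^\sharp)_*$ is an isomorphism in every $\Z_2$-graded degree; that is, $f^\sharp$ is a quasi-isomorphism.

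There is no genuine obstacle here — the argument is purely homological bookkeeping. The only two points I would take the trouble to state cleanly are: (i) that an isomorphism of towers of $k$-modules induces isomorphisms on $\varprojlim$ and on $\varprojlim^1$, which is immediate from functoriality; and (ii) the naturality of~(\ref{eq:Para-S-Mod.exact-sequence-periodicity}) in $C$, which is the only substantive ingredient and follows by unwinding the standard construction of the $\varprojlim$–$\varprojlim^1$ (Milnor) sequence attached to the tower $(C^\sharp_{m+2q})_{q\geq0}$ and checking that $f^\sharp$ is compatible with the chosen surjection/difference presentation of that tower. Both are routine, so the write-up can be kept short.
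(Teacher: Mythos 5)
Your proposal is correct and follows exactly the route the paper indicates: the naturality of the $\varprojlim$--$\varprojlim^1$ exact sequence~(\ref{eq:Para-S-Mod.exact-sequence-periodicity}) in the $S$-module, combined with the fact that $f_*$ is an isomorphism of towers on homology, and then the (short) five lemma applied to the resulting ladder. The paper leaves these details to the reader, and your write-up simply makes the same argument explicit.
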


\subsection{Para $S$-modules} 
The paracyclic category of Feigin-Tsygan~\cite{FT:LNM87a} and Getzler-Jones~\cite{GJ:Crelle93} (see also~\cite{DK:CMH85, FL:TAMS92}) appears in various computations of cyclic homology groups, including the cyclic homology of group rings and crossed-product algebras associated with group actions on $k$-algebras (see, e.g., \cite{Bu:CMH85, Cr:KT99, FT:LNM87b, GJ:Crelle93, Ma:BCP86, Po:CRAS17a}). Getzler-Jones also introduced parachain complexes as the ``para'' version of mixed complexes (see also~\cite{DK:CMH85}). In this paper, we shall use the following ``para'' generalization of $S$-modules. 

\begin{definition}\label{def:Para-S-Module}
 A \emph{para-$S$-module} is given by  a system $(C_\bt, d, S,T)$, where $C_m$, $m\geq 0$, are $k$-modules, $d:C_\bt \rightarrow C_{\bt-1}$ and $S:C_\bt\rightarrow C_{\bt-2}$ are $k$-linear maps, and $T:C_\bt \rightarrow C_\bt$ is a $k$-linear isomorphism in such a way that
\begin{equation}
 d^2=S(1-T)  \qquad \text{and} \qquad [d,S]=[d,T]=[S,T]=0. 
 \label{eq:Para-S-modules-relations}
\end{equation}
\end{definition}

\begin{remark}
When $T=1$ we recover the definition of an $S$-module. In particular, any $S$-module is a para-$S$-module. 
\end{remark}

\begin{remark}
As we shall see later, para-$S$-modules provides us with a natural setting for the following constructions: 
\begin{itemize}
\item A version for parachain complexes of the cyclic complex of a mixed complex. This construction goes back to Getzler-Jones~\cite{GJ:Crelle93} (see Section~\ref{sec:parachain}). 

\item A version for para-precyclic $k$-modules of (the total complex of) the $CC$-bicomplex (see~Section~\ref{sec:para-precyclic}). 
\end{itemize}
As we shall see, although these constructions do not provide us with chain complexes, they do give rise to para-$S$-modules.  
\end{remark}

Given para-$S$-modules $C=(C_\bt, d,S,T)$ and $\wbC=(\wbC_\bt, d,S,T)$, we shall say that a $k$-linear map $f:C_\bt \rightarrow \wbC_\bt$ is a \emph{chain map} when it is compatible with the $d$-operators. We shall say that this is a para-$S$-module map, or simply an \emph{$S$-map}, when it is further compatible with the operators $(S,T)$. When $T=1$ we recover the usual notion of an $S$-map between $S$-modules~(\cite{JK:KT89, Ka:JAlg87}). 

As $(C_\bt, d)$ and $(\wbC_\bt, d)$ need not be chain complexes, we cannot speak about quasi-isomorphisms between them. However, notions of chain homotopy equivalences and deformation retracts do make sense in the same way as with usual chain  complexes. More precisely, we shall say that two chain maps $f_2:C_\bt \rightarrow \wbC_\bt$ and $f_2:C_\bt \rightarrow \wbC_\bt$ are \emph{chain homotopic} when there is a $k$-linear map $\varphi: C_\bt \rightarrow \wbC_{\bt+1}$ such that  
\begin{equation}
 f_1-f_2=d\varphi + \varphi d. 
 \label{eq:Para-S-Modules.homotopy}
\end{equation}
Given chain maps $f:C_\bt \rightarrow \wbC_\bt$ and $g:\wbC_\bt \rightarrow C_\bt$, we then say that $g$ is a homotopy inverse of $f$ when $gf$ and $fg$ are chain homotopy equivalent to the identity maps of $C_\bt$ and $\wbC_\bt$. We similarly define notions of chain homotopy left-inverse and right-inverse. 

\begin{definition}
 A \emph{chain homotopy equivalence} between $C$ and $\wbC$ is given by chain maps $f:C_\bt \rightarrow \wbC_\bt$ and $g:\wbC_\bt \rightarrow C_\bt$ 
 that are chain homotopy inverses of each other. We have a \emph{deformation retract} of $C$ to $\wbC$ when we can choose $f$ and $g$ so that $fg=1$. 
\end{definition}

In what follows we will be interested in chain homotopies that are compatible with the $(S,T)$-operators. We shall say that $S$-maps $f:C_\bt \rightarrow \wbC_\bt$ and $f':C_\bt \rightarrow \wbC_\bt$ are \emph{$S$-homotopic} when they are chain homotopic and the homotopy~(\ref{eq:Para-S-Modules.homotopy}) can be realized by means of a chain homotopy $\varphi: C_\bt \rightarrow \wbC_{\bt+1}$ that is compatible with the $(S,T)$-operators. This allows us to give sense to notions of $S$-homotopy inverses of $S$-maps. 

\begin{definition}
An \emph{$S$-homotopy equivalence} between $C$ and $\wbC$ is given by chain maps $f:C_\bt \rightarrow \wbC_\bt$ and $g:\wbC_\bt \rightarrow C_\bt$ 
 that are $S$-homotopy inverses of each other. We have an \emph{$S$-deformation retract} of $C$ to $\wbC$ when we can choose $f$ and $g$ so that $fg=1$. 
\end{definition}

\subsection{The Periodic para-complex of a para-$S$-module} 
Let $C=(C_\bt, d,S,T)$ be a para-$S$-module. In the same way as with $S$-modules, the operator $S$ allows us to form a system $(C_0^\sharp \oplus C_1^\sharp, d)$, 
where the $\Z_2$-graded $k$-module $C_0^\sharp \oplus C_1^\sharp$ is defined as in~(\ref{eq:Para-S-Mod.Csharp2}) and the $k$-linear map $d:C_\bt^\sharp \rightarrow C_{\bt-1}^\sharp$ is induced from the ``para-differential'' $d:C_\bt \rightarrow C_{\bt-1}$. The compatibility of $T$ with $S$ also implies that it gives rise to an invertible (even) $k$-linear map $T:C_\bt^\sharp \rightarrow C_\bt^\sharp$. Moreover, given any sequence $(x_0,x_2, \ldots)\in C_0^\sharp$, we have 
\begin{equation*}
 d^2(x_0,x_2, \ldots)=d(dx_2,dx_4, \ldots)= (d^2x_2,d^2x_4, \ldots). 
\end{equation*}
As $d^2x_{2q+2}=(1-T)Sx_{2q+2}=(1-T)x_{2q}$, we see that $d^2=(1-T)$ on $C_0^\sharp$. Likewise, $d^2=(1-T)$ on $C_1^\sharp$. As $T:C_\bt^\sharp \rightarrow C_\bt^\sharp$ is invertible, this shows that the triple 
$(C_\bt^\sharp , d, T)$ is a para-complex in the sense of Voigt~\cite{Vo:JIMJ07}. 

\begin{definition}
 The para-complex $C^\sharp:=(C_\bt^\sharp, d, T)$ is called the \emph{periodic para-complex} of the para-$S$-module 
 $C$. 
\end{definition}

Given para-$S$-modules $C=(C_\bt, d,S,T)$ and $\wb{C}=(\wb{C}_\bt, d, S, T)$, any $S$-map $f:C_\bt \rightarrow \wb{C}_\bt$ gives rise to a $T$-compatible chain map $f^\sharp: C_\bt^\sharp \rightarrow \wb{C}^\sharp_\bt$.  More generally, any $k$-linear map $f:C_\bt \rightarrow \wb{C}_{\bt+d}$, $d \in \Z$, which is compatible with the operators $S$ and $T$ gives to a $T$-compatible $k$-linear map $f^\sharp$ between periodic chains. We get an even or odd map according to the parity of $d$. 
 
 In what follows we shall call \emph{$T$-homotopy} any chain homotopy between para-complexes that is realized by means of $T$-compatible $k$-linear maps. 
Any $S$-homotopy between $C$ and $\wb{C}$ gives rise a $T$-homotopy between $C_\bt^\sharp$ and $\wb{C}_{\bt}^\sharp$. Therefore, any $S$-homotopy equivalence between paras-$S$-modules gives rise to a $T$-homotopy equivalence between the corresponding periodic para-complexes. More precisely, we have the following result. 
 
\begin{proposition}\label{prop:Para-S-Mod.periodic-homotopy}
Let $f:C_\bt \rightarrow \wb{C}_ \bt$ and $g:\wb{C}\rightarrow C_\bt$ be $S$-maps such that
\begin{equation*}
 fg=1+d\psi +\psi d, \qquad gf=1+d\varphi +\varphi d, 
\end{equation*}
where $\varphi: C_\bt \rightarrow C_{\bt+1}$ and  $\psi: \wb{C}_\bt \rightarrow \wb{C}_{\bt+1}$ are $T$-compatible $k$-linear maps. 
Then,  we have 
\begin{equation*}
 f^\sharp g^\sharp=1+d\psi^\sharp +\psi^\sharp d, \qquad g^\sharp f^\sharp=1+d\varphi^\sharp +\varphi^\sharp d. 
\end{equation*}
In particular, this gives a $T$-homopotopy equivalence between $C^\sharp$ and $\wb{C}^\sharp$. We get a deformation retract when $fg=1$. 
\end{proposition}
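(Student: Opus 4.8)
The plan is to deduce everything from the already-established machinery: an $S$-map gives a $T$-compatible chain map between periodic para-complexes, and an $S$-homotopy gives a $T$-homotopy. So the proof is essentially an application of the functor $C\mapsto C^\sharp$ to the given relations, checking that it is additive and respects composition and the passage to $\varphi\mapsto\varphi^\sharp$. Concretely, I would first recall (as established in the text just before the statement) that $f\mapsto f^\sharp$ is well-defined on $S$-compatible $k$-linear maps of any degree $d$, producing a $T$-compatible $k$-linear map on periodic chains of parity $d$; in particular $\varphi^\sharp:C^\sharp_\bt\to C^\sharp_{\bt+1}$ and $\psi^\sharp:\wb C^\sharp_\bt\to\wb C^\sharp_{\bt+1}$ make sense because $\varphi$ and $\psi$ are $T$-compatible and (being chain homotopies in the para-$S$-module sense) are also $S$-compatible.

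Next I would observe that the assignment $f\mapsto f^\sharp$ is $k$-linear — it is literally induced on inverse limits by applying $f$ levelwise — and that it is multiplicative: $(fg)^\sharp = f^\sharp g^\sharp$, $(d\varphi)^\sharp = d\,\varphi^\sharp$, $(\varphi d)^\sharp = \varphi^\sharp d$, and $1^\sharp = 1$. Each of these is immediate from the levelwise description of elements of $C^\sharp_m$ as compatible sequences $(x_{m+2q})_{q\ge 0}$ and the fact that $d,\varphi,\psi$ all commute with $S$ (hence descend) and compose componentwise. Applying this to the hypotheses
\begin{equation*}
 fg = 1 + d\psi + \psi d, \qquad gf = 1 + d\varphi + \varphi d
\end{equation*}
and using additivity and multiplicativity of $(-)^\sharp$ then yields
\begin{equation*}
 f^\sharp g^\sharp = 1 + d\psi^\sharp + \psi^\sharp d, \qquad g^\sharp f^\sharp = 1 + d\varphi^\sharp + \varphi^\sharp d,
\end{equation*}
which is exactly the claimed $T$-homotopy equivalence between $C^\sharp$ and $\wb C^\sharp$, the $T$-compatibility of $\varphi^\sharp,\psi^\sharp$ having already been noted. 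The final sentence of the proposition is the degenerate case: if $fg=1$ then applying $(-)^\sharp$ gives $f^\sharp g^\sharp = 1$, so the equivalence is a deformation retract.

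I do not expect a serious obstacle here; the statement is a formal corollary of the functoriality of the periodic-para-complex construction, which the excerpt has already set up in detail. The only point requiring a word of care is the well-definedness of $\varphi^\sharp$ and $\psi^\sharp$: one must make sure that a chain homotopy $\varphi$ in the para-$S$-module sense is genuinely $S$-compatible (not merely $T$-compatible) so that it induces a map on the $S$-limit $C^\sharp$ — but this is part of the definition of $S$-homotopy given earlier (an $S$-homotopy is a homotopy realized by an $(S,T)$-compatible map), so the hypothesis "$\varphi,\psi$ are $T$-compatible" in the statement is to be read together with their being the homotopies witnessing that $f,g$ are $S$-homotopy inverses, hence $S$-compatible as well. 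With that understood, the proof is a two-line computation after recording the additivity and multiplicativity of $(-)^\sharp$.
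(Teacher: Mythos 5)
Your proposal is correct and takes essentially the same route as the paper, which states this proposition without a separate proof precisely because it is an immediate consequence of the functoriality of the $(-)^\sharp$-construction set up in the preceding paragraphs (levelwise application of $(S,T)$-compatible maps to the inverse system, which is additive, multiplicative, and sends $1$ to $1$). Your remark on the one delicate point — that $\varphi$ and $\psi$ must be $S$-compatible, not merely $T$-compatible, for $\varphi^\sharp$ and $\psi^\sharp$ to descend to the limit $C^\sharp$, as is implicit in the notion of $S$-homotopy — is the correct reading of the hypothesis.
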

 
\begin{remark}
 By Proposition~\ref{prop:Para-S-Mod.quasi-isomorphism-periodic}, when $C$ and $\wb{C}$ are both $S$-modules, any quasi-isomorphism provided by an $S$-map $f:C_\bt \rightarrow \wb{C}_\bt$ gives rise to quasi-isomorphism $f^\sharp: C_\bt^\sharp \rightarrow \wb{C}_\bt^\sharp$. Proposition~\ref{prop:Para-S-Mod.periodic-homotopy} provides us with some substitute for this result in the framework of para-$S$-modules. 
\end{remark}

\subsection{Property (DR) and quasi-$S$-modules}  Given any para-$S$-module $C=(C_\bt, d,S,T)$ we set 
\begin{equation}
 C^T_\bt=\ker(1-T), \qquad R^T_\bt=\ran(1-T), \qquad C_{T,\bt}=C_\bt\slash R^T_\bt. 
 \label{eq:Para-S-Mod.CTRT}
\end{equation}
The compatibility of $T$ with the operators $d$ and $S$ implies that these operators induce operators on $C_\bt^T$ and $R^T_\bt$, so that we obtain para-$S$-modules $C^T:=(C^T_\bt, d,S,T)$ and $R^T:=(R^T_\bt, d,S,T)$. In fact, as $T=1$ on $C^T_\bt$, we see that $C^T$ is actually an $S$-module. 
 
The compatibility of $T$ with the operators $d$ and $S$ also implies that these operators descend to operators on $C_{T,\bt}$. As $T=1$ on $C_{T,\bt}$ we obtain another $S$-module $C_T:=(C_{T,\bt}, d,S)$. 
Note that the canonical projection $\pi_T: C_\bt \rightarrow C_{T,\bt}$ is an $S$-map.

We further observe that, given para-$S$-modules $C=(C_\bt, d,S,T)$ and $\wbC=(\wbC_\bt, d,S,T)$, any $S$-map $f:C_\bt \rightarrow \wbC_\bt$ induces an $S$-map $f:C_\bt^T \rightarrow \wbC_\bt^T$ and it descends to a unique $S$-map $f:C_{T,\bt} \rightarrow \wbC_{T,\bt}$. More generally, any $S$-homotopy equivalence between $C$ and $\wbC$ gives rise to $S$-homotopy equivalences between the $S$-modules $C^T$ and $\wbC^T$ and between $C_T$ and $\wbC_T$. 

We are especially interested in para-$S$-modules with the following property. 

\begin{definition}
 A para-$S$-module $C$ has property (DR) when the canonical projection $\pi_T: C_\bt \rightarrow C_{T,\bt}$ gives rise to an $S$-deformation retract (i.e., it has a right-inverse which an $S$-homotopy left-inverse).
 \end{definition}

In what follows, given any $x\in C_\bt$, it will be convenient to set $\wb{x}=\pi_T(x)$.  We shall also say that a para-$S$-module  is \emph{$S$-contractible} when the identity map is $S$-homotopic to $0$. 

We have the following characterization of property (DR).

\begin{proposition}\label{eq:Para-S-Module.Property-DR}
 Let $C=(C_\bt, d, S,T)$ be a para-$S$-module. Then $C$ has property (DR) if and only the following two conditions are satisfied:
 \begin{enumerate}
    \item[(i)] The  para-$S$-module $R^T$ is $S$-contractible. 
    
    \item[(ii)] There is a sub-$S$-module $C'\subset C^T$ such that $C_\bt=C'_\bt\oplus R^T_\bt$. 
\end{enumerate}
\end{proposition}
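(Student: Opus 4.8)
The plan is to prove the two directions separately, the forward direction (property (DR) $\Rightarrow$ (i)+(ii)) being the more delicate one. First suppose $C$ has property (DR), so there is an $S$-map $\sigma: C_{T,\bt}\rightarrow C_\bt$ with $\pi_T\sigma=1$ and an $S$-homotopy $\varphi: C_\bt\rightarrow C_{\bt+1}$ (compatible with $S$ and $T$) such that $\sigma\pi_T - 1 = d\varphi+\varphi d$. I would set $C'_\bt:=\sigma(C_{T,\bt})=\ran(\sigma\pi_T)$. Since $\sigma$ is a section of $\pi_T$, the projector $p:=\sigma\pi_T$ satisfies $p^2=p$, giving the $k$-module splitting $C_\bt = \ran p \oplus \ker p = C'_\bt\oplus R^T_\bt$ (note $\ker p=\ker\pi_T=R^T_\bt$ because $\pi_T$ is injective on the image of any section). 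Because $\sigma$ and $\pi_T$ are $S$-maps, $p$ commutes with $d$, $S$, and $T$, so $C'_\bt$ is a sub-para-$S$-module; moreover on $C'_\bt$ one has $\pi_T|_{C'}$ an isomorphism onto $C_{T,\bt}$ intertwining $T$ with $T=1$, hence $T=1$ on $C'_\bt$, i.e.\ $C'\subset C^T$. This gives (ii). For (i): restricting the homotopy identity $p-1=d\varphi+\varphi d$ to $R^T_\bt=\ker p$ yields $-1=d\varphi+\varphi d$ on $R^T_\bt$ (one must check $\varphi$ preserves $R^T_\bt$, which follows from $T$-compatibility of $\varphi$ since $R^T_\bt=\ran(1-T)$ and $\varphi$ commutes with $1-T$). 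Thus $1\simeq 0$ on $R^T$ via a $T$- and $S$-compatible homotopy, so $R^T$ is $S$-contractible.

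For the converse, assume (i) and (ii). Let $q: C_\bt\rightarrow C'_\bt$ be the projection along $R^T_\bt$ afforded by the splitting in (ii), and let $\iota: C'_\bt\hookrightarrow C_\bt$ be the inclusion; write $j=1-\iota q: C_\bt\rightarrow R^T_\bt$ for the complementary projection. The key point is that this splitting need not a priori be compatible with $d$ and $S$, so I would use the $S$-contraction of $R^T$ to correct it. Let $h:R^T_\bt\rightarrow R^T_{\bt+1}$ be an $(S,T)$-compatible homotopy with $dh+hd=1$ on $R^T_\bt$ (here I'm using $1\simeq 0$, absorbing the sign into $h$). Since $d^2=S(1-T)$ vanishes on $R^T_\bt$... wait — it does not vanish on $R^T_\bt$ in general, since $(1-T)$ need not be zero there. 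So the correction must be done more carefully: observe that on $R^T_\bt$ the operator $d$ satisfies $d^2=S(1-T)$, and one should treat $(R^T_\bt,d,S,T)$ as a genuine para-$S$-module that happens to be $S$-contractible. The standard move is: define $\sigma:C_{T,\bt}\rightarrow C_\bt$ by first lifting via $\iota\circ(\pi_T|_{C'})^{-1}$ and then, if that lift fails to be a chain map, correcting by a term built from $h$ and $j\, d\, \iota(\pi_T|_{C'})^{-1}$, which lands in $R^T$ where $h$ is available. Concretely, since $C'\subset C^T$, one checks $d$ maps $C'_\bt$ into $C'_\bt\oplus R^T_\bt$ and the $R^T$-component is a cycle-like error killed by $h$; the corrected section $\sigma$ is then a genuine $S$-map with $\pi_T\sigma=1$, and $1-\sigma\pi_T$ is $S$-homotopic to zero because it is supported on $R^T$ where the contracting homotopy lives.

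The main obstacle, and the step I would spend the most care on, is exactly this correction argument in the converse direction: verifying that the naive splitting of (ii) can be upgraded, using the contracting homotopy of (i), to an $(S,T)$-compatible pair $(\sigma,\varphi)$ realizing the deformation retract. This is a small instance of a perturbation/gluing argument — one writes $d$ on $C_\bt\cong C'_\bt\oplus R^T_\bt$ as a triangular matrix with diagonal the $d$'s on $C'$ and $R^T$ and an off-diagonal "connecting" term $\partial: C'_\bt\rightarrow R^T_{\bt-1}$, and uses $h$ to build the homotopy $\varphi$ from the Neumann-type series which here terminates (or simplifies) because the relevant composites land in the $S$-contractible piece. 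I expect one can invoke the relevant perturbation lemma from Section~\ref{sec:Perturbation}, or argue directly; either way the bookkeeping to keep everything simultaneously compatible with $d$, $S$, and $T$ is where the real work is. The forward direction, by contrast, is essentially formal manipulation of the idempotent $p=\sigma\pi_T$ and its interaction with the structure maps.
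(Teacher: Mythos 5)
Your forward direction is fine and is essentially the paper's argument: set $p=\sigma\pi_T$, check $p^2=p$, $\ker p=\ker\pi_T=R^T_\bt$, $Tp=p$ (so $\ran p\subset C^T_\bt$), and restrict the $(S,T)$-compatible homotopy to $R^T_\bt$ to get $S$-contractibility.

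The converse, however, has a genuine gap, and it stems from a misreading of hypothesis (ii). You write that ``this splitting need not a priori be compatible with $d$ and $S$'' and then propose a correction via a perturbation argument (triangular decomposition of $d$ with an off-diagonal term $\partial:C'_\bt\rightarrow R^T_{\bt-1}$, a Neumann-type series, etc.), which you then do not carry out, deferring ``the real work'' to unspecified bookkeeping. But (ii) asserts that $C'$ is a \emph{sub-$S$-module}, i.e.\ $C'_\bt$ is stable under $d$, $S$ and $T$; and $R^T_\bt=\ran(1-T)$ is stable under these operators as well, since they commute with $T$. Hence the splitting $C_\bt=C'_\bt\oplus R^T_\bt$ is automatically a splitting of para-$S$-modules, your off-diagonal term is identically zero, and the projection $\pi'$ onto $C'_\bt$ along $R^T_\bt$ commutes with $d$, $S$, $T$ with no correction needed. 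From there the argument is short and explicit (this is what the paper does): $\pi_T$ restricts to an isomorphism $C'_\bt\simeq C_{T,\bt}$, so $\iota_T'(\wb{x}):=\pi'(x)$ is a well-defined $S$-map with $\pi_T\iota_T'=1$ and $\iota_T'\pi_T=\pi'$; since $1-\pi'$ has range in $R^T_\bt$ and $\pi'$ is a chain map, the $(S,T)$-compatible map $h:=-\beta\circ(1-\pi')$, with $\beta$ the contracting homotopy of $R^T$, satisfies $dh+hd=-(d\beta+\beta d)(1-\pi')=\pi'-1$, i.e.\ $\iota_T'\pi_T=1+dh+hd$. As submitted, your converse neither exhibits the corrected section and homotopy nor verifies their $(S,T)$-compatibility, so that half of the equivalence is not proved; replacing the perturbation sketch by the direct verification above closes the gap (and along the way your mid-proof hesitation about $d^2=S(1-T)$ on $R^T_\bt$ becomes irrelevant, since no vanishing of $d^2$ is needed).
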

\begin{proof}
 Suppose that $C$ has property (DR). This means there are a chain map $\iota:C_{T,\bt}\rightarrow C_\bt$ and an $(S,T)$-compatible $k$-linear map $h:C_\bt \rightarrow C_{\bt+1}$ such that $\pi_T \iota=1$ and $\iota \pi_T=1+dh+hd$.  The $(S,T)$-compatibility of $h$ implies that it induces an $(S,T)$-compatible $k$-linear map $h:R^T_\bt \rightarrow R^T_{\bt+1}$. Moreover, the equality $1+dh+dh=\iota \pi_T$ implies that $1+dh+dh=0$ on $R^T_\bt=\ker \pi_T$. Thus, the identity map on $R^T_\bt$ is $S$-homotopic to 0, i.e., $R^T$ is $S$-contractible.
 
 Set $\pi =\iota \pi_T$ and $C_\bt'=\ran \pi$. The $T$-compatibility of $\iota$ means that $T\iota=\iota$, and so $T\pi=\pi$. This means that $C'_\bt=\ran \pi$ is contained in $C^T_\bt$. In addition, the operators $d$ and $S$ acts on $C'_\bt$. Thus, we get a sub-$S$-module $C'=(C'_\bt, d, S, T)$ of $C^T$. 
 As $\pi_T \iota=1$, we have $\pi^2=\iota(\pi \iota)\pi_T=\pi$, and so $\pi$ is a projection. This implies that $C_\bt=C_\bt'\oplus \ker \pi$. As $\pi =\iota \pi_T$ it is immediate that $\ker \pi$ contains $\ker \pi_T$. Moreover, as $\pi_T\pi=(\pi_T\iota \pi) \pi_T=\pi_T$, we also see that $\ker \pi\subset \ker \pi_T$. Thus, $\ker \pi$ agrees with $\ker \pi_T=R^T_\bt$, and hence $C_\bt=C_\bt'\oplus R^T_\bt$. 
 
Conversely, suppose that the condition (i) and (ii) are satisfied. The latter condition means there is a a sub-$S$-module $C'\subset C^T$ such that $C_\bt=C'_\bt\oplus R^T_\bt$. Let $\pi':C_\bt \rightarrow C_\bt$ be the projection onto $C'_\bt$ associated with this projection. This is an $S$-map, since $C_\bt=C'_\bt\oplus R^T_\bt$ is a splitting of para-$S$-modules. This splitting also implies that the canonical projection $\pi_T:C_\bt \rightarrow C_{T,\bt}$ induces a $k$-module isomorphism $\pi_T': C_\bt '\rightarrow C_{T,\bt}$. This is an $S$-map, and so we get an $S$-module isomorphism from $C'$ onto $C_T$. Under this isomorphism the inclusion of $C'_\bt$ to $C_\bt$ corresponds to the $S$-map $\iota_T':C_{T,\bt} \rightarrow C_\bt$ given by
\begin{equation*}
 \iota_T'( \wb{x}) =  \iota_T'\big( \overline{\pi'(x)}\big) = \pi'(x), \qquad x\in C_\bt. 
\end{equation*}
This means that $\iota_T'\pi_T=\pi'$. This gives $\pi_T \iota_T'\pi_T=\pi_T\pi'=\pi_T$, and hence $\pi_T \iota_T'=1$ on $C_{T,\bt}$. 

By assumption $R^T$ is $S$-contractible, and so there is an $(S,T)$-compatible $k$-linear map $\beta:R^T_\bt\rightarrow R^T_{\bt+1}$ such that $1=d\beta+\beta d$. Let $h:C_{\bt}\rightarrow C_{\bt+1}$ be the $k$-linear map defined by
\begin{equation}
 h(x) =-\beta\left(x-\pi'(x)\right), \qquad x \in C_\bt. 
 \label{eq:Para-S-Module.homotopy-h}
\end{equation}
This map is compatible with the operators $S$ and $T$. Moreover, as $\pi'$ is an $S$-map, given any $x\in C_\bt$, we see that $dh(x)+h(dx)$ is equal to
\begin{equation*}
 -d\beta\left(x-\pi'(x)\right)- \beta \left(dx-\pi'(dx)\right)=-(d\beta+\beta d)\left(x-\pi'(x)\right)= -\left(x-\pi'(x)\right). 
\end{equation*}
It then follows that $1+dh+hd=\pi'=\iota_T'\pi_T$. Together with the equality $\pi_T\iota_T'=1$ above this shows that we have an $S$-deformation retract of $C$ to $C_T$. The proof is complete. 
\end{proof}

\begin{remark}\label{rmk:Para-S-Module.homotopy-special-C'} 
As we can see from~(\ref{eq:Para-S-Module.homotopy-h}), the range of $h$ is contained in $R^T_\bt=\ker \pi_T$ and its kernel contains on $C_\bt'=\ran \iota_T'$. It then follows that $\pi_Th=0$ and $h\iota_T'=0$. 
\end{remark}

Proposition~\ref{eq:Para-S-Module.Property-DR} is the main impetus for introducing the following class of para-$S$-modules. 

\begin{definition}\label{def:Para-S.quasi-S-mod}
 A \emph{quasi-$S$-module} is a para-$S$-module $C=(C^T_\bt, d,S)$ such that $R^T$ is $S$-contractible and we have the $k$-module splitting, 
\begin{equation}
 C_\bt = C^T_\bt \oplus R^T_\bt. 
 \label{eq:Para-S-Mod.quasi-splitting}
\end{equation}
\end{definition}

\begin{remark}
 As we shall see, in many instances the $S$-contractibility of $R^T$ is ensured by the sole splitting~(\ref{eq:Para-S-Mod.quasi-splitting}). 
\end{remark}

When it occurs the splitting~(\ref{eq:Para-S-Mod.quasi-splitting}) is a splitting of para-$S$-modules. In such a case we shall let $\pi^T : C_\bt \rightarrow C_\bt$ be the projection on $C_T$ associated with this splitting, and, given any $x\in C_\bt$, we set $x^T=\pi^T(x)$. 

\begin{lemma}\label{lem:Para-S-Mod.Q-module}
 Suppose there is a polynomial $Q(X)\in k[X]$ such that $Q(T)(T-1)=0$ and $Q(1)=1$. Then $C$ is a quasi-$S$-module and we have $\pi^T=Q(T)$. 
\end{lemma}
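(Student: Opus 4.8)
The plan is to prove everything by showing that the operator $P:=Q(T)$ is an idempotent with range $C^T_\bt$ and kernel $R^T_\bt$; the conclusions of the lemma then all follow. First I would record two algebraic identities. The relation $Q(T)(T-1)=0$ says $PT=P$, and since $P$ is a polynomial in $T$ it commutes with $T$, so also $(1-T)P=P-TP=0$; iterating $PT=P$ gives $PT^n=P$ for all $n\ge 0$, whence $P^2=PQ(T)=Q(1)P=P$, using $Q(1)=1$. For the second identity, as $Q(1)=1$ and $X-1$ is monic (and central in $k[X]$), the remainder theorem lets us write $Q(X)-1=(X-1)R(X)=R(X)(X-1)$ for some $R(X)\in k[X]$, and evaluating at $T$ gives $1-P=(1-T)R(T)=R(T)(1-T)$.

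Next I would read off the splitting. From $(1-T)P=0$ we get $\ran P\subseteq\ker(1-T)=C^T_\bt$; and since $1-P=R(T)(1-T)$ vanishes on $\ker(1-T)$, we have $P=1$ on $C^T_\bt$, so $C^T_\bt\subseteq\ran P$. Hence $\ran P=C^T_\bt$. From $P(1-T)=P-PT=0$ we get $R^T_\bt=\ran(1-T)\subseteq\ker P$; and since $\ker P=\ran(1-P)=\ran\!\big((1-T)R(T)\big)\subseteq\ran(1-T)$, we have $\ker P=R^T_\bt$. Therefore $C_\bt=\ran P\oplus\ker P=C^T_\bt\oplus R^T_\bt$, which is the splitting~(\ref{eq:Para-S-Mod.quasi-splitting}); and since $P=Q(T)$ commutes with $d$ and $S$ and is $T$-compatible, it is a splitting of para-$S$-modules and $P$ is the associated projection onto $C^T_\bt$ along $R^T_\bt$, i.e.\ $\pi^T=Q(T)$.

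It remains to check that $R^T$ is $S$-contractible, which I expect to be the real work. The splitting supplies the key input: restricted to $R^T_\bt=\ker P$ one has $(1-T)R(T)=R(T)(1-T)=1-P=1$, so $1-T$ acts invertibly on $R^T_\bt$ with inverse the restriction of $R(T)$; together with the para-$S$-module relation $d^2=S(1-T)$ this also gives $S=d^2R(T)$ on $R^T$. The plan is to leverage this invertibility to manufacture an $(S,T)$-compatible homotopy $\varphi\colon R^T_\bt\to R^T_{\bt+1}$ with $d\varphi+\varphi d=1$; once it is in place, its $(S,T)$-compatibility being built in, Definition~\ref{def:Para-S.quasi-S-mod} gives that $C$ is a quasi-$S$-module, completing the proof. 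The main obstacle is precisely the construction of $\varphi$: the structure operators $d,S,T$ all have non-positive degree, so the degree-raising homotopy cannot be obtained by any formal manipulation of them, and one must genuinely use the invertibility of $1-T$ on $R^T$ (and the way $d$ and $S$ are linked there through $S=d^2R(T)$) to produce it.
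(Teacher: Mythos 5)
Your first two paragraphs are, in essence, exactly the paper's proof: the same idempotency argument ($Q(T)T^{j}=Q(T)$ for all $j$, hence $Q(T)^{2}=Q(1)Q(T)=Q(T)$), the same factorization $Q(X)-1=(X-1)\tilde{Q}(X)$ coming from $Q(1)=1$, and the same identifications $\ran Q(T)=C^{T}_\bt$ and $\ker Q(T)=R^{T}_\bt$, which give the splitting~(\ref{eq:Para-S-Mod.quasi-splitting}) as a splitting of para-$S$-modules and the identity $\pi^{T}=Q(T)$.

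Concerning your third paragraph: the $S$-contractibility of $R^{T}$ that you flag as ``the real work'' is not part of the paper's proof of this lemma either; the proof in the paper ends once the splitting is established. Moreover, your suspicion that no $(S,T)$-compatible contracting homotopy can be manufactured from $d$, $S$, $T$ alone is correct: taking $d=0$, $S=0$, and $T$ of order $2$ with $\tfrac{1}{2}\in k$ and $R^{T}_\bt\neq 0$ yields a para-$S$-module satisfying the hypotheses of the lemma whose $R^{T}$ is not $S$-contractible. So the lemma should be read as asserting precisely what you (and the paper) prove, namely the splitting and $\pi^{T}=Q(T)$; the contractibility of $R^{T}$ is the point of the remark following Definition~\ref{def:Para-S.quasi-S-mod} and is supplied separately, from extra structure, in the situations where the lemma is applied. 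For instance, for quasi-mixed complexes the splitting makes $1-T$ invertible on $R^{T}_\bt$ and Lemma~\ref{lem:Parachain.contractibility} produces the contracting homotopy $\beta=(1-T)^{-1}B$, while for $C^{\nnatural}$ of a quasi-precyclic module the homotopy $\beta^{\nnatural}$ of Section~\ref{sec:para-precyclic} uses $N(1-T)^{-1}$; in both cases the degree-raising operator comes from the parachain or precyclic structure, not from the abstract para-$S$-module data. In short, you did not miss a step that the paper carries out: you reproduced its argument and correctly identified a point that is deliberately left to the concrete instances.
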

\begin{proof}
The assumption that $Q(T)(1-T)=(1-T)Q(T)=0$ ensures us that $R^T_\bt\subset \ker Q(T)$ and $\ran Q(T)\subset C^T_\bt$. The fact that $Q(T)T=Q(T)$ implies that $Q(T)T^j =Q(T)$ for all $j\geq 0$, and so $Q(T)P(T)=Q(T)P(T)=P(1)Q(T)$ for every polynomial $P(X)\in k[X]$.  In particular, taking $P(X)=Q(X)$ gives $Q(T)^2=Q(1)Q(T)=Q(T)$. Thus, $Q(T)$ is an idempotent, and so we have the direct sum of $k$-modules,    
\begin{equation*}
 C_\bt=\ran Q(T) \oplus \ker Q(T). 
\end{equation*}
 
To complete the proof we just need to show that $\ker Q(T)\subset R^T_\bt$ and $C^T_\bt\subset \ran Q(T)$. As $Q(1)=1$ there is $\tilde{Q}(X)\in k[X]$ such that $Q(X)-1=(X-1)\tilde{Q}(X)$. If $x\in \ker Q(T)$, then we have $x=(1-Q(T))x=(1-T)\tilde{Q}(T)x\in \ran (1-T)$. Thus, $\ker Q(T)\subset R^T_\bt$. In addition, if $x\in C^T_\bt$, then $Q(T)x-x=\tilde{Q}(T)(T-1)x=0$, and so $x=Q(T)x\in \ran Q(T)$. This shows that $C^T_\bt \subset \ran Q(T)$. The proof is complete.   
 \end{proof}

\begin{example}\label{ex:Para-S-Mod.Tr=1}
 Suppose that $T^r=1$ with $r\geq 1$ such that $r^{-1}\in k$ (this happens for instance when $k\supset \Q$). Then Lemma~\ref{lem:Para-S-Mod.Q-module} holds with $Q(X)=r^{-1}(1+X+\cdots + X^{r-1})$. In this case, we have
 \begin{equation*}
 x^T= \frac{1}{r}(x+Tx+\cdots +T^{r-1}x) \qquad \forall x\in C_\bt. 
\end{equation*}
\end{example}

Let $C=(C_\bt,d,S,T)$ be a quasi-$S$-module. As the conditions~(i)--(ii) of Proposition~\ref{eq:Para-S-Module.Property-DR} are satisfied we know that $C$ has property (DR). Moreover, the proof of Proposition~\ref{eq:Para-S-Module.Property-DR} above provides us with an explicit $S$-deformation retract of $C$ to $C^T$ and $C_T$ by using taking the projection $\pi'$ to be $\pi^T$. More precisely, let $\iota_T:C_{T,\bt}\rightarrow C_{\bt}$ be the $k$-linear map given by
\begin{equation}
  \iota_T(\wb{x})=x^T, \qquad x\in C_\bt. 
  \label{eq:Para-S-Mod.iotaT}
\end{equation}
We also let $\beta:R^T_\bt\rightarrow R^T_{\bt+1}$ be an $(S,T)$-compatible contracting homotopy, i.e., $1=d\beta+\beta d$ on $R^T_\bt$. We  define the $k$-linear map 
$h:C_\bt \rightarrow C_{\bt+1}$ defined by
\begin{equation}
 h(x)=-\beta\left(x-x^T\right), \qquad x\in C_\bt. 
  \label{eq:Para-S-Mod.hT}
\end{equation}
Proposition~\ref{prop:Para-S-Mod.periodic-homotopy} and the second part of the proof of Proposition~\ref{eq:Para-S-Module.Property-DR} then give the following result. 

\begin{proposition}\label{prop:Para-S-Mod.quasi-S-Mod}
 Suppose that $(C_\bt,d,S,T)$ is a quasi-$S$-module. 
 \begin{enumerate}
 \item The canonical projection $\pi^T:C_\bt \rightarrow C_{T,\bt}$ induces an $S$-module isomorphism from $C^T$ onto $C_T$.
 
 \item The map $\iota_T:C_{T,\bt}\rightarrow C_{\bt}$ is an $S$-map such that
 \begin{equation}
 \pi_T \iota_T=1, \qquad \pi^T=\iota_T\pi_T=1+dh +hd.
 \label{eq:Para-S-Module.Quasi-S-DR} 
\end{equation}
 This provides us with $S$-deformation retracts of $C$ to $C_T$ and $C^T$ and $T$-deformation retracts of $C^\sharp$ to $C_T^\sharp$ and $C^{T,\sharp}$.  
 \end{enumerate}
\end{proposition}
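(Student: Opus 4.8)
The plan is to observe that a quasi-$S$-module $C=(C_\bt,d,S,T)$ is exactly a para-$S$-module realizing conditions (i) and (ii) of Proposition~\ref{eq:Para-S-Module.Property-DR} in the most economical way. Indeed, condition~(i), the $S$-contractibility of $R^T$, is part of Definition~\ref{def:Para-S.quasi-S-mod}, and for condition~(ii) one takes $C'=C^T$, the splitting being~(\ref{eq:Para-S-Mod.quasi-splitting}), which is a splitting of para-$S$-modules. Hence $C$ has property (DR) and the content of the proposition is only to make the resulting $S$-deformation retract explicit, with the projection $\pi'$ of Proposition~\ref{eq:Para-S-Module.Property-DR} taken to be $\pi^T=\iota_T\pi_T$. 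So the proof will just re-run the converse half of the proof of Proposition~\ref{eq:Para-S-Module.Property-DR} with that choice and then quote Proposition~\ref{prop:Para-S-Mod.periodic-homotopy}.

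For part~(1), I would first note that $R^T_\bt=\ker\pi_T$, so the splitting~(\ref{eq:Para-S-Mod.quasi-splitting}) forces the restriction $\pi_T|_{C^T_\bt}\colon C^T_\bt\to C_{T,\bt}$ to be a $k$-module isomorphism; since $\pi_T$ is a chain map and $T$ acts as the identity on both sides, it is an isomorphism of $S$-modules $C^T\xrightarrow{\ \sim\ }C_T$. Its inverse is the map $\iota_T$ of~(\ref{eq:Para-S-Mod.iotaT}): the rule $\wb{x}\mapsto x^T$ is well defined because two representatives of $\wb{x}$ differ by an element of $R^T_\bt=\ker\pi^T$, and $\pi_T(x^T)=\pi_T(x)=\wb{x}$ since $x-x^T\in R^T_\bt=\ker\pi_T$; thus $\pi_T\iota_T=1$.

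For part~(2), the two things to verify are that $\iota_T$ is an $S$-map and that $h$ from~(\ref{eq:Para-S-Mod.hT}) is an $(S,T)$-compatible chain homotopy from $1$ to $\iota_T\pi_T$. The first holds because~(\ref{eq:Para-S-Mod.quasi-splitting}) is a splitting of para-$S$-modules, so the inclusion $C^T_\bt\hookrightarrow C_\bt$ is an $S$-map and $\iota_T$ is its composition with the $S$-module isomorphism of part~(1); in particular $\iota_T\pi_T=\pi^T$. For the second, the $S$-contractibility of $R^T$ gives an $(S,T)$-compatible $\beta\colon R^T_\bt\to R^T_{\bt+1}$ with $1=d\beta+\beta d$ on $R^T_\bt$; since $1-\pi^T$ takes values in $R^T_\bt$, the map $h(x)=-\beta(x-x^T)$ is well defined and $(S,T)$-compatible, and, using that $\pi^T$ commutes with $d$, one gets $dh(x)+h(dx)=-(d\beta+\beta d)(x-x^T)=-(x-\pi^T x)$, i.e.\ $1+dh+hd=\pi^T=\iota_T\pi_T$. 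Combined with $\pi_T\iota_T=1$ these are precisely the relations~(\ref{eq:Para-S-Module.Quasi-S-DR}) expressing an $S$-deformation retract of $C$ onto $C_T$; transporting along the isomorphism of part~(1) gives the $S$-deformation retract onto $C^T$. Finally, applying Proposition~\ref{prop:Para-S-Mod.periodic-homotopy} to the $S$-maps $\pi_T$ and $\iota_T$, with $\psi=0$ and $\varphi=h$, produces the claimed $T$-deformation retracts of $C^\sharp$ onto $C_T^\sharp$ and $C^{T,\sharp}$.

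I do not expect a genuine obstacle here: everything rides on Proposition~\ref{eq:Para-S-Module.Property-DR} and Proposition~\ref{prop:Para-S-Mod.periodic-homotopy}. The one point deserving care is the bookkeeping of $(S,T)$-compatibility: one must use ``$S$-contractible'' in its strong sense, so that $\beta$ and hence $h$ are compatible with both $S$ and $T$, and one must use that~(\ref{eq:Para-S-Mod.quasi-splitting}) is a decomposition of para-$S$-modules, so that $\pi^T$ and $\iota_T$ are $S$-maps and not merely chain maps; the remaining verifications (well-definedness of $\iota_T$, the homotopy identity, the passage to periodic complexes) are routine.
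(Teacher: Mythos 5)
Your proposal is correct and follows exactly the paper's route: the paper proves this proposition by invoking the second half of the proof of Proposition~\ref{eq:Para-S-Module.Property-DR} with the projection $\pi'$ taken to be $\pi^T$ (yielding $\iota_T$, the homotopy $h$, and the relations~(\ref{eq:Para-S-Module.Quasi-S-DR})), together with Proposition~\ref{prop:Para-S-Mod.periodic-homotopy} for the periodic statements. Your added verifications (well-definedness of $\iota_T$, the $(S,T)$-compatibility bookkeeping, $\psi=0$, $\varphi=h$ in the periodic step) are just the details the paper leaves implicit.
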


\begin{remark}
 The $S$-deformation retract of $C^\natural$ to $C^{T,\natural}$ is simply given by the inclusion $\iota^T: C^T_\bt \hookrightarrow C_\bt$ and the projection $x\rightarrow x^T$ seen as an $S$-map $\tilde{\pi}^T: C_\bt \rightarrow C^T_\bt$. There are the $S$-maps $\iota_T$ and $\pi_T$ under the isomorphism $C^T_\bt\simeq C_{T,\bt}$. In fact, it is immediate that $\tilde{\pi}^T\iota^T=1$ on $C^T_\bt$ and by  using~(\ref{eq:Para-S-Module.Quasi-S-DR}) we get $\iota^T\tilde{\pi}^T=\pi^T=1+dh +hd$. 
\end{remark}

\begin{remark}
As in Remark~\ref{rmk:Para-S-Module.homotopy-special-C'} the chain homotopy $h$ is such that $\pi_Th=0$ and $\iota_Th=0$.  
\end{remark}

\section{The Para-$S$-Module of a Parachain Complex}\label{sec:parachain}
In this section, we review the construction of the para-$S$-module of a parachain complex due to Getzler-Jones~\cite{GJ:Crelle93}. Although, Getzler-Jones didn't  not consider para-$S$-modules in their article, their construction actually yields a para-$S$-module. We shall also introduce quasi-mixed complexes as the parachain complexes that give rise to quasi-$S$-modules. 

\subsection{The Para-$S$-module $C^\natural$} 
Let $C=(C_\bt, b, B)$ be a parachain complex of $k$-modules in the sense of Getzler-Jones~\cite{GJ:Crelle93} (see also~\cite{DK:CMH85}). 
Recall this means that $(C_\bt, b)$ is a chain complex of $k$-modules and we have an extra degree~1 differential $B:C_\bt \rightarrow C_{\bt+1}$, $B^2=0$,  such that $T:=1-(bB+Bb)$ is a $k$-linear isomorphism. When $T=1$ we recover the definition of a mixed complex~\cite{Bu:CM86, Ka:JAlg87}. Even when $T\neq 1$, the map $T$ commutes with the differentials $(b,B)$, and so this is an automorphism of the parachain complex  $C$. 

A number of parachain complexes arise from paracylic modules, and more generally $H$-unital para-precyclic modules  (\emph{cf}.~Section~\ref{sec:paracyclic}). An example of a different kind of parachain complex is the parachain complex of equivariant noncommutative differential forms of a (non-unital) locally convex algebra acted on by a locally compact group  (see Voigt~\cite{Vo:JIMJ07}). 

As it follows from~\cite{GJ:Crelle93} the datum of a parachain complex $C=(C_\bt, b, B)$ allows us to construct a para-$S$-module as follows. 

Throughout this paper we let $u$ be an indeterminate variable and denote by $k[u]$ the polynomial algebra over $k$ that it generates. We shall regard the tensor  product $ k[u] \otimes C_\bt$ as a $k[u]$-module. Given any $m,p\geq 0$ and $x\in C_m$,  we denote by $xu^p$ the tensor product $u^p\otimes x$. We then have
\begin{equation*}
 C_m\otimes k[u]= \bigoplus_{p\geq 0} C_m u^p= C_m \oplus C_m u \oplus  C_m u^2 \oplus \cdots, 
\end{equation*}
where we have set $C_m u^p=\{xu^p; x\in C_m\}$. In addition, we let $u^{-1}: C_\bt \otimes k[u] \rightarrow C_\bt \otimes k[u]$ be the $k$-linear map defined by 
\begin{equation*}
 u^{-1} (xu^0)=0, \qquad u^{-1}(xu^{p})=xu^{p-1}, \quad p \geq 1. 
\end{equation*}
We also denote by $u^{-j}$, $j\geq 2$, the operator $(u^{-1})^{j}$. Note that $\ker u^{-j}= C_\bt u^0 \oplus \cdots \oplus C_\bt u^{j-1}$. 

The para-$S$-module associated with $(C_\bt, b,B)$ is $C^\natural:=(C^\natural_\bt, b+Bu^{-1}, u^{-1}, T)$, where the $k$-module of $m$-chains $C_m^\natural$, $m\geq 0$, is defined by
\begin{equation}
 C^\natural_m = \bigoplus_{2p+q=m} C_q u^p = C_m u^0 \oplus C_{m-2}u \oplus \cdots. 
 \label{eq:PSP.Cnat}
\end{equation}
Here $u^{-1}$ maps $C^\natural_m$ to $ C_{m-2} \oplus C_{m-4}u \oplus \cdots = C_{m-2}^\natural$. Moreover, we have 
\begin{equation*}
 (b+Bu^{-1})^2= (bB+Bb)u^{-1}=(1-T)u^{-1}.
\end{equation*}
In particular, when $C$ is a mixed complex, i.e., $T=1$, we recover the usual cyclic complex of a mixed complex~\cite{Bu:CM86, Co:MFO81, Co:CRAS83, Co:IHES85, Ka:JAlg87}.  

As the $S$-operator $u^{-1}$ simply shifts $(x_m, x_{m-2}, \ldots)$ to $(x_{m-2}, x_{m-4}, \ldots)$, the inverse limit~(\ref{eq:Para-S-Mod.Csharp2}) is naturally identified with the direct product, 
\begin{equation}
 C_i^\sharp = \prod_{q\geq 0} C_{2q+i}, \qquad i=0,1. 
 \label{eq:Parachain.periodic}
\end{equation}
Namely, the identification between $\varprojlim_{u^{-1}}C_{i+2q}^\sharp$ and $\prod_{q\geq 0} C_{2q+i}$ is given by $(x_{i+2q})\rightarrow (\pi_0^\natural (x_{i+2q}))$, where $\pi_0^\natural:C_\bt^\natural \rightarrow C_\bt$ is the projection onto the zeroth degree summand $C_\bt u^0=C_\bt$. Under this identification, the differential of $C^\sharp$ is just $b+B$. Note that $(b+B)^2=bB+Bb=1-T$. 

\begin{definition}
The para-complex $C^\sharp:=(C_\bt^\sharp, b+B, T)$  is called the \emph{periodic para-complex} of the parachain complex $C$. 
\end{definition}

Given parachain complexes $(C_\bt, b,B)$ and $(\wbC_\bt, b,B)$, an $S$-map $f: C_\bt^\natural \rightarrow \wbC_\bt^\natural$ is a chain map of the form, 
\begin{equation*}
 f = f^{(0)} +f^{(1)}u^{-1}+f^{(2)}u^{-2}+\cdots, 
\end{equation*}
where the sum is finite and the $k$-linear maps $f^{(j)}:C_{\bt}\rightarrow \wbC_{\bt+2j}$, $j\geq 0$, are compatible with the $T$-operator. The chain map condition means that 
\begin{equation}
 bf^{(0)} = f^{(0)}b, \qquad b f^{(j)} + Bf^{(j-1)} = f^{(j)}b + f^{(j-1)}B, \quad j\geq 1. 
 \label{eq:Parachain.S-map}
\end{equation}
For instance, any map of parachain complexes $f:C_\bt \rightarrow \wbC_\bt$ is an $S$-map with $f^{(0)}=f$ and $f^{(j)}=0$ for $j\geq 1$. When $T=1$ we recover the usual notion of $S$-map between cyclic complexes of mixed complexes in the sense of~\cite{Ka:JAlg87}. 

Note also that~(\ref{eq:Parachain.S-map}) implies that $f^{(0)}$ is an ordinary chain map from $(C_\bt, b)$ to $(\wb{C}_\bt, b)$. In addition, the corresponding chain map $f^\sharp: C_\bt^\sharp \rightarrow  \wb{C}_\bt^\sharp$ is given by
\begin{equation*}
 f^\sharp \left( (x_{2q+i})_{q\geq 0}\right) = \big( f^{(0)}(x_{2q+i})+f^{(1)}(x_{2q-2+i})+\cdots + f^{(q)}(x_i)\big)_{q\geq 0}. 
\end{equation*}

\subsection{Quasi-mixed complexes} 
We shall now look at parachain complexes $C$ for which the para-$S$-modules $C^\natural$ are quasi-$S$-modules. Let $C=(C_\bt, b,B)$ be a parachain complex of $k$-modules. As in~(\ref{eq:Para-S-Mod.CTRT}) we form the $k$-modules $C^T_\bt=\ker(1-T)$, $R^T_\bt=\ran(1-T)$ and $C_{T,\bt}=C_\bt \slash R^T_\bt$. As the differentials $b$ and $B$ commute with the operator $T=1-(bB+bB)$, they induce differentials on $C^T_\bt$ and $R^T_\bt$ and descend to differentials on $C_{T,\bt}$. We thus get parachain complexes $C^T:=(C^T_\bt, b,B)$, $R^T:=(R^T_\bt, b,B)$, and $C_T:=(C_{T,\bt},b,B)$. Furthermore, as $bB+bB=1-T=0$ on $C^T_\bt$ and $C_{T,\bt}$, we see that $C^T$ and $C_T$ are actually mixed complexes. 

\begin{definition}
A parachain complex $C=(C_\bt,b,B)$ such that $C_\bt = C^T_\bt \oplus R^T_\bt$ is called a  \emph{quasi-mixed complex}.  
\end{definition}

By using Lemma~\ref{lem:Para-S-Mod.Q-module} we obtain the following criterion.  

\begin{lemma}\label{lem:Parachain.Q(T)-quasi}
Suppose there is $Q(X)\in k[X]$ such that $Q(T)(T-1)=0$ and $Q(1)=1$. Then $C$ is a quasi-mixed complex.
\end{lemma}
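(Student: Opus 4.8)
The plan is to reduce Lemma~\ref{lem:Parachain.Q(T)-quasi} directly to Lemma~\ref{lem:Para-S-Mod.Q-module}, which has already been established for general para-$S$-modules. Recall that a parachain complex $C=(C_\bt,b,B)$ gives rise to the para-$S$-module $C^\natural=(C^\natural_\bt, b+Bu^{-1}, u^{-1}, T)$, and that by definition $C$ is a quasi-mixed complex precisely when $C_\bt=C^T_\bt\oplus R^T_\bt$. Now observe that the $T$-operator acting on $C^\natural_\bt=\bigoplus_{2p+q=m}C_qu^p$ is the diagonal operator induced by $T$ on each summand $C_qu^p\cong C_q$. Consequently $\ker(1-T)$ and $\ran(1-T)$ computed on $C^\natural_\bt$ are simply the direct sums over $p$ of the corresponding subspaces of $C_q$, so the splitting $C^\natural_\bt=\ker(1-T)\oplus\ran(1-T)$ of $k$-modules holds if and only if $C_\bt=C^T_\bt\oplus R^T_\bt$ holds. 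Thus $C^\natural$ is a quasi-$S$-module if and only if $C$ is a quasi-mixed complex.

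With this translation in hand, the argument is short. Given $Q(X)\in k[X]$ with $Q(T)(T-1)=0$ and $Q(1)=1$, the operator $T$ on $C^\natural$ satisfies the same polynomial identities (since the identities are purely algebraic statements about the single operator $T$, and $T$ acts on $C^\natural$ as a direct sum of copies of its action on $C$). Therefore Lemma~\ref{lem:Para-S-Mod.Q-module} applies to the para-$S$-module $C^\natural$ and shows that $C^\natural$ is a quasi-$S$-module, with the projection onto $C^T$ given by $Q(T)$. By the equivalence noted above, $C$ is then a quasi-mixed complex. One should also remark that $Q(T)$, being a polynomial in $T$, automatically commutes with the differentials $b$ and $B$ (since $T$ does), so the splitting it produces is genuinely a splitting of parachain complexes; but this refinement is not needed for the bare statement, which only asserts the $k$-module splitting.

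I do not expect a genuine obstacle here: the lemma is essentially a corollary of Lemma~\ref{lem:Para-S-Mod.Q-module}, and the only thing to verify carefully is that the hypothesis on $T$ (a single $k$-linear isomorphism solving $Q(T)(T-1)=0$, $Q(1)=1$) transfers verbatim from $C$ to $C^\natural$, which is immediate because $T$ acts componentwise on the direct-sum decomposition~(\ref{eq:PSP.Cnat}). The mildest point of care is bookkeeping the identification between $\ker(1-T)$, $\ran(1-T)$ on $C_\bt$ versus on $C^\natural_\bt$, i.e.\ that taking kernels, ranges, and direct sums all commute with the grading shift by powers of $u$; this is routine. In writing the proof I would state this componentwise compatibility explicitly as a one-line observation and then simply invoke Lemma~\ref{lem:Para-S-Mod.Q-module}.
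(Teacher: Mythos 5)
Your proof is correct and is essentially the paper's: the paper establishes this lemma simply by invoking Lemma~\ref{lem:Para-S-Mod.Q-module}, whose proof is purely an argument about the single operator $T$ on a graded $k$-module and hence applies verbatim to $(C_\bt,T)$ with $T=1-(bB+Bb)$, so your detour through the para-$S$-module $C^\natural$ and the componentwise bookkeeping, while valid, is not needed. One small caveat: your aside that ``$C^\natural$ is a quasi-$S$-module if and only if $C$ is a quasi-mixed complex'' does not follow from the splitting comparison alone, since Definition~\ref{def:Para-S.quasi-S-mod} also requires $S$-contractibility of $\ran(1-T)$ (that converse direction is the content of Proposition~\ref{prop:parachain.quasi-mixed-quasi-S}); your argument, however, only uses the direction quasi-$S$-module $\Rightarrow$ splitting, which is correct, so this does not affect the proof.
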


\begin{example}
In the same way as in Example~\ref{ex:Para-S-Mod.Tr=1}, if $T^r=1$ for some $r\geq 1$ such that $r^{-1}\in k$, then the assumption of Lemma~\ref{lem:Parachain.Q(T)-quasi} is satisfied with $Q(X)=r^{-1}(1+X+\cdots +X^{r-1})$. Therefore, in this case $C$ is a quasi-mixed complex. Examples of such quasi-mixed complexes are provided by the parachain complexes associated with $r$-cyclic modules (see Example~\ref{ex:Paracylic.r-cyclic} below). 
\end{example}

Assume that $C$ is a quasi-mixed complex. We note that the splitting $C_\bt = C^T_\bt \oplus R^T_\bt$ is actually a splitting of parachain complexes, and so we get a splitting of para-$S$-modules, 
\begin{equation}
 C^\natural_\bt = C^{T,\natural}_\bt \oplus R^{T,\natural}_\bt. 
 \label{eq:paracyclic.splitting-Cn-quasi}
\end{equation}
We also observe that  the splitting $C_\bt = C^T_\bt \oplus R^T_\bt$ implies that $1-T$ induces on $R^T_\bt$ a $k$-linear map which is both one-to-one and onto, and so we get a $k$-linear isomorphism of $R^T_\bt$. As it is compatible with the differentiels $(b,B)$ we get a parachain complex automorphism. 
We then let $\beta:R^T_\bt \rightarrow R^T_{\bt+1}$ be the $k$-module defined by
\begin{equation*}
 \beta =(1-T)^{-1}B=B(1-T)^{-1}. 
\end{equation*}

\begin{lemma}\label{lem:Parachain.contractibility}
 On $R^T_\bt$ we have
\begin{equation*}
 b\beta +\beta b=1, \qquad B\beta=\beta B=0. 
\end{equation*}
\end{lemma}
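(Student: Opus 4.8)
The plan is to work entirely on the sub-$k$-module $R^T_\bt = \ran(1-T)$, on which $1-T$ is an isomorphism by the quasi-mixed hypothesis, so that $\beta = (1-T)^{-1}B = B(1-T)^{-1}$ is well defined. First I would record the basic identity coming from the parachain structure, namely $bB + Bb = 1 - T$, together with $B^2 = 0$ and the fact that $T$ commutes with both $b$ and $B$; these already imply that $(1-T)^{-1}$ commutes with $b$ and $B$ on $R^T_\bt$, so the two expressions for $\beta$ agree and all four operators $b, B, T, (1-T)^{-1}$ commute with one another on $R^T_\bt$.

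For the first identity, compute directly on $R^T_\bt$:
\begin{equation*}
 b\beta + \beta b = b(1-T)^{-1}B + (1-T)^{-1}Bb = (1-T)^{-1}(bB + Bb) = (1-T)^{-1}(1-T) = 1,
\end{equation*}
where the middle step uses that $(1-T)^{-1}$ commutes with $b$. For the second identity, since $\beta = B(1-T)^{-1}$ we get $B\beta = B^2(1-T)^{-1} = 0$ because $B^2 = 0$; and similarly $\beta B = (1-T)^{-1}B^2 = 0$. This is all that is needed.

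The only genuine point requiring care — the ``main obstacle,'' though it is mild — is justifying that $\beta$ is well defined as a map $R^T_\bt \to R^T_{\bt+1}$ and that $(1-T)^{-1}$ genuinely commutes with $b$ and $B$ there. Well-definedness is exactly the content of the paragraph preceding the lemma: the splitting $C_\bt = C^T_\bt \oplus R^T_\bt$ forces $1-T$ to restrict to a bijection of $R^T_\bt$, and since $B$ maps $R^T_\bt$ into $R^T_{\bt+1}$ (because $B$ commutes with $T$, hence preserves $\ran(1-T)$), the composite $(1-T)^{-1}B$ lands in $R^T_{\bt+1}$; the same argument with $b$ in place of $B$ shows $\beta$ is well defined via the other formula. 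Commutation of $(1-T)^{-1}$ with $b$ on $R^T_\bt$ follows formally from commutation of $1-T$ with $b$: if $A$ is invertible and $AB' = B'A$ then $A^{-1}B' = B'A^{-1}$. Once these preliminaries are in place, the two displayed computations above finish the proof.
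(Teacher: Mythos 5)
Your proof is correct and follows essentially the same route as the paper: both verify $B\beta=\beta B=0$ from $B^2=0$ and derive $b\beta+\beta b=1$ by factoring $(1-T)^{-1}$ out of $bB+Bb=1-T$, using that $T$ (hence $(1-T)^{-1}$ on $R^T_\bt$) commutes with $b$ and $B$. The well-definedness remarks you add are handled in the paper by the paragraph preceding the lemma, so nothing is missing.
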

\begin{proof}
 It is immediate that $ B\beta=B^2(1-T)^{-1}=0$, and likewise $\beta B=0$. Moreover, the equality $bB+bB=1-T$ implies that on $R^T$ we have 
 \begin{equation*}
 b\beta+\beta b =(bB+Bb)(1-T)^{-1}=(1-T)(1-T)^{-1}=1.
\end{equation*}
 The proof is complete. 
\end{proof}

\begin{remark}
 The fact that $\beta$ is a contracting homotopy was observed by Hadfield-Kr\"ahmer~\cite[Proposition~2.1]{HK:KT05} in the case of the parachain complex of a twisted cyclic module. 
\end{remark}

The above lemma implies that on $R^{T,\natural}_\bt$ we have 
\begin{equation*}
 (b+Bu^{-1})\beta + \beta (b+Bu^{-1})= b\beta+\beta b=1. 
\end{equation*}
The contracting homotopy $\beta:R^{T,\natural}_\bt\rightarrow R^{T,\natural}_{\bt+1}$ is compatible with the operators $S$ and $T$. Thus, this is an $S$-homotopy, and so  the para-$S$-module $R^{T}$ is $S$-contractible. Combining this with the splitting~(\ref{eq:paracyclic.splitting-Cn-quasi}) we then arrive at the following result. 

\begin{proposition}\label{prop:parachain.quasi-mixed-quasi-S}
 If $C$ is a quasi-mixed complex, then $C^\natural$ is a quasi-$S$-module.  
\end{proposition}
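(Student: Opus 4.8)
The plan is to verify directly, for the para-$S$-module $C^\natural=(C^\natural_\bt, b+Bu^{-1}, u^{-1}, T)$, the two defining properties of a quasi-$S$-module from Definition~\ref{def:Para-S.quasi-S-mod}: first, that there is a $k$-module splitting $C^\natural_\bt=C^{T,\natural}_\bt\oplus R^{T,\natural}_\bt$, and second, that the sub-para-$S$-module $R^{T,\natural}$ is $S$-contractible. The splitting comes essentially for free. By hypothesis $C$ is a quasi-mixed complex, so $C_\bt=C^T_\bt\oplus R^T_\bt$, and since $b$, $B$, and $T$ commute this is a splitting of parachain complexes. Applying the $\natural$-construction~(\ref{eq:PSP.Cnat}) degreewise to each summand then produces the splitting~(\ref{eq:paracyclic.splitting-Cn-quasi}), which is exactly the desired decomposition of $C^\natural_\bt$ as a direct sum of para-$S$-modules.

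For the $S$-contractibility of $R^{T,\natural}$, I would use the operator $\beta=(1-T)^{-1}B=B(1-T)^{-1}$ on $R^T_\bt$; this is well defined precisely because the splitting forces $1-T$ to restrict to an automorphism of $R^T_\bt$. Lemma~\ref{lem:Parachain.contractibility} gives $b\beta+\beta b=1$ and $B\beta=\beta B=0$ on $R^T_\bt$. Extending $\beta$ to $R^{T,\natural}_\bt$ so that it acts only on the $C_\bt$-factor — in particular it commutes with $u^{-1}$ — a one-line computation shows
\[
(b+Bu^{-1})\beta+\beta(b+Bu^{-1})=(b\beta+\beta b)+(B\beta+\beta B)u^{-1}=1
\]
on $R^{T,\natural}_\bt$. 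Thus $\beta$ is a contracting homotopy for the para-differential $b+Bu^{-1}$ on $R^{T,\natural}$.

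It then remains to observe that $\beta$ is an \emph{$S$-homotopy}, i.e.\ compatible with $S=u^{-1}$ and with $T$. Compatibility with $u^{-1}$ is immediate since $\beta$ only affects the coefficients, and compatibility with $T$ holds because $\beta$ is built from $B$ and $T$, each of which commutes with $T$. Hence $R^{T,\natural}$ is $S$-contractible, and combined with the splitting this yields, by Definition~\ref{def:Para-S.quasi-S-mod}, that $C^\natural$ is a quasi-$S$-module.

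The argument involves no real difficulty: the only point requiring attention is the bookkeeping needed to see that $b$, $B$, $u^{-1}$, $T$, and $\beta$ genuinely restrict to $R^{T,\natural}_\bt$ and that the identities of Lemma~\ref{lem:Parachain.contractibility} remain valid there — which is exactly where the quasi-mixed hypothesis $C_\bt=C^T_\bt\oplus R^T_\bt$ is used. Once that is set up, everything reduces to the short direct computation above.
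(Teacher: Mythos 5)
Your proof is correct and follows essentially the same route as the paper: it uses the splitting of parachain complexes $C_\bt=C^T_\bt\oplus R^T_\bt$ to get the splitting~(\ref{eq:paracyclic.splitting-Cn-quasi}) of para-$S$-modules, and then the contracting homotopy $\beta=(1-T)^{-1}B=B(1-T)^{-1}$ of Lemma~\ref{lem:Parachain.contractibility}, extended to $R^{T,\natural}_\bt$ as an $(S,T)$-compatible homotopy, to conclude that $R^{T,\natural}$ is $S$-contractible. This is exactly the paper's argument, so nothing is missing.
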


Proposition~\ref{prop:parachain.quasi-mixed-quasi-S} allows us to apply Proposition~\ref{prop:Para-S-Mod.quasi-S-Mod}. More precisely, let $\pi_T:C_\bt\rightarrow C_{T,\bt}$ be the canonical projection of $C_\bt$ onto $C_{T,\bt}$ and $\pi^T:C_\bt \rightarrow C_\bt$ the projection on $C^T_\bt$ associated with the splitting $C_\bt = C^T_\bt \oplus R^T_\bt$. They are both maps of parachain complexes. As above, given any $x\in C_\bt$, we set $\wb{x}=\pi_T(x)$ and $x^T=\pi^T(x)$. The projection $\pi_T$ induces an isomorphism of mixed complexes from $C^T$ onto $C_T$. Under this isomorphism the inclusion of $C^T$ into $C_\bt$ corresponds to the parachain complex embedding $\iota_T:C_{T,\bt}\rightarrow C_\bt$ given by
\begin{equation*}
 \iota_T\left(\wb{x}\right)=x^T, \qquad x\in C_\bt. 
\end{equation*}
In addition, we let $h:C_\bt\rightarrow C_{\bt+1}$ be the $k$-linear map defined by
\begin{equation}
 h(x)=-B(1-T)^{-1}\left(x-x^T\right), \qquad x \in C_\bt. 
 \label{eq:paracyclic.h}
\end{equation}
This is a $T$-compatible map. It gives the chain homotopy~(\ref{eq:Para-S-Mod.hT}) on $C_\bt^\natural$ associated with the contracting homotopy $\beta=-B(1-T)^{-1}$ of $R^{T,\natural}$. Therefore, by using Proposition~\ref{prop:Para-S-Mod.quasi-S-Mod} we obtain the following result. 

\begin{proposition}\label{prop:Parachain.quasi-mixed-DR}
 Suppose that $C$ is a quasi-mixed complex. 
 \begin{enumerate}
 \item The canonical projection $\pi_T:C_\bt \rightarrow C_{T,\bt}$ induces a mixed complex isomorphism from $C^T$ onto $C_T$. 
 
 \item The map $\iota_T:C_\bt \rightarrow C_\bt$ given by~(\ref{eq:Para-S-Mod.iotaT}) is a parachain complex embedding. 
 
 \item We have
          \begin{equation*}
          \pi_T\iota_T=1, \qquad \iota_T\pi_T=\pi^T=1 + \left(b+Bu^{-1}\right)h + h  \left(b+Bu^{-1}\right). 
           \end{equation*}
           This provides us with $S$-deformation retracts of $C^\natural$ to $C_T^\natural$ and $C^{T,\natural}$ and $T$-deformation retracts of $C^\sharp$ to $C_T^\sharp$ and $C^{T,\sharp}$. 
\end{enumerate}
\end{proposition}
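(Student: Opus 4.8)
The plan is to deduce Proposition~\ref{prop:Parachain.quasi-mixed-DR} from the abstract quasi-$S$-module result Proposition~\ref{prop:Para-S-Mod.quasi-S-Mod} together with Proposition~\ref{prop:parachain.quasi-mixed-quasi-S}, the key point being that the splitting $C_\bt=C^T_\bt\oplus R^T_\bt$ defining a quasi-mixed complex is a splitting of \emph{parachain} complexes, so that one can work one level below the $\natural$-construction. First I would invoke Proposition~\ref{prop:parachain.quasi-mixed-quasi-S} to know that $C^\natural=(C^\natural_\bt,b+Bu^{-1},u^{-1},T)$ is a quasi-$S$-module; in particular the splitting $C^\natural_\bt=C^{T,\natural}_\bt\oplus R^{T,\natural}_\bt$ of~(\ref{eq:paracyclic.splitting-Cn-quasi}) holds and $R^{T,\natural}$ is $S$-contractible, an explicit $(S,T)$-compatible contracting homotopy being $\beta=(1-T)^{-1}B$ by Lemma~\ref{lem:Parachain.contractibility}.

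For part~(1): since $\ker\pi_T=R^T_\bt$ is complementary to $C^T_\bt$, the quotient map $\pi_T$ restricts to a $k$-module isomorphism of $C^T_\bt$ onto $C_{T,\bt}$; as $\pi_T$ intertwines $b$ and $B$ (because $b,B$ commute with $T$) and $C^T$, $C_T$ are mixed complexes, this is an isomorphism of mixed complexes. For part~(2): the map $\iota_T$ of~(\ref{eq:Para-S-Mod.iotaT}) is well defined (if $\wb x=\wb y$ then $x-y\in R^T_\bt=\ker\pi^T$, so $x^T=y^T$) and equals the composition of the inclusion $C^T_\bt\hookrightarrow C_\bt$ with the inverse of $\pi_T\colon C^T_\bt\to C_{T,\bt}$; being a composite of parachain-complex morphisms it is one, and $\pi_T\iota_T=1$ forces injectivity, so $\iota_T$ is a parachain-complex embedding. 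For part~(3): the relations $\pi_T\iota_T=1$ and $\iota_T\pi_T=\pi^T$ are immediate from the definitions, while $\pi^T=1+(b+Bu^{-1})h+h(b+Bu^{-1})$ is exactly~(\ref{eq:Para-S-Module.Quasi-S-DR}) of Proposition~\ref{prop:Para-S-Mod.quasi-S-Mod}(2) applied to the quasi-$S$-module $C^\natural$ with $d=b+Bu^{-1}$ and the contracting homotopy $\beta$ above; here $h$ is the homotopy~(\ref{eq:Para-S-Mod.hT}), and since $B$ commutes with $T$ one identifies $h(x)=-\beta(x-x^T)=-B(1-T)^{-1}(x-x^T)$ with~(\ref{eq:paracyclic.h}). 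The claimed $S$-deformation retracts of $C^\natural$ onto $C^\natural_T$ and $C^{T,\natural}$, and --- via Proposition~\ref{prop:Para-S-Mod.periodic-homotopy} --- the $T$-deformation retracts of $C^\sharp$ onto $C^\sharp_T$ and $C^{T,\sharp}$, are then read off from Proposition~\ref{prop:Para-S-Mod.quasi-S-Mod}(2).

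I do not expect a genuine obstacle here: the substantive work has already been carried out in Propositions~\ref{prop:Para-S-Mod.quasi-S-Mod} and~\ref{prop:parachain.quasi-mixed-quasi-S}. The only point requiring care is the bookkeeping of transporting the abstract $S$-module assertions into the concrete parachain setting --- confirming the $b$- and $B$-equivariance of $\iota_T$ (rather than mere $(b+Bu^{-1})$-equivariance) and matching the abstract contracting homotopy with the closed formula~(\ref{eq:paracyclic.h}) --- which is routine precisely because $C_\bt=C^T_\bt\oplus R^T_\bt$ is a splitting of parachain complexes.
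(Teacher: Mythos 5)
Your proposal is correct and follows essentially the same route as the paper: the paper likewise obtains this proposition by applying Proposition~\ref{prop:Para-S-Mod.quasi-S-Mod} to the quasi-$S$-module $C^\natural$ (provided by Proposition~\ref{prop:parachain.quasi-mixed-quasi-S}), using that the splitting $C_\bt=C^T_\bt\oplus R^T_\bt$ is a splitting of parachain complexes so that $\pi_T$, $\pi^T$, $\iota_T$ are parachain maps, and identifying the abstract homotopy~(\ref{eq:Para-S-Mod.hT}) for the contracting homotopy $\beta=B(1-T)^{-1}$ with the closed formula~(\ref{eq:paracyclic.h}). Your extra bookkeeping (well-definedness of $\iota_T$, its identification with the inclusion of $C^T$ under $\pi_T|_{C^T}$, and the passage to the periodic level via Proposition~\ref{prop:Para-S-Mod.periodic-homotopy}) matches what the paper leaves implicit.
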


\begin{remark}
 In the same way as in Remark~\ref{rmk:Para-S-Module.homotopy-special-C'}, we have $\pi_Th=0$ and $h\iota_T=0$. Moreover, for all $x\in C_\bt$, we have 
 \begin{equation*}
 h^2(x)=B(1-T)^{-1}B(1-T)^{-1}(x-x^T)=(1-T)^{-1}B^2(1-T)^{-1}(x-x^T)=0.
\end{equation*}
That is, $h^2=0$. Therefore, we see that the chain homotopy $h$ is special in the sense of~(\ref{eq:Perturbation.special}). 
\end{remark}

Finally, when $C$ is a quasi-mixed complex we also can modify the $B$-operator to get an actual mixed complex whose cyclic $S$-module  is $S$-homotopy equivalent to $C^\natural$. Namely, we have the following result. 

\begin{proposition}\label{prop:Parachain.tC-mixed-complex}
 Suppose that $C=(C_\bt, b,B)$ is a quasi-mixed complex and set $\tilde{B}=\pi^TB$. 
 \begin{enumerate}
    \item $\tilde{C}:=(C_\bt, b,\tilde{B})$ is a mixed complex. 
    
    \item The projection $\pi^T:C_\bt \rightarrow C_\bt$ gives rise to an $S$-homotopy equivalence between $C^\natural$ and $\tilde{C}^\natural$ and a $T$-homotopy equivalence between their periodic para-complexes. 
\end{enumerate}
\end{proposition}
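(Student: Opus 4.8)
The statement has two parts. For part (1), I need to verify that $\tilde{C}=(C_\bt, b, \tilde{B})$ with $\tilde{B}=\pi^T B$ is a genuine mixed complex, i.e.\ that $b^2=0$ (already known), $\tilde{B}^2=0$, and $b\tilde{B}+\tilde{B}b=0$. For part (2), I want to exhibit $\pi^T$ (suitably interpreted as an $S$-map $C^\natural \to \tilde{C}^\natural$) together with an inverse $S$-map, and a chain homotopy, using the machinery of Proposition~\ref{prop:Parachain.quasi-mixed-DR}. The key structural fact I will lean on throughout is that the splitting $C_\bt = C^T_\bt \oplus R^T_\bt$ is a splitting of parachain complexes, that $\pi^T$ is the associated projection onto $C^T_\bt$, and that $\pi^T$ is a map of parachain complexes (hence commutes with $b$ and $B$); moreover $B = \pi^T B + (1-\pi^T) B$ and on $R^T_\bt$ the $B$-differential is "absorbed" by the contracting homotopy $\beta = B(1-T)^{-1}$.

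\emph{Part (1).} Since $\pi^T$ commutes with $b$ and $B$ and $\pi^T{}^2=\pi^T$, I compute $\tilde{B}^2 = \pi^T B \pi^T B = \pi^T{}^2 B^2 = 0$ using $B^2=0$. For the mixed-complex relation, $b\tilde{B} + \tilde{B}b = b\pi^T B + \pi^T B b = \pi^T(bB + Bb) = \pi^T(1-T)$. Now $1-T$ kills $C^T_\bt$ and maps onto $R^T_\bt$, while $\pi^T$ is the projection onto $C^T_\bt$ along $R^T_\bt$; hence $\pi^T(1-T)=0$. So $b\tilde{B}+\tilde{B}b=0$, and $\tilde{C}$ is a mixed complex. (Equivalently: $(b+\tilde{B}u^{-1})^2 = (b\tilde{B}+\tilde{B}b)u^{-1} = 0$, so $\tilde{C}^\natural$ is genuinely a chain complex, i.e.\ an $S$-module, consistent with $T=1$.)

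\emph{Part (2).} Here is the cleanest route. On $C^T_\bt$ we have $T=1$ and $\pi^T=1$, so $\tilde{B}=B$ there; thus the mixed complex $\tilde{C}^T := (C^T_\bt, b, \tilde B)$ coincides with $C^T=(C^T_\bt, b, B)$, and correspondingly $\tilde{C}^{T,\natural} = C^{T,\natural}$. By Proposition~\ref{prop:Parachain.quasi-mixed-DR}, the inclusion $\iota^T: C^T_\bt \hookrightarrow C_\bt$ and the projection $x \mapsto x^T$ (viewed as an $S$-map $\tilde\pi^T: C_\bt^\natural \to C^{T,\natural}_\bt$) form an $S$-deformation retract of $C^\natural$ onto $C^{T,\natural}$, with homotopy $h$ of~(\ref{eq:paracyclic.h}). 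I claim the same data give an $S$-deformation retract of $\tilde{C}^\natural$ onto $\tilde{C}^{T,\natural}=C^{T,\natural}$. Indeed $\iota^T$ and $\tilde\pi^T$ remain $k$-linear; I must check they are chain maps for the \emph{new} differential $b+\tilde{B}u^{-1}$ and that $h$ is still a chain homotopy. For $\tilde\pi^T=\pi^T$: $\pi^T(b+\tilde{B}u^{-1}) = \pi^T b + \pi^T{}^2 B u^{-1}$ while $(b+\tilde{B}u^{-1})\pi^T = b\pi^T + \pi^T B \pi^T u^{-1} = \pi^T b + \pi^T{}^2 B u^{-1}$ using that $\pi^T$ commutes with $b$ and $B$; so $\pi^T$ is a chain map. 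For $\iota^T$: on $C^T_\bt$ the differentials $b+\tilde B u^{-1}$ and $b + B u^{-1}$ agree (as $\tilde B = B$ there), and $\iota^T$ is evidently compatible, so it is a chain map into $\tilde{C}^\natural$. Finally, $(b+\tilde{B}u^{-1})h + h(b+\tilde{B}u^{-1})$: since $h = -B(1-T)^{-1}(1-\pi^T)$ has range in $R^{T,\natural}_\bt$ where $B$ and $\tilde B$ need not agree, I instead argue directly. From Proposition~\ref{prop:Parachain.quasi-mixed-DR}(3), $(b+Bu^{-1})h + h(b+Bu^{-1}) = \pi^T - 1$. The discrepancy between this and the $\tilde B$-version is $(\tilde B - B)u^{-1}h + h(\tilde B - B)u^{-1} = -(1-\pi^T)Bu^{-1}h + h(\pi^T - 1)Bu^{-1}$; using $h = -B(1-T)^{-1}(1-\pi^T)$ together with $B^2 = 0$ and $h\iota_T = 0$ (equivalently $h\pi^T = 0$, from Remark after Proposition~\ref{prop:Parachain.quasi-mixed-DR}), both terms vanish: $h(1-\pi^T) = h$ and $hB = -B(1-T)^{-1}(1-\pi^T)B$, and $(1-\pi^T)B$ maps into $R^T_\bt$ where $B(1-T)^{-1}B = (1-T)^{-1}B^2 = 0$; similarly $(1-\pi^T)Bu^{-1}h$ involves $B \cdot B(1-T)^{-1} = 0$. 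Hence $h$ is still a chain homotopy with $(b+\tilde B u^{-1})h + h(b+\tilde B u^{-1}) = \pi^T - 1$, and $h$ is $(S,T)$-compatible as before. So we get an $S$-deformation retract of $\tilde{C}^\natural$ onto $C^{T,\natural}$. Composing the $S$-deformation retract $C^\natural \to C^{T,\natural}$ (from Proposition~\ref{prop:Parachain.quasi-mixed-DR}) with the inverse of the $S$-deformation retract $\tilde{C}^\natural \to C^{T,\natural}$ yields an $S$-homotopy equivalence $C^\natural \simeq \tilde{C}^\natural$ in which the forward map, on chains, is exactly $\pi^T$. Finally, Proposition~\ref{prop:Para-S-Mod.periodic-homotopy} promotes this $S$-homotopy equivalence to a $T$-homotopy equivalence between the periodic para-complexes $C^\sharp$ and $\tilde{C}^\sharp$.

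\emph{Main obstacle.} The only subtlety is bookkeeping in part (2): the homotopy $h$ has range in $R^T_\bt$, precisely the place where $B$ and $\tilde B=\pi^T B$ differ, so one must check carefully that replacing $B$ by $\tilde B$ does not spoil the identity $(b+Bu^{-1})h + h(b+Bu^{-1}) = \pi^T-1$. This is handled by the two "absorption" facts $B^2=0$ and $h\pi^T=0$ (equivalently $h\iota_T=0$), which force all the correction terms to vanish. Everything else is a direct computation exploiting that $\pi^T$ is a parachain-complex morphism and $\pi^T(1-T)=0$.
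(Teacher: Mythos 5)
Your argument is correct and follows essentially the same route as the paper: part (1) is the same computation, and in part (2) your correction-term verification that $h$ remains a chain homotopy for $b+\tilde{B}u^{-1}$ is exactly the paper's observation (via Lemma~\ref{lem:Parachain.contractibility}) that $Bh=hB=0$, hence $\tilde{B}h=h\tilde{B}=0$, so the identity $\pi^T=1+(b+Bu^{-1})h+h(b+Bu^{-1})$ of Proposition~\ref{prop:Parachain.quasi-mixed-DR} holds verbatim with $\tilde{B}$ in place of $B$. The only cosmetic difference is that you factor the equivalence through the retract onto $C^{T,\natural}$, whereas the paper directly takes $\pi^T$ in both directions as mutual $S$-homotopy inverses with the same homotopy $h$, and then invokes Proposition~\ref{prop:Para-S-Mod.periodic-homotopy} for the periodic statement, as you also do.
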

\begin{proof}
As the projection $\pi^T$ is a parachain complex map we have $\tilde{B}^2=\pi^T B^2=0$. As the range of $\pi^T$ is $C^T_\bt=\ker (1-T)$ we also have
$b\tilde{B}+\tilde{B}b=(bB+Bb)\pi^T=(1-T)\pi^T=0$. We thus get a mixed complex $\tilde{C}:=(C_\bt, b,\tilde{B})$.  Moreover, as $[b,\pi_T]=0$ and $\pi^T \tilde{B}=\tilde{B}\pi^T=\pi^T B =B\pi^T$, we see that  $\pi^T:C_\bt \rightarrow C_\bt$ is a parachain complex map from $C$ (resp., $\tilde{C}$) to $\tilde{C}$ (resp., $C$). 

By Proposition~\ref{prop:Parachain.quasi-mixed-DR} we have $(\pi^T)^2=\pi^T= 1 + (b+Bu^{-1})h + h  (b+Bu^{-1})$. In fact, it follows from Lemma~\ref{lem:Parachain.contractibility} and the definition of $h$ that $Bh=hB=0$. Thus,  
$\tilde{B}h=\pi^T Bh=0$ and $ h\tilde{B}=hB\pi^T=0$, and so we also have $(\pi^T)^2= 1 + (b+\tilde{B}u^{-1})h + h  (b+\tilde{B}u^{-1})$. This shows that the 
$S$-maps $\pi^T: C_\bt^\natural \rightarrow \tilde{C}_\bt^\natural$ and $S$-maps $\pi^T: C_\bt^\natural \rightarrow \tilde{C}_\bt^\natural$ are $S$-homotopy inverses of each other, and hence $C^\natural$ and $\tilde{C}^\natural$ are $S$-homotopy equivalent. Furthermore, by using Proposition~\ref{prop:Para-S-Mod.periodic-homotopy} we get a $T$-homotopy equivalence between the periodic para-complexes  $C^\sharp$ and $\tilde{C}^\sharp$. The proof is complete. 
\end{proof}

\section{Paracyclic and Para-Precyclic Modules}\label{sec:paracyclic-modules}
In this section, we review the main definitions and examples regarding paracyclic and para-precyclic modules. 

\subsection{Paracyclic modules} 
Recall that a \emph{simplicial $k$-module} $C=(C_\bt, d_\bt, s_\bt)$ is given by $k$-modules $C_m$, $m\geq 0$, together with $k$-linear maps $d_j:C_m \rightarrow C_{m+1}$, $j=0,\ldots, m$, called faces, and $k$-linear maps $s_j:C_{m}\rightarrow C_{m+1}$, $j=0,\ldots, m$, called degeneracies, which satisfy the following relations: 
\begin{equation}
 d_id_j=\left\{ 
\begin{array}{ll}
 d_{j-1}d_i, & i\leq j-1,\\
 d_jd_{i+1} & i\geq j,
\end{array}\right.
\label{eq:simplicial.dd}
\end{equation}

\begin{equation}
 s_is_j=\left\{ 
 \begin{array}{ll}
 s_{j+1}s_i, & i\leq j,\\
 s_js_{i-1}, & i\geq j+1,
\end{array}\right.
\label{eq:simplicial.ss}
\end{equation}

\begin{equation}
 d_is_j = \left\{ 
 \begin{array}{cl}
 s_{j-1}d_i, & i\leq j-1,\\
 1, & i=j,j+1,\\
 s_jd_{i-1}, & i\geq j+2.  
\end{array}\right.
\label{eq:simplicial.ds}
\end{equation}

A \emph{paracyclic $k$-module}~\cite{FT:LNM87a, GJ:Crelle93} (see also~\cite{DK:CMH85, FL:TAMS92}) is a simplicial $k$-module $(C_\bt, d_\bt, s_\bt)$ together with an invertible $k$-linear map $t:C_\bt \rightarrow C_{\bt}$ such that
\begin{equation}
\left\{ 
 \begin{array}{ll} 
 td_i = d_{i+1}t, &0\leq i \leq m-1,\\ 
 d_{m}=d_0t,
 \end{array}\right.
 \label{eq:paracyclic.td}
 \end{equation}
\begin{equation}
\left\{ 
 \begin{array}{ll} 
 ts_j=s_{j+1}t, &0\leq j \leq m-1,\\ 
 ts_m=s_{-1}.
 \end{array}\right.
  \label{eq:paracyclic.ts}
 \end{equation}
Here $s_{-1}:C_\bt \rightarrow C_{\bt+1}$ is the \emph{extra-degeneracy}, i.e.,
\begin{equation*}
 s_{-1}:=t^{-1}s_0t. 
\end{equation*}
We obtain a cyclic $k$-module in the sense of Connes~\cite{Co:CRAS83} when $t^{m+1}=1$ on $C_m$ for all $m\geq 0$. 

A paracyclic module structure is uniquely determined by the end-face $d=d_m$, the extra degeneracy $s=s_{-1}$ and the paracyclic operator $t=ds$. Indeed, it follows from~(\ref{eq:paracyclic.td}) and~(\ref{eq:paracyclic.ts}) that we have
\begin{equation*}
 d_i=t^{i-m}dt^{m-i}, \qquad s_i=t^{i+1} st^{-(i+1)}, \qquad i=0,\ldots, m. 
\end{equation*}
Combining this with the relations $d_m=d_0t$ and $s_{-1}=ts_m$ we further see that $d=d_0t=t^{-m}dt^{m+1}$ and $s=ts_m=t^{m+2}st^{-(m+1)}$. Thus, even in the non-cyclic case,  the operator $T:=t^{m+1}$ is invertible and commutes with the structural operators $(d,s,t)$. That is, this is a paracylic-module automorphism. 

We also record the following relations satisfied by the extra-degeneracy: 
\begin{equation}
 s_{-1}s_j=s_{j+1}s_{-1}, \qquad -1\leq j \leq m, 
  \label{eq:paracyclic.s1s}
\end{equation}
\begin{equation}
 d_is_{-1} = \left\{ 
  \begin{array}{ll} 
1 & i=0,\\
s_{-1}d_{i-1} & 0\leq j \leq m-1,\\ 
 t& i=m+1.
 \end{array}\right.
   \label{eq:paracyclic.ds1}
\end{equation}

\begin{example}[\cite{HK:KT05, KMT:JGP03}] \label{ex:paracyclic.twisted-cyclic}
Let $\cA$ be a unital $k$-algebra and $\sigma:\cA\rightarrow \cA$ a unital algebra automorphism. The corresponding twisted cyclic $k$-module $C^\sigma(\cA)$ is the paracyclic $k$-module $(C_\bt(\cA), d_\sigma, s, t_\sigma)$, where $C_m(\cA)=\cA^{\otimes (m+1)}$, $m\geq 0$, and the structural operators  $(d_\sigma, s, t_\sigma)$ are given by
\begin{align}
 d_\sigma(a^0\otimes \cdots \otimes a^m) & = \left ( \sigma(a^m)a^0\right) \otimes a^1 \otimes \cdots \otimes a^{m-1},
 \label{eq:paracyclic.twisted-dtau}\\
 s(a^0\otimes \cdots \otimes a^m) & = 1\otimes a^0\otimes \cdots \otimes a^m,\\
 t_\sigma (a^0\otimes \cdots \otimes a^m) & = a^m\otimes a^0\otimes \cdots \otimes a^{m-1}, \qquad a^j\in \cA. 
 \label{eq:paracyclic.twisted-ttau}
\end{align}
When $\sigma=1$ we recover the usual cyclic module of a unital algebra~\cite{Co:CRAS83, Co:IHES85}. In general, the operator $T=t_\sigma^{m+1}$ is just the $k$-module isomorphism $\sigma^{\otimes(m+1)}$ on $C_m(\cA)=\cA^{\otimes (m+1)}$. 
\end{example}

\begin{example}\label{ex:paracyclic.twisted-group}
 Let $\Gamma$ be a group and $\phi$ a normal element of $\Gamma$. When $k$ has a unit these data give rise to the paracyclic $k$-module $C^\phi(\Gamma)=(C_\bt(\Gamma), d,s_\phi, t_\phi)$, where $C_m(\Gamma)=k\Gamma^{m+1}$, $m\geq 0$, and the structural operators  $(d,s_\phi, t_\phi)$ are given by
\begin{align}
 d(\gamma_0, \ldots, \gamma_m) & = (\gamma_0, \ldots, \gamma_{m-1}),
 \label{eq:paracyclic.twisted-group-d}\\
 s_\phi(\gamma_0, \ldots, \gamma_m) & = \left(\phi^{-1}\gamma_m, \gamma_0, \ldots, \gamma_{m}\right),\\
  t_\phi(\gamma_0, \ldots, \gamma_m) & = \left(\phi^{-1}\gamma_m, \gamma_0, \ldots, \gamma_{m-1}\right),\qquad \gamma_j\in \Gamma.
  \label{eq:paracyclic.twisted-group-tphi}
\end{align}
When $\phi=1$ we recover the standard cyclic $k$-module of $\Gamma$ (see, e.g., \cite{Lo:CH}). In general, the operator $T=t_\phi^{m+1}$ on $C_m(\Gamma)$ arises from the diagonal left-action of $\phi^{-1}$ on $\Gamma^{m+1}$. Namely, we have 
\begin{equation*}
 T(\gamma_0, \ldots, \gamma_m)=\left(\phi^{-1}\gamma_0, \ldots, \phi^{-1}\gamma_m\right),\qquad \gamma_j\in \Gamma.
\end{equation*}
In any case, the $b$-differential of $C^\phi(\Gamma)$ is always the group differential~(\ref{eq:Para-S-Mod.group-differential}). 
\end{example}

\begin{remark}
 Paracyclic modules also naturally appear in Hopf cyclic theory (see~\cite{CM:CMP98, Cr:JPAA02, HKRS:CRAS04, Ka:KT05}).  
\end{remark}

As in~(\ref{eq:Para-S-Mod.CTRT}) we introduce the $k$-modules $C^T_\bt =\ker (1-T)$, $R^T_\bt=\ran (1-T)$, and $C_{T,\bt}=C_\bt \slash R^T_\bt$. As the structural operators $(d,s,t)$ commute with the operator $T$, they induce structural operators on the $k$-modules $C^T_\bt$ and $R^T_\bt$ and they descend to structural operators on $C_{T,\bt}$. We thus obtain paracyclic $k$-modules $C^T:=(C^T_\bt, d,s,t)$, $R^T:=(R^T_\bt, d,s,t)$ and $C_T=(C_{T,_\bt}, d,s,t)$. In addition, as $t^{m+1}=T=1$ on $C^T_m$ and $C_{T,m}$ we see that $C^T$ and $C_T$ are both cyclic $k$-modules. 

\begin{definition}[\cite{KK:Crelle14}] 
A quasi-cyclic $k$-module is a paracyclic $k$-module $C=(C_\bt, d,s,t)$ such that $C_\bt=C^T_\bt\oplus R^T_\bt$. 
\end{definition}

\begin{remark}
 If $C$ is a quasi-cyclic $k$-module, then the splitting $C_\bt=C^T_\bt\oplus R^T_\bt$ is a splitting of paracyclic $k$-modules.  
\end{remark}

In the same way as with Lemma~\ref{lem:Parachain.Q(T)-quasi} we have the following result. 

\begin{lemma}\label{lem:paracyclic.Q(T)-quasi-cyclic}
 Suppose that $C=(C_\bt, d,s,t)$ is a paracyclic $k$-module for which there is a polynomial $Q(X)\in k[X]$ such that $Q(T)(T-1)=0$ and $Q(1)=1$. Then $C$ is a quasi-cyclic $k$-module. 
\end{lemma}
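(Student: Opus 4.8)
The statement is a direct analogue of Lemma~\ref{lem:Parachain.Q(T)-quasi}, just transported from parachain complexes to paracyclic $k$-modules, so the plan is to mimic that proof. The goal is to produce the $k$-module splitting $C_\bt = C^T_\bt \oplus R^T_\bt$ from the polynomial hypothesis $Q(T)(T-1) = 0$, $Q(1) = 1$; this is precisely the conclusion of the purely module-theoretic part of Lemma~\ref{lem:Para-S-Mod.Q-module}, applied degreewise to the $k$-module $C_m$ equipped with the automorphism $T = t^{m+1}$. So the first step is to invoke Lemma~\ref{lem:Para-S-Mod.Q-module} (or rather the proof of it) to get that $Q(T)$ is an idempotent on each $C_m$ with $\ran Q(T) = C^T_m$ and $\ker Q(T) = R^T_m$, whence $C_m = C^T_m \oplus R^T_m$.

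\textbf{Key steps.} First I would note that since $(d,s,t)$ all commute with $t$, they commute with $T = t^{m+1}$ in the appropriate degree, and hence with $Q(T)$; this is what guarantees the summands $C^T_\bt$ and $R^T_\bt$ are actually paracyclic submodules, so the splitting is a splitting of paracyclic modules (as recorded in the Remark right before the lemma). Second, I would apply the idempotent argument verbatim: $Q(T)T = Q(T)$ gives $Q(T)P(T) = P(1)Q(T)$ for any polynomial $P$, so $Q(T)^2 = Q(1)Q(T) = Q(T)$; thus $C_\bt = \ran Q(T) \oplus \ker Q(T)$. Third, identify the two summands: writing $Q(X) - 1 = (X-1)\tilde Q(X)$, any $x \in \ker Q(T)$ satisfies $x = (1 - Q(T))x = (1-T)\tilde Q(T)x \in R^T_\bt$, while $Q(T)(T-1) = 0$ forces $\ran Q(T) \subset C^T_\bt$; conversely $R^T_\bt \subset \ker Q(T)$ since $Q(T)(1-T) = 0$, and $C^T_\bt \subset \ran Q(T)$ since $x \in C^T_\bt$ gives $x = Q(T)x$. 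Hence $\ran Q(T) = C^T_\bt$ and $\ker Q(T) = R^T_\bt$, and the splitting follows. By definition of quasi-cyclic $k$-module, $C$ is quasi-cyclic.

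\textbf{Main obstacle.} There is essentially no obstacle: the entire content is already in Lemma~\ref{lem:Para-S-Mod.Q-module}, whose proof is purely about a $k$-module with a commuting idempotent-producing polynomial in an automorphism, and is therefore insensitive to whether the ambient structure is a para-$S$-module, a parachain complex, or a paracyclic module. The only thing to be careful about is that $T$ is defined degreewise ($T = t^{m+1}$ on $C_m$), so one should phrase the argument as happening in each degree $m$ and then remark that compatibility with the structural operators makes the degreewise splittings assemble into a splitting of paracyclic $k$-modules; but this is routine. I would therefore keep the proof to a single sentence pointing to Lemma~\ref{lem:Para-S-Mod.Q-module}, exactly as the paper does for Lemma~\ref{lem:Parachain.Q(T)-quasi}.
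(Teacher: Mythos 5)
Your proof is correct and follows essentially the same route as the paper: the paper also reduces the statement to the purely module-theoretic idempotent argument of Lemma~\ref{lem:Para-S-Mod.Q-module} (stating the lemma with the single remark ``In the same way as with Lemma~\ref{lem:Parachain.Q(T)-quasi}''), and your degreewise reading with compatibility of $Q(T)$ with the structural operators is exactly what makes that reduction legitimate.
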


\begin{example}[Feigin-Tsygan~\cite{FT:LNM87a}; see also~\cite{BHM:InvM93}]\label{ex:Paracylic.r-cyclic}
In the terminology of~\cite{FT:LNM87a}, given $r\geq 1$, an $r$-cyclic $k$-module is a paracyclic $k$-module $C=(C_\bt, d,s,t)$ for which $T^r=1$. In the same way as in Example~\ref{ex:Para-S-Mod.Tr=1}, when $r^{-1}\in k$ the assumptions of Lemma~\ref{lem:paracyclic.Q(T)-quasi-cyclic} are satisfied, and so in this case any $r$-cyclic $k$-module is a quasi-cyclic $k$-module. Instances of $r$-cyclic $k$-modules include the following:
\begin{itemize}
\item The twisted cyclic $k$-module $C^\sigma(\cA)$ of Example~\ref{ex:paracyclic.twisted-cyclic}, when the automorphism $\sigma$ is such that $\sigma^r=1$. 

\item The paracyclic $k$-module $C^\phi(\Gamma)$ of Example~\ref{ex:paracyclic.twisted-group}, when $\phi$ has order $r$. 
\end{itemize}
\end{example}

\begin{remark}
 We refer to~\cite[Example 3.10]{HK:JKT09} for an example of a paracyclic module which is not a quasi-cyclic module. 
\end{remark}

\subsection{Para-precylic Modules} 
Recall that a pre-simplicial $k$-module $C=(C_\bt, d_\bt)$ is given by $k$-modules $C_m$, $m\geq 0$, and $k$-linear maps $d_j:C_m \rightarrow C_{m+1}$, $j=0,\ldots, m$, satisfying the face relations~(\ref{eq:simplicial.dd}). In this paper, we shall use the following definition of a para-precyclic $k$-module. 

\begin{definition}[\cite{Ni:InvM93}] \label{def:para-precyclic.para-precyclic}
 A \emph{para-precyclic $k$-module} $C=(C_\bt, d_\bt, t)$ is a pre-simplicial $k$-module $(C_\bt, d_\bt)$ together with an invertible $k$-linear map $t:C_\bt \rightarrow C_\bt$ satisfying the relations~(\ref{eq:paracyclic.td}). 
\end{definition}

\begin{remark}
 Para-precyclic modules are also called quasi-cyclic modules in~\cite{Ni:InvM93}.  
\end{remark}

When the paracyclic operator $t$ further satisfies the cyclic condition $t^{m+1}=1$ on $C_m$ we recover the definition of a precyclic $k$-module~\cite{Lo:CH}. In the same way as with a paracyclic module structure, a para-precyclic module structure is uniquely determined by the end face $d=d_m$ on $C_m$ and the paracyclic operator $t$. Moreover, the operator $T=t_m^{m+1}$ commutes with the structural operators $(d,t)$, and so this a para-precyclic module automorphism.

\begin{example}\label{ex:para-precyclic.twisted-cyclic}
 Let $\cA$ be a $k$-algebra and $\sigma:\cA\rightarrow \cA$ an algebra automorphism. In the unital case we can form the twisted cyclic module $C^\sigma(\cA)$ of Example~\ref{ex:paracyclic.twisted-cyclic}. In general, we can form the para-precyclic $k$-module $C^\sigma(\cA)=(C_\bt(\cA), d_\sigma, t_\sigma)$, where $C_m=\cA^{\otimes (m+1)}$, $m\geq 0$, and the structural operators $(d_\sigma, t_\sigma)$ are defined as in~(\ref{eq:paracyclic.twisted-dtau}) and~(\ref{eq:paracyclic.twisted-ttau}). We obtain the usual precyclic module of an algebra when $\sigma=1$ (see, e.g.,~\cite{Lo:CH}). 
\end{example}

Let $C=(C_\bt, d,t)$ be a para-precyclic $k$-module. In the same way as with paracyclic $k$-modules, the fact that the structural operators $(d,t)$ commute with $T$ ensures us that they define a pre-paracylic $k$-module structure on 
$R^T_\bt=\ran (1-T)$ and precyclic $k$-module structures on $C^T_\bt=\ker(1-T)$ and $C_{T,\bt}=C_\bt \slash R_\bt^T$. We denote by $R^T$, $C^T$ and $C_T$ the corresponding pre-(para)cyclic $k$-modules. 

\begin{definition}\label{def:paracyclic.quasi-precyclic}
A \emph{quasi-precyclic $k$-module} is a para-precyclic $k$-module $C=(C_\bt, d,t)$ such that $C_\bt=C^T_\bt\oplus R^T_\bt$. 
\end{definition}

\begin{remark}\label{rmk:para-precyclic.splitting-quasi-precyclic}
 If $C$ is a quasi-precyclic $k$-module, then the splitting $C_\bt=C^T_\bt\oplus R^T_\bt$ is a splitting of para-precyclic $k$-modules.  
\end{remark}

We have the following version of Lemma~\ref{lem:paracyclic.Q(T)-quasi-cyclic}. 

\begin{lemma}\label{lem:paracyclic.Q(T)-quasi-precyclic}
 Suppose that $C=(C_\bt, d,t)$ is a para-precyclic $k$-module for which there is a polynomial $Q(X)\in k[X]$ such that $Q(T)(T-1)=0$ and $Q(1)=1$. Then $C$ is a 
 quasi-precyclic $k$-module. 
\end{lemma}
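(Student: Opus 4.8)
The plan is to reduce this statement to the purely algebraic fact already established in Lemma~\ref{lem:Para-S-Mod.Q-module}, exactly as Lemma~\ref{lem:Parachain.Q(T)-quasi} and Lemma~\ref{lem:paracyclic.Q(T)-quasi-cyclic} were deduced. The only thing that needs checking is that the existence of a polynomial $Q(X)\in k[X]$ with $Q(T)(T-1)=0$ and $Q(1)=1$ forces the $k$-module splitting $C_\bt=C^T_\bt\oplus R^T_\bt$ in the definition of a quasi-precyclic $k$-module (Definition~\ref{def:paracyclic.quasi-precyclic}). That splitting is a statement about the single $k$-linear operator $T$ on each $C_m$, and it has nothing to do with the faces or the paracyclic operator beyond the fact that $T$ commutes with them; so the argument is identical to the one in the proof of Lemma~\ref{lem:Para-S-Mod.Q-module}.

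Concretely, I would argue as follows. From $Q(T)(1-T)=(1-T)Q(T)=0$ together with $Q(1)=1$, the proof of Lemma~\ref{lem:Para-S-Mod.Q-module} shows that $Q(T)$ is an idempotent on $C_\bt$ with $\ran Q(T)=C^T_\bt=\ker(1-T)$ and $\ker Q(T)=R^T_\bt=\ran(1-T)$; hence $C_\bt=\ran Q(T)\oplus\ker Q(T)=C^T_\bt\oplus R^T_\bt$. This is precisely the splitting required by Definition~\ref{def:paracyclic.quasi-precyclic}, so $C$ is a quasi-precyclic $k$-module. One should phrase this as a one-line invocation: ``This is immediate from Lemma~\ref{lem:Para-S-Mod.Q-module}'' (or ``proved exactly as Lemma~\ref{lem:paracyclic.Q(T)-quasi-cyclic}''), since the earlier lemma already does all the work at the level of a single $T$-module, and nothing about the para-precyclic structure enters.

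There is essentially no obstacle here; the result is a corollary of Lemma~\ref{lem:Para-S-Mod.Q-module} combined with the observation from the paragraph preceding Definition~\ref{def:paracyclic.quasi-precyclic} that $T$ commutes with $(d,t)$ (which is why the induced structures on $C^T_\bt$, $R^T_\bt$, $C_{T,\bt}$ make sense in the first place, and why the direct-sum decomposition of $C_\bt$ is automatically a decomposition of para-precyclic modules — compare Remark~\ref{rmk:para-precyclic.splitting-quasi-precyclic}). The only care needed is to make sure the statement is read correctly: it is \emph{not} asserting $S$-contractibility of anything at this stage, merely the module splitting, so the proof really is just the idempotent computation and a citation. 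Thus the expected proof is a single short paragraph referring back to Lemma~\ref{lem:Para-S-Mod.Q-module}.
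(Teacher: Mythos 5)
Your proposal is correct and matches the paper's (implicit) argument: the paper proves this lemma by the same chain of reductions, invoking the idempotent computation in the proof of Lemma~\ref{lem:Para-S-Mod.Q-module} to get $C_\bt=\ran Q(T)\oplus\ker Q(T)=C^T_\bt\oplus R^T_\bt$, which is all that Definition~\ref{def:paracyclic.quasi-precyclic} requires. You are also right that only the module splitting is at stake here, so the one-paragraph citation-style proof is exactly what the paper intends.
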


\begin{example}
Given $r\geq 1$, an $r$-precyclic $k$-module is a paracyclic $k$-module $C=(C_\bt, d,t)$ for which $T^r=1$. In the same way as in Example~\ref{ex:Paracylic.r-cyclic}, any $r$-precyclic $k$-module is a quasi-precyclic $k$-module when $r^{-1}\in k$. For instance,  the twisted precyclic $k$-module $C^\sigma(\cA)$ of Example~\ref{ex:para-precyclic.twisted-cyclic} is $r$-precyclic when the automorphism $\sigma$ is such that $\sigma^r=1$. 
\end{example}

\section{The Para-$S$-Module of an $H$-Unital Para-Precyclic Module}\label{sec:paracyclic}
In this section, by elaborating on~\cite{GJ:Crelle93} we explain how to associate a parachain complex with any paracyclic module and, more generally, with any $H$-unital para-precyclic module. As a result, this allows us to associate a para-$S$-module with any $H$-unital para-precyclic module. We shall also exhibit some of the properties of these para-$S$-modules. 

Any cyclic $k$-module gives rise to a mixed complex~\cite{Co:MFO81, Co:CRAS83, Co:IHES85}. In the terminology of~\cite{Co:MFO81, Co:CRAS83, Co:IHES85} the corresponding $C^\natural$-complex is precisely the total complex of Connes' $(b,B)$-bicomplex. As it turns out, this construction can be extended to precyclic modules that are $H$-unital, i.e., precyclic modules whose bar complexes are contractible (see, e.g., \cite{Ka:Crelle90, Wo:AM89}).  

Getzler-Jones~\cite{GJ:Crelle93} observed that the construction of the mixed complex of a cyclic module can be carried out \emph{mutatis mutandis} to associate a parachain complex with any paracyclic module. In fact, in the paracyclic setting, we only need to be a bit  careful with the choice of the contracting homotopy for the bar complex. Moreover, as we shall explain, this construction holds in greater generality for any $H$-unital para-precyclic module.

Let $C=(C_\bt, d,s,t)$ be a paracyclic $k$-module.  As $C$ is a simplicial $k$-module we have differentials $b:C_\bt \rightarrow C_{\bt-1}$ and $b':C_\bt \rightarrow C_{\bt-1}$ given by 
\begin{equation}
 b= \sum_{0 \leq j \leq m} (-1)^j d_j \qquad \text{and} \qquad  b'= \sum_{0 \leq j \leq m-1} (-1)^j d_j \qquad \text{on $C_m$}.
 \label{eq:parachain-paracyclic.bb'} 
\end{equation}
The relations~(\ref{eq:paracyclic.ds1}) imply that the bar complex $(C_\bt, b')$ is acyclic and a contracting homotopy is provided by the extra degeneracy $s$, i.e., we have
\begin{equation}
 b's + sb'=1 \qquad \text{on $C_\bt$}. 
 \label{eq:parachain-paracyclic.contracting-homotopy}
\end{equation}
Moreover, the extra degeneracy $s$ is compatible with the isomorphism $T=\tau^{m+1}$. 

The chain complexes $(C_\bt, b)$ and $(C_\bt, b')$ make sense for any pre-simplicial $k$-module. In particular, they make sense for any para-precyclic $k$-module. However, for a general para-precyclic $k$-module, the bar complex $(C_\bt, b')$ need not be contractible. 

\begin{definition}
A para-precyclic $k$-module $C=(C_\bt, d,t)$ is called \emph{$H$-unital} when its bar complex $(C_\bt, b')$ admits a $T$-compatible contracting homotopy. 
\end{definition}

An $H$-unital paracyclic $k$-module is thus given by a system $(C_\bt, d,s,t)$, where $(d,t)$ defines a para-precyclic $k$-module structure on $C_\bt$ and $s$ is a $T$-contractible homotopy satisfying~(\ref{eq:parachain-paracyclic.contracting-homotopy}). In particular, any paracyclic $k$-module is an $H$-unital para-precyclic $k$-module. 

\begin{example}
At least when $k$ is a division ring, for any $k$-algebra with a local unit the precyclic $k$-module $C(\cA)$ is $H$-unital (see~\cite{Lo:CH, Wo:AM89}). As a result, given any automorphism $\sigma:A\rightarrow A$ of finite order, the corresponding twisted precyclic $k$-module $C^\sigma(A)$ of Example~\ref{ex:para-precyclic.twisted-cyclic} is $H$-unital. 
\end{example}

Let $C=(C_\bt, d, t)$ be a para-precyclic $k$-module. The $k$-linear maps $\tau:C_\bt \rightarrow C_\bt$ and $N:C_\bt \rightarrow C_\bt$ are defined by 
\begin{equation}
 \tau=(-1)^m t \qquad \text{and} \qquad  N= 1+\tau + \cdots +\tau^m \qquad \text{on $C_m$}.
 \label{eq:parachain-paracyclic.tau-N} 
\end{equation}
Note that
\begin{equation*}
 (1-\tau)N=N(1-\tau)=1-T. 
\end{equation*}
Moreover, in the same way as with cyclic modules~\cite{Co:CRAS83, Co:IHES85, Ts:UMN83} (see also~\cite{Lo:CH}) we have 
\begin{equation}
 b(1-\tau)=(1-\tau)b'  \qquad \text{and} \qquad Nb=b'N. 
 \label{eq:parachain-paracyclic.bNtau}
\end{equation}

Suppose now that we have a contracting homotopy $s':C_\bt \rightarrow C_{\bt+1}$ satisfying~(\ref{eq:parachain-paracyclic.contracting-homotopy}). The $k$-linear map 
$B:C_\bt \rightarrow C_{\bt+1}$ is defined by 
\begin{equation}
 B=(1-\tau) s'N.
 \label{eq:parachain-paracyclic.B} 
\end{equation}
When $C$ is cyclic and $s'$ is the extra degeneracy we recover the usual $B$-operator~\cite{Co:MFO81, Co:CRAS83, Co:IHES85}. Using~(\ref{eq:parachain-paracyclic.bNtau}) and the fact that $b's'+s'b'=1$ we see that $bB+Bb$ is equal to 
\begin{equation*}
 b(1-\tau)s'N+(1-\tau)s'Nb  = (1-\tau)(b's'+s'b')N=(1-\tau)N=1-T. 
\end{equation*}
 We also have 
 \begin{equation*}
 B^2= (1-\tau) s'N(1-\tau) s'N=(1-\tau)s'(1-T)s'N. 
\end{equation*}
Therefore, we arrive at the following result. 

\begin{proposition}[see also~\cite{GJ:Crelle93, HK:JKT10}]\label{prop:parachain.paracomplex} 
If the contracting homotopy $s'$ is such that $s'(1-T)s'=0$, then $(C_\bt, b,B)$ is a parachain complex such that $bB+Bb=1-T$.
\end{proposition}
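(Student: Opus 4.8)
The statement asserts that, given a para-precyclic $k$-module $C=(C_\bt,d,t)$ equipped with a contracting homotopy $s'$ for the bar complex satisfying $b's'+s'b'=1$ and the extra condition $s'(1-T)s'=0$, the triple $(C_\bt,b,B)$ with $B=(1-\tau)s'N$ is a parachain complex with $bB+Bb=1-T$. By the definition of a parachain complex recalled in Section~\ref{sec:parachain}, there are exactly three things to verify: that $(C_\bt,b)$ is a chain complex, that $B^2=0$, and that $bB+Bb=1-T$ (which in particular forces $T=1-(bB+Bb)$ to be a $k$-linear isomorphism, a fact already built into the para-precyclic structure since $T=t^{m+1}$ is invertible).

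The plan is as follows. First, $b^2=0$ is standard from the face relations~(\ref{eq:simplicial.dd}) and holds for any pre-simplicial module, so nothing para-specific is needed there. Second, I would establish $bB+Bb=1-T$ exactly as in the computation already displayed just before the statement: using the two commutation identities $b(1-\tau)=(1-\tau)b'$ and $Nb=b'N$ from~(\ref{eq:parachain-paracyclic.bNtau}), one writes
\begin{equation*}
 bB+Bb = b(1-\tau)s'N + (1-\tau)s'Nb = (1-\tau)(b's'+s'b')N = (1-\tau)N = 1-T,
\end{equation*}
the last equality being the identity $(1-\tau)N=N(1-\tau)=1-T$ recorded after~(\ref{eq:parachain-paracyclic.tau-N}). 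Third, for $B^2=0$ I would compute
\begin{equation*}
 B^2 = (1-\tau)s'N(1-\tau)s'N = (1-\tau)s'\bigl((1-\tau)N\bigr)^{\ast}\!s'N,
\end{equation*}
more precisely substituting $N(1-\tau)=1-T$ to get $B^2=(1-\tau)s'(1-T)s'N$, and then invoking the hypothesis $s'(1-T)s'=0$ to conclude $B^2=0$. This is the one place where the extra condition on $s'$ is used, and it is the crux of why the bare bar-contracting homotopy is not enough: in the cyclic case $T=1$ makes $s'(1-T)s'=0$ automatic, but for a genuine para-precyclic module one must arrange it.

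I do not anticipate a serious obstacle here — the argument is essentially a bookkeeping of the identities~(\ref{eq:parachain-paracyclic.bNtau}) together with $(1-\tau)N=N(1-\tau)=1-T$, all of which are available from the preceding material. The only point requiring a little care is making sure the commutation relations are applied on the correct degree-graded pieces (so that $\tau=(-1)^m t$ and $N=1+\tau+\cdots+\tau^m$ on $C_m$ are used consistently), but this is routine. One could also remark, for completeness, that $T=t^{m+1}$ being an isomorphism is what upgrades the data $(C_\bt,b,B)$ from a mere "mixed-complex-like" object to a bona fide parachain complex in the sense of Getzler--Jones; this is inherited directly from the para-precyclic structure and needs no further argument.
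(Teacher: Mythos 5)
Your proposal is correct and follows essentially the same route as the paper: the identity $bB+Bb=(1-\tau)(b's'+s'b')N=(1-\tau)N=1-T$ via~(\ref{eq:parachain-paracyclic.bNtau}), and $B^2=(1-\tau)s'N(1-\tau)s'N=(1-\tau)s'(1-T)s'N=0$ using the hypothesis $s'(1-T)s'=0$, with $b^2=0$ and the invertibility of $T=t^{m+1}$ inherited from the pre-simplicial and para-precyclic structure. The only cosmetic blemish is the stray intermediate expression $\bigl((1-\tau)N\bigr)^{\ast}$ in your display, which you immediately correct by the substitution $N(1-\tau)=1-T$.
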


\begin{remark}
 When $C$ is a cyclic module, i.e., $T=1$, the condition $s'(1-T)s'=0$ is always satisfied, and so we can choose any contracting homotopy to define the operator $B$. In particular, we may take $s'$ to be the extra degeneracy $s$, in which case we recover the usual mixed complex of a cyclic module~\cite{Co:CRAS83, Co:IHES85}. 
\end{remark}

Let $C=(C_\bt, d,s,t)$ be an $H$-unital pre-paracylic $k$-module. If $T\neq 1$, then $s(1-T)s=(1-T)s^2$ need not be zero (there seems to be an oversight of this fact in~\cite{GJ:Crelle93}; see also~\cite{HK:JKT10} on this point). However, we can replace $s$ by the $k$-linear map $s':C_\bt \rightarrow C_{\bt+1}$ given by
\begin{equation*}
 s'=sb's. 
\end{equation*}
This is a contracting homotopy, since we have 
\begin{equation*}
 b's'+s'b'=b'sb's+sb'sb'=(1-sb')b's+sb'(1-b's)=b's+sb'=1. 
\end{equation*}
In addition, we have 
\begin{equation*}
 (s')^2=sb's s b's=s(1-sb')(1-b's)s=s(1-b's-sb')s=0. 
\end{equation*}
As $T$ commutes with $s$ and $b'$, and hence with $s'$, we deduce that $s'(1-T)s'=(1-T)(s')^2=0$. Therefore, by using $s'=sb's$ to define the operator $B$ we obtain a parachain complex.

\begin{definition}[compare~\cite{GJ:Crelle93}] \label{def:paracyclic.parachain-complex}
Given any $H$-unital para-precyclic $k$-module $C=(C_\bt, d,s,t)$ its parachain complex is $(C_\bt, b,B)$, where 
$b$ is defined as in~(\ref{eq:parachain-paracyclic.bb'}) and $B$ is given by~(\ref{eq:parachain-paracyclic.B}) with $s':=sb's$. The corresponding para-complex $C^\sharp:=(C_\bt^\sharp, b+B, T)$ is called the \emph{periodic para-complex} of $C$. 
\end{definition}

\begin{remark}
 When $C$ is a cyclic $k$-module, we get a mixed complex which is different from the usual mixed complex of a cyclic $k$-module, since for the latter the 
 $B$-differential is defined by using the extra degeneracy $s$ instead of the special homotopy $s'=b'sb'$. As it turns out, the two mixed complexes are isomorphic (see Proposition~\ref{prop:Parachain.isomorphism} below). 
\end{remark}

Let $C=(C_\bt, d,s,t)$ be an $H$-unital quasi-precylic $k$-module, so that  $C_\bt =C^T_\bt \oplus R^T_\bt$. This implies that the parachain complex $(C,b,B)$ is a quasi-mixed complex. As the contracting homotopy $s$ is compatible with the $T$-operator, it induces contracting homotopies for the bar complexes of $C^T$ and $R^T$ and it descends to a contracting homotopy for the bar complex of $C_{T}$. Thus, the para-precyclic $k$-module $R^T$ and the precyclic $k$-modules $C^T$ and $C_T$ are all $H$-unital. Moreover, their parachain complexes are precisely the parachain complexes $C^T, R^T, C_T$ associated with the parachain complex $(C_\bt,b,B)$.  In addition,  the canonical projection $\pi_T:C_\bt \rightarrow C_{T}^\bt$ and the projection $\pi^T:C_\bt \rightarrow C_\bt$ on $C^T$ defined by the splitting $C_\bt =C^T_\bt \oplus R^T_\bt$ are both maps of $H$-unital para-precyclic $k$-modules. Therefore, by using Proposition~\ref{prop:parachain.quasi-mixed-quasi-S} we arrive at the following result.

\begin{proposition}[see also~{\cite[Proposition~2.1]{HK:KT05}}] \label{eq:paracyclic.quasi-cyclic-DR}
Assume that $C$ is a quasi-cyclic (resp., $H$-unital quasi-precyclic) $k$-module. 
\begin{enumerate}
 \item The canonical projection $\pi_T:C_\bt\rightarrow C_{T,\bt}$ induces an isomorphism of cyclic (resp., precyclic) $k$-modules from $C^{T}$ onto $C_T$. 
 
 \item The embedding $\iota_T: C_{T,\bt} \rightarrow C^T_\bt$ given by~(\ref{eq:Para-S-Mod.iotaT}) is an embedding of paracyclic (resp., para-precyclic) $k$-modules.

 \item  We have
          \begin{equation*}
          \pi_T\iota_T=1, \qquad \iota_T\pi_T=\pi^T=1 + \left(b+Bu^{-1}\right)h + h  \left(b+Bu^{-1}\right). 
           \end{equation*}
This provides us with $S$-deformation retracts of $C^\natural$ to $C_T^\natural$ and $C^{T,\natural}$, and with $T$-deformation retracts of $C^\sharp$ to $C_T^\sharp$ and $C^{T,\sharp}$.  
\end{enumerate}
\end{proposition}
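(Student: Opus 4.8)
The plan is to deduce the statement from its parachain-complex counterparts, Proposition~\ref{prop:parachain.quasi-mixed-quasi-S} and Proposition~\ref{prop:Parachain.quasi-mixed-DR}, applied to the parachain complex $(C_\bt,b,B)$ of Definition~\ref{def:paracyclic.parachain-complex}. First I would record that, when $C$ is quasi-cyclic (resp.\ $H$-unital quasi-precyclic), the splitting $C_\bt=C^T_\bt\oplus R^T_\bt$ is a splitting of paracyclic (resp.\ para-precyclic) $k$-modules (by Remark~\ref{rmk:para-precyclic.splitting-quasi-precyclic} and its quasi-cyclic analogue), so the structural operators preserve the two summands. Since $b$ and $B=(1-\tau)s'N$ (with $s'=sb's$ in the $H$-unital case) are built from these structural operators, the same splitting is a splitting of parachain complexes, and hence $(C_\bt,b,B)$ is a \emph{quasi-mixed complex}. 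Moreover, since $s$ (hence $b'$, $s'$ and $B$) commutes with $T$, the sub-parachain-complex and the quotient parachain complex of $(C_\bt,b,B)$ attached to $\ker(1-T)$ and to $C_\bt/\ran(1-T)$ coincide with the parachain complexes of the ($H$-unital) pre-(para)cyclic modules $C^T$, $R^T$, $C_T$; and the canonical projection $\pi_T:C_\bt\rightarrow C_{T,\bt}$ and the projection $\pi^T:C_\bt\rightarrow C_\bt$ onto $C^T_\bt$ are morphisms of paracyclic (resp.\ para-precyclic) $k$-modules, in particular of parachain complexes.

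With this in hand I would feed the quasi-mixed complex $(C_\bt,b,B)$ into Proposition~\ref{prop:parachain.quasi-mixed-quasi-S} and Proposition~\ref{prop:Parachain.quasi-mixed-DR}. The former yields that $C^\natural$ is a quasi-$S$-module; the latter yields $\pi_T\iota_T=1$ and $\iota_T\pi_T=\pi^T=1+(b+Bu^{-1})h+h(b+Bu^{-1})$, where $h$ is the $T$-compatible operator $h(x)=-B(1-T)^{-1}(x-x^T)$ of~(\ref{eq:paracyclic.h}), together with the $S$-deformation retracts of $C^\natural$ onto $C_T^\natural$ and $C^{T,\natural}$ and the $T$-deformation retracts of $C^\sharp$ onto $C_T^\sharp$ and $C^{T,\sharp}$. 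This gives part~(3) and the homotopy-retract assertions verbatim.

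It then remains to promote parts~(1)--(2) from the mixed/parachain level to the (para)(pre)cyclic level. Since $C^T$ and $C_T$ are cyclic (resp.\ precyclic) $k$-modules (as $t^{m+1}=T=1$ on them) and $\pi_T$ is a morphism of paracyclic (resp.\ para-precyclic) modules, its restriction $\pi_T:C^T\rightarrow C_T$ is a morphism of cyclic (resp.\ precyclic) modules, which the splitting makes bijective, hence an isomorphism; this is~(1). Then $\iota_T$, defined by~(\ref{eq:Para-S-Mod.iotaT}), is just the inverse of this isomorphism followed by the inclusion $C^T_\bt\hookrightarrow C_\bt$, so it is an injective morphism of paracyclic (resp.\ para-precyclic) $k$-modules, i.e.\ an embedding; this is~(2). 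I do not anticipate a genuine obstacle: the only delicate point is the compatibility of the passage $C\mapsto(C_\bt,b,B)$ with taking $\ker(1-T)$ and the quotient by $\ran(1-T)$, and that is exactly what the $T$-compatibility of the contracting homotopy $s$ (hence of $s'=sb's$ and of $B$) provides.
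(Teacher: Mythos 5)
Your argument is correct and is essentially the paper's own: the paper likewise observes that the splitting $C_\bt=C^T_\bt\oplus R^T_\bt$ together with the $T$-compatibility of $s$ (hence of $s'=sb's$ and $B$) makes $(C_\bt,b,B)$ a quasi-mixed complex whose sub/quotient parachain complexes are those of $C^T$, $R^T$, $C_T$, with $\pi_T$ and $\pi^T$ maps of (H-unital) para-(pre)cyclic modules, and then invokes Propositions~\ref{prop:parachain.quasi-mixed-quasi-S} and~\ref{prop:Parachain.quasi-mixed-DR}. Your handling of parts (1)--(2) at the (pre)cyclic-module level matches this reduction as well, so there is nothing to correct.
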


As $(C_\bt, b,B)$ is a quasi-mixed complex, by Proposition~\ref{prop:Parachain.tC-mixed-complex} we also have a  mixed complex $\tilde{C}=(C_\bt, b,\tilde{B})$ whose $S$-module is $S$-homotopy equivalent to $C^\natural$. Here $\tilde{B}=B\pi^T =(1-\tau)s'N\pi^T$, and so $\tilde{C}$ is defined in the same way as $(C_\bt, b,B)$, upon replacing $N$ by $\tilde{N}:=N\pi^T$ in the formula~(\ref{eq:parachain-paracyclic.B}) for $B$. 

When $C$ is an $r$-precyclic $k$-module and $r^{-1}\in k$, we know by Lemma~\ref{lem:Parachain.Q(T)-quasi} that $C$ is quasi-precyclic and $\pi^T=r^{-1}(1+\cdots + T^{r-1})$. On $C_m$ we then have
\begin{equation}
r \tilde{N}= \frac{1}{r}N(1+\cdots + T^{r-1}) = \biggl(\sum_{0\leq j \leq m} \tau^j \biggr) \biggl( \sum_{0\leq l \leq r-1} \tau^{(m+1)l}\biggr) = 
\sum_{j=0}^{r(m+1)-1} \tau^j. 
\label{eq:Paracyclic.FT-N}
\end{equation}
 Thus, $r\tilde{N}$ agrees with the $N$-operator for $r$-cyclic $k$-modules of Feigin-Tsygan~\cite[Appendix]{FT:LNM87a}.

\subsection{Dependence on the contracting homotopy $s'$} 
As it turns out, the choice of the contracting homotopy $s'$ is hardly important, since, as we are going to see, all the para-$S$-modules that we get are isomorphic. More precisely, let $C=(C_\bt, d,s,t)$ be an $H$-unital pre-paracylic $k$-module and suppose we are given another contracting homotopy  $\hat{s}:C_\bt \rightarrow C_{\bt +1}$ such  that $\hat{s}(1-T)\hat{s}=0$. (In particular, when $C$ a cyclic module we may take $\hat{s}$ to be the extra degeneracy $s$.) We thus get another para-$S$-module $(C^\natural, b+\hat{B}u^{-1}, u^{-1}, T)$, where $\hat{B}=(1-\tau)\hat{s}N$.  Let $f:C_\bt^\natural \rightarrow C_\bt^\natural$ be the $k$-linear map defined by 
\begin{equation}
 f=1 +(1-\tau) \hat{s} s' Nu^{-1}.
 \label{eq:parachain-paracyclic.f} 
\end{equation}
This maps commutes with $u^{-1}$ and with the operator $T$. This is also a graded $k$-linear isomorphism with inverse, 
\begin{equation*}
 f^{-1}= 1 + \sum_{j \geq 1} \left[ (1-\tau) \hat{s} s' N\right]^j u^{-j}. 
\end{equation*}

\begin{lemma}\label{lem:parachain-paracyclic.f-chain-map}
 We have 
\begin{equation*}
 f\left(b+Bu^{-1}\right)= \big(b+\hat{B}u^{-1}\big)f. 
\end{equation*}
\end{lemma}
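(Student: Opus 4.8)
The plan is to verify the intertwining identity by expanding both sides in powers of $u^{-1}$ and comparing coefficients, using the known relations $bB+Bb=1-T$ (and $b\hat B+\hat Bb=1-T$), $b^2=0$, $B^2=0$, and the basic cyclic identities $b(1-\tau)=(1-\tau)b'$, $Nb=b'N$, $b's'+s'b'=1$, together with the hypotheses $(s')^2=0$ on the chosen homotopy. Writing $f=1+Eu^{-1}$ with $E=(1-\tau)\hat s s' N$, the left side is $f(b+Bu^{-1})=b+(Bu^{-1}+Eu^{-1}b)+Eu^{-1}Bu^{-1}$, and since $u^{-1}$ commutes with the (degree-preserving) operators $b,B,\hat B,E$ we may freely move the $u^{-1}$'s to the right; the right side is $(b+\hat Bu^{-1})f=b+(\hat Bu^{-1}+bEu^{-1})+\hat BEu^{-2}$. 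So the identity reduces to the two equations
\begin{equation*}
 B+Eb=\hat B+bE \qquad\text{and}\qquad EB=\hat BE,
\end{equation*}
the first in degree shift $-1$ with one $u^{-1}$, the second with $u^{-2}$. (Here I am using that $u^{-1}$ commutes with all these maps, so $Eu^{-1}Bu^{-1}=EBu^{-2}$, etc.; this is exactly the observation already recorded before the lemma that $f$ commutes with $u^{-1}$.)

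First I would dispatch the $u^{-2}$-coefficient equation $EB=\hat BE$. Both sides contain the factor $(1-\tau)\hat s s' N$ on one end and $(1-\tau)(\text{something})N$ on the other; expanding, $EB=(1-\tau)\hat s s' N(1-\tau)\hat s N$ and $\hat BE=(1-\tau)\hat s N(1-\tau)\hat s s' N$. Using $N(1-\tau)=1-T$ on the appropriate components turns these into $(1-\tau)\hat s s'(1-T)\hat s N$ and $(1-\tau)\hat s(1-T)\hat s s' N$ respectively; since $T$ commutes with $s'$ and with $\hat s$, and since $\hat s(1-T)\hat s=0$ by hypothesis, the second vanishes, and $s'(1-T)\hat s$ can be rewritten using $(1-T)=(1-T)$ sliding past $s'$... here I would instead use $(s')^2=0$: note $s'(1-T)\hat s = s'\hat s - s' T\hat s$, and $T s' = s' T$, so this needs the extra input that $s'(1-T)\hat s$ dies. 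Actually the cleanest route is: $s'(1-T)=(1-T)s'$ (as $T$ commutes with $s'$ and $b'$), and then $EB=(1-\tau)\hat s(1-T)s'\hat s N$; using $\hat s(1-T)\hat s=0$... this doesn't immediately cancel because of the intervening $s'$. So the genuine content is an identity like $\hat s s'(1-T)\hat s=0$, which should follow from $(s')^2=0$ combined with $s'=sb's$, $\hat s(1-T)\hat s=0$, and the homotopy relations; I expect this to require a short but not entirely mechanical computation expressing $s'(1-T)\hat s$ in terms of $b'$-conjugates and using $\hat s^2$-type cancellations.

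The $u^{-1}$-coefficient equation $B-\hat B=bE-Eb$ is then the main step. We have $B-\hat B=(1-\tau)(s'-\hat s)N$. On the other side, $bE-Eb=b(1-\tau)\hat s s'N-(1-\tau)\hat s s'Nb=(1-\tau)b'\hat s s'N-(1-\tau)\hat s s'b'N=(1-\tau)(b'\hat s s'-\hat s s'b')N$, using $b(1-\tau)=(1-\tau)b'$ and $Nb=b'N$. So it suffices to prove, as operators between the relevant degrees, $s'-\hat s=b'\hat s s'-\hat s s'b'$, equivalently $b'\hat s s'-\hat s s'b'-s'+\hat s=0$. Now insert $b'\hat s+\hat s b'=1$ and $b's'+s'b'=1$: write $s'=(b'\hat s+\hat s b')s'=b'\hat s s'+\hat s b's'$ and $\hat s=\hat s(b's'+s'b')=\hat s b's'+\hat s s'b'$, whence $s'-\hat s=b'\hat s s'-\hat s s'b'$ exactly. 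So this coefficient is an identity of elementary homotopy algebra, true for \emph{any} two contracting homotopies $s',\hat s$ of the bar complex, with no use of $T$ or of the squaring conditions. The only real obstacle is therefore the $u^{-2}$-term, i.e.\ confirming that the particular special homotopies in play satisfy the needed quadratic cancellation; I would handle it by substituting $s'=sb's$ explicitly, expanding $s'(1-T)\hat s$, and invoking $\hat s(1-T)\hat s=0$ together with $b'\hat s+\hat s b'=1$ to collapse the cross terms.
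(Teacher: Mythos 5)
Your overall strategy---write $f=1+Eu^{-1}$ with $E=(1-\tau)\hat{s}s'N$, expand both sides, and match the coefficients of $u^{-1}$ and $u^{-2}$ (which is legitimate, since all the maps involved act componentwise in the $u$-variable and hence commute with $u^{-1}$)---is exactly the paper's, and your treatment of the $u^{-1}$ coefficient is correct and complete: the reduction to $s'-\hat{s}=b'\hat{s}s'-\hat{s}s'b'$ via $b(1-\tau)=(1-\tau)b'$ and $Nb=b'N$, and its verification from $b'\hat{s}+\hat{s}b'=1$ and $b's'+s'b'=1$, is the same computation the paper performs, and as you note it uses no property of $T$.

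The gap is in the $u^{-2}$ term, and it stems from a substitution slip: you expanded $EB$ as $(1-\tau)\hat{s}s'N(1-\tau)\hat{s}N$, i.e.\ you used $\hat{B}=(1-\tau)\hat{s}N$ where $B=(1-\tau)s'N$ should appear. The correct expansion is $EB=(1-\tau)\hat{s}s'N(1-\tau)s'N=(1-\tau)\hat{s}\,s'(1-T)s'\,N$, which vanishes at once because $s'=sb's$ satisfies $(s')^2=0$ and commutes with $T$, so $s'(1-T)s'=(1-T)(s')^2=0$; similarly $\hat{B}E=(1-\tau)\hat{s}N(1-\tau)\hat{s}s'N=(1-\tau)\hat{s}(1-T)\hat{s}s'N=0$ by the hypothesis $\hat{s}(1-T)\hat{s}=0$. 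Thus $EB$ and $\hat{B}E$ are each zero separately; no nontrivial identity $EB=\hat{B}E$ is needed. Because of the slip you were led to chase $\hat{s}s'(1-T)\hat{s}=0$, which is not a consequence of the hypotheses (one only gets $s'(1-T)\hat{s}=(1-T)s'\hat{s}$, and there is no reason for $s'\hat{s}$ to be annihilated), and you explicitly left that step unfinished. With the corrected expansion the $u^{-2}$ step is immediate from the hypotheses you already listed, and your argument then coincides with the paper's proof.
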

\begin{proof}
 The operator $f(b+Bu^{-1})$ is equal to 
\begin{equation*}
 \left( 1 + (1-\tau) \hat{s} s' Nu^{-1}\right) \left(b+Bu^{-1}\right)= b + \left[ B+ (1-\tau) \hat{s} s'N b\right] u^{-1} + (1-\tau) \hat{s} s'NBu^{-2}. 
\end{equation*}
By using the relation $Nb=b'N$ we get
\begin{equation*}
B+ (1-\tau) \hat{s} s'N b= (1-\tau) s'N + (1-\tau) \hat{s} s'b'N= (1-\tau)( s' +\hat{s} s'b')N. 
\end{equation*}
As $s'(1-T)s'=0$ we also obtain
\begin{equation*}
 (1-\tau) \hat{s}s'NB=(1-\tau) \hat{s}s'N(1-\tau) s'N= (1-\tau) \hat{s}s'(1-T)s'N=0. 
\end{equation*}
Thus, 
\begin{equation}
  f\left(b+Bu^{-1}\right)= b + (1-\tau)( s' +\hat{s} s'b')Nu^{-1}.
  \label{eq:parachain-paracyclic.fd}  
\end{equation}

Similarly, the operator $(b+\hat{B}u^{-1})f$ is equal to 
\begin{equation*}
 \left(b+\hat{B}u^{-1}\right)  \left( 1 + (1-\tau) \hat{s} s' Nu^{-1}\right) = b + \left[ b(1-\tau) \hat{s} s' N +  \hat{B} \right] u^{-1}+ \hat{B}(1-\tau) \hat{s} s' Nu^{-2}. 
\end{equation*}
 Thanks to the relation $b(1-\tau)=b'(1-\tau)$ we obtain
 \begin{equation*}
 b(1-\tau) \hat{s} s' N +  \hat{B}= (1-\tau)b' \hat{s}s'N+(1-\tau)\hat{s} N= (1-\tau)\left[ b' \hat{s}s' + \hat{s}\right] N. 
\end{equation*}
Using the fact that $\hat{s}(1-T)\hat{s}=0$ we also get
\begin{equation*}
  \hat{B}(1-\tau) \hat{s} s' N = (1-\tau)\hat{s} N(1-\tau) \hat{s} s'N = (1-\tau)\hat{s} (1-T)  \hat{s} s'N=0. 
\end{equation*}
Thus, 
\begin{equation}
 \left(b+\hat{B}u^{-1}\right)f = b + (1-\tau)\left[ b' \hat{s}s' + \hat{s}\right] Nu^{-1}.
 \label{eq:parachain-paracyclic.df} 
\end{equation}

Combining~(\ref{eq:parachain-paracyclic.fd}) and~(\ref{eq:parachain-paracyclic.df}) gives 
\begin{equation*}
  f\left(b+Bu^{-1}\right)-  \left(b+\hat{B}u^{-1}\right)f= (1-\tau)\left[s'- \hat{s} + \hat{s} s'b'-b' \hat{s}s' \right] Nu^{-1}. 
\end{equation*}
As $\hat{s} s'b'-b' \hat{s}s' = \hat{s}(1-b's')-(1-\hat{s}b')s'=\hat{s}-s'$, we deduce that $f(b+Bu^{-1})-(b+\hat{B}u^{-1})f=0$. This proves the result.  
\end{proof}

Lemma~\ref{lem:parachain-paracyclic.f-chain-map} asserts that $f$ is a chain map from $(C^\natural_\bt, b+Bu^{-1})$ to $(C^\natural_\bt, b+\hat{B}u^{-1})$. As this is an isomorphism which commutes with $u^{-1}$ and $T$, we deduce that this is an isomorphism of para-$S$-modules. Therefore, we have proved the following result. 

\begin{proposition}\label{prop:Parachain.isomorphism}
 The para-$S$-modules  $(C^\natural_\bt, b+Bu^{-1}, u^{-1},T)$ and $(C^\natural_\bt, b+\hat{B}u^{-1}, u^{-1},T)$ are isomorphic. An explicit isomorphism is provided by~(\ref{eq:parachain-paracyclic.f}). 
\end{proposition}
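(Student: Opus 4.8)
The plan is to verify directly that the explicit map $f$ of~(\ref{eq:parachain-paracyclic.f}) is an isomorphism of para-$S$-modules. Unwinding the definition (Definition~\ref{def:Para-S-Module} together with the notion of $S$-map introduced after it), what must be checked is that $f=1+(1-\tau)\hat{s}s'Nu^{-1}$ is (i) a graded $k$-linear bijection of $C^\natural_\bt$, (ii) commuting with the $S$-operator $u^{-1}$ and with the $T$-operator, and (iii) intertwining the two para-differentials, $f(b+Bu^{-1})=(b+\hat{B}u^{-1})f$. Points (ii) and (iii) are essentially already in hand; point (i) is a short nilpotence observation.

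First I would dispose of (ii): the building blocks $\tau$ and $N$ are polynomials in $t$, hence commute with $T=t^{m+1}$, while both contracting homotopies $s'$ and $\hat{s}$ were chosen $T$-compatible (in the case where $\hat{s}$ is the extra degeneracy this is the $T$-compatibility of $s$ recorded in Section~\ref{sec:paracyclic}); therefore $(1-\tau)\hat{s}s'N$ commutes with $T$, and of course $u^{-1}$ commutes with $u^{-1}$, so $f$ commutes with both $u^{-1}$ and $T$. For (i), I would note that on $C^\natural_m=\bigoplus_{2p+q=m}C_qu^p$ the correction term sends $C_qu^p$ into $C_{q+2}u^{p-1}$: $\hat{s}$ and $s'$ each raise the simplicial degree by one while $1-\tau$ and $N$ preserve it, and $u^{-1}$ lowers the $u$-power by one. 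Iterating $j$ times lands in $C_{q+2j}u^{p-j}$, which vanishes once $j>p$, so $(1-\tau)\hat{s}s'Nu^{-1}$ is nilpotent on each $C^\natural_m$. Consequently the Neumann series $f^{-1}=1+\sum_{j\geq 1}\big[(1-\tau)\hat{s}s'N\big]^{j}u^{-j}$ displayed before Lemma~\ref{lem:parachain-paracyclic.f-chain-map} is a finite sum in each degree and is a genuine two-sided inverse; by the same argument as for $f$ it also commutes with $u^{-1}$ and $T$.

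Finally, (iii) is exactly the content of Lemma~\ref{lem:parachain-paracyclic.f-chain-map}, whose computation has already been carried out (it rests on the relations $Nb=b'N$, $b(1-\tau)=b'(1-\tau)$, the homotopy identities $b's'+s'b'=1$, and the conditions $s'(1-T)s'=0$, $\hat{s}(1-T)\hat{s}=0$). Combining (i)--(iii) shows that $f$ is an isomorphism of para-$S$-modules from $(C^\natural_\bt,b+Bu^{-1},u^{-1},T)$ to $(C^\natural_\bt,b+\hat{B}u^{-1},u^{-1},T)$, given explicitly by~(\ref{eq:parachain-paracyclic.f}), which is the assertion. I do not expect a real obstacle: the only computational input, Lemma~\ref{lem:parachain-paracyclic.f-chain-map}, is already established, and the remaining verifications are formal. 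The one point deserving a moment's care is the $T$-compatibility of $\hat{s}$ in the degenerate special case $\hat{s}=s$ for a cyclic module, which holds because the extra degeneracy commutes with $T$.
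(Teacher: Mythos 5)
Your proposal is correct and follows essentially the same route as the paper: the paper's proof likewise combines the chain-map identity of Lemma~\ref{lem:parachain-paracyclic.f-chain-map} with the observations that $f$ commutes with $u^{-1}$ and $T$ and is a graded $k$-linear bijection (your nilpotence argument being the justification the paper leaves implicit). One small caveat: since $\left[(1-\tau)\hat{s}s'N\right]^2$ need not vanish, the two-sided inverse of $f=1+(1-\tau)\hat{s}s'Nu^{-1}$ is the alternating Neumann series $f^{-1}=1+\sum_{j\geq 1}(-1)^j\left[(1-\tau)\hat{s}s'N\right]^ju^{-j}$, so the unsigned formula you quote (as displayed in the paper) should be corrected accordingly, which does not affect the conclusion.
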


\section{The Para-$S$-Module $C^\nnatural$}\label{sec:para-precyclic}
An alternative way to define the cyclic homology of a cyclic $k$-module is to use the $CC$-bicomplex~\cite{Co:CRAS83, Ts:UMN83}. 
From this point of view the cyclic homology is given by the homology of the total complex of this bicomplex. In fact, this bicomplex makes sense for any precyclic $k$-module, including precyclic modules of non-unital algebras. 

In this section, we shall extend this approach to the para-(pre)cyclic setting. Although we will not get a bicomplex, we will still be able to construct a para-$S$-module. We shall work in the setting of para-precyclic $k$-modules.

\subsection{Construction of $C^\nnatural$} 
Let $C=(C_\bt, d,t)$ be a para-precyclic $k$-module. The $k$-module $C^\nnatural_m$, $m\geq 0$, is defined by 
\begin{equation}
 C^\nnatural_m= \bigoplus_{p+q=m} C_qu^p =C_m u^0 \oplus C_{m-1}u \oplus C_{m-2}u^2 \oplus \cdots. 
 \label{eq:Cnnat.Cnnat}
\end{equation}
Note that $u^{-1}$ maps $C_m^\nnatural$ to $C_{m-1}u^0 \oplus C_{m-2}u \oplus \cdots =C_{m-1}^\nnatural$, and so $u^{-2}$ maps $C_m^\nnatural$ to 
$C_{m-2}^\nnatural$.  

We define the operator $\dl: C_\bt^\nnatural \rightarrow C_{\bt-1}^\nnatural$ by 
\begin{equation}
 \dl \left(xu^{2p+1}\right) = (1-\tau)x u^{2p}, \qquad  \dl \left(xu^{2p}\right) = \left\{ 
\begin{array}{cl}
 0  &  \text{if $p=0$},     \\
  Nxu^{2p-1}&   \text{if $p\geq 1$},
\end{array} \right. 
\label{eq:CCparacyclic.dl}
\end{equation}
where $\tau$ and $N$ are given by~(\ref{eq:parachain-paracyclic.tau-N}). We also define the operator $\delta: C_\bt^\nnatural \rightarrow C_{\bt-1}^\nnatural$ by 
\begin{equation}
 \delta\left( xu^{2p}\right) = bx u^{2p}, \qquad  \delta\left( xu^{2p+1}\right)=-b'xu^{2p+1}. 
 \label{eq:CCparacyclic.delta}
\end{equation}

\begin{lemma}[see also~\cite{Co:CRAS83, LQ:CMH84, Ts:UMN83}]\label{lem:CCparacyclic.square}
 We have 
 \begin{equation*}
 \dl^2 = (1-T)u^{-2}  \qquad \text{and} \qquad  \dl \delta +\delta \dl=\delta^2=0. 
\end{equation*}
\end{lemma}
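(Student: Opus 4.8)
The plan is to verify each of the three identities by a direct computation on the homogeneous pieces $xu^{2p}$ and $xu^{2p+1}$ of $C^\nnatural_\bt$, using the structural relations $(1-\tau)N=N(1-\tau)=1-T$, $b(1-\tau)=(1-\tau)b'$, $Nb=b'N$ (from~\eqref{eq:parachain-paracyclic.bNtau}), together with $b^2=(b')^2=0$, which hold for any para-precyclic module. First I would compute $\dl^2$. Applying $\dl$ to $xu^{2p+1}$ gives $(1-\tau)xu^{2p}$, and then $\dl$ again gives $N(1-\tau)xu^{2p-1}=(1-T)xu^{2p-1}$ when $p\geq 1$ and $0$ when $p=0$; applying $\dl$ to $xu^{2p}$ gives $Nxu^{2p-1}$ for $p\geq 1$, and then $\dl$ again gives $(1-\tau)Nxu^{2p-2}=(1-T)xu^{2p-2}$, while for $p=0$ both sides vanish. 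In every case the outcome equals $(1-T)u^{-2}$ applied to the argument (recall $u^{-2}(xu^r)=xu^{r-2}$ for $r\geq 2$ and $0$ otherwise), which is exactly the claimed formula $\dl^2=(1-T)u^{-2}$.

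Next I would compute $\delta^2$. On $xu^{2p}$ we get $\delta(bxu^{2p})=b^2xu^{2p}=0$; on $xu^{2p+1}$ we get $\delta(-b'xu^{2p+1})=(b')^2xu^{2p+1}=0$. So $\delta^2=0$ identically. For the anticommutator $\dl\delta+\delta\dl$ I would again split by parity. On $xu^{2p+1}$: $\dl\delta(xu^{2p+1})=\dl(-b'xu^{2p+1})=-(1-\tau)b'xu^{2p}$, while $\delta\dl(xu^{2p+1})=\delta((1-\tau)xu^{2p})=b(1-\tau)xu^{2p}$; these cancel by $b(1-\tau)=(1-\tau)b'$. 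On $xu^{2p}$ with $p\geq 1$: $\dl\delta(xu^{2p})=\dl(bxu^{2p})=Nbxu^{2p-1}$, while $\delta\dl(xu^{2p})=\delta(Nxu^{2p-1})=-b'Nxu^{2p-1}$; these cancel by $Nb=b'N$. On $xu^0$: $\dl\delta(xu^0)=\dl(bxu^0)=0$ and $\delta\dl(xu^0)=\delta(0)=0$. Hence $\dl\delta+\delta\dl=0$ on all of $C^\nnatural_\bt$.

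I do not expect any genuine obstacle here: the statement is a bookkeeping identity and the only subtlety is keeping the parity cases and the $p=0$ boundary cases straight, and making sure the signs in the definition~\eqref{eq:CCparacyclic.delta} of $\delta$ (the $-b'$ on odd powers) are what make the $b(1-\tau)=(1-\tau)b'$ cancellation work with the correct sign. The mild care point is purely notational, namely that $u^{-2}$ annihilates the two bottom columns $C_\bt u^0\oplus C_\bt u^1$, which matches the vanishing of $\dl^2$ there. Everything else is immediate from~\eqref{eq:parachain-paracyclic.bNtau} and the relation $(1-\tau)N=N(1-\tau)=1-T$ recorded just before it.
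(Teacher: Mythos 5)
Your proof is correct and follows essentially the same route as the paper: a parity-split, case-by-case verification on $xu^{2p}$ and $xu^{2p+1}$ using $b^2=(b')^2=0$, $Nb=b'N$, $b(1-\tau)=(1-\tau)b'$, and $(1-\tau)N=N(1-\tau)=1-T$, with the $p=0$ boundary handled by the vanishing of $u^{-1}$ on the bottom column. The only cosmetic difference is that the paper streamlines the $\dl^2$ computation by writing $\dl=Nu^{-1}$ on even columns and $\dl=(1-\tau)u^{-1}$ on odd columns, which your explicit case check reproduces.
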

\begin{proof}
It is immediate that $\delta^2=0$, since $b^2=(b')^2=0$. Let $x\in C_\bt$. Note that $\dl=Nu^{-1}$ on $C_\bt u^{2p}$ and $\dl=(1-t)u^{-1}$ on $C_\bt u^{2p+1}$. Thus, 
\begin{equation*}
 \dl^2\left( xu^{2p}\right)=\dl Nu^{-1}\left( xu^{2p}\right) = (1-\tau)Nu^{-2} \left( xu^{2p}\right)  =(1-T)u^{-2}\left( xu^{2p}\right).  
\end{equation*}
 Likewise, we have
 \begin{equation*}
  \dl^2 \left( xu^{2p+1}\right)=\dl (1-\tau)u^{-1}  \left( xu^{2p+1}\right) = N(1-\tau)u^{-2}  \left( xu^{2p+1}\right)  =(1-T)u^{-2} \left( xu^{2p+1}\right). 
\end{equation*}
Therefore, we see that $ \dl^2 = (1-T)u^{-2}$.  
 
 We have $(\dl \delta +\delta \dl)(xu^0)= \dl (bx u^0)=0$. If $p\geq 1$, then by using the relation $Nb=b'N$ we get
 \begin{equation*}
 (\dl \delta +\delta \dl)\left( xu^{2p}\right)= \dl \left( bxu^{2p}\right)+ \delta \left( Nxu^{2p-1}\right)= ( Nbx-b'Nx)u^{2p-1}=0. 
\end{equation*}
 If $p\geq 0$, then the equality $b(1-\tau)=(1-\tau)b'$ implies that we have
 \begin{equation*}
 (\dl \delta +\delta \dl)\left( xu^{2p}\right)= - \dl\left( b'xu^{2p+1}\right)+ \delta \left( (1-\tau)xu^{2p}\right) =\left[-b'(1-\tau)x+b(1-\tau)x\right]u^{2p}=0.
\end{equation*}
It then follows that $\dl \delta +\delta \dl=0$. The proof is complete. 
\end{proof}

It follows from Lemma~\ref{lem:CCparacyclic.square} that we have 
\begin{equation*}
 (\dl + \delta)^2=\dl^2 +\dl \delta +\delta\dl +\delta^2=(1-T)u^{-2}. 
\end{equation*}
As $u^{-1}$ and $T$ both commute with $\dl$ and $\delta$ we arrive at the following result. 

\begin{proposition}\label{prop:para-precyclic.para-S-module}
$C^\nnatural:= (C_\bt^\nnatural, \dl+\delta, u^{-2},T)$ is a para-$S$-module. 
\end{proposition}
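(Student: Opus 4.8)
The plan is to verify directly the two clauses of Definition~\ref{def:Para-S-Module} for the system $\big(C_\bt^\nnatural, \dl+\delta, u^{-2}, T\big)$. I would begin by recording the purely structural points: the $C_m^\nnatural$ of~(\ref{eq:Cnnat.Cnnat}) are $k$-modules by construction, and~(\ref{eq:CCparacyclic.dl})--(\ref{eq:CCparacyclic.delta}) show that $\dl+\delta$ lowers total degree by $1$ and $u^{-2}$ lowers it by $2$, so these maps have the degrees demanded of $d$ and $S$. The operator $T$ acts on the summand $C_qu^p$ through its action $t^{q+1}$ on $C_q$; since $t$ is invertible, $T$ is a $k$-linear automorphism of $C_\bt^\nnatural$. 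What remains is the relation $(\dl+\delta)^2=u^{-2}(1-T)$ together with the three commutation identities $[\dl+\delta,u^{-2}]=[\dl+\delta,T]=[u^{-2},T]=0$.

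For the commutations involving $T$, I would use that $T$ is a para-precyclic module automorphism (Section~\ref{sec:paracyclic-modules}): it commutes with every face, hence with $b$ and $b'$, and being a power of $t$ it commutes with $\tau$ and $N$ as well; since the operators in~(\ref{eq:CCparacyclic.dl})--(\ref{eq:CCparacyclic.delta}) are built from these together with shifts of the $u$-exponent, and $T$ leaves the $u$-exponent fixed, this yields $[\dl+\delta,T]=0$, and $[u^{-2},T]=0$ is immediate.

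The step that requires a little care is $[\dl+\delta,u^{-2}]=0$. Here I would first observe that $\dl$ equals $Nu^{-1}$ on $C_\bt u^{2p}$ but $(1-\tau)u^{-1}$ on $C_\bt u^{2p+1}$, so $\dl$ does \emph{not} commute with $u^{-1}$; it is essential that the $S$-operator be $u^{-2}$. The point is that $u^{-2}$ preserves the parity of the $u$-exponent, while on each parity class $\dl$ is one of the fibrewise operators $N$, $1-\tau$ composed with the shift $u^{-1}$, and $\delta$ is one of the fibrewise operators $b$, $-b'$ (leaving the exponent unchanged). Since fibrewise operators and the shifts $u^{-1},u^{-2}$ all commute among themselves, a short case check over even and odd exponents gives $\dl u^{-2}=u^{-2}\dl$ and $\delta u^{-2}=u^{-2}\delta$, hence $[\dl+\delta,u^{-2}]=0$.

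Finally, expanding $(\dl+\delta)^2=\dl^2+(\dl\delta+\delta\dl)+\delta^2$ and applying Lemma~\ref{lem:CCparacyclic.square}, which gives $\dl^2=(1-T)u^{-2}$ and $\dl\delta+\delta\dl=\delta^2=0$, yields $(\dl+\delta)^2=(1-T)u^{-2}=u^{-2}(1-T)$, the last equality by $[u^{-2},T]=0$. This checks all of Definition~\ref{def:Para-S-Module}. I do not anticipate a genuine obstacle: Lemma~\ref{lem:CCparacyclic.square} already performs the only substantial computation, and the one thing to watch is the bookkeeping with $u$-exponents in the commutation with the $S$-operator $u^{-2}$.
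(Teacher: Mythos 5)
Your proposal is correct and follows essentially the same route as the paper: the only substantial computation is Lemma~\ref{lem:CCparacyclic.square}, which gives $(\dl+\delta)^2=(1-T)u^{-2}$, and the rest is the compatibility of $\dl$ and $\delta$ with the operators $u^{-2}$ and $T$. Your explicit remark that $\dl$ does \emph{not} commute with $u^{-1}$ (only with $u^{-2}$, which preserves the parity of the $u$-exponent) is a welcome sharpening of the paper's looser phrasing of this point, and your edge-case bookkeeping at low $u$-exponents does check out.
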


\begin{remark}\label{def:para-precyclic.cyclic-bicomplex}
 If $C$ is a cyclic $k$-module, i.e., $T=1$, then $\dl^2=0$ and $(\dl+\delta)^2=0$, and so $C^\nnatural$ is an $S$-module. In particular, $(C_\bt^\nnatural, \dl+\delta)$ is a chain complex. In fact, this is the total complex of the bicomplex $(CC_{\bt,\bt}, \dl, \hat{\delta})$, where 
\begin{equation*}
 CC_{p,q}= C_q u^p, \qquad \hat{\delta}\left(xu^{2p}\right)=bxu^{2p}, \qquad \hat{\delta}\left(xu^{2p+1}\right)=b'xu^{2p+1}.
\end{equation*}
This bicomplex is precisely the $CC$-bicomplex.  
\end{remark}

\begin{remark}
When $C$ is an $H$-unital paracyclic $k$-module we obtain two para-$S$-modules $C^\natural$ and $C^\nnatural$. We will see in Section~\ref{sec:Cn-Cnn} that the former is a deformation retract of the latter.
\end{remark}

\subsection{Quasi-precyclic modules} 
Supposed now that $C=(C_\bt, d,t)$ is a quasi-precyclic $k$-module in the sense of Definition~\ref{def:paracyclic.quasi-precyclic}. As mentioned in Remark~\ref{rmk:para-precyclic.splitting-quasi-precyclic} the splitting $C_\bt=C^T_\bt\oplus R^T_\bt$ is a splitting of para-precyclic $k$-modules. In particular, we get a splitting of para-$S$-modules, 
\begin{equation}
 C^\nnatural_\bt = C^{T,\nnatural}_\bt \oplus R^{T,\nnatural}_\bt. 
 \label{eq:para-precyclic.splitting-Cnn-quasi}
\end{equation}

As mentioned in Section~\ref{sec:parachain} the splitting $C_\bt=C^T_\bt\oplus R^T_\bt$ also ensures us that $1-T$ induces an invertible $k$-linear map on $R^T_\bt$. Let $\beta^\nnatural: R^{T,\nnatural}_\bt \rightarrow R^{T,\nnatural}_{\bt+1}$ be the $k$-linear map defined by
\begin{equation*}
 \beta^\nnatural(xu^{2p})=N(1-T)^{-1}xu^{2p+1}, \qquad \beta^\nnatural(xu^{2p+1})=0, \qquad x\in R^T_\bt. 
\end{equation*}

\begin{lemma}
 On $R^{T,\nnatural}_\bt$ we have 
\begin{equation*}
  \dl \beta^\nnatural + \beta^\nnatural \dl =1, \qquad \delta \beta^\nnatural + \beta^\nnatural \delta=0. 
\end{equation*}
\end{lemma}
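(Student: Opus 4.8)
The plan is to verify the two identities directly on the two families of homogeneous generators of $R^{T,\nnatural}_\bt$, namely $xu^{2p}$ and $xu^{2p+1}$ with $p\geq 0$ and $x\in R^T_\bt$, using only the elementary relations $(1-\tau)N=N(1-\tau)=1-T$ and $b(1-\tau)=(1-\tau)b'$, $Nb=b'N$ recalled in~(\ref{eq:parachain-paracyclic.bNtau}). Two structural facts will be used throughout: first, $R^T_\bt$ is a para-precyclic submodule, so $\tau$, $N$, $b$, $b'$ all preserve it (in particular $(1-\tau)x$, $bx$, $Nx$ lie in $R^T_\bt$, so $\beta^\nnatural$ applies to them); second, since $1-T$ is invertible on $R^T_\bt$ and $T$ commutes with every structural operator, the inverse $(1-T)^{-1}$ on $R^T_\bt$ commutes with $\tau$, $N$, $b$ and $b'$.

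For the first identity I would split into the odd and even cases. On an odd generator $xu^{2p+1}$ one has $\beta^\nnatural(xu^{2p+1})=0$, so only $\beta^\nnatural\dl$ contributes; there $\dl(xu^{2p+1})=(1-\tau)xu^{2p}$ and, commuting $(1-T)^{-1}$ past $(1-\tau)$, one gets $\beta^\nnatural\big((1-\tau)xu^{2p}\big)=N(1-\tau)(1-T)^{-1}xu^{2p+1}=(1-T)(1-T)^{-1}xu^{2p+1}=xu^{2p+1}$. On an even generator $xu^{2p}$ the $\dl$-image lies in odd $u$-degree (it is $Nxu^{2p-1}$ for $p\geq1$ and $0$ for $p=0$), so $\beta^\nnatural\dl$ kills it and only $\dl\beta^\nnatural$ contributes; here $\dl\beta^\nnatural(xu^{2p})=\dl\big(N(1-T)^{-1}xu^{2p+1}\big)=(1-\tau)N(1-T)^{-1}xu^{2p}=(1-T)(1-T)^{-1}xu^{2p}=xu^{2p}$. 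Hence $\dl\beta^\nnatural+\beta^\nnatural\dl=1$ on $R^{T,\nnatural}_\bt$.

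For the second identity, on an odd generator both terms vanish: $\beta^\nnatural(xu^{2p+1})=0$, while $\delta(xu^{2p+1})=-b'xu^{2p+1}$ still has odd $u$-degree so $\beta^\nnatural\delta(xu^{2p+1})=0$. On an even generator $xu^{2p}$ one has $\delta\beta^\nnatural(xu^{2p})=-b'N(1-T)^{-1}xu^{2p+1}$ and $\beta^\nnatural\delta(xu^{2p})=N(1-T)^{-1}bxu^{2p+1}$; moving $(1-T)^{-1}$ to the right and using $Nb=b'N$ their sum is $(Nb-b'N)(1-T)^{-1}xu^{2p+1}=0$. Thus $\delta\beta^\nnatural+\beta^\nnatural\delta=0$ on $R^{T,\nnatural}_\bt$.

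I do not expect a genuine obstacle here — the whole argument is a short finite case check. The only points requiring a modicum of care are (i) bookkeeping the two cases in the definition~(\ref{eq:CCparacyclic.dl}) of $\dl$ on even powers of $u$ (the $p=0$ case causes no trouble, since replacing $\dl(xu^0)=0$ only makes the term $\beta^\nnatural\dl$ drop out), and (ii) justifying the commutation of $(1-T)^{-1}$ with $\tau$, $N$, $b$, $b'$ on $R^T_\bt$, which rests on the commutation of $T$ with the structural operators established in Section~\ref{sec:paracyclic-modules}.
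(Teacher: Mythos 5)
Your proof is correct and follows essentially the same route as the paper's: a direct case check on the even and odd generators $xu^{2p}$, $xu^{2p+1}$ of $R^{T,\nnatural}_\bt$, using $(1-\tau)N=N(1-\tau)=1-T$, $Nb=b'N$, and the fact that $(1-T)^{-1}$ commutes with the structural operators on $R^T_\bt$. Your extra remarks justifying that $R^T_\bt$ is preserved by $\tau,N,b,b'$ and that $(1-T)^{-1}$ commutes with them are exactly the points the paper uses implicitly, so nothing is missing.
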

\begin{proof}
 Let $x\in R^T_\bt$. We have 
 \begin{align*}
  \left(\dl \beta^\nnatural + \beta^\nnatural \dl \right)(xu^{2p}) & = \dl \left[ N(1-T)^{-1} xu^{2p+1}\right] + \beta^\nnatural  \left[Nxu^{2p-1}\right] \\ 
  & = (1-\tau)N(1-T)^{-1}x u^{2p}.
\end{align*}
As $(1-\tau)N(1-T)^{-1}=(1-T)(1-T)^{-1}=1$ on $R^T_\bt$, we deduce that $(\dl \beta^\nnatural + \beta^\nnatural \dl)(xu^{2p})= xu^{2p}$. Likewise, $\left(\dl \beta^\nnatural + \beta^\nnatural \dl \right)(xu^{2p+1})$ is equal to  
\begin{equation*}
 \beta^\nnatural  \left[(1-\tau)xu^{2p}\right] = N(1-T)^{-1}(1-\tau)x u^{2p+1}= xu^{2p+1}.
\end{equation*}
Therefore, we see that $ \dl \beta^\nnatural + \beta^\nnatural \dl =1$ on  $R^{T,\nnatural}_\bt$. 

In addition, we have 
 \begin{align*}
  \left(\delta \beta^\nnatural + \beta^\nnatural \delta \right)(xu^{2p}) & = \delta \left[ N(1-T)^{-1} xu^{2p+1}\right] + \beta^\nnatural  \left[bxu^{2p}\right] \\ 
  & = -b'N(1-T)^{-1}x u^{2p+1}+N(1-T)^{-1}bx^{2p+1}\\
  & = (-b'N+Nb)(1-T)^{-1}x u^{2p+1}\\
  &=0.  
\end{align*}
We also have $\left(\delta \beta^\nnatural + \beta^\nnatural \delta\right)(xu^{2p+1})= 
-\beta^\nnatural [b'xu^{2p+1}]=0$. 
Therefore, we see that $\delta \beta^\nnatural + \beta^\nnatural \delta=0$ on all $R^{T, \nnatural}_\bt$. The proof is complete. 
\end{proof}

The above lemma implies that $\beta^\nnatural$ is a contracting homotopy for $(R^{T,\nnatural}_\bt, \dl+\delta)$. By construction $\beta^\nnatural$ is compatible with the $T$-operator. Moreover, we have $\beta^\nnatural u^{-2}=u^{-2}\beta^\nnatural $. Therefore, $\beta^\nnatural$ is an $S$-contracting homotopy, and so  $R^{T,\nnatural}$ is an $S$-contractible para-$S$-module. Combining this with the splitting~(\ref{eq:para-precyclic.splitting-Cnn-quasi}) we then arrive at the following statement. 

\begin{proposition}\label{prop:para-precyclic.Cnn-quasi}
 If $C$ is a quasi-precyclic $k$-module, then $C^\nnatural$ is a quasi-$S$-module. 
\end{proposition}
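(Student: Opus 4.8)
The plan is to check the two conditions in Definition~\ref{def:Para-S.quasi-S-mod} directly: the $k$-module splitting $C^\nnatural_\bt = C^{T,\nnatural}_\bt \oplus R^{T,\nnatural}_\bt$, and the $S$-contractibility of the para-$S$-module $R^{T,\nnatural}$.

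For the splitting, I would start from Remark~\ref{rmk:para-precyclic.splitting-quasi-precyclic}: since $C$ is quasi-precyclic, $C_\bt = C^T_\bt \oplus R^T_\bt$ is a splitting of para-precyclic $k$-modules. The assignment $C\mapsto C^\nnatural$ is visibly functorial and, being given in each degree by the finite direct sum $\bigoplus_{p+q=m}C_q u^p$ with structural maps $\dl,\delta,u^{-1}$ built from the structural operators of $C$, it commutes with finite direct sums of para-precyclic modules. Hence it carries the above splitting to the splitting of para-$S$-modules~(\ref{eq:para-precyclic.splitting-Cnn-quasi}), in which $C^{T,\nnatural}$ is the para-$S$-module of the precyclic module $C^T$ (this one has $T=1$, so it is in fact an $S$-module, consistently with Definition~\ref{def:Para-S.quasi-S-mod}) and $R^{T,\nnatural}$ is the para-$S$-module of $R^T$.

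For the $S$-contractibility of $R^{T,\nnatural}$, I would use that the splitting forces $1-T$ to restrict to an invertible $k$-linear map of $R^T_\bt$, so that the operator $\beta^\nnatural$ introduced just before the statement is well defined, and then verify on $R^{T,\nnatural}_\bt$ the two identities $\dl\beta^\nnatural+\beta^\nnatural\dl=1$ and $\delta\beta^\nnatural+\beta^\nnatural\delta=0$ — this is exactly the content of the Lemma preceding the statement, and it rests on the relations $(1-\tau)N(1-T)^{-1}=1$ on $R^T_\bt$, together with $Nb=b'N$ and $b(1-\tau)=(1-\tau)b'$ from~(\ref{eq:parachain-paracyclic.bNtau}). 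Adding the two identities gives $(\dl+\delta)\beta^\nnatural+\beta^\nnatural(\dl+\delta)=1$ on $R^{T,\nnatural}_\bt$. Since $\beta^\nnatural$ involves only $N$, $(1-T)^{-1}$ and a shift of the $u$-grading, it commutes with both $u^{-2}$ and $T$, and is therefore an $S$-contracting homotopy; thus $R^{T,\nnatural}$ is $S$-contractible. Combining this with the splitting and Definition~\ref{def:Para-S.quasi-S-mod} completes the argument.

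The only genuinely delicate point — already dealt with in the preceding Lemma — is producing the correct contracting homotopy: one has to notice that on $R^{T,\nnatural}$ the operator $\dl$ acts as $Nu^{-1}$ on even powers of $u$ and as $(1-\tau)u^{-1}$ on odd powers, which is why $\beta^\nnatural$ must be asymmetric in the parity of $p$, carrying the factor $N(1-T)^{-1}$ on even powers and vanishing on odd ones. Everything else is a routine assembly of facts established earlier in the section, so I do not anticipate any further obstacle.
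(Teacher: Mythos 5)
Your proposal is correct and follows essentially the same route as the paper: the splitting of para-precyclic $k$-modules $C_\bt=C^T_\bt\oplus R^T_\bt$ yields the splitting~(\ref{eq:para-precyclic.splitting-Cnn-quasi}) of para-$S$-modules, and the $S$-contractibility of $R^{T,\nnatural}$ is obtained from the contracting homotopy $\beta^\nnatural$ established in the lemma preceding the statement, whose compatibility with $u^{-2}$ and $T$ you correctly note. Nothing is missing.
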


Proposition~\ref{prop:para-precyclic.Cnn-quasi} allows us to apply Proposition~\ref{prop:Para-S-Mod.quasi-S-Mod} as follows. Let $\pi_T:C_\bt\rightarrow C_{T,\bt}$ be the canonical projection of $C_\bt$ onto $C_{T,\bt}$ and $\pi^T:C_\bt \rightarrow C_\bt$ the projection on $C^T_\bt$ associated with the splitting $C_\bt=C^T_\bt\oplus R^T_\bt$.
They are both maps of paracyclic $k$-modules. As above, $\pi_T$ induces an isomorphism of para-precyclic $k$-modules from $C^T$ onto $C_T$, and, under this isomorphism, the inclusion of $C^T$ into $C_\bt$ corresponds to the embedding $\iota_T:C_{T,\bt}\rightarrow C_\bt$ given by~(\ref{eq:Para-S-Mod.iotaT}). This embedding is actually a para-precyclic module embedding. We also let $h:C_\bt\rightarrow C_{\bt+1}$ be the $k$-linear map defined by
\begin{equation}
 h^\nnatural(xu^{2p})=-N(1-T)^{-1}\left(x-x^T\right)u^{2p+1}, \qquad  h^\nnatural(xu^{2p+1})=0, \qquad x\in C_\bt.
 \label{eq:para-precyclic.homotopy-hnn} 
\end{equation}
This is a $T$-compatible map. This is also the chain homotopy~(\ref{eq:Para-S-Mod.hT}) associated with the contracting homotopy $\beta^\nnatural$. Therefore, by using Proposition~\ref{prop:Para-S-Mod.quasi-S-Mod} we obtain the following result.

\begin{proposition}\label{prop:para-precyclic.CTCT-Cnn}
 Assume that $C$ is a quasi-precyclic $k$-module. 
 \begin{enumerate}
 \item The canonical projection $\pi_T:C_\bt\rightarrow C_{T,\bt}$ induces a precyclic $k$-module isomorphism from $C^T$ onto $C_T$. 
 
 \item The map $\iota_T:C_{T,\bt} \rightarrow C_\bt$ given by~(\ref{eq:Para-S-Mod.iotaT}) is a para-precyclic $k$-module embedding. 
 
 \item We have
           \begin{equation*}
                      \pi_T\iota_T=1,\qquad \pi^T=\iota_T\pi_T=1 + (\dl +\delta)h^\nnatural + h^\nnatural (\dl +\delta). 
            \end{equation*}
           This provides us with $S$-deformation retracts of $C^\nnatural$ to $C^\nnatural_T$ and $C^{T,\nnatural}$.
\end{enumerate}
\end{proposition}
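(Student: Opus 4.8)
The plan is to derive all three assertions as an essentially formal consequence of the quasi-$S$-module machinery of Section~\ref{sec:Para-S-Mod} applied to the para-$S$-module $C^\nnatural$, using Proposition~\ref{prop:para-precyclic.Cnn-quasi} as the entry point; the only genuinely computational point will be to check that the explicit homotopy $h^\nnatural$ of~(\ref{eq:para-precyclic.homotopy-hnn}) is the one produced by that machinery.

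First I would record that, by Remark~\ref{rmk:para-precyclic.splitting-quasi-precyclic}, the splitting $C_\bt=C^T_\bt\oplus R^T_\bt$ is a splitting of para-precyclic $k$-modules; hence the projection $\pi^T:C_\bt\rightarrow C_\bt$ onto $C^T_\bt$ and the canonical projection $\pi_T:C_\bt\rightarrow C_{T,\bt}$ are both para-precyclic $k$-module maps, and since $T=1$ on $C^T_\bt$ and on $C_{T,\bt}$ the modules $C^T$ and $C_T$ are in fact precyclic $k$-modules. For part~(1), I would note that the restriction of $\pi_T$ to $C^T_\bt$ is injective (its kernel is $C^T_\bt\cap R^T_\bt=0$) and surjective (because $C_\bt=C^T_\bt+R^T_\bt$), hence a precyclic $k$-module isomorphism. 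For part~(2), the map $\iota_T$ of~(\ref{eq:Para-S-Mod.iotaT}) satisfies $\iota_T(\wb{x})=x^T=\pi^T(x)$, so it is exactly the inverse of this isomorphism followed by the inclusion $C^T_\bt\hookrightarrow C_\bt$; being a composite of para-precyclic module maps it is a para-precyclic $k$-module embedding.

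For part~(3), the identities $\pi_T\iota_T=1$ and $\pi^T=\iota_T\pi_T$ are immediate from the splitting. For the homotopy identity I would invoke Proposition~\ref{prop:Para-S-Mod.quasi-S-Mod}: the system $C^\nnatural=(C^\nnatural_\bt,\dl+\delta,u^{-2},T)$ is a quasi-$S$-module by Proposition~\ref{prop:para-precyclic.Cnn-quasi}, and the $(S,T)$-compatible contracting homotopy of $R^T$ required there is the map $\beta^\nnatural$ on $R^{T,\nnatural}_\bt$ exhibited just before Proposition~\ref{prop:para-precyclic.Cnn-quasi}. Proposition~\ref{prop:Para-S-Mod.quasi-S-Mod} then yields $\pi^T=1+(\dl+\delta)h+h(\dl+\delta)$ with $h(x)=-\beta^\nnatural(x-x^T)$ as in~(\ref{eq:Para-S-Mod.hT}), and the remaining step is to expand this on each graded summand $C_qu^p$ and compare with~(\ref{eq:para-precyclic.homotopy-hnn}). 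Here I would use that $\pi^T$ acts componentwise and so commutes with $u^{-1}$ (so that $x^T$ is computed $u$-degree by $u$-degree), and that $\beta^\nnatural$ annihilates the odd powers $C_\bt u^{2p+1}$, which reproduces the two clauses of~(\ref{eq:para-precyclic.homotopy-hnn}). The $S$-deformation retracts of $C^\nnatural$ onto $C^\nnatural_T$ and $C^{T,\nnatural}$ then follow from the corresponding part of Proposition~\ref{prop:Para-S-Mod.quasi-S-Mod}.

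The hard part is really only this last comparison of homotopies; everything else is bookkeeping, since Proposition~\ref{prop:para-precyclic.Cnn-quasi} has already done the work of producing the splitting~(\ref{eq:para-precyclic.splitting-Cnn-quasi}) and the contracting homotopy $\beta^\nnatural$ at the level of para-$S$-modules.
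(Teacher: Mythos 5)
Your proposal is correct and follows essentially the same route as the paper: the paper likewise deduces the proposition by noting that the splitting $C_\bt=C^T_\bt\oplus R^T_\bt$ is a splitting of para-precyclic $k$-modules, that $C^\nnatural$ is a quasi-$S$-module with $(S,T)$-compatible contracting homotopy $\beta^\nnatural$ on $R^{T,\nnatural}$, and that $h^\nnatural$ is exactly the homotopy~(\ref{eq:Para-S-Mod.hT}) attached to $\beta^\nnatural$, so that Proposition~\ref{prop:Para-S-Mod.quasi-S-Mod} applies. Your componentwise comparison of $-\beta^\nnatural(x-x^T)$ with~(\ref{eq:para-precyclic.homotopy-hnn}) is precisely the verification the paper leaves implicit, and it checks out.
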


\begin{remark}\label{rmk:para-precyclic.hnn-special}
 As we can see from~(\ref{eq:para-precyclic.homotopy-hnn}), the range of $h^\nnatural$ is contained in $\oplus_{p\geq 0} R_\bt^Tu^{2p+1}$ and its kernel contains $\oplus_{p\geq 0} C_\bt^T u^{2p}$ and $\oplus_{p\geq 0} C_\bt u^{2p+1}$. It then follows that $\pi_T h^\nnatural=0$, $h^\nnatural \iota_T=0$ and $(h^\nnatural)^2=0$, i.e., the chain homotopy $h^\nnatural$ is special in the sense of~(\ref{eq:Perturbation.special}). 
\end{remark}

Given any $r$-cyclic $k$-module $C$, Feigin-Tsygan~\cite[Appendix]{FT:LNM87a} defined a chain bicomplex, which is like the $CC$-bicomplex 
$(CC_{\bt,\bt}, \dl, \hat{\delta})$ (\emph{cf.}~Remark~\ref{def:para-precyclic.cyclic-bicomplex}). The only difference is with the formula~(\ref{eq:CCparacyclic.dl}) for the differential $\dl$, where the operator $N$ is replaced by the operator $\sum_{j=0}^{r(m+1)-1} \tau^j$. As we shall now see, the Feigin-Tsygan bicomplex is a special case of a general construction for quasi-precyclic modules. 

Suppose that $C$ is a quasi-precyclic $k$-module. As mentioned above the projection $\pi^T:C_\bt \rightarrow C_\bt$ is a para-precyclic $k$-module map. As in Section~\ref{sec:paracyclic} set $\tilde{N}=N\pi^T=\pi^TN$, and let $\tilde{\dl}:C_\bt^\nnatural \rightarrow C_{\bt-1}^\nnatural$ be the $k$-linear map defined as in~(\ref{eq:CCparacyclic.dl}) by using the operator $\tilde{N}$ instead of $N$. Thus, $\tilde{\dl}=\pi^T\dl=\dl\pi^T$ on $C_\bt u^{2p}$ and $\tilde{\dl}=\dl$ on $C_\bt u^{2p+1}$. When  $C$ is  $r$-precyclic and $r^{-1}\in k$, the operator $\tilde{N}$ is $r^{-1}$-times the Feigin-Tsygan operator $\sum_{j=0}^{r(m+1)-1} \tau^j$ on $C_m$ (see  Eq.~(\ref{eq:Paracyclic.FT-N})).  

\begin{proposition}\label{prop:para-precyclic.CC-quasi-precyclic}
 Suppose that $C$ is a quasi-precyclic $k$-module.
\begin{enumerate}
 \item $\widetilde{CC}:=(CC_{\bt, \bt}, \tilde{\dl}, \hat{\delta})$ is a chain bicomplex and $\widetilde{C}^\nnatural:=(C^\nnatural_\bt, \tilde{\dl}+\delta, u^{-2})$ is an $S$-module. 
 
 \item The projection $\pi^T:C_\bt \rightarrow C_\bt$ yields an $S$-homotopy equivalence between $C^\nnatural$ and  $\widetilde{C}^\nnatural$.
\end{enumerate}
\end{proposition}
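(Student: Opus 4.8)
The plan is to run the same computations as in the proofs of Lemma~\ref{lem:CCparacyclic.square} and Proposition~\ref{prop:para-precyclic.para-S-module}, systematically replacing $N$ by $\tilde N = N\pi^T = \pi^T N$ and exploiting two features of the projection $\pi^T$. First, being the projection attached to the splitting $C_\bt = C^T_\bt \oplus R^T_\bt$, which by Remark~\ref{rmk:para-precyclic.splitting-quasi-precyclic} is a splitting of para-precyclic $k$-modules, $\pi^T$ is a map of para-precyclic $k$-modules: it commutes with all faces $d_j$, hence with $b$ and $b'$, and with $t$, hence with $\tau$, $N$ and $T$. Second, $\pi^T$ is idempotent with kernel $R^T_\bt = \ran(1-T)$ and image $C^T_\bt = \ker(1-T)$, so that $\pi^T(1-T) = (1-T)\pi^T = 0$ and $\tilde N\pi^T = \pi^T\tilde N = \tilde N$.

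For part~(1), note that on the even $u$-columns $\tilde\dl = \pi^T\dl = \dl\pi^T$, while on the odd $u$-columns $\tilde\dl = \dl$. The identity $\tilde\dl^2 = 0$ then follows because each composition that occurs collapses: on $C_\bt u^{2p}$ one meets $(1-\tau)\tilde N(\cdots) = \pi^T(1-\tau)N(\cdots) = \pi^T(1-T)(\cdots) = 0$, and on $C_\bt u^{2p+1}$ one meets $\tilde N(1-\tau)(\cdots) = \pi^T(1-T)(\cdots) = 0$. Since $b^2 = (b')^2 = 0$ we have $\hat\delta^2 = 0$, and the commutation relation $\tilde\dl\hat\delta = \hat\delta\tilde\dl$ is checked column by column exactly as in Lemma~\ref{lem:CCparacyclic.square}, the only new input being $\tilde N b = \pi^T N b = \pi^T b'N = b'\tilde N$ (using $Nb = b'N$ and $[\pi^T,b'] = 0$); thus $\widetilde{CC} = (CC_{\bt,\bt}, \tilde\dl, \hat\delta)$ is a chain bicomplex. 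Equivalently, the same computations yield $\tilde\dl^2 = 0$, $\delta^2 = 0$ and $\tilde\dl\delta + \delta\tilde\dl = 0$, hence $(\tilde\dl + \delta)^2 = 0$; since $u^{-2}$ is $2$-periodic in the $u$-grading it commutes with $\tilde\dl$ and $\delta$, so $\widetilde{C}^\nnatural = (C^\nnatural_\bt, \tilde\dl + \delta, u^{-2})$ is an $S$-module, namely the total complex of $\widetilde{CC}$.

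For part~(2), take $f = g = \pi^T$, induced columnwise on $C^\nnatural_\bt$; it commutes with $u^{-2}$ and $T$. Using that $\pi^T$ commutes with $N$, $\tau$, $b$, $b'$ and is idempotent, one checks $\pi^T\dl = \tilde\dl\pi^T$ and $\pi^T\tilde\dl = \dl\pi^T$ (together with $[\pi^T,\delta] = 0$), so $\pi^T$ is an $S$-map from $C^\nnatural$ to $\widetilde{C}^\nnatural$ and from $\widetilde{C}^\nnatural$ to $C^\nnatural$. By Proposition~\ref{prop:para-precyclic.CTCT-Cnn} we already have $\pi^T = 1 + (\dl + \delta)h^\nnatural + h^\nnatural(\dl + \delta)$ with $h^\nnatural$ the $(S,T)$-compatible homotopy of~(\ref{eq:para-precyclic.homotopy-hnn}). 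The key observation is that the \emph{same} $h^\nnatural$ works on $\widetilde{C}^\nnatural$: the operator $\dl - \tilde\dl$ vanishes on the odd $u$-columns and maps the even $u$-columns into the odd ones, while by Remark~\ref{rmk:para-precyclic.hnn-special} the homotopy $h^\nnatural$ vanishes on the odd $u$-columns and maps the even ones into the odd ones; hence $(\dl - \tilde\dl)h^\nnatural = 0 = h^\nnatural(\dl - \tilde\dl)$, and therefore $\pi^T = 1 + (\tilde\dl + \delta)h^\nnatural + h^\nnatural(\tilde\dl + \delta)$ as well. Thus $(\pi^T)^2 = \pi^T$ is $S$-homotopic to the identity on both $C^\nnatural$ and $\widetilde{C}^\nnatural$, so the two $S$-maps $\pi^T$ are $S$-homotopy inverses of one another; this is an $S$-homotopy equivalence $C^\nnatural \simeq \widetilde{C}^\nnatural$, in exact analogy with the argument in the proof of Proposition~\ref{prop:Parachain.tC-mixed-complex}.

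The only genuine computation is in part~(1): keeping the $u$-columns and the $b/b'$ bookkeeping straight while noticing that every potential obstruction term is of the form $\pi^T(1-T)(\cdots)$ and hence vanishes. Part~(2) is then essentially formal, resting on Proposition~\ref{prop:para-precyclic.CTCT-Cnn} and the support properties of $h^\nnatural$ recorded in Remark~\ref{rmk:para-precyclic.hnn-special}.
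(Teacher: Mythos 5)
Your proposal is correct and follows essentially the same route as the paper: part (1) reduces to the vanishing of terms of the form $\pi^T(1-T)(\cdots)$ (the paper packages this as $\tilde{\dl}^2=\pi^T\dl^2=\pi^T(1-T)u^{-2}=0$ via Lemma~\ref{lem:CCparacyclic.square}, which is the same computation you do column by column), and part (2) rests, as in the paper, on Proposition~\ref{prop:para-precyclic.CTCT-Cnn} together with the observation that the column supports of $h^\nnatural$ and of $\dl-\tilde{\dl}$ force $\tilde{\dl}h^\nnatural+h^\nnatural\tilde{\dl}=\dl h^\nnatural+h^\nnatural\dl$. No gaps.
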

\begin{proof}
 We know by Lemma~\ref{lem:CCparacyclic.square} that $\dl^2=(1-T)u^{-2}$ and $\delta^2=\dl\delta+ \delta\dl=0$. As mentioned above $\tilde{\dl}=\pi^T\dl=\dl\pi^T$ on $C_\bt u^{2p}$ and $\tilde{\dl}=\dl$ on $C_\bt u^{2p+1}$. Thus, $\tilde{\dl}^2=\pi^T\delta^2=\pi^T(1-T)u^{-2}=0$. Moreover, as $\delta$ maps $C_\bt u^q$ to $C_{\bt-1} u^q$ we have $\tilde{\dl}\delta+ \delta\tilde{\dl}=\pi^T(\dl\delta+ \delta\dl)=0$ on $C_\bt u^{2p}$ and $\tilde{\dl}\delta+ \delta\tilde{\dl}=\dl\delta+ \delta\dl=0$ on $C_\bt u^{2p+1}$. Therefore, we see that $\delta^2=\tilde{\dl}^2=\tilde{\dl}\delta+ \delta\tilde{\dl}=0$, and hence $(\tilde{\dl}+\delta)^2=0$. It then follows that $(CC_{\bt, \bt}, \tilde{\dl}, \hat{\delta})$ is a chain bicomplex, and so its total complex $(C^\nnatural_\bt, \tilde{\dl}+\delta)$ is a chain complex. As $u^{-2}$ is compatible with the operators $(\dl, \delta)$, we then see that 
 $\widetilde{C}^\nnatural:=(C^\nnatural_\bt, \tilde{\dl}+\delta, u^{-2})$ is an $S$-module. 
 
 As $[\delta,\pi^T]=[u^{-2},\pi^T]=0$ and $ \tilde{\dl}\pi^T =  \pi^T\tilde{\dl}=\dl \pi^T= \pi^T\dl$ we see that $\pi^T$ gives rise to an $S$-map from 
 $C^\nnatural$ (resp., $\widetilde{C}^\nnatural$) to $\widetilde{C}^\nnatural$ (resp., $C^\nnatural$). Moreover, by Proposition~\ref{prop:para-precyclic.CTCT-Cnn} we have $(\pi^T)^2=\pi^T=1+(\dl +\delta)h^\nnatural + h^\nnatural(\dl +\delta)$, where the chain homotopy $h^\nnatural :C^\nnatural_\bt \rightarrow C^\nnatural_{\bt+1}$ is given by~(\ref{eq:para-precyclic.homotopy-hnn}). Thus, $\pi^T: C_\bt^\nnatural \rightarrow \widetilde{C}^\nnatural$ is an $S$-homotopy right-inverse of  $\pi^T: \tilde{C}_\bt^\nnatural \rightarrow C^\nnatural$. 
 
 Let $x\in C_\bt$. By definition $h^\nnatural$ vanishes on $C_\bt u^{2p+1}$ and maps $C_\bt u^{2p}$ to $C_\bt u^{2p-1}$. As the images of $xu^{2p}$ by $\dl$ and $\tilde{\dl}$ are contained in $C_\bt u^{2p-1}$, we get
\begin{equation*}
 (\tilde{\dl} h^\nnatural + h^\nnatural\tilde{\dl})(xu^{2p})= \dl h^\nnatural (xu^{2p}) = (\dl h^\nnatural + h^\nnatural\dl)(xu^{2p}). 
\end{equation*}
 Similarly, as $h^\nnatural (xu^{2p+1})=0$ and $\tilde{\dl}(xu^{2p+1}) = \dl(xu^{2p+1})$, we have 
 \begin{equation*}
 (\tilde{\dl} h^\nnatural + h^\nnatural\tilde{\dl})(xu^{2p+1})= \dl h^\nnatural (xu^{2p+1}) = (\dl h^\nnatural + h^\nnatural\dl)(xu^{2p+1}). 
\end{equation*}
Therefore, we see that $\tilde{\dl} h^\nnatural + h^\nnatural\tilde{\dl}=\dl h^\nnatural + h^\nnatural\dl$, and so $(\pi^T)^2=1+(\tilde{\dl} +\delta)h^\nnatural + h^\nnatural(\tilde{\dl} +\delta)$. It then follows that $\pi^T$ gives rise to an $S$-homotopy equivalence between $C^\nnatural$ and  $\widetilde{C}^\nnatural$. 
The proof is complete. 
\end{proof}

\section{Perturbation Lemmas and Co-Extensions}\label{sec:Perturbation}
The basic perturbation lemma~\cite{Br:Messina64, EM:AM47, Gu:IJM72,  Sh:IHES62} is a simple and powerful tool for constructing chain homotopy equivalences in homological algebra  (see, e.g.,~\cite{Ba:Preprint98, Be:AGT14, Br:Messina64, EM:AM47, Gu:IJM72, GL:IJM89, GLS:IJM91, HK:MZ91, Ka:Crelle90, KR:CMB04, LS:MM87, La:SM01, Re:HHA00, Sh:IHES62}). 
In particular, as first observed by Kassel~\cite{Ka:Crelle90}, in the context of cyclic homology it allows us to convert various deformation retracts of Hochschild complexes  into deformation retracts of cyclic complexes (see also~\cite{Ba:Preprint98, KR:CMB04, Po:EZ}). Incidentally, it also provides us with a natural interpretation of the $B$-operator of an $H$-unital para-precyclic module (\emph{cf}.~\cite[Remarque~5.3]{Ka:Crelle90}; see also Section~\ref{sec:Cn-Cnn}). 

In this section, we seek for generalizations of the basic perturbation lemma for para-$S$-modules, and more especially para-$S$-modules associated with parachain complexes. To this end we shall establish a generalized perturbation lemma for \emph{para-twin complexes} (see below for their precise definition). Specializing these generalized perturbation lemmas to para-S-modules of parachain complexes will provide us with general recipes to convert deformation retracts of Hochschild chain complexes into $S$-deformation retracts of para-$S$-modules.

In the next sections, we will present various applications of the generalized perturbation theory of this section. We also refer to~\cite{Po:EZ} for further applications regarding the normalization of the para-$S$-modules of paracyclic and bi-paracyclic modules and a version of the  Eilenberg-Zilber theorem for bi-paracyclic modules.  

\subsection{Basic perturbation lemma for para-twin complexes} 
In what follows by a \emph{para-twin complex} we shall mean a system $(C_\bt, \dl,\delta)$, where $C_m$, $m \geq 0$, are $k$-modules and $\dl: C_\bt \rightarrow C_{\bt-1}$ and  $\delta: C_\bt \rightarrow C_{\bt-1}$ are $k$-linear maps. We call  $(C_\bt, \dl,\delta)$ a \emph{twin complex} when we further have $\dl^2=\delta^2=\dl\delta +\delta \dl=0$. In this case $(C_\bt, \dl)$, $(C_\bt, \delta)$ and $(C_\bt, \dl+\delta)$ are chain complexes. Given two twin complexes  $(C_\bt, \dl,\delta)$ and $(\wbC_\bt, \dl, \delta)$, the basic perturbation lemma~\cite{Br:Messina64, EM:AM47, Gu:IJM72,  Sh:IHES62} is a simple recipe for converting a deformation retract of 
$(C_\bt, \dl)$ to $(\wbC_\bt, \dl)$ into a deformation retract of $(C_\bt, \dl+\delta)$. 

We seek for generalizing the basic perturbation lemma to para-twin complexes. Thus, let $C=(C_\bt, \dl,\delta)$ and $(\wbC_\bt, \dl, \delta)$ be para-twin complexes, and  assume we are given $k$-linear maps $f:C_\bt \rightarrow \wbC_\bt$ and $g:\wbC_\bt \rightarrow C_\bt$ such that
             \begin{equation}
                 gf=1 +\dl \varphi + \varphi \dl, 
                   \label{eq:Perturbation.homotopy-gf}
             \end{equation}
where $\varphi: C_\bt \rightarrow C_{\bt+1}$  is a $k$-linear map so that, for every $m\geq 0$, we have 
\begin{equation}
(\delta \varphi)^j=(\varphi \delta)^j=0 \qquad \text{on $C_m$ for $j\gg 1$}. 
\label{eq:perturbation.delta-vphij}
\end{equation}
These data allow us to define $k$-linear maps $\tilde{\varphi}:C_\bt \rightarrow C_{\bt+1}$, $\tilde{f}: C_\bt \rightarrow C_{\bt}$ and $\tilde{g}: \wbC_\bt \rightarrow \wbC_{\bt}$ by
\begin{gather}
 \tilde{\varphi} = \sum_{j \geq 0} \varphi (\delta \varphi)^j = \sum_{j \geq 0} (\varphi \delta)^j  \varphi ,
 \label{eq:Perturbation.tvarphi} \\
 \tilde{f}= f(1+ \delta \tilde{\varphi}), \qquad \tilde{g}= (1+  \tilde{\varphi}\delta)g. 
  \label{eq:Perturbation.tf}
\end{gather}
Note that we have 
\begin{gather}
  \tilde{f}= \sum_{j\geq 0} f(\delta \varphi)^j = f + f (\delta \varphi) + f (\delta \varphi)^2 + \cdots,
  \label{eq:Perturbation.tf-expansion} \\ 
  \tilde{g}= \sum_{j\geq 0} (\varphi \delta )^j g = g +   (\varphi \delta ) g +  (\varphi \delta )^2 g+ \cdots,
   \label{eq:Perturbation.tg-expansion}
\end{gather}

We also define the $k$-linear map $\tilde{\delta}: \wbC_\bt \rightarrow \wbC_{\bt-1}$ by
\begin{equation}
 \tilde{\delta}= f(\delta +\delta \tilde{\varphi} \delta) g= f \delta g + f \delta \tilde{\varphi} \delta g.
 \label{eq:Perturbation.tdelta0} 
\end{equation}
We observe that
\begin{equation}
 \tilde{\delta}= \tilde{f}\delta g= f \delta \tilde{g}.  
  \label{eq:Perturbation.tdelta}
\end{equation}
In particular, we have 
\begin{equation}
  \tilde{\delta}= \sum_{j \geq 0} f (\delta \varphi)^j \delta g = \sum_{j \geq 0} f \delta (\varphi \delta )^j g= f\delta g + f\delta \varphi \delta g +\cdots .
  \label{eq:Perturbation.tdelta-expansion}
\end{equation}
We also set 
\begin{equation}
 \Delta =  \delta^2+ \dl \delta + \delta \dl.
 \label{eq:perturbation.Delta}
\end{equation}

We shall say that the chain homotopy $\varphi$ in~(\ref{eq:Perturbation.homotopy-gf}) is \emph{special} when the following conditions are satisfied:
 \begin{equation}
 f\varphi =0, \qquad \varphi g=0, \qquad \varphi^2=0.
 \label{eq:Perturbation.special} 
\end{equation}
We are now in a position to obtain the following generalization of the basic perturbation lemma. 

\begin{lemma}\label{lem:perturbation-lemma-special}
 Suppose that the maps $f$ and $g$ are compatible with the operators $\dl$, and the chain homotopy $\varphi$ is special and $\Delta$-compatible. 
Then we have
\begin{gather}
 \tilde{f}(\dl+\delta)= (\dl +\tilde{\delta}) \tilde{f}, 
 \label{eq:Perturbation.tf-chain-map}\\
 \tilde{g}(\dl+\tilde{\delta})= (\dl +\delta) \tilde{g},
 \label{eq:Perturbation.tg-chain-map}\\
  \tilde{g}\tilde{f}= 1+ (\dl +\delta) \tilde{\varphi} + \tilde{\varphi}( \dl+\delta),
   \label{eq:Perturbation.homotopy-tgtf}\\
     \tilde{f}\tilde{g}=fg,
    \label{eq:Perturbation.homotopy-tftg} \\
 \tilde{\delta}^2+ \dl\tilde{\delta} + \tilde{\delta}\dl =f\Delta g.
\label{eq:Perturbation.tdelta-fdeltag}
 \end{gather}
In addition, the chain homotopy $\tilde{\varphi}$ is special. 
\end{lemma}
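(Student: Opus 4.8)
The strategy is to verify each of the six displayed identities by direct substitution of the defining formulas, exploiting the special conditions $f\varphi=\varphi g=\varphi^2=0$ together with $\partial$-compatibility of $f,g$ and $\Delta$-compatibility of $\varphi$ to collapse the geometric series. I would organize the work around the fundamental recursion satisfied by $\tilde\varphi$, namely $\tilde\varphi = \varphi + \varphi\delta\tilde\varphi = \varphi + \tilde\varphi\delta\varphi$, which follows immediately from~(\ref{eq:Perturbation.tvarphi}), and the analogous recursions $\tilde f = f + \tilde f\delta\varphi = f + f\delta\tilde\varphi$ and $\tilde g = g + \varphi\delta\tilde g = g + \tilde\varphi\delta g$ coming from~(\ref{eq:Perturbation.tf-expansion})--(\ref{eq:Perturbation.tg-expansion}). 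The termination hypothesis~(\ref{eq:perturbation.delta-vphij}) guarantees all these sums are finite on each $C_m$, so the manipulations are legitimate.

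\emph{Step 1: the chain-map identities~(\ref{eq:Perturbation.tf-chain-map})--(\ref{eq:Perturbation.tg-chain-map}).} First I would record the key consequence of $\Delta$-compatibility: since $\Delta = \delta^2 + \partial\delta + \delta\partial$ commutes with $\varphi$, and $[\partial,f]=[\partial,g]=0$, one gets a commutation relation between $\partial$ and the ``corrected'' differential. Concretely, $\partial\tilde f - \tilde f\partial$ telescopes using $[\partial,\varphi] = gf - 1$ (from~(\ref{eq:Perturbation.homotopy-gf})) and the special conditions, and one checks $\tilde f\delta + \partial(\tilde f\delta\varphi$-tail$) = \tilde\delta\tilde f$ after reorganizing. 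The cleanest route is to compute $(\partial + \tilde\delta)\tilde f - \tilde f(\partial+\delta)$ and show every term cancels: use $\tilde\delta\tilde f = \tilde f\delta g\tilde f$ from~(\ref{eq:Perturbation.tdelta}) and $g\tilde f = g f(1+\delta\tilde\varphi)$, then apply~(\ref{eq:Perturbation.homotopy-gf}) and $f\varphi = 0$ to kill the $\varphi$-contributions. The identity~(\ref{eq:Perturbation.tg-chain-map}) is symmetric, using $\varphi g = 0$ instead.

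\emph{Step 2: the homotopy identities~(\ref{eq:Perturbation.homotopy-tgtf})--(\ref{eq:Perturbation.homotopy-tftg}).} For~(\ref{eq:Perturbation.homotopy-tgtf}), expand $\tilde g\tilde f = (1+\tilde\varphi\delta)g f(1+\delta\tilde\varphi) = (1+\tilde\varphi\delta)(1+\partial\varphi+\varphi\partial)(1+\delta\tilde\varphi)$ using~(\ref{eq:Perturbation.homotopy-gf}), multiply out, and repeatedly feed in the recursion $\tilde\varphi = \varphi + \varphi\delta\tilde\varphi$ and the special conditions $\varphi^2 = 0$; the terms should reassemble into $1 + (\partial+\delta)\tilde\varphi + \tilde\varphi(\partial+\delta)$. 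For~(\ref{eq:Perturbation.homotopy-tftg}), compute $\tilde f\tilde g = f(1+\delta\tilde\varphi)(1+\tilde\varphi\delta)g$ and use $f\varphi = 0 = \varphi g$ (hence $f\tilde\varphi = 0 = \tilde\varphi g$ by summing the series) to see that only the $fg$ term survives.

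\emph{Step 3: the curvature identity~(\ref{eq:Perturbation.tdelta-fdeltag}) and specialness of $\tilde\varphi$.} Using $\tilde\delta = f\delta\tilde g = \tilde f\delta g$, write $\tilde\delta^2 = f\delta\tilde g\tilde f\delta g$, substitute~(\ref{eq:Perturbation.homotopy-tgtf}) for $\tilde g\tilde f$, and combine with $\partial\tilde\delta + \tilde\delta\partial$ (computed via the already-proved chain-map relations or directly); the cross terms involving $\tilde\varphi$ cancel against $\partial\delta + \delta\partial$ and the surviving piece is $f(\delta^2 + \partial\delta + \delta\partial)g = f\Delta g$. Finally, $f\tilde\varphi = \sum_j f\varphi(\delta\varphi)^j = 0$ and $\tilde\varphi g = 0$ follow term-by-term from $f\varphi = \varphi g = 0$, while $\tilde\varphi^2 = \sum_{i,j}\varphi(\delta\varphi)^i(\varphi\delta)^j\varphi$ vanishes because each summand contains a factor $\varphi^2 = 0$ (for $i,j\geq 1$ the middle is $(\delta\varphi)^{i-1}\delta\varphi\varphi\delta(\varphi\delta)^{j-1}$; for $i=0$ or $j=0$ one uses $\varphi^2=0$ directly).

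\emph{Main obstacle.} The genuinely delicate point is the chain-map identity~(\ref{eq:Perturbation.tf-chain-map}): unlike the classical perturbation lemma, here $\partial$ and $\delta$ are not differentials, so the cancellations that normally rely on $\partial^2 = \delta^2 = 0$ must instead be absorbed into the term $\Delta$, and one must track carefully where $\Delta$-compatibility of $\varphi$ (not merely of $f$ and $g$) is invoked. I expect that bookkeeping — showing that every ``obstruction'' term produced by $\partial^2 \neq 0$ or $\delta^2 \neq 0$ either telescopes away via the recursion for $\tilde\varphi$ or gets packaged into $f\Delta g$ in~(\ref{eq:Perturbation.tdelta-fdeltag}) — to be the crux. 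The special conditions are what make this feasible: they ensure $f\tilde\varphi = \tilde\varphi g = 0$, so the ``mixed'' terms $f(\cdots)\varphi(\cdots)g$ that would otherwise obstruct the identities simply vanish.
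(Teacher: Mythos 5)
Your proposal is correct and follows essentially the same route as the paper's proof: establish the recursion $\tilde{\varphi}=\varphi+\varphi\delta\tilde{\varphi}=\varphi+\tilde{\varphi}\delta\varphi$, expand everything through the basic homotopy~(\ref{eq:Perturbation.homotopy-gf}), and kill the obstruction terms via the vanishings forced by specialness combined with $\Delta$-compatibility of $\varphi$ (in the paper these are recorded as $f\Delta\varphi=\varphi\Delta g=\varphi\Delta\varphi=0$), with specialness of $\tilde{\varphi}$ checked term by term exactly as you indicate. The only difference is a harmless reordering — the paper proves~(\ref{eq:Perturbation.homotopy-tgtf}) first and feeds it into the chain-map identities via $\tilde{\delta}=f\delta\tilde{g}$, whereas you substitute the raw homotopy directly into $\tilde{\delta}\tilde{f}=\tilde{f}\delta g\tilde{f}$ — which leads to the same cancellations.
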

\begin{proof}
We observe that~(\ref{eq:Perturbation.tvarphi}) implies that 
 \begin{equation}
 \tilde{\varphi} = \varphi +\tilde{\varphi} \delta \varphi=  \varphi + \varphi \delta \tilde{\varphi}. 
 \label{eq:Perturbation.varphi}
\end{equation}
We also have the following equalities, 
 \begin{equation}
 f\Delta \varphi=0, \qquad \varphi \Delta g=0, \qquad \varphi \Delta \varphi=0.
 \label{eq:Perturbation.conditions}
\end{equation}
Indeed, as $\varphi$ is special and compatible with $\Delta$ we have $f\Delta \varphi= f\varphi\Delta =0$. Likewise, $\varphi \Delta g=\Delta\varphi  g=0$ and $\varphi \Delta \varphi= \Delta \varphi^2=0$.

By using~(\ref{eq:Perturbation.homotopy-gf}) and~(\ref{eq:Perturbation.varphi}) we see that $\tilde{g}\tilde{f}$ is equal to 
\begin{align*}
 (1+\tilde{\varphi} \delta)gf (1+\delta \tilde{\varphi})  = &  (1+\tilde{\varphi} \delta)(1+\dl \varphi + \varphi \dl) (1+\delta \tilde{\varphi})\\ 
 = & 1 + \dl (\varphi + \varphi \delta \tilde{\varphi}) + \delta \tilde{\varphi} + (\varphi + \tilde{\varphi} \delta \varphi)\dl + \tilde{\varphi} \delta \\ 
  & + 
 \tilde{\varphi} \delta \dl (\varphi + \varphi \delta \tilde{\varphi}) + (\varphi + \tilde{\varphi} \delta \varphi) \dl \delta \tilde{\varphi} + \tilde{\varphi} \delta^2 \tilde{\varphi} \\
 = & 1+ (\dl + \delta)\tilde{\varphi} +  \tilde{\varphi}(\dl + \delta) + \tilde{\varphi} \Delta \tilde{\varphi}. 
\end{align*}
 We know by~(\ref{eq:Perturbation.conditions}) that  $\varphi \Delta \varphi=0$, and so $\tilde{\varphi} \Delta \tilde{\varphi}= (1+\tilde{\varphi}\delta) \varphi \Delta \varphi (1+\delta \tilde{\varphi}) =0$. It then follows that $ \tilde{g}\tilde{f}= 1+ (\dl + \delta)\tilde{\varphi} +  \tilde{\varphi}(\dl + \delta)$, i.e., we have~(\ref{eq:Perturbation.homotopy-tgtf}). 
 
By assumption $[\dl, f]=0$. Combining this with~(\ref{eq:Perturbation.tdelta}) and (\ref{eq:Perturbation.homotopy-tgtf}) gives
\begin{align*}
(\dl + \tilde{\delta})\tilde{f} &=  \dl f (1+ \delta \tilde{\varphi} ) + f \delta \tilde{g} \tilde{f}\\
\ & = f \dl  (1+ \delta \tilde{\varphi} ) +  f \delta\left[ 1+ (\dl + \delta)\tilde{\varphi} +  \tilde{\varphi}(\dl + \delta)\right]  \\
 &= (f+ f \delta \tilde{\varphi}) \dl + (f+f\delta \tilde{\varphi}) \delta + f(\dl \delta +\delta \dl +\delta^2)\tilde{\varphi}\\
&  = \tilde{f} (\dl +\delta)  + f \Delta \tilde{\varphi}. 
\end{align*}
We know by~(\ref{eq:Perturbation.conditions})  that $f\Delta\varphi=0$. Thus,  $f \Delta \tilde{\varphi}= f\Delta\varphi(1+\delta \tilde{\varphi})=0$, and hence $(\dl + \tilde{\delta})\tilde{f}= \tilde{f} (\dl +\delta)$. 
Similarly, as by assumption $[\dl,g]=0$, we see that $\tilde{g}(\dl + \tilde{\delta})$ is equal to 
\begin{align*}
  (1+  \tilde{\varphi}\delta ) g  \dl + \tilde{g} \tilde{f} \delta g  = & (1+  \tilde{\varphi}\delta )  \dl g +    (1+\dl \varphi + \varphi \dl) \delta g \\
  = & \dl  (g+   \tilde{\varphi}\delta g) + \delta (g+   \tilde{\varphi}\delta g) +  \tilde{\varphi}(\dl \delta +\delta \dl +\delta^2) g\\
 = & ( \dl +\delta) \tilde{g} + \tilde{\varphi}\Delta g. 
\end{align*}
As by~(\ref{eq:Perturbation.conditions}) we have $\varphi\Delta g=0$, we get $\tilde{\varphi}\Delta g=(1+ \tilde{\varphi}\delta)\varphi \Delta g=0$, and so 
$\tilde{g}(\dl + \tilde{\delta})=(\dl + \delta)\tilde{g}$. 

By using~(\ref{eq:Perturbation.tdelta}) and~(\ref{eq:Perturbation.homotopy-tgtf}) we see that $\tilde{\delta}^2$ is equal to
\begin{align*}
  f\delta \tilde{g} \tilde{f} \delta g  =  & f\delta\left[ 1+ (\dl + \delta)\tilde{\varphi} +  \tilde{\varphi}(\dl + \delta)\right]  \delta g \\
=  &  f \delta^2 + f\left( \delta^2+\delta \dl\right) \tilde{\varphi} \delta g + f \delta \tilde{\varphi} \left( \delta^2+\dl \delta \right) g \\ 
= &  f \Delta g + f \Delta \tilde{\varphi} \delta g + f  \delta\tilde{\varphi} \Delta g \
  - f(\dl \delta +\delta \dl) g - f \dl \delta \tilde{\varphi} \delta g  - f  \delta\tilde{\varphi} \delta \dl  g. 
\end{align*}
By using the compatibility of $f$ and $g$ with the $\dl$-operators and the equalities  $f\Delta \tilde{\varphi}=0$ and $\tilde{\varphi} \Delta g=0$ we obtain
\begin{align*}
\tilde{\delta}^2 = & f\Delta g -  \dl f\delta - \delta g \dl -  \dl f\delta \tilde{\varphi} \delta g  - f  \delta\tilde{\varphi} \delta g \dl \\
= & f\Delta g - \dl f(\delta + \delta \tilde{\varphi} \delta) g - f (\delta + \delta \tilde{\varphi} \delta)g \\
= &  f\Delta g -\dl \tilde{\delta}-  \tilde{\delta}\dl. 
\end{align*}
This proves~(\ref{eq:Perturbation.tdelta-fdeltag}). 

It remains to show that $\tilde{f}\tilde{g}=fg$ and $\tilde{\varphi}$ is special. We have $\tilde{\varphi}^2=(1+\tilde{\varphi}\delta)\varphi^2 (1+ \delta \tilde{\varphi})=0$. 
Combining this with~(\ref{eq:Perturbation.tf}) we get
\begin{equation}
\tilde{f}\tilde{g}= f(1+ \delta \tilde{\varphi})(1+\tilde{\varphi}\delta)g= fg + f\tilde{\varphi}\delta g + f \delta \tilde{ \varphi}g.
\label{eq:Perturbation.tftg}
\end{equation}
We also have $f\tilde{\varphi}=f\varphi (1+ \delta \tilde{\varphi})=0$ and $\tilde{\varphi} g = (1+\tilde{\varphi}\delta)\varphi g=0$. Thus, 
\begin{gather*}
 \tilde{f}\tilde{\varphi} = f(1+ \delta \tilde{\varphi}) \tilde{\varphi}= f \tilde{\varphi} + f\delta \tilde{\varphi}^2=0,\\
 \tilde{\varphi} \tilde{g}=  \tilde{\varphi} (1+\tilde{\varphi}\delta)g=  \tilde{\varphi}g +  \tilde{\varphi}^2\delta g=0.
\end{gather*}
Combining this with~(\ref{eq:Perturbation.tftg})  shows that $\tilde{f}\tilde{g}=fg$. As $\tilde{\varphi}^2=0$ this also shows that the chain homotopy $\tilde{\varphi}$ is special. 
The proof is complete. 
\end{proof}

\begin{remark}
When $fg=1$ and $C$ and $\wbC$ are both twin complexes (i.e., $\dl^2=\delta^2=\dl \delta +\delta \dl=0$) we recover the basic perturbation lemma in the version of~\cite{Br:Messina64, Gu:IJM72}.  
\end{remark}

\begin{remark}\label{rmk:Perturbation.fg}
When $fg=1$ and $[\Delta,f]=0$ the equalities~(\ref{eq:Perturbation.homotopy-tftg}) and~(\ref{eq:Perturbation.tdelta-fdeltag}) become
\begin{equation}
 \tilde{f}\tilde{g} = fg =1 \qquad \text{and} \qquad \tilde{\delta}^2+ \dl\tilde{\delta} + \tilde{\delta}\dl=f\Delta g= \Delta fg =\Delta. 
 \label{eq:Perturbation.tdelta-fdeltag-fg=1}
\end{equation}
\end{remark}

\begin{remark}\label{rmk:Perturbation.Delta-0}
The proof of Lemma~\ref{lem:perturbation-lemma-special} shows a little more. A close examination of the proof shows the following: 
\begin{enumerate}
 \item[(a)] In order to get~(\ref{eq:Perturbation.homotopy-tgtf})  we only need~(\ref{eq:Perturbation.conditions}). 
 
 \item[(b)] In order to get~(\ref{eq:Perturbation.tf-chain-map}) (resp., (\ref{eq:Perturbation.tg-chain-map})) we only need~(\ref{eq:Perturbation.conditions}) and the compatibility of the map $f$ (resp., $g$) with the $\dl$-operators. 
 
 \item[(c)] In order to get~(\ref{eq:Perturbation.tdelta-fdeltag}) we  only need~(\ref{eq:Perturbation.conditions}) and the compatibility with the $\dl$-operators of  both maps $f$ and $g$. 
\end{enumerate}
\end{remark}

\begin{remark}\label{rmk:Perturbation.f-tdelta-compatible}
If $[\delta, f]=0$, then $\tilde{f}=f$ and $\tilde{\delta}=f\delta g= \delta f g$. Indeed, as $\varphi$ is special, we have $f \delta \varphi=\delta f \varphi=0$, and so by using~(\ref{eq:Perturbation.tf-expansion}) and~(\ref{eq:Perturbation.tdelta-expansion}) we see that  $\tilde{f}=f$ and $\tilde{\delta}=f\delta g$. In particular, when $[\delta, f]=0$ and $fg=1$ we have $\tilde{\delta}=\delta$. 
\end{remark}

\begin{remark}\label{rmk:Perturbation.special}
The assumption that $\varphi$ is special ensures us that $f\varphi=0$ and $\varphi g=0$. When $fg=1$ these two conditions are not essential. Indeed, if we set $\pi=gf$, then $\pi$ is compatible with the $\dl$-operator on $C_\bt$ and, as $fg=1$, we have $\pi^2=g(fg)f=\pi$. Therefore, if we set $\tilde{\varphi} =(1-\pi)\varphi (1-\pi)$, then we have
\begin{equation*}
 fg -1= (1-\pi)(fg -1) (1-\pi) =  (1-\pi)(\dl\varphi+\varphi \dl)  (1-\pi)= \dl\tilde{\varphi}+\tilde{\varphi} \dl. 
\end{equation*}
 We also have $f(1-\pi)=f-(fg)f=0$ and $(1-\pi)g=g-g(fg)=0$, and so $f\tilde{\varphi}=0$ and $\tilde{\varphi}g=0$. Thus, upon replacing $\varphi$ by $\tilde{\varphi}$ 
 we obtain a chain homotopy such that $f\varphi =0$ and $\varphi g=0$. 

Assume further that $\dl^2=0$ and we are given a chain homotopy such that $f\varphi=0$ and $\varphi g=0$. Under these assumptions the condition $\varphi^2=0$ in~(\ref{eq:Perturbation.special}) becomes inessential. Indeed, we then have $(\dl \varphi)^2=(1-\pi -\varphi \dl)\dl \varphi=\dl \varphi$. Likewise, $( \varphi \dl)^2= \varphi \dl$, and so $ \dl (\varphi \dl \varphi)+  (\varphi \dl \varphi)\dl = \dl \varphi+ \varphi \dl=1-\pi$. We also have $\dl \varphi^2 \dl=(1-\pi -\varphi \dl)(1-\pi -\dl \varphi )=1-\pi - \varphi \dl -\dl \varphi =0$, and hence $(\varphi \dl \varphi)^2=\varphi (\dl \varphi^2 \dl) \varphi=0$. Thus, upon replacing $\varphi$ by $\varphi \dl \varphi$ we obtain a chain homotopy such that $\varphi^2=0$. 

All this shows that, when $fg=1$ and $\dl^2=0$, we can convert $\varphi$ into a special homotopy by replacing it by 
\begin{equation*}
 \hat{\varphi}:=\tilde{\varphi}\dl \tilde{\varphi}=(1-\pi)\varphi \dl (1-\pi)\varphi (1-\pi). 
\end{equation*}
Note that when $f\varphi=0$ we simply have $\hat{\varphi}= \varphi \dl \varphi (1-\pi)$.

When $fg\neq 1$ or $\dl^2=0$ the above transformations need apply. Thus, whereas in the setting of the basic perturbation lemma the special homotopy assumption is not essential, it becomes essential for the generalization provided by Lemma~\ref{lem:perturbation-lemma-special}. 
\end{remark}

Further elaborating on the remarks above we would like to stress that, when $\Delta=0$ and $f$ is compatible with the $\delta$-operators, we actually can relax the assumption on $\varphi$ being special (compare~\cite{Ka:Crelle90}). Namely, we have the following result. 

\begin{lemma}\label{lem:perturbation.Delta-zero}
Suppose that $\Delta=0$ on $C_\bt$ and $\wbC_\bt$. Assume further that $[\delta,f]=f\varphi=0$ and $[\dl,g]=0$. Then we have 
\begin{gather}
 (\dl + \delta)\tilde{g}=\tilde{g} (\dl + \tilde{\delta}),
\label{eq:Perturbation.fg-chain-maps-D0}\\
 f\tilde{g}=fg, \qquad 
\tilde{g}f=1 + (\dl + \delta)\tilde{\varphi} + (\dl + \delta)\tilde{\varphi},
\label{eq:Perturbation.ftg-inverses-D0}\\
 \tilde{\delta}= \delta fg, \qquad  \tilde{\delta}^2+ \dl\tilde{\delta} + \tilde{\delta}\dl =0,
 \label{eq:Perturbation.fdelta-D0} \\  
 f\tilde{\varphi}=0.
 \label{eq:Perturbation.ftvphi-D0}
\end{gather}
\end{lemma}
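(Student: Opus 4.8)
The statement is a degenerate-case version of Lemma~\ref{lem:perturbation-lemma-special}, so the natural strategy is to mimic that proof but exploit the simplifications afforded by $\Delta=0$ and $[\delta,f]=0$, while being careful that we are \emph{not} assuming $\varphi$ special. The first observation is that $[\delta,f]=0$ together with $f\varphi=0$ makes the perturbed maps on the $C_\bt$ side collapse: since $f\delta\varphi=\delta f\varphi=0$, the expansions~(\ref{eq:Perturbation.tf-expansion}) and~(\ref{eq:Perturbation.tdelta-expansion}) truncate after one term, giving $\tilde f=f$ and $\tilde\delta=f\delta g$. Because $[\delta,f]=0$, this rewrites as $\tilde\delta=\delta fg$, which is the first half of~(\ref{eq:Perturbation.fdelta-D0}). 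So the content of~(\ref{eq:Perturbation.ftvphi-D0}) is then $f\tilde\varphi=f\varphi(1+\delta\tilde\varphi)=0$, immediate from $f\varphi=0$ and the identity $\tilde\varphi=\varphi+\varphi\delta\tilde\varphi$ in~(\ref{eq:Perturbation.varphi}); and $f\tilde g=f(1+\tilde\varphi\delta)g=fg+f\tilde\varphi\delta g=fg$ gives the first half of~(\ref{eq:Perturbation.ftg-inverses-D0}).

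Next I would establish~(\ref{eq:Perturbation.fg-chain-maps-D0}) and the second half of~(\ref{eq:Perturbation.ftg-inverses-D0}). For the homotopy identity, I expand $\tilde g f=(1+\tilde\varphi\delta)gf=(1+\tilde\varphi\delta)(1+\dl\varphi+\varphi\dl)$; here the relevant point is that $f$ appears on the \emph{right}, so I do not get to use $f\varphi=0$ on that side, but I still get, after using~(\ref{eq:Perturbation.varphi}) exactly as in the proof of Lemma~\ref{lem:perturbation-lemma-special}, the expression $1+(\dl+\delta)\tilde\varphi+\tilde\varphi(\dl+\delta)+\tilde\varphi\Delta\tilde\varphi$, and the last term vanishes because $\Delta=0$ identically on $C_\bt$. (Note the statement as printed has $(\dl+\delta)\tilde\varphi$ written twice rather than also $\tilde\varphi(\dl+\delta)$; I would write the symmetric homotopy formula $\tilde g f=1+(\dl+\delta)\tilde\varphi+\tilde\varphi(\dl+\delta)$, which is what the computation actually yields.) For~(\ref{eq:Perturbation.fg-chain-maps-D0}), I compute $\tilde g(\dl+\tilde\delta)$ using $[\dl,g]=0$, $\tilde\delta=\delta fg$, and the homotopy formula just proved, following the corresponding chunk of the proof of Lemma~\ref{lem:perturbation-lemma-special}: this produces $(\dl+\delta)\tilde g+\tilde\varphi\Delta g$, and again $\Delta=0$ kills the error term. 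The only place I must be alert is that in Lemma~\ref{lem:perturbation-lemma-special} the term $\tilde\varphi\Delta g$ was killed by $\varphi\Delta g=0$ (a consequence of $\varphi g=0$), whereas here it is killed outright by $\Delta=0$; so $\varphi g=0$ is genuinely not needed.

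Finally, $\tilde\delta^2+\dl\tilde\delta+\tilde\delta\dl=0$: since $\tilde\delta=\delta fg$ and $[\delta,f]=0$, $[\dl,g]=0$, we get $\dl\tilde\delta+\tilde\delta\dl=\dl\delta fg+\delta fg\dl=(\dl\delta+\delta\dl)fg$ (moving $\dl$ past $g$ on the right and noting it already commutes with $f$ and $\delta$ on the left), and $\tilde\delta^2=\delta fg\delta fg=\delta^2 f g \cdot (\text{since }g\delta f = \delta gf\text{?})$ — here I should instead just observe $\tilde\delta^2=f\delta\tilde g\tilde f\delta g$ from~(\ref{eq:Perturbation.tdelta}) and feed in the homotopy formula for $\tilde g\tilde f=\tilde g f$ (as $\tilde f=f$), which reproduces the computation in the proof of Lemma~\ref{lem:perturbation-lemma-special} verbatim and yields $f\Delta g-\dl\tilde\delta-\tilde\delta\dl=-\dl\tilde\delta-\tilde\delta\dl$ since $\Delta=0$. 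That closes everything. \textbf{The main obstacle} is bookkeeping: making sure at each step that every error term that the general Lemma kills via the ``special'' hypotheses $f\varphi=\varphi g=\varphi^2=0$ is, in this degenerate regime, killed instead by either $\Delta\equiv0$ or $[\delta,f]=0$ alone — in particular verifying that $\varphi g=0$ and $\varphi^2=0$ are never actually invoked, so the weaker hypotheses suffice.
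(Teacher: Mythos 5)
Your overall strategy is the paper's own: use $[\delta,f]=f\varphi=0$ to collapse the perturbed data ($\tilde f=f$, $\tilde\delta=f\delta g=\delta fg$, $f\tilde\varphi=0$, $f\tilde g=fg$), and then observe that when $\Delta=0$ the conditions~(\ref{eq:Perturbation.conditions}) hold trivially, so the computations of Lemma~\ref{lem:perturbation-lemma-special} giving~(\ref{eq:Perturbation.tg-chain-map}), (\ref{eq:Perturbation.homotopy-tgtf}) and~(\ref{eq:Perturbation.tdelta-fdeltag}) go through with vanishing error terms; the paper does exactly this by citing Remark~\ref{rmk:Perturbation.f-tdelta-compatible} and Remark~\ref{rmk:Perturbation.Delta-0}. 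You also rightly note that the displayed formula should read $\tilde g f=1+(\dl+\delta)\tilde\varphi+\tilde\varphi(\dl+\delta)$.

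However, one step is misjustified as written. You claim that expanding the two-factor product $\tilde g f=(1+\tilde\varphi\delta)(1+\dl\varphi+\varphi\dl)$ ``exactly as in'' Lemma~\ref{lem:perturbation-lemma-special}, without using $f\varphi=0$ on that side, yields $1+(\dl+\delta)\tilde\varphi+\tilde\varphi(\dl+\delta)+\tilde\varphi\Delta\tilde\varphi$. It does not: the computation in Lemma~\ref{lem:perturbation-lemma-special} is for the three-factor product $(1+\tilde\varphi\delta)gf(1+\delta\tilde\varphi)$, and the right-hand factor is what supplies the terms needed to assemble $(\dl+\delta)\tilde\varphi$. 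Expanding your two-factor product and using~(\ref{eq:Perturbation.varphi}) gives only $1+\tilde\varphi(\dl+\delta)+\dl\varphi+\tilde\varphi\delta\dl\varphi$, and the identity $\dl\varphi+\tilde\varphi\delta\dl\varphi=(\dl+\delta)\tilde\varphi$ is false in general even when $\Delta=0$ (take $\dl=0$ and $\delta^2=0$: the left side vanishes while the right side is $\delta\tilde\varphi$). The formula for $\tilde gf$ genuinely uses the $f$-hypotheses; the repair, which is the paper's route, is one line: $f\delta\tilde\varphi=\delta f\tilde\varphi=0$, hence $\tilde f=f$ and $\tilde gf=\tilde g\tilde f$, and the three-factor computation applies with its only error term $\tilde\varphi\Delta\tilde\varphi=0$. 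With this fix the rest of your argument (the chain-map identity for $\tilde g$, and $\tilde\delta^2+\dl\tilde\delta+\tilde\delta\dl$ via $\tilde\delta=f\delta\tilde g=\tilde f\delta g$ and the homotopy formula) matches the paper's proof. One further caution: in your discarded direct attempt at $\dl\tilde\delta+\tilde\delta\dl$ you asserted that $\dl$ commutes with $f$, which is not among the stated hypotheses; you were right to abandon that computation and argue through the homotopy formula instead, as the paper does.
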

\begin{proof} 
As $[\delta, f]=f\varphi=0$, we know by Remark~\ref{rmk:Perturbation.f-tdelta-compatible} that $\tilde{f}=f$ and $\tilde{\delta}=\delta fg$. Moreover, in the same way as in the proof of Lemma~\ref{lem:perturbation-lemma-special}, we have  $f\tilde{\varphi}=f\varphi (1+ \delta \tilde{\varphi})=0$, and so $f\delta \tilde{\varphi}=\delta f \tilde{\varphi}=0$.  Combining this with~(\ref{eq:Perturbation.tf}) we get $f\tilde{g}=\tilde{f}\tilde{g}= f(1+ \delta \tilde{\varphi})(1+\tilde{\varphi}\delta)g= fg$. 

By assumption $[\dl, g]=0$, and as $\Delta=0$ the equalities~(\ref{eq:Perturbation.conditions}) are trivially satisfied.  Therefore, by Remark~\ref{rmk:Perturbation.Delta-0} we have (\ref{eq:Perturbation.tg-chain-map}), (\ref{eq:Perturbation.homotopy-tgtf}) and~(\ref{eq:Perturbation.tdelta-fdeltag}). 
As $\tilde{f}=f$ and $\Delta=0$ this gives~(\ref{eq:Perturbation.fg-chain-maps-D0}) and the equalities $\tilde{g}f=\tilde{g}\tilde{f}=1 + (\dl + \delta)\tilde{\varphi} + (\dl + \delta)\tilde{\varphi}$ and $ \tilde{\delta}^2+ \dl\tilde{\delta} + \tilde{\delta}\dl =f\Delta g  =0$. The proof is complete. 
\end{proof}

\begin{remark}\label{rmk:perturbation.Delta-zero-fg=1}
 When $fg=1$ we have $f\tilde{g}=fg=1$ and $\tilde{\delta}=\delta fg=\delta$, and so $(\dl + \delta)\tilde{g}=\tilde{g} (\dl + \delta)$. 
 Thus, if we further have the compatibility of $f$ with the $\dl$-operators, then we obtain a deformation retract of $(C_\bt, \dl+\delta)$ to $(\wb{C}_\bt,  \dl+\delta)$.  
\end{remark}

\subsection{Perturbation lemmas for parachain complexes} 
We shall now specialize Lemma~\ref{lem:perturbation-lemma-special} and Lemma~\ref{lem:perturbation.Delta-zero} to parachain complexes.  This will enable us to construct co-extensions of deformation retracts of Hochschild complexes.  In particular, this will generalize to parachain complexes the version of the basic perturbation lemma for mixed complexes of Kassel~\cite{Ka:Crelle90} (see also~\cite{Ba:Preprint98}). 
 
Suppose that $C=(C_\bt, b, B)$ and $\wbC=(\wbC_\bt, b,B)$ are parachain complexes. Any $S$-map $f:C^\natural_\bt\rightarrow \wbC^\natural_\bt$ is of the form $f=f^{(0)}u^0+f^{(1)}u+\cdots $, where $f^{(j)}:C_{\bt}\rightarrow \wbC_{\bt+2j}$ are $T$-compatible $k$-linear maps satisfying~(\ref{eq:Parachain.S-map}). In particular, the zero-th degree component  $f^{(0)}:C_{\bt}\rightarrow \wbC_{\bt}$ is a Hochschild chain map, i.e., $[b,f^{(0)}]=0$. Conversely,  given any $T$-compatible Hochschild chain map $f: C_\bt \rightarrow \wbC_\bt$,  we shall  call \emph{co-extension} of $f$  any $S$-map $f^\natural :C^\natural_\bt \rightarrow \wbC^\natural_\bt$ such that $f^{\natural(0)}=f$. This terminology was coined by Hood-Jones~\cite{HJ:KT87}.  

Throughout the rest of this section we assume we are given Hochschild chain maps $f:C_\bt \rightarrow \wb{C}_\bt$ and $g:\wb{C}_\bt \rightarrow C_\bt$ 
that provide us with a deformation retract, 
\begin{equation}
 fg=1, \qquad gf =1 +b\varphi +\varphi b,
 \label{eq:Perturbation.DR}
\end{equation}
where $\varphi: C_\bt \rightarrow C_{\bt+1}$ is some $k$-linear map. We seek for applying Lemma~\ref{lem:perturbation-lemma-special}  to the the para-twin complexes $(C_\bt^\natural, b, Bu^{-1})$ and $(\wbC_\bt^\natural, b, Bu^{-1})$. Note that the nilpotency of $\delta =Bu^{-1}$ ensures us that the condition~(\ref{eq:perturbation.delta-vphij}) is automatically satisfied.  

Let $\varphi^\natural :C_\bt^\natural  \rightarrow C_{\bt+1}^\natural$ be the $k$-linear map defined by 
\begin{equation*}
\varphi^\natural  = \sum_{j\geq 0} \varphi (B\varphi)^j u^{-j} =  \varphi + \varphi B \varphi u^{-1} + \cdots.  
\end{equation*}
We also define $k$-linear maps $f^\natural :C_\bt^\natural \rightarrow \wbC_\bt^\natural$ and $g^\natural:\wbC_\bt^\natural \rightarrow C_\bt^\natural$ by 
\begin{gather*}
 f^\natural = f + f B \varphi^\natural u^{-1} = f + \sum_{j \geq 1} f(B \varphi)^j u^{-j},\\
  g^\natural = g +   \varphi^\natural Bg u^{-1} = g + \sum_{j \geq 1} ( \varphi B)^jg u^{-j}. 
\end{gather*}
Note that the maps $f^\natural$ and $g^\natural$ are compatible with the $S$-operators and their zero-th degree componenents are $f$ and $g$, respectively. In addition, we let $\tilde{B}: \wbC_\bt \rightarrow \wbC_{\bt+1}$ be the $k$-linear map defined by
\begin{equation*}
 \tilde{B}= \sum_{j\geq 0} \varphi f(B\varphi)^jBg u^{-j} = fBg + fB\varphi Bg + \cdots. 
\end{equation*}

By assumption the chain homotopy $\varphi$ is special. Note also that if $\dl =b$ and $\delta =Bu^{-1}$, then in the notation of~(\ref{eq:Perturbation.tvarphi})--(\ref{eq:Perturbation.tdelta0}) the maps $(\varphi^\natural, f^\natural, g^\natural, \tilde{B}u^{-1})$ are precisely the maps $(\tilde{\varphi},\tilde{f},\tilde{g},\tilde{\delta})$. 
Moreover, we have
\begin{equation}
 \Delta=\delta^2+\dl \delta+\delta\dl=B^2u^{-2}+(bB+Bb)u^{-1}=(1-T)u^{-1}. 
 \label{eq:Perturbation.Delta-parachain}
\end{equation}
Thus, the compatibility with the $\Delta$-operators follows from the compatibility with the $T$-operators. Therefore, we have the following version of  Lemma~\ref{lem:perturbation-lemma-special}. 
 
\begin{lemma}\label{lem:Perturbation.parachain-complexes}
 Suppose that the maps $(f,g,\varphi)$ are $T$-compatible and the chain homotopy $\varphi$ is special. Then
 \begin{enumerate}
 \item[(i)] $\tilde{C}^\natural:=(\wbC_\bt^\natural, b+\tilde{B}u^{-1}, u^{-1}, T)$ is a para-$S$-module.   

 \item[(ii)] $f^\natural: C_\bt^\natural \rightarrow \tilde{C}_\bt^\natural$ and $g^\natural: \tilde{C}_\bt^\natural \rightarrow \tilde{C}_\bt^\natural$ are $S$-maps such that
 \begin{gather*}
 f^\natural g^\natural =1, \qquad g^\natural f^\natural =1 + (b+Bu^{-1}) \varphi^\natural +   \varphi^\natural (b+Bu^{-1}). 
\end{gather*}
In particular, we obtain an $S$-deformation retract of $C^\natural$ to $\tilde{C}^\natural$, and a $T$-deformation retract of $C^\sharp$ to  $\tilde{C}^\sharp$. 

\item[(iii)] The chain homotopy $\varphi^\natural$ is special.
\end{enumerate}
\end{lemma}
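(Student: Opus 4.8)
The plan is to obtain everything as a direct specialization of the abstract perturbation lemma for para-twin complexes, Lemma~\ref{lem:perturbation-lemma-special}, applied to the para-twin complexes $(C_\bt^\natural, b, Bu^{-1})$ and $(\wbC_\bt^\natural, b, Bu^{-1})$, with $\dl=b$ and $\delta=Bu^{-1}$. First I would check that its hypotheses are in force: the maps $f$ and $g$, extended to the $C^\natural$-modules by acting on coefficients, commute with $\dl=b$ because they are Hochschild chain maps; the finiteness condition~(\ref{eq:perturbation.delta-vphij}) holds automatically since $\delta=Bu^{-1}$ lowers the $u$-degree, so $(\delta\varphi)^j$ and $(\varphi\delta)^j$ vanish on $C_m^\natural$ once $j>\lfloor m/2\rfloor$; by~(\ref{eq:Perturbation.Delta-parachain}) we have $\Delta=(1-T)u^{-1}$, so the $\Delta$-compatibility of $\varphi$ follows from its assumed $T$-compatibility; and $\varphi$ is special by hypothesis. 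Hence Lemma~\ref{lem:perturbation-lemma-special} applies.

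Next I would set up the dictionary between the output of Lemma~\ref{lem:perturbation-lemma-special} and the maps defined above: inspecting the formulas~(\ref{eq:Perturbation.tvarphi})--(\ref{eq:Perturbation.tdelta0}) with $\delta=Bu^{-1}$, and using repeatedly that $b$, $B$, $\varphi$, $f$, $g$ and every power of $u^{-1}$ commute with $u^{-1}$, one gets $\tilde{\varphi}=\varphi^\natural$, $\tilde{f}=f^\natural$, $\tilde{g}=g^\natural$, and $\tilde{\delta}=\tilde{B}u^{-1}$. Granting this, part (iii) is exactly the final assertion of Lemma~\ref{lem:perturbation-lemma-special}, and the identities of part (ii) are the transcriptions of~(\ref{eq:Perturbation.tf-chain-map}), (\ref{eq:Perturbation.tg-chain-map}), (\ref{eq:Perturbation.homotopy-tgtf}), (\ref{eq:Perturbation.homotopy-tftg}) — namely $f^\natural g^\natural=1$, $g^\natural f^\natural=1+(b+Bu^{-1})\varphi^\natural+\varphi^\natural(b+Bu^{-1})$, and the chain map property of $f^\natural$ and $g^\natural$ — once one notes that $f^\natural$ and $g^\natural$ are $S$-maps (they commute with $u^{-1}$ and, being assembled from $T$-compatible pieces, with $T$), with zero-th degree components $f$ and $g$, and that $\varphi^\natural$ is $(S,T)$-compatible, so the homotopy in (ii) is an $S$-homotopy.

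For part (i), I would invoke~(\ref{eq:Perturbation.tdelta-fdeltag-fg=1}) of Remark~\ref{rmk:Perturbation.fg}: since $fg=1$ and $\Delta=(1-T)u^{-1}$ commutes with $f$, it gives $\tilde{\delta}^2+b\tilde{\delta}+\tilde{\delta}b=\Delta$, that is $(b+\tilde{B}u^{-1})^2=(1-T)u^{-1}=u^{-1}(1-T)$. Combined with the facts that $b+\tilde{B}u^{-1}$ commutes with both $u^{-1}$ and $T$ and that $[u^{-1},T]=0$ — all immediate from the $T$-compatibility of $b$, $B$, $\varphi$ (hence of $\tilde{B}$) and their commutation with $u^{-1}$ — this says precisely that $\tilde{C}^\natural=(\wbC_\bt^\natural, b+\tilde{B}u^{-1}, u^{-1}, T)$ is a para-$S$-module. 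Finally, $f^\natural$ and $g^\natural$ then realize an $S$-deformation retract of $C^\natural$ onto $\tilde{C}^\natural$, and Proposition~\ref{prop:Para-S-Mod.periodic-homotopy} converts this into a $T$-deformation retract of $C^\sharp$ onto $\tilde{C}^\sharp$.

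The only genuinely delicate point is the bookkeeping of $(S,T)$-compatibility, which is not part of the abstract Lemma~\ref{lem:perturbation-lemma-special}: one must verify that each map produced by the construction ($\varphi^\natural$, $f^\natural$, $g^\natural$, $\tilde{B}$) commutes with $u^{-1}$ and with $T$. This reduces to the observation that $u^{-1}$, $b$, $B$ and $\varphi$ all commute with $u^{-1}$, and that the $T$-compatible endomorphisms form a subalgebra; everything else is a routine transcription of Lemma~\ref{lem:perturbation-lemma-special}.
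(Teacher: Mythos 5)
Your proposal is correct and follows essentially the same route as the paper: specialize Lemma~\ref{lem:perturbation-lemma-special} to the para-twin complexes $(C_\bt^\natural, b, Bu^{-1})$ and $(\wbC_\bt^\natural, b, Bu^{-1})$, identify $(\tilde\varphi,\tilde f,\tilde g,\tilde\delta)$ with $(\varphi^\natural, f^\natural, g^\natural, \tilde Bu^{-1})$, get $\Delta$-compatibility from $T$-compatibility via $\Delta=(1-T)u^{-1}$, use Remark~\ref{rmk:Perturbation.fg} together with $b^2=0$ to obtain $(b+\tilde Bu^{-1})^2=(1-T)u^{-1}$ for part (i), and invoke Proposition~\ref{prop:Para-S-Mod.periodic-homotopy} for the periodic statement. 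Your extra bookkeeping (the nilpotency check for condition~(\ref{eq:perturbation.delta-vphij}) and the $(S,T)$-compatibility of the constructed maps) is exactly what the paper records in the surrounding discussion and the first line of its proof.
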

\begin{proof}
The $T$-compatibility of the maps $(f,g,\varphi)$ ensure us the $T$-compatibility of the maps $(f^\natural, g^\natural, \varphi^\natural)$. Granted this, the parts (ii) and (iii) then follow from the last two parts of Lemma~\ref{lem:perturbation-lemma-special} and from Remark~\ref{rmk:Perturbation.fg}. Moreover, by using~(\ref{eq:Perturbation.tdelta-fdeltag-fg=1}) and~(\ref{eq:Perturbation.Delta-parachain}) and the fact that $b^2=0$ we get
\begin{equation}
 (1-T)u^{-1}= \tilde{B}^2 u^{-2} + (b\tilde{B}+\tilde{B}b)u^{-1}= \big(b+\tilde{B}u^{-1}\big)^2. 
 \label{eq:Perturbation.d-tB2}
\end{equation}
As $\tilde{B}$ is compatible with $u^{-1}$ and $T$ it then follows that $(\wb{C}_\bt^\natural, b+\tilde{B}u^{-1}, u^{-1}, T)$ is a para-$S$-module. The proof is complete.
\end{proof}

\begin{remark}
 When $C$ and $\wbC$ are mixed complexes (i.e., $bB+Bb=0$) the para-$S$-module $\tilde{C}^\natural$ is actually an $S$-module, since in this case $T=1-(bB+Bb)=1$. In particular, we recover the version of the basic perturbation lemma for mixed complexes of Kassel~\cite{Ka:Crelle90} (see also~\cite[Lemma~I.2]{Ba:Preprint98}).
\end{remark}

\begin{remark}\label{rmk:Perturbation.d-tB-parachain}
 Write $B=\sum_{j\geq 0} B^{(j)}u^{-j}$, with $B^{(j)}:= f (B\varphi)^j Bg$. Then~(\ref{eq:Perturbation.d-tB2}) implies that
 \begin{equation*}
 bB^{(0)}+B^{(0)}b=1-T, \qquad \sum_{p+q=j-1} B^{(p)}B^{(q)} +  bB^{(j)}+B^{(j)}b=0, \quad j\geq 1. 
\end{equation*}
Thus, if $B^{(j)}=0$ for $j \geq 1$, then $(\wb{C}, b, B^{(0)})$ is a parachain complex whose para-$S$-module is precisely $\tilde{C}^\natural$. 
\end{remark}

\begin{remark}
 In general, $\tilde{C}^\natural$ need not be the para-$S$-module of a parachain complex. This shows the relevance of considering para-$S$-modules when attempting to apply perturbation lemmas in the setting of parachain complexes.
\end{remark}

Further elaborating on Remark~\ref{rmk:Perturbation.d-tB-parachain} we have the following result. 

 \begin{lemma}\label{lem:Lifting.co-extension-deformation-retract-fBg=B}
 Suppose that the maps $(f,g,\varphi)$ are $T$-compatible and the chain homotopy $\varphi$ is special. Assume further that
 \begin{equation}
 fBg=B \qquad \text{and} \qquad f (B\varphi)^j Bg=0 \quad \text{for $j\geq 1$}.
 \label{eq:Perturbation.conditions-Bj=0}  
\end{equation}
 Then the following holds. 
  \begin{enumerate}
 \item[(i)]  $ f^\natural:C_\bt^\natural \rightarrow \wb{C}^\natural_\bt$ and $g^\natural: \wb{C}_\bt^\natural \rightarrow C^\natural_\bt$ are $S$-maps and coextensions of $f$ and $g$, respectively. 

 \item[(ii)] We have 
 \begin{equation*}
 f^\natural g^\natural =1, \qquad g^\natural f^\natural =1 + (b+Bu^{-1}) \varphi^\natural +   \varphi^\natural (b+Bu^{-1}). 
\end{equation*}
In particular, we obtain an $S$-deformation retract of $C^\natural$ to $\wb{C}^\natural$ and a $T$-deformation retract of $C^\sharp$ to $\wb{C}^\sharp$. 

\item[(iii)] The chain homotopy $\varphi^\natural$ is special.
\end{enumerate}
\end{lemma}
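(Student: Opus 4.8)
The plan is to deduce the statement directly from Lemma~\ref{lem:Perturbation.parachain-complexes} by checking that, under the extra hypotheses~(\ref{eq:Perturbation.conditions-Bj=0}), the perturbed operator $\tilde{B}$ coincides with the original operator $B$, so that the para-$S$-module $\tilde{C}^\natural$ furnished by that lemma is nothing but $\wbC^\natural$.

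First I would recall, as noted just before the statement of Lemma~\ref{lem:Perturbation.parachain-complexes}, that the maps $\varphi^\natural$, $f^\natural$, $g^\natural$ and $\tilde{B}u^{-1}$ are precisely the maps $\tilde{\varphi},\tilde{f},\tilde{g},\tilde{\delta}$ produced by the generalized perturbation lemma (Lemma~\ref{lem:perturbation-lemma-special}) applied to the para-twin complexes $(C_\bt^\natural, b, Bu^{-1})$ and $(\wbC_\bt^\natural, b, Bu^{-1})$ with input data $(f,g,\varphi)$. Since $(f,g,\varphi)$ are $T$-compatible and $\varphi$ is special, the hypotheses of Lemma~\ref{lem:Perturbation.parachain-complexes} hold, so that lemma applies: $f^\natural$ and $g^\natural$ are $S$-maps with $f^\natural g^\natural=1$ and $g^\natural f^\natural=1+(b+Bu^{-1})\varphi^\natural+\varphi^\natural(b+Bu^{-1})$, the chain homotopy $\varphi^\natural$ is special, and $\tilde{C}^\natural:=(\wbC_\bt^\natural, b+\tilde{B}u^{-1}, u^{-1}, T)$ is a para-$S$-module.

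Next, using Remark~\ref{rmk:Perturbation.d-tB-parachain}, write $\tilde{B}=\sum_{j\geq 0}B^{(j)}u^{-j}$ with $B^{(j)}=f(B\varphi)^jBg$. The first condition in~(\ref{eq:Perturbation.conditions-Bj=0}) gives $B^{(0)}=fBg=B$, and the second gives $B^{(j)}=0$ for $j\geq 1$; hence $\tilde{B}=B$ and $\tilde{C}^\natural=(\wbC_\bt^\natural, b+Bu^{-1}, u^{-1}, T)=\wbC^\natural$. Substituting $\wbC^\natural$ for $\tilde{C}^\natural$ in the conclusions of Lemma~\ref{lem:Perturbation.parachain-complexes} yields (ii) and (iii) at once. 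For (i), the $u^0$-components of $f^\natural$ and $g^\natural$ are visibly $f$ and $g$, so these $S$-maps are coextensions of $f$ and $g$; and the $T$-deformation retract of $C^\sharp$ to $\wbC^\sharp$ follows from the $S$-deformation retract of $C^\natural$ to $\wbC^\natural$ by Proposition~\ref{prop:Para-S-Mod.periodic-homotopy}. The only point needing any care is to transport each conclusion of Lemma~\ref{lem:Perturbation.parachain-complexes} correctly once $\tilde{C}^\natural$ is identified with $\wbC^\natural$, but each such step is a formal substitution; there is no genuine obstacle beyond the one-line verification $\tilde{B}=B$.
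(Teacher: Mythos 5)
Your proof is correct and follows essentially the same route as the paper: it invokes Lemma~\ref{lem:Perturbation.parachain-complexes} together with Remark~\ref{rmk:Perturbation.d-tB-parachain} to see that the hypotheses~(\ref{eq:Perturbation.conditions-Bj=0}) force $\tilde{B}=B$, hence $\tilde{C}^\natural=\wb{C}^\natural$, after which the conclusions are read off from that lemma. The only difference is that you spell out the identification $\tilde{B}=B$ and the coextension/periodic statements a bit more explicitly than the paper does, which is fine.
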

\begin{proof}
The assumptions~(\ref{eq:Perturbation.conditions-Bj=0}) and Remark~\ref{rmk:Perturbation.d-tB-parachain} ensure us that the para-$S$-module $\tilde{C}^\natural$ of the first part of Lemma~\ref{lem:Perturbation.parachain-complexes} agrees with $\wb{C}^\natural$. The result then follows from the last two parts of Lemma~\ref{lem:Perturbation.parachain-complexes}. 
\end{proof}

When $f$ is already a map of parachain complexes we have the following statement. 

\begin{lemma}\label{lem:Lifting.co-extension-deformation-retract-f-parachain}
 Suppose that  $f$ is a map of parachain complexes,  the maps $(g,\varphi)$ are $T$-compatible, and the chain homotopy $\varphi$ is special. Then
 \begin{enumerate}
 \item[(i)]  $g^\natural: \wbC_\bt^\natural \rightarrow C_\bt^\natural $ is an $S$-map and a co-extension of $g$. 
 
 \item[(ii)]  We have
 \begin{equation*}
  f g^\natural =1, \qquad g^\natural f =1 + (b+Bu^{-1}) \varphi^\natural +   \varphi^\natural (b+Bu^{-1}). 
\end{equation*}
In particular, this provides us with an $S$-deformation retract of $C^\natural$ to $\wbC^\natural$ and a $T$-deformation retract of $C^\sharp$ to $\wb{C}^\sharp$.
  
\item[(iii)] The chain homotopy $\varphi^\natural$ is special.
\end{enumerate}
\end{lemma}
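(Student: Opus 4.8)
The plan is to obtain this statement as a specialization of Lemma~\ref{lem:Lifting.co-extension-deformation-retract-fBg=B}. As $f$ is a map of parachain complexes we have $[b,f]=0$ and $[B,f]=0$, and hence $f$ commutes with $T=1-(bB+Bb)$; thus $f$ is $T$-compatible. Together with the $T$-compatibility of $g$ and $\varphi$ and the assumption that $\varphi$ is special, this shows that the hypotheses of Lemma~\ref{lem:Lifting.co-extension-deformation-retract-fBg=B} will be met once the conditions~(\ref{eq:Perturbation.conditions-Bj=0}) are verified.

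To verify~(\ref{eq:Perturbation.conditions-Bj=0}) I would use $[B,f]=0$ together with the special-homotopy relation $f\varphi=0$. First, $fBg=Bfg=B$ since $fg=1$. Next, for $j\geq 1$ one has $f(B\varphi)^{j}=(Bf)\varphi(B\varphi)^{j-1}=B(f\varphi)(B\varphi)^{j-1}=0$, and so $f(B\varphi)^{j}Bg=0$. (Equivalently, since $[\delta,f]=0$ for $\delta=Bu^{-1}$, Remark~\ref{rmk:Perturbation.f-tdelta-compatible} gives $\tilde f=f$ and $\tilde\delta=\delta fg=\delta$, so that $\tilde B=B$ and the para-$S$-module $\tilde C^\natural$ of Lemma~\ref{lem:Perturbation.parachain-complexes} coincides with $\wbC^\natural$.) Hence Lemma~\ref{lem:Lifting.co-extension-deformation-retract-fBg=B} applies and yields parts~(i) and~(iii) along with the relations $f^\natural g^\natural=1$ and $g^\natural f^\natural=1+(b+Bu^{-1})\varphi^\natural+\varphi^\natural(b+Bu^{-1})$, together with the associated $T$-deformation retract of the periodic para-complexes.

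It remains to identify $f^\natural$ with $f$. From $f^\natural=f+\sum_{j\geq1}f(B\varphi)^{j}u^{-j}$ and the vanishing $f(B\varphi)^{j}=0$ for $j\geq1$ established above, we get $f^\natural=f$. Substituting this into the two relations above gives exactly $fg^\natural=1$ and $g^\natural f=1+(b+Bu^{-1})\varphi^\natural+\varphi^\natural(b+Bu^{-1})$, which is the claimed $S$-deformation retract of $C^\natural$ to $\wbC^\natural$; Proposition~\ref{prop:Para-S-Mod.periodic-homotopy} then produces the $T$-deformation retract of $C^\sharp$ to $\wbC^\sharp$. I do not expect a genuine obstacle here: the statement is a direct corollary of the preceding lemma, the only points needing care being the verification of~(\ref{eq:Perturbation.conditions-Bj=0}) and the identity $f^\natural=f$, both of which are immediate consequences of $[B,f]=0$ and $f\varphi=0$.
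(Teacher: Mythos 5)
Your proposal is correct and follows essentially the same route as the paper: the paper's proof also reduces to Lemma~\ref{lem:Lifting.co-extension-deformation-retract-fBg=B} by noting that a parachain map $f$ commutes with $T$ and $\delta=Bu^{-1}$, and then uses the argument of Remark~\ref{rmk:Perturbation.f-tdelta-compatible} (i.e.\ $f\varphi=0$ and $[B,f]=0$) to get $f^\natural=f$ and the conditions~(\ref{eq:Perturbation.conditions-Bj=0}). Your explicit verifications $fBg=B$ and $f(B\varphi)^jBg=0$ for $j\geq 1$ are exactly what that remark encapsulates, so there is nothing to add.
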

\begin{proof}
 The fact that $f$ is a map of parachain complexes ensures us it is compatible with the operators $T=1-(bB+Bb)$ and $\delta=Bu^{-1}$. As $fg=1$, we can argue in the same way as in Remark~\ref{rmk:Perturbation.f-tdelta-compatible} to show that $f^\natural =f $ and the conditions~(\ref{eq:Perturbation.conditions-Bj=0}) are satisfied. The result then follows from Lemma~\ref{lem:Lifting.co-extension-deformation-retract-fBg=B}. 
\end{proof}

Finally, when $C$ and $\wb{C}$ are mixed complexes (i.e., when $T=1$) we can  relax the assumptions on the chain homotopy $\varphi$ (compare~\cite{Ba:Preprint98, Ka:Crelle90}).  Namely, by using Lemma~\ref{lem:perturbation.Delta-zero} and Remark~\ref{rmk:perturbation.Delta-zero-fg=1} we obtain the following result. 

\begin{lemma}\label{lem:Lifting.co-extension-deformation-retract-f-mixed}
Suppose that $C$ and $\wbC$ are mixed complexes. Assume further that $f$ is a map of mixed complexes and $f\varphi=0$. Then
 \begin{enumerate}
 \item[(i)]  $g^\natural: \wbC_\bt^\natural \rightarrow C_\bt^\natural $ is an $S$-map and a co-extension of $g$. 
 
 \item[(ii)]  We have
 \begin{equation*}
  f g^\natural =1, \qquad g^\natural f =1 + (b+Bu^{-1}) \varphi^\natural +   \varphi^\natural (b+Bu^{-1}), \qquad f\varphi^\natural=0. 
\end{equation*}
This provides us with an $S$-deformation retract of $C^\natural$ to $\wbC^\natural$ and a $T$-deformation retract of $C^\sharp$ to $\wb{C}^\sharp$. 
\end{enumerate}
\end{lemma}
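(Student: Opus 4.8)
The plan is to specialize Lemma~\ref{lem:perturbation.Delta-zero} to the para-twin complexes $(C_\bt^\natural, b, Bu^{-1})$ and $(\wbC_\bt^\natural, b, Bu^{-1})$, exactly as Lemma~\ref{lem:Perturbation.parachain-complexes} specializes Lemma~\ref{lem:perturbation-lemma-special}. Since $C$ and $\wbC$ are mixed complexes, $T=1-(bB+Bb)=1$, so by~(\ref{eq:Perturbation.Delta-parachain}) we have $\Delta=(1-T)u^{-1}=0$ on both $C^\natural_\bt$ and $\wbC_\bt^\natural$. The hypotheses of Lemma~\ref{lem:perturbation.Delta-zero} require $[\delta,f]=0$, $f\varphi=0$, and $[\dl,g]=0$: here $\dl=b$, $\delta=Bu^{-1}$; the condition $[\dl,g]=0$ is the Hochschild chain map property of $g$ from~(\ref{eq:Perturbation.DR}); the condition $[\delta,f]=0$ means $fB=Bf$ as maps $C_\bt\to\wbC_{\bt+1}$, which holds because $f$ is assumed to be a map of \emph{mixed} complexes, hence in particular $fB=Bf$; and $f\varphi=0$ is assumed outright. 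Note also that the nilpotency of $\delta=Bu^{-1}$ (as in Lemma~\ref{lem:Perturbation.parachain-complexes}) makes condition~(\ref{eq:perturbation.delta-vphij}) automatic, and the identification of $(\tilde\varphi,\tilde f,\tilde g,\tilde\delta)$ with $(\varphi^\natural,f^\natural,g^\natural,\tilde Bu^{-1})$ is the same as there.

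First I would invoke Lemma~\ref{lem:perturbation.Delta-zero} to get: $\tilde f=f$ (hence $f^\natural=f$, which gives (i): $g^\natural$ is an $S$-map and a co-extension of $g$, with zeroth component $g$), $f\tilde g=fg=1$, $\tilde g f=1+(b+Bu^{-1})\varphi^\natural+\varphi^\natural(b+Bu^{-1})$, $\tilde\delta=\delta fg=\delta=Bu^{-1}$, $\tilde\delta^2+\dl\tilde\delta+\tilde\delta\dl=0$, and $f\tilde\varphi=0$, i.e.\ $f\varphi^\natural=0$. Then I would apply Remark~\ref{rmk:perturbation.Delta-zero-fg=1}: since $fg=1$ we have $f\tilde g=1$ and $\tilde\delta=\delta$, so $(\dl+\delta)\tilde g=\tilde g(\dl+\delta)$; combined with the compatibility of $f$ with the $\dl$-operators (automatic since $f$ commutes with $b$), this yields a deformation retract of $(C_\bt^\natural, b+Bu^{-1})$ to $(\wbC_\bt^\natural, b+Bu^{-1})$ — which is precisely statement (ii) with the displayed identities $fg^\natural=1$, $g^\natural f=1+(b+Bu^{-1})\varphi^\natural+\varphi^\natural(b+Bu^{-1})$, $f\varphi^\natural=0$.

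Finally, for the claim that this deformation retract of para-$S$-modules induces a $T$-deformation retract of $C^\sharp$ to $\wbC^\sharp$, I would observe that $f$ and $g^\natural$ are $S$-maps (the $S$-compatibility of $g^\natural$ is part of (i); $f$ is an $S$-map because $f$ is a map of mixed complexes, which commutes with $u^{-1}$ and trivially with $T=1$), and $\varphi^\natural$ is $T$-compatible (trivially, as $T=1$ on mixed complexes). Then Proposition~\ref{prop:Para-S-Mod.periodic-homotopy} applies directly to give $f^\sharp (g^\natural)^\sharp=1$ and $(g^\natural)^\sharp f^\sharp=1+(b+B)(\varphi^\natural)^\sharp+(\varphi^\natural)^\sharp(b+B)$ on the periodic para-complexes, which is the desired $T$-deformation retract of $C^\sharp$ to $\wbC^\sharp$.

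I do not anticipate a genuine obstacle here — the proof is essentially a bookkeeping exercise translating the hypotheses of Lemma~\ref{lem:Lifting.co-extension-deformation-retract-f-mixed} into those of Lemma~\ref{lem:perturbation.Delta-zero} and reading off the conclusions. The one point requiring a moment's care is verifying that ``$f$ is a map of mixed complexes'' really does supply \emph{both} $[\delta,f]=0$ (equivalently $fB=Bf$) \emph{and} the Hochschild chain map property $[b,f]=0$ that feeds into the $\dl$-compatibility needed in Remark~\ref{rmk:perturbation.Delta-zero-fg=1}; both are immediate from the definition of a mixed-complex morphism. A secondary point is noting that the hypothesis $fg=1$ in~(\ref{eq:Perturbation.DR}) is exactly what is needed to pass from the weaker conclusions of Lemma~\ref{lem:perturbation.Delta-zero} (involving $f\tilde g=fg$ and $\tilde\delta=\delta fg$) to the sharp equalities $f g^\natural=1$, $\tilde B u^{-1}=Bu^{-1}$ required in the statement.
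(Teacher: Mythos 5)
Your proposal is correct and follows exactly the route the paper intends: the paper derives this lemma precisely by specializing Lemma~\ref{lem:perturbation.Delta-zero} and Remark~\ref{rmk:perturbation.Delta-zero-fg=1} to the para-twin complexes $(C^\natural_\bt,b,Bu^{-1})$ and $(\wbC^\natural_\bt,b,Bu^{-1})$, using $T=1$ to get $\Delta=0$ and the mixed-complex property of $f$ to get $[b,f]=[Bu^{-1},f]=0$, with $fg=1$ forcing $\tilde{\delta}=\delta$ and $f^\natural=f$, and then Proposition~\ref{prop:Para-S-Mod.periodic-homotopy} for the periodic statement. Your bookkeeping of the hypotheses and of the identification $(\tilde{\varphi},\tilde{f},\tilde{g},\tilde{\delta})=(\varphi^\natural,f^\natural,g^\natural,\tilde{B}u^{-1})$ matches the paper's argument, so there is nothing to add.
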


\subsection{Some Applications} 
It is well known that a map of mixed complexes is a quasi-isomorphism at the cyclic chain level if and only if it is an isomorphism at the ordinary chain level (see, e.g., \cite{Lo:CH}). As a first application of the results of this section we shall prove the following version of that result for parachain complexes. 

\begin{proposition}
Suppose that $C=(C_\bt, b,B)$ and $\wb{C}=(\wb{C}_\bt, b,B)$ are parachain complexes, and let $f:C_\bt \rightarrow \wb{C}_\bt$ be a parachain complex map. Then the following are equivalent: 
 \begin{enumerate}
   \item[(i)] $f$ gives rise to a $T$-deformation retract of $(C_\bt, b)$ to $(\wb{C}_\bt, b)$.

    \item[(ii)] $f$ gives rise to an $S$-deformation retract of $C^\natural$ to $\wb{C}^\natural$. 
\end{enumerate}
Furthermore, if (i) and (ii) hold, then we get a $T$-deformation retract of $C^\sharp$ to $\wb{C}^\sharp$. 
\end{proposition}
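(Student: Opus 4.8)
The plan is to prove the two implications separately and then deduce the periodic statement.

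\textbf{(i) $\Rightarrow$ (ii).} Suppose $f$, together with a $T$-compatible Hochschild chain map $g:\wb{C}_\bt\to C_\bt$ and a $T$-compatible homotopy $\varphi$, realizes a $T$-deformation retract of $(C_\bt,b)$ to $(\wb{C}_\bt,b)$, so that $fg=1$ and $gf=1+b\varphi+\varphi b$. The first step is to replace $\varphi$ by a \emph{special} homotopy. Since $fg=1$ and $b^2=0$, Remark~\ref{rmk:Perturbation.special} applies with $\dl=b$: setting $\pi=gf$, the map $\hat{\varphi}=(1-\pi)\varphi b(1-\pi)\varphi(1-\pi)$ is again a chain homotopy between $gf$ and $1$, and it satisfies $f\hat{\varphi}=0$, $\hat{\varphi}g=0$, $\hat{\varphi}^2=0$. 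Because $f$, $g$, $\varphi$, $b$ all commute with $T$, so does $\pi$, and hence $\hat{\varphi}$ is $T$-compatible. Now $f$ is a map of parachain complexes (and hence the $S$-map $C^\natural_\bt\to\wb{C}^\natural_\bt$ with $f^{(0)}=f$ and $f^{(j)}=0$ for $j\geq1$), the pair $(g,\hat{\varphi})$ is $T$-compatible, and $\hat{\varphi}$ is special; so Lemma~\ref{lem:Lifting.co-extension-deformation-retract-f-parachain} produces an $S$-map $g^\natural:\wb{C}^\natural_\bt\to C^\natural_\bt$ co-extending $g$ with $fg^\natural=1$ and $g^\natural f=1+(b+Bu^{-1})\hat{\varphi}^\natural+\hat{\varphi}^\natural(b+Bu^{-1})$, which is precisely an $S$-deformation retract of $C^\natural$ to $\wb{C}^\natural$.

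\textbf{(ii) $\Rightarrow$ (i).} Here $f$ is the $S$-map $C^\natural_\bt\to\wb{C}^\natural_\bt$ induced by the parachain map $f$, and by hypothesis there is an $S$-map $g^\natural:\wb{C}^\natural_\bt\to C^\natural_\bt$ with $fg^\natural=1$ and $g^\natural f=1+(b+Bu^{-1})\psi+\psi(b+Bu^{-1})$ for some $(S,T)$-compatible $\psi:C^\natural_\bt\to C^\natural_{\bt+1}$. Write $g^\natural=\sum_{j\geq0}g^{(j)}u^{-j}$ and $\psi=\sum_{j\geq0}\psi^{(j)}u^{-j}$. The strategy is to restrict every identity to the $u^0$-summand, under the identifications $C_\bt u^0\cong C_\bt$ and $\wb{C}_\bt u^0\cong\wb{C}_\bt$: since $u^{-j}$ annihilates the $u^0$-summand for $j\geq1$, on that summand $g^\natural$ acts as $g:=g^{(0)}$, $f$ acts as $f$, and $b+Bu^{-1}$ acts as $b$. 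Hence the chain-map relation $g^\natural(b+Bu^{-1})=(b+Bu^{-1})g^\natural$ restricts to $gb=bg$, the relation $fg^\natural=1$ restricts to $fg=1$, and $g^\natural f=1+(b+Bu^{-1})\psi+\psi(b+Bu^{-1})$ restricts to $gf=1+b\psi^{(0)}+\psi^{(0)}b$. Finally, since $T$ preserves the $u$-grading and commutes with $u^{-1}$, the $(S,T)$-compatibility of $g^\natural$ and of $\psi$ forces $T$-compatibility of each component, in particular of $g^{(0)}$ and $\psi^{(0)}$. Thus $f$ and $g$ are $T$-compatible Hochschild chain maps realizing a $T$-deformation retract of $(C_\bt,b)$ to $(\wb{C}_\bt,b)$.

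\textbf{Periodic statement.} Once (i) and (ii) hold, the $S$-deformation retract of the para-$S$-modules $C^\natural$ and $\wb{C}^\natural$ gives, via Proposition~\ref{prop:Para-S-Mod.periodic-homotopy}, a $T$-deformation retract of the associated periodic para-complexes $C^\sharp$ and $\wb{C}^\sharp$; this is in fact already recorded in part (ii) of Lemma~\ref{lem:Lifting.co-extension-deformation-retract-f-parachain}. The main obstacle I anticipate is purely organizational: in the direction (i) $\Rightarrow$ (ii) one must check that the special-homotopy replacement of Remark~\ref{rmk:Perturbation.special} preserves $T$-compatibility, and in (ii) $\Rightarrow$ (i) one must carry out the bookkeeping showing that restriction to the $u^0$-summand is compatible with all structural maps and that the degree-zero components inherit the required $T$-compatibility. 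Both are routine but should be spelled out carefully.
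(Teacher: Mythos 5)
Your proof is correct and follows essentially the same route as the paper: for (i)$\Rightarrow$(ii) one specializes the homotopy via Remark~\ref{rmk:Perturbation.special} and applies Lemma~\ref{lem:Lifting.co-extension-deformation-retract-f-parachain}, for (ii)$\Rightarrow$(i) one extracts the zeroth-degree ($u^0$) components of the identities, and the periodic statement follows from Proposition~\ref{prop:Para-S-Mod.periodic-homotopy}. The only additions are the (welcome) explicit checks of $T$-compatibility of the specialized homotopy and of the degree-zero components, which the paper leaves implicit.
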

\begin{proof}
 Suppose that $f$ gives rise to a $T$-deformation retract of $(C_\bt, b)$ to $(\wb{C}_\bt, b)$. That is, there is a $T$-compatible chain map 
 $g:(\wb{C}_\bt, b)\rightarrow ( C_\bt, b)$ and a $T$-compatible $k$-linear map $\varphi:C_\bt \rightarrow C_{\bt+1}$ satisfying~(\ref{eq:Perturbation.DR}). 
 As pointed out in Remark~\ref{rmk:Perturbation.special}, we may assume that the chain homotopy $\varphi$ is special, since $fg=1$ and $b^2=0$. 
 Lemma~\ref{lem:Lifting.co-extension-deformation-retract-f-parachain} then produces a coextension  $g^\natural:\wb{C}^\natural_\bt \rightarrow C_\bt^\natural$ which is a right-inverse of $f$ on $\wb{C}^\natural$ and a $S$-homotopy left-inverse on $C^\natural_\bt$. 
 This gives an $S$-deformation retract of $C^\natural$ to $\wb{C}^\natural$, and so by Proposition~\ref{prop:Para-S-Mod.periodic-homotopy} we obtain a $T$-deformation retract of $C^\sharp$ to $\wb{C}^\sharp$. 
 
 Conversely, suppose there are an $S$-map $g^\natural:\wb{C}^\natural_\bt \rightarrow C_\bt^\natural$ and an $(S,T)$-compatible $k$-linear map $\varphi^\natural: C_\bt^\natural \rightarrow C_{\bt+1}^\natural$ such that
\begin{equation}
 fg^\natural=1, \qquad g^\natural f = 1+ (b+Bu^{-1}) \varphi^\natural +   \varphi^\natural (b+Bu^{-1}). 
 \label{eq:Perturbation.S-deformation-g}
\end{equation}
 Set $g^\natural = \sum_{j\geq 0} g^{(j)} u^{-j}$ and $ \varphi^\natural   = \sum_{j\geq 0} \varphi^{(j)} u^{-j}$, where $g^{(j)}:C_{\bt}\rightarrow C_{\bt+2j}$ and $\varphi^{(j)}:C_\bt \rightarrow C_{\bt+2j+1}$ are $T$-compatible $k$-linear maps. 
 We observe that the zeroth degree component parts of $fg^\natural$, $g^\natural f$, and $ 1+ (b+Bu^{-1}) \varphi^\natural +   \varphi^\natural (b+Bu^{-1})$ are $fg^{(0)}$, $g^{(0)} f$, and $ 1+ b\varphi^{(0)} +   \varphi^{(0)} b$, respectively. 
 Therefore, taking zeroth degree component parts in both equalities in~(\ref{eq:Perturbation.S-deformation-g}) gives $fg^{(0)}=1$ and $g^{(0)} f= 1+ b\varphi^{(0)} +   \varphi^{(0)} b$. In particular, we get a $T$-deformation retract of $(C_\bt, b)$ to $(\wb{C}_\bt, b)$. The proof is complete. 
\end{proof}

Applying the above result to the canonical projection $\pi_T:C_\bt \rightarrow C_{T,\bt}$ of a parachain complex provides us with the following characterization of property (DR) for para-$S$-modules associated with parachain complexes. 

\begin{corollary}
 Let $C=(C_\bt, b,B)$ be a parachain complex. Then the following are equivalent:
  \begin{enumerate}
    \item[(i)] The para-$S$-module $C^\natural$ has property (DR). 
      
 \item[(ii)] The canonical projection $\pi_T:C_\bt \rightarrow C_{T,\bt}$ gives rise to a $T$-deformation retract of $(C_\bt, b)$ to $(C_{T,\bt}, b)$. 
\end{enumerate}
\end{corollary}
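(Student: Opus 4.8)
The plan is to deduce this corollary directly from the preceding proposition by feeding it the specific parachain complex map $f=\pi_T$. First I would recall that $\pi_T:C_\bt\rightarrow C_{T,\bt}$ is a map of parachain complexes: the differentials $b$ and $B$ descend to $C_{T,\bt}$ precisely because $T=1-(bB+Bb)$ commutes with both, so $\pi_T$ is $T$-compatible and intertwines the $b$- and $B$-differentials. Hence $\pi_T$ is eligible input for the proposition, with $\wb C=C_T$.

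Next I would observe that the para-$S$-module of the parachain complex $C_T$ is exactly $C_{T}^\natural=(C^\natural_\bt/R^{T,\natural}_\bt,\,b+Bu^{-1},u^{-1},1)$, and that the $S$-map $\pi_T^\natural:C^\natural_\bt\rightarrow C_{T}^\natural$ induced by $\pi_T$ (with $\pi_T^{\natural(0)}=\pi_T$ and higher components zero) agrees with the canonical projection from $C^\natural$ onto $C_{T}^\natural$ appearing in the definition of property (DR). Thus condition (ii) of the proposition for $f=\pi_T$ — namely that $\pi_T$ gives rise to an $S$-deformation retract of $C^\natural$ to $C_T^\natural$ — is literally the statement that $C^\natural$ has property (DR), and condition (i) of the proposition is exactly condition (ii) of the corollary. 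The equivalence of the two conditions of the proposition therefore yields the equivalence of (i) and (ii) of the corollary verbatim.

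\begin{proof}
 Since $T=1-(bB+Bb)$ commutes with $b$ and $B$, these operators descend to $C_{T,\bt}=C_\bt/R^T_\bt$ and the canonical projection $\pi_T:C_\bt \rightarrow C_{T,\bt}$ is a map of parachain complexes. Applying the previous proposition to $f=\pi_T$ (so that $\wb C=C_T$), conditions (i) and (ii) there read: $\pi_T$ gives rise to a $T$-deformation retract of $(C_\bt, b)$ to $(C_{T,\bt}, b)$, respectively $\pi_T$ gives rise to an $S$-deformation retract of $C^\natural$ to $C_T^\natural$. The latter is, by definition, the statement that the para-$S$-module $C^\natural$ has property (DR), since the induced $S$-map $\pi_T^\natural:C_\bt^\natural \rightarrow C^\natural_{T,\bt}$ is precisely the canonical projection of $C^\natural$ onto its $C_{T}$-quotient. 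The equivalence provided by the proposition thus gives the equivalence of (i) and (ii).
\end{proof}

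The only point requiring care — and the closest thing to an obstacle — is the bookkeeping identification that the "canonical projection $C^\natural_\bt \rightarrow C^\natural_{T,\bt}$" used in the definition of property (DR) for $C^\natural$ coincides with the $S$-map induced by the parachain-complex map $\pi_T$; this is immediate from $C^\natural_{T,\bt}=C^\natural_\bt/R^{T,\natural}_\bt$ together with the description of $S$-maps between cyclic complexes of parachain complexes in~(\ref{eq:Parachain.S-map}), but it is the one compatibility that must be checked rather than quoted.
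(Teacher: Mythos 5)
Your proposal is correct and matches the paper's (implicit) argument exactly: the paper obtains the corollary precisely by applying the preceding proposition to $f=\pi_T$, with the same identification of $(C^\natural)_T$ with $C_T^\natural$ and of the induced $S$-map with the canonical projection. Your explicit check of that bookkeeping identification is the only point needing care, and you handle it correctly.
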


\begin{remark}
 A parachain complex $(C_\bt,b,B)$ such that the canonical projection $\pi_T:(C_\bt, b)\rightarrow (C_{T,\bt}, b)$ is a quasi-isomorphism is called a \emph{homological skycraper} in~\cite{KM:HHA18}. 
\end{remark}

\section{Comparison of the Para-$S$-Modules $C^\nnatural$ and $C^\natural$}\label{sec:Cn-Cnn}
When $C$ is a cyclic module, Connes~\cite{Co:CRAS83} showed that $C^\nnatural$ and $C^\natural$ are quasi-isomorphic chain complexes (see also~\cite{Lo:CH, LQ:CMH84}). Subsequently, Kassel~\cite{Ka:Crelle90} exhibited a deformation retract of $C^\nnatural$ to $C^\natural$ when $C$ is an $H$-unital precyclic module. In addition, he pointed out that his deformation retract could be obtained by using the basic perturbation lemma, and the $B$-operator would naturally re-appear from this process (\emph{cf}.\ \cite[Remarque~5.3]{Ka:Crelle90}). 

In this section, by elaborating on Kassel's observation we shall extend to $H$-unital para-precyclic modules the aforementioned equivalence results. This will use the generalization of the basic perturbation lemma given by Lemma~\ref{lem:perturbation-lemma-special}. In particular, as we shall see, and further confirming Kassel's observation, the $B$-differential~(\ref{eq:parachain-paracyclic.B}) naturally arises from the perturbation of the deformation retract of $(C_\bt^\nnatural, \delta)$ to $(C^\natural_\bt, b)$. 

The deformation retract of $(C_\bt^\nnatural, \delta)$ to $(C^\natural_\bt, b)$ is obtained as follows.  We have a natural $k$-module embedding $I_0:C_\bt^\natural \rightarrow C^{\nnatural}_\bt$ given by 
\begin{equation*}
 I_0(xu^p)= xu^{2p}, \qquad x \in C_\bt. 
\end{equation*}
As $ I_0b(xu^p)=I_0(bxu^p)=bxu^{2p}=\delta(xu^{2p})=\delta I_0(xu^p)$, we see that $I_0$ is a chain map from $(C^\natural_\bt,b)$ to $(C^\nnatural_\bt,\delta)$. 

We also have a natural projection $J_0:C_\bt^\nnatural \rightarrow C^\natural_\bt$ given by
\begin{equation*}
 J_0(xu^2p)=xu^p, \qquad J_0(xu^{2p+1})=0. 
\end{equation*}
As $J_0\delta(xu^{2p})=J_0(bxu^{2p})=bxu^p=bJ_0(xu^{2p})$ and $J_0\delta(xu^{2p+1})=J_0(bxu^{2p+1})=0=bJ_0(xu^{2p+1})$, we also see that $J_0$ is a chain map from $(C^\nnatural_\bt,\delta)$ to $(C^\natural_\bt,b)$. 

Let $h:C^\nnatural_\bt \rightarrow C^\nnatural_{\bt+1}$ be the $k$-linear map defined by
\begin{equation*}
 h(xu^{2p})=0, \qquad h(xu^{2p+1})=s'xu^{2p+1}. 
\end{equation*}

\begin{lemma}\label{lem:Cnn-Cn.homotopy-I0J0}
 We have 
\begin{equation}
 J_0I_0=1, \qquad I_0J_0=1+ \delta h + h \delta.
 \label{eq:Cnn-Cn.homotopy-I0J0}
\end{equation}
In particular, we get a deformation retract of $(C^\nnatural_\bt,\delta)$ to $(C^\natural_\bt,b)$. In addition, the chain homotopy $h$ is special. 
\end{lemma}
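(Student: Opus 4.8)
The plan is to check the two identities in~(\ref{eq:Cnn-Cn.homotopy-I0J0}) by a direct computation on the homogeneous summands $C_qu^p$ of $C_\bt^\nnatural$ and $C_\bt^\natural$, separating the summands carrying an even power of $u$ from those carrying an odd power. The only non-formal inputs are the identities $b's'+s'b'=1$ and $(s')^2=0$ for the special contracting homotopy $s'=sb's$, both of which were established in Section~\ref{sec:paracyclic}.

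First I would note that $J_0I_0(xu^p)=J_0(xu^{2p})=xu^p$ for every $x\in C_\bt$, so that $J_0I_0=1$ on $C_\bt^\natural$, and that $I_0J_0$ is exactly the projection of $C_\bt^\nnatural$ onto its even-power part, namely $I_0J_0(xu^{2p})=xu^{2p}$ and $I_0J_0(xu^{2p+1})=0$. Thus the right-hand sides of~(\ref{eq:Cnn-Cn.homotopy-I0J0}) are explicitly identified, and it remains to compute $\delta h+h\delta$.

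Next I would compute $\delta h+h\delta$ using the formulas~(\ref{eq:CCparacyclic.delta}) for $\delta$ and the definition of $h$, noting that $\delta$ preserves the power of $u$ while $h$ kills all even-power summands. On $C_\bt u^{2p}$ both $h\delta$ and $\delta h$ therefore vanish, so $1+\delta h+h\delta=1$ there, in agreement with $I_0J_0$. On $C_\bt u^{2p+1}$ one finds $\delta h(xu^{2p+1})=-b's'xu^{2p+1}$ and $h\delta(xu^{2p+1})=-s'b'xu^{2p+1}$, hence $(\delta h+h\delta)(xu^{2p+1})=-(b's'+s'b')xu^{2p+1}=-xu^{2p+1}$ and $1+\delta h+h\delta=0$ there, again matching $I_0J_0$. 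This gives the second identity in~(\ref{eq:Cnn-Cn.homotopy-I0J0}); together with $J_0I_0=1$ and the fact, already verified in the paragraphs preceding the lemma, that $I_0$ and $J_0$ intertwine $b$ and $\delta$, this yields the asserted deformation retract of $(C_\bt^\nnatural,\delta)$ onto $(C_\bt^\natural,b)$.

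Finally, for the special property~(\ref{eq:Perturbation.special}) with $(f,g,\varphi)=(J_0,I_0,h)$: $J_0h=0$ because $h$ takes values in odd-power summands, which $J_0$ annihilates; $hI_0=0$ because $I_0$ takes values in even-power summands, which $h$ annihilates; and $h^2=0$ because $h$ vanishes on the even-power summands while $h^2(xu^{2p+1})=(s')^2xu^{2p+1}=0$. I do not expect any genuine obstacle here: the whole argument is bookkeeping in the grading by powers of $u$, and the one structural ingredient is the special-homotopy identity $(s')^2=0$, which is precisely why $s'=sb's$, rather than the bare extra degeneracy $s$, was built into Definition~\ref{def:paracyclic.parachain-complex}.
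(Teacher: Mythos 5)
Your proposal is correct and follows essentially the same route as the paper's proof: a direct check on the summands $C_\bt u^{2p}$ and $C_\bt u^{2p+1}$, using $b's'+s'b'=1$ for the odd part and $(s')^2=0$ (together with the range/kernel bookkeeping of $h$, $I_0$, $J_0$) for the specialness of $h$. The only cosmetic difference is that you identify $I_0J_0$ as the projection onto the even-power summands at the outset, which the paper verifies summand by summand instead.
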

\begin{proof}
 Let $x\in C_\bt$. We have $J_0I_0(xu^p)=J_0(xu^p)=1$, and so $J_0I_0=1$.  Likewise, we have $I_0J_0(xu^{2p})=I_0(xu^{p})=xu^{2p}$. As $h=0$ on $C_\bt u^{2p}$, we also get $(\delta h +h\delta)(xu^{2p})=h[bxu^{2p}]=0$. Thus, 
 \begin{equation}
 (1+\delta h +h\delta)(xu^{2p})=xu^{2p}=I_0J_0(xu^{2p}).
 \label{eq:Cnn-Cn.homotopy0}
\end{equation}
Note also that $I_0J_0(xu^{2p+1})=0$. Moreover, we have 
\begin{equation*}
 (\delta h + h\delta )(xu^{2p+1}) = \delta [s'x u^{2p+1}] - h [b'xu^{2p+1}]  = -(b's'+s'b')xu^{2p+1} = -xu^{2p+1}. 
\end{equation*}
Therefore, we see that $(1+\delta h +h\delta)(xu^{2p+1})=0=I_0J_0(xu^{2p+1})$. 
Combining this with~(\ref{eq:Cnn-Cn.homotopy0}) gives the homotopy formula $I_0J_0=1+ \delta h + h \delta$. 

In addition, we have $h^2(xu^{2p})=0$ and $h^2(xu^{2p+1})=(s')^2xu^{2p+1}=0$, and so $h^2=0$. As $hI_0(xu^{p})=h(xu^{2p})=0$, we also see that $hI_0=0$. In addition, we have $J_0h(xu^{2p})=0$ and $J_0h(xu^{2p+1})=J_0(s'xu^{2p+1})=0$, and so $J_0h=0$. All this shows that the chain homotopy $h$ is special. The proof is complete. 
\end{proof}

We seek for applying Lemma~\ref{lem:perturbation-lemma-special} to the deformation retract~(\ref{eq:Cnn-Cn.homotopy-I0J0}). That is, for the para-twin complexes $(C^\nnatural_\bt, \delta, \dl)$ and $(C^\natural_\bt, b, Bu^{-1})$, where the respective roles of the maps $(f,g,\varphi)$ are played by the maps  $(J_0, I_0, h)$. As $(C^\natural_\bt, b, Bu^{-1})$ is a parachain complex, its $\Delta$-operator~(\ref{eq:perturbation.Delta}) is $(1-T)u^{-1}$. Using Lemma~\ref{lem:CCparacyclic.square} we also see that the $\Delta$-operator of $(C^\nnatural_\bt, \delta, \dl)$ is given by
\begin{equation*}
 \Delta = \dl^2+ \delta \dl+\dl \delta= (1-T)u^{-2}. 
\end{equation*}
It is immediate from their definitions that the maps $(J_0, I_0, h)$ are $T$-compatible. It is also straightforward to check that $J_0u^{-2}=u^{-1}J_0$, $I_0u^{-1}=u^{-2}I_0$ and $hu^{-2}=u^{-2}h$. Therefore, we see that the maps $(J_0, I_0, h)$ are $\Delta$-compatible. Together with Lemma~\ref{lem:Cnn-Cn.homotopy-I0J0} this allows us to apply Lemma~\ref{lem:perturbation-lemma-special}. 

Let $(\tilde{h}, \tilde{J}, \tilde{I}, \tilde{\dl})$ be the maps~(\ref{eq:Perturbation.tvarphi})--(\ref{eq:Perturbation.tf}) and (\ref{eq:Perturbation.tdelta0}) associated with $(h, J_0, I_0, \dl)$. Namely, 
\begin{equation}
 \tilde{h}= \sum_{j\geq 0} h(\dl h)^j, \quad \tilde{J} = J_0(1+\dl \tilde{h}), \quad \tilde{I}=(1+ \tilde{h}\dl)I_0, \quad \tilde{\dl}=J_0(\dl +\dl \tilde{h}\dl)I_0. 
 \label{eq:CnnCn.perturbation-hJLdl}
\end{equation}
By Lemma~\ref{lem:perturbation-lemma-special} these maps provide us with a deformation retract from $(C^\nnatural_\bt, \delta +\dl)$ to $(C^\natural_\bt, b +\tilde{\dl})$. It just remains to identify them.

Let $I:C_\bt^\natural \rightarrow C^\nnatural_\bt$ be the $k$-linear map defined by
\begin{equation}
 I(xu^0)=xu^0, \qquad I(xu^p)=xu^{2p} + s'N xu^{2p-1}, \qquad p\geq 1.
 \label{eq:Cnn-Cn.I-map}
\end{equation}
We also define the $k$-linear map $J:C_\bt^{\nnatural} \rightarrow C_\bt^\natural$ by
\begin{equation}
 J(xu^{2p})= xu^p, \qquad J(xu^{2p+1})= (1-\tau)s'x u^p,  \qquad  p \geq 0. 
  \label{eq:Cnn-Cn.J-map}
\end{equation}

\begin{lemma}\label{lem:CnnCn.computation-thJLdl}
 We have
 \begin{equation*}
 \tilde{h}=h, \qquad \tilde{J}=J, \qquad \tilde{I}=I, \qquad \tilde{\dl}=Bu^{-1}.  
\end{equation*}
\end{lemma}
\begin{proof}
Let $x\in C_\bt$. We have 
\begin{equation}
 \dl h(xu^{2p})=0 \qquad \text{and} \qquad \dl h(xu^{2p+1})=\dl (s'xu^{2p+1})=(1-\tau)s'xu^{2p}. 
 \label{eq:CnnCn.dlh}
\end{equation}
This implies that $h\dl h(xu^{2p})=0$ and $h\dl h(xu^{2p+1})=h[(1-\tau)s'xu^{2p}]=0$, and hence $h \dl h=0$. It then follows that $\tilde{h}=\sum_{j\geq 0} h(\dl h)^j=h$. As a result we can substitute $h$ for $\tilde{h}$ in the definitions of $(\tilde{I}, \tilde{J}, \tilde{\dl})$ in~(\ref{eq:CnnCn.perturbation-hJLdl}). 

By using~(\ref{eq:CnnCn.dlh}) we get $\tilde{J}(xu^{2p})= J_0(1+\dl h)(xu^{2p})=J_0(xu^{2p})=xu^{p}$. We also get 
\begin{equation*}
 \tilde{J}(xu^{2p})=J_0(1+\dl h)(xu^{2p+1})=J_0\dl\left[s'xu^{2p+1}\right]=J_0\left[(1-\tau)s'xu^{2p}\right]=(1-\tau)s'xu^{p}.  
\end{equation*}
It then follows that $\tilde{J}=J$.  
We also have $h\dl I_0(xu^0)=h\dl (xu^0)=0$. If $p\geq 1$, then $h\dl I_0(xu^p)=h\dl (xu^p)=0$. If $p\geq 1$, then
\begin{equation}
 h\dl I_0\left(xu^{p}\right)= h\dl\left(xu^{2p}\right)= h\left( Nxu^{2p-1}\right) =s'Nxu^{2p-1}. 
 \label{eq:Cnn-Cn.hdlI0}
\end{equation}
This shows that $h\dl I_0= s'Nu^{-1}I_0u^{-1}$, and so $\tilde{I}=(1+h \dl)I_0=(1+s'Nu^{-1})I_0=I$. 

We also have $J_0 \dl I_0(xu^p)=J_0\dl(xu^{2p})=J_0(Nxu^{2p-1})=0$, and so $J_0\dl I_0=0$. In addition, we have $J_0 \dl h \dl I_0(xu^0)= J_0 \dl h \dl (xu^0)=0=Bu^{-1}(xu^0)$. If $p\geq 1$, then by~(\ref{eq:Cnn-Cn.hdlI0}) we have $h\dl I_0(xu^{p})= s'Nxu^{2p-1}$, and so we get
\begin{equation*}
 J_0 \dl h \dl I_0(xu^p)=J_0 \dl \left (s'Nxu^{2p-1}\right)=J_0\left[ (1-\tau)s'Nxu^{2p-1}\right]=Bxu^{p-1}.   
\end{equation*}
Therefore, we see that $J_0 \dl h \dl I_0=Bu^{-1}$. Thus, 
\begin{equation*}
 \tilde{\dl}= J_0 (\dl+ \dl h \dl)I_0=  J_0 \dl I_0 + J_0 \dl h \dl I_0=Bu^{-1}. 
\end{equation*}
The proof is complete. 
\end{proof}

We are now in a position to prove the main result of this section. 

\begin{proposition}\label{prop:CnnCn.deformation-retract}
Let $C=(C_\bt, d,s,t)$ be an $H$-unital para-precyclic $k$-module. 
 \begin{enumerate}
\item The maps $I:C^\natural_\bt \rightarrow C^\nnatural_\bt$ and $J:C^\nnatural_\bt \rightarrow C^\natural_\bt$ given by~(\ref{eq:Cnn-Cn.I-map})--(\ref{eq:Cnn-Cn.J-map}) are $S$-maps. 

\item We have 
\begin{equation}
 JI=1, \qquad IJ=1+(\dl+\delta)h+h(\dl+\delta). 
 \label{lem:CnnCn.deformation-retract-IJ}
\end{equation}
This provides us with an $S$-deformation retract of $C^\nnatural$ to $C^\natural$.

\item The chain homotopy $h$ is special (i.e., $J h=0$, $h I=0$, and $h^2=0$).  
\end{enumerate}
\end{proposition}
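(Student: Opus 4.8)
The plan is to apply the generalized perturbation lemma, Lemma~\ref{lem:perturbation-lemma-special}, to the deformation retract established in Lemma~\ref{lem:Cnn-Cn.homotopy-I0J0} and then transport the conclusions through the identifications computed in Lemma~\ref{lem:CnnCn.computation-thJLdl}. Concretely, I would set up the para-twin complexes $(C^\nnatural_\bt,\delta,\dl)$ and $(C^\natural_\bt, b, Bu^{-1})$, with the triple $(f,g,\varphi)$ of the perturbation lemma being $(J_0,I_0,h)$. The first task is to verify the hypotheses of Lemma~\ref{lem:perturbation-lemma-special}: that $J_0$ and $I_0$ are compatible with the unperturbed differentials $\delta$ and $b$ (which is part of Lemma~\ref{lem:Cnn-Cn.homotopy-I0J0}, together with the fact that $I_0$ and $J_0$ intertwine $\delta$ and $b$), that $h$ is special (also in Lemma~\ref{lem:Cnn-Cn.homotopy-I0J0}), that the nilpotency condition~(\ref{eq:perturbation.delta-vphij}) holds (immediate since $\dl h \dl h$-type products vanish, as shown while proving $\tilde{h}=h$), and that $h$ is $\Delta$-compatible. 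For the last point I would record that both $\Delta$-operators equal $(1-T)u^{-2}$ by Lemma~\ref{lem:CCparacyclic.square} and the parachain identity $(b+Bu^{-1})^2=(1-T)u^{-1}$, and then note that $J_0$, $I_0$, $h$ all commute (in the appropriate graded sense) with $u^{-1}$, $u^{-2}$, and $T$, so they are $\Delta$-compatible.

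Next I would invoke Lemma~\ref{lem:perturbation-lemma-special} to obtain the perturbed maps $(\tilde h,\tilde J,\tilde I,\tilde\dl)$ of~(\ref{eq:CnnCn.perturbation-hJLdl}) together with the conclusions~(\ref{eq:Perturbation.tf-chain-map})--(\ref{eq:Perturbation.tdelta-fdeltag}) and the assertion that $\tilde h$ is again special. Since $J_0I_0=1$, Remark~\ref{rmk:Perturbation.fg} applies: $\tilde I\tilde J=J_0I_0=1$ — wait, in the notation here the roles are swapped, so one gets $\tilde J\tilde I = 1$ from $\tilde g\tilde f$-type identity with $fg=J_0I_0=1$, and the perturbed $\tilde\delta$ satisfies $\tilde\delta^2+\dl\tilde\delta+\tilde\delta\dl=\Delta=(1-T)u^{-2}$ wait—here $\tilde\delta = \tilde\dl = Bu^{-1}$ is the perturbed version of $\dl$, so I must be careful to feed the correct pair into the lemma: the ``$\dl$'' of the lemma is $\delta$ (the $b$-type differential), and the ``$\delta$'' of the lemma is $\dl$ (the $Nu^{-1}$, $(1-\tau)u^{-1}$ part). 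With that bookkeeping fixed, Lemma~\ref{lem:perturbation-lemma-special} yields exactly $\tilde J\tilde I = 1$, $\tilde I\tilde J = 1 + (\delta+\dl)\tilde h + \tilde h(\delta+\dl)$, and the fact that $\tilde h$ is special. Then I substitute the identifications $\tilde h = h$, $\tilde J = J$, $\tilde I = I$, $\tilde\dl = Bu^{-1}$ from Lemma~\ref{lem:CnnCn.computation-thJLdl}, which turns these into the claimed formulas~(\ref{lem:CnnCn.deformation-retract-IJ}) and the specialness of $h$ (i.e.\ $Jh=0$, $hI=0$, $h^2=0$).

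For part (1), that $I$ and $J$ are $S$-maps: the chain map property $I(b+Bu^{-1}) = (\dl+\delta)I$ and $J(\dl+\delta) = (b+Bu^{-1})J$ follow from~(\ref{eq:Perturbation.tf-chain-map})--(\ref{eq:Perturbation.tg-chain-map}) of Lemma~\ref{lem:perturbation-lemma-special} after substituting the identifications; compatibility with the $S$-operators ($Iu^{-1}=u^{-2}I$, $Ju^{-2}=u^{-1}J$) and with $T$ can either be read off the explicit formulas~(\ref{eq:Cnn-Cn.I-map})--(\ref{eq:Cnn-Cn.J-map}) directly or deduced from the $\Delta$-compatibility propagated by the perturbation process (the perturbed maps are built from $u^{-1}$- and $T$-commuting pieces). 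I would state this briefly and refer to the explicit formulas.

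The only real subtlety — and hence the ``main obstacle'' — is purely notational: the labels $\dl$ and $\delta$ are used with opposite meanings in Section~\ref{sec:Perturbation} versus Section~\ref{sec:para-precyclic}, so one must be scrupulous about which differential is ``unperturbed'' (the $b$/$b'$-type map $\delta$, whose restriction to $C^\natural$ via $I_0,J_0$ gives the deformation retract) and which is the ``perturbation'' (the $N$/$(1-\tau)$-type map $\dl$). Once that dictionary is pinned down, everything is a direct quotation of Lemma~\ref{lem:perturbation-lemma-special}, Remark~\ref{rmk:Perturbation.fg}, Lemma~\ref{lem:Cnn-Cn.homotopy-I0J0}, and Lemma~\ref{lem:CnnCn.computation-thJLdl}, with no further computation required. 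I would therefore write the proof as: ``Apply Lemma~\ref{lem:perturbation-lemma-special} with $(f,g,\varphi)=(J_0,I_0,h)$ and $(\dl_{\mathrm{lem}},\delta_{\mathrm{lem}})=(\delta,\dl)$; the hypotheses hold by Lemma~\ref{lem:Cnn-Cn.homotopy-I0J0} and the $\Delta=(1-T)u^{-2}$ computation; the conclusion, combined with Remark~\ref{rmk:Perturbation.fg} and the identifications of Lemma~\ref{lem:CnnCn.computation-thJLdl}, gives (1)--(3).''
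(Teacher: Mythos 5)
Your proposal is correct and follows essentially the same route as the paper: apply Lemma~\ref{lem:perturbation-lemma-special} to the para-twin complexes $(C^\nnatural_\bt,\delta,\dl)$ and $(C^\natural_\bt,b,Bu^{-1})$ with $(f,g,\varphi)=(J_0,I_0,h)$, verify the hypotheses via Lemma~\ref{lem:Cnn-Cn.homotopy-I0J0} and the $(u^{-1},T)$-compatibilities, then substitute the identifications of Lemma~\ref{lem:CnnCn.computation-thJLdl} and read off the $S$-map property from the explicit formulas. (Only a cosmetic slip: the $\Delta$-operator of $(C^\natural_\bt,b,Bu^{-1})$ is $(1-T)u^{-1}$, not $(1-T)u^{-2}$, which does not affect the argument.)
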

\begin{proof}
 It follows from Lemma~\ref{lem:perturbation-lemma-special} and Lemma~\ref{lem:CnnCn.computation-thJLdl} that $I$ and $J$ are chains maps giving rise to the deformation retract~(\ref{lem:CnnCn.deformation-retract-IJ}). Moreover, the chain homotopy $h$ is special, in the sense that $J h=0$, $h I=0$, and $h^2=0$.  As mentioned above, the chain homotopy $h$ is compatible with the operators $u^{-2}$ and $T$ on $C^\nnatural_\bt$. It follows from their definitions that $I$ and $J$ are $T$-compatible maps. Moreover, it can be checked that $Iu^{-1}=u^{-2}I$ and $Ju^{-2}=u^{-1}J$. Therefore, the maps $I$ and $J$ are $S$-maps and the deformation retract~(\ref{lem:CnnCn.deformation-retract-IJ}) is an $S$-deformation retract of $C^\nnatural$ to $C^\natural$. 
 The proof is complete. 
\end{proof}

\begin{remark}
When $C$ is an $H$-unital  precyclic module Proposition~\ref{prop:CnnCn.deformation-retract} was proved by Kassel~\cite{Ka:Crelle90}. 
\end{remark}

\begin{remark}
 It was shown by Loday-Quillen~\cite{LQ:CMH84} that the chain map $I:C^\natural_\bt \rightarrow C^\nnatural_\bt$ given by~(\ref{eq:Cnn-Cn.I-map}) is a quasi-isomorphism when $C$ is the cyclic module of an unital $k$-algebra and $k$ is commutative. 
\end{remark}

\section{Comparison of  $C^\nnatural$ and $C^\lambda$}\label{sec:Cnn-Cl} 
When $C$ is a precyclic $k$-module with $k\supset \Q$, Kassel~\cite{Ka:Crelle90} used a version of the basic perturbation lemma to construct a deformation retract of the chain complex $C^\nnatural$ to Connes' cyclic complex $C^\lambda$. In this section, we seek for a similar result for para-precyclic modules by using the perturbation theory of Section~\ref{sec:Perturbation}. In particular, this approach avoids using the cyclic relation $\tau^{m+1}=1$, which is used in~\cite{Ka:Crelle90}, but is not available in general with para-precyclic modules.  

Recall that if  $C=(C_\bt, d,t)$ is a precyclic $k$-module, then its cyclic chain complex in the sense of Connes~\cite{Co:MFO81, Co:CRAS83, Co:IHES85} is $C^\lambda=(C_\bt^\lambda,b)$, where
\begin{equation}
 C_m^\lambda = C_m \slash \ran(1-\tau), \qquad m\geq 0.
 \label{eq:Cnnat-Clambda.Clambda}
\end{equation}
Here $\tau$ is given by~(\ref{eq:parachain-paracyclic.tau-N}) and the differential $b$ is induced from the Hochschild differential $b:C_\bt \rightarrow C_{\bt-1}$. Indeed, as $b(1-\tau)=(1-\tau)b'$ this operator descends to a unique $k$-linear differential $b:C_\bt^\lambda \rightarrow C_{\bt-1}^\lambda$.
More generally, if $C=(C_\bt, d,t)$ is any para-precyclic $k$-module, then we can define the $k$-modules $C_m^\lambda$, $m\geq 0$, as in~(\ref{eq:Cnnat-Clambda.Clambda}). As we still have the relation $b(1-\tau)=(1-\tau)b'$, the Hoschschild differential descends to a $k$-linear differential  $b:C_\bt^\lambda \rightarrow C_{\bt-1}^\lambda$, and so we get a chain complex $C^\lambda:=(C_\bt^\lambda,b)$. 

In what follows, we assume we are given a para-precyclic $k$-module $C=(C_\bt, d,t)$. We let $\pi^\lambda:C_\bt \rightarrow C_\bt^\lambda$ be the canonical projection of $C_\bt$ onto $C_\bt^\lambda$. We also let $\pi_0^\nnatural:C_\bt^\nnatural \rightarrow C_\bt$ be the projection onto the zeroth degree component $C_\bt u^0=C_\bt$. That is, $\pi_0^\nnatural (xu^0)=x$ and $\pi_0^\nnatural (xu^p)=0$ for $p\geq 1$. Set $\pi^\nnatural:=\pi^\lambda \pi_0^\nnatural$; this is the  $k$-linear map from  $C_\bt^\nnatural$ to $C_{\bt}^\lambda$ such that
\begin{equation*}
 \pi^\nnatural(xu^0)=x^\lambda, \qquad \pi^\nnatural(xu^p)=0, \quad p\geq 1.
\end{equation*}

\begin{lemma}\label{lem:Cn-Cl.pinn-chain-map}
 We have 
 \begin{equation*}
 \pi_0^\nnatural \dl=(1-\tau) \pi_0^\nnatural u^{-1},  \qquad  \pi^\nnatural_0 \delta =b\pi^\nnatural, \qquad 
  \pi^\nnatural \dl=0,  \qquad  \pi^\nnatural \delta =b\pi^\nnatural.
\end{equation*}
In particular, the projection $\pi^\nnatural: C^\nnatural \rightarrow C^\lambda$ is a chain map. 
\end{lemma}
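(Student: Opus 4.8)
The plan is to prove the two identities involving $\pi_{0}^{\nnatural}$ by a direct case-by-case computation on a general homogeneous element $xu^{p}\in C^{\nnatural}_{\bt}$, and then to obtain the two identities involving $\pi^{\nnatural}=\pi^{\lambda}\pi_{0}^{\nnatural}$ by composing on the left with the projection $\pi^{\lambda}$. The only external facts I will use are: $\pi_{0}^{\nnatural}$ restricts to the identity on $C_\bt u^{0}=C_\bt$ and annihilates $C_\bt u^{p}$ for $p\geq 1$; $u^{-1}$ sends $xu^{p}$ to $xu^{p-1}$ (and kills $u$-degree $0$); $\pi^{\lambda}$ annihilates $\ran(1-\tau)$; and $b$ descends to a differential on $C^{\lambda}_{\bt}$, i.e.\ $\pi^{\lambda}b=b\pi^{\lambda}$, which is exactly the content of the relation $b(1-\tau)=(1-\tau)b'$ recorded in~(\ref{eq:parachain-paracyclic.bNtau}).

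First I would compute $\pi_{0}^{\nnatural}\dl$. By~(\ref{eq:CCparacyclic.dl}), $\dl$ vanishes on $C_\bt u^{0}$, equals $Nu^{-1}$ on $C_\bt u^{2p}$ with $p\geq 1$, and equals $(1-\tau)u^{-1}$ on $C_\bt u^{2p+1}$; in every case the output sits in $u$-degree $p-1$ when the input sits in $u$-degree $p$. Hence $\pi_{0}^{\nnatural}\dl$ can be nonzero only on $C_\bt u^{1}$, where $\dl(xu^{1})=(1-\tau)xu^{0}$ gives $\pi_{0}^{\nnatural}\dl(xu^{1})=(1-\tau)x=(1-\tau)\pi_{0}^{\nnatural}u^{-1}(xu^{1})$; on $C_\bt u^{0}$ and on $C_\bt u^{p}$ with $p\geq 2$ both $\pi_{0}^{\nnatural}\dl$ and $(1-\tau)\pi_{0}^{\nnatural}u^{-1}$ vanish because their outputs sit in $u$-degree $\geq 1$. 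This gives the first identity. The Hochschild analogue $\pi_{0}^{\nnatural}\delta=b\pi_{0}^{\nnatural}$ is even quicker: by~(\ref{eq:CCparacyclic.delta}) the operator $\delta$ preserves the $u$-degree, so both sides kill $C_\bt u^{p}$ for $p\geq 1$, while on $C_\bt u^{0}$ one has $\pi_{0}^{\nnatural}\delta(xu^{0})=\pi_{0}^{\nnatural}(bxu^{0})=bx=b\pi_{0}^{\nnatural}(xu^{0})$.

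The remaining two identities then follow formally. Composing the first identity on the left with $\pi^{\lambda}$ yields $\pi^{\nnatural}\dl=\pi^{\lambda}\pi_{0}^{\nnatural}\dl=\pi^{\lambda}(1-\tau)\pi_{0}^{\nnatural}u^{-1}=0$ since $\pi^{\lambda}$ kills $\ran(1-\tau)$; composing the Hochschild identity with $\pi^{\lambda}$ and using $\pi^{\lambda}b=b\pi^{\lambda}$ yields $\pi^{\nnatural}\delta=\pi^{\lambda}\pi_{0}^{\nnatural}\delta=\pi^{\lambda}b\pi_{0}^{\nnatural}=b\pi^{\lambda}\pi_{0}^{\nnatural}=b\pi^{\nnatural}$. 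Adding these two gives $\pi^{\nnatural}(\dl+\delta)=b\pi^{\nnatural}$, which is precisely the assertion that $\pi^{\nnatural}\colon(C^{\nnatural}_{\bt},\dl+\delta)\to(C^{\lambda}_{\bt},b)$ is a chain map. There is no genuine obstacle in this argument — it is entirely routine bookkeeping; the only two points that need a little attention are the degenerate case $p=0$ in the piecewise definition~(\ref{eq:CCparacyclic.dl}) of $\dl$, and the fact that $b$ passes to the quotient $C^{\lambda}_{\bt}$, which is where~(\ref{eq:parachain-paracyclic.bNtau}) enters.
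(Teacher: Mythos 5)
Your proof is correct and follows essentially the same route as the paper: a case-by-case check of $\pi_0^\nnatural \dl=(1-\tau)\pi_0^\nnatural u^{-1}$ and $\pi_0^\nnatural\delta=b\pi_0^\nnatural$ on homogeneous elements $xu^p$ (splitting off the degenerate cases $p=0$ and $p=1$), followed by composition with $\pi^\lambda$ using that $\pi^\lambda$ kills $\ran(1-\tau)$ and that $b$ descends to $C^\lambda_\bt$ via $b(1-\tau)=(1-\tau)b'$. No gaps; nothing further is needed.
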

\begin{proof}
Let $x\in C_\bt$. If $p\geq 2$, then $\dl (xu^p)$ and $u^{-1}(xu^p)=xu^{p-1}$ are both contained in $\oplus_{q\geq 1}C_\bt u^q =\ker \pi^\nnatural_0$, and so 
$ \pi_0^\nnatural \dl(xu^p)=0=(1-\tau) \pi_0^\nnatural u^{-1}(xu^p)$. As $\dl(xu^0)=0=u^{-1} (xu^0)$ we also have $ \pi_0^\nnatural \dl(xu^0)=0=(1-\tau) \pi_0^\nnatural u^{-1}(xu^0)$. In addition, by definition $\dl(xu)=(1-\tau)xu^0$, and so we have $\pi_0^\nnatural \dl(xu)= (1-\tau)x =(1-\tau) \pi_0^\nnatural (xu^0) =(1-\tau) \pi_0^\nnatural u^{-1}(xu)$. 
Therefore, we see that $ \pi_0^\nnatural \dl=(1-\tau) \pi_0^\nnatural u^{-1}$, and hence $ \pi^\nnatural \dl=\pi^\lambda (1-\tau) \pi_0^\nnatural u^{-1}=0$. 
 
Given any  $x\in C_\bt$ we have $ \pi^\nnatural_0 \delta (xu^0) = \pi^\nnatural_0(bx u^0) = bx =  b\pi^\nnatural_0 (xu^0)$. Moreover, if $p\geq 1$, then $xu^p$ and $\delta (xu^p)$ are both contained in in $\oplus_{q\geq 1}C_\bt u^q =\ker \pi^\nnatural_0$, and hence $\pi^\nnatural_0 \delta (xu^p)=0= b \pi^\nnatural_0 (xu^p)$. This shows that $\pi^\nnatural_0 \delta =b\pi^\nnatural$. As $b\pi^\lambda = \pi^\lambda b$, we further see that $\pi^\nnatural \delta  = \pi^\lambda b \pi_0^\nnatural=b\pi^\nnatural$.  The lemma is proved. 
\end{proof}

From now on we assume that $k\supset \Q$. In what follows, given $x\in C_m$, $m\geq 0$, we set 
\begin{equation*}
\hat{x}=(m+1)^{-1}x \qquad \text{and} \qquad \hat{N} x=N\hat{x}.
\end{equation*}
We then let $\nu:C_\bt \rightarrow C_\bt^\nnatural$ be the $k$-linear map defined by 
\begin{equation*}
 \nu(x)=\hat{N}xu^0, \qquad x\in C_\bt. 
\end{equation*}

For $j\geq 0$, set $N_j(X)=\sum_{\ell \leq j}X^\ell \in k[X]$. Note that $X^{j+1}-1=(X-1)N_j(X)$. In addition, let $D_m(X)\in k[X]$ be the polynomial given by 
\begin{equation}
 D_m(X)= \sum_{0\leq j \leq m}(m-j)X^j=\sum_{0\leq j \leq m-1}N_j(X).
 \label{eq:CnCl.Dm}  
\end{equation}
We observe that 
\begin{equation}
 N_m(X)-(m+1)= \sum_{0\leq j \leq m}(X^j-1)= \sum_{1\leq j \leq m}(X-1)N_{j-1}(X)=(X-1)D_m(X).
 \label{eq:Cn-Cl.Nm-Dm} 
\end{equation}
Let $\hat{D}:C_\bt \rightarrow C_\bt$ be the $k$-linear map defined by
\begin{equation}
\hat{D}x:=D_m(\tau) \hat{x} = \sum_{0\leq j \leq m}(m-j)\tau^j \hat{x},  \qquad x\in C_\bt. 
\label{eq:CnCl.D}
\end{equation}
Note that by~(\ref{eq:Cn-Cl.Nm-Dm}) we have
\begin{equation}
 \hat{N}+(1-\tau)\hat{D}=1. 
 \label{eq:Cn-Cl.N-D}
\end{equation}
We then let $\varphi: C_\bt^{\nnatural} \rightarrow C_\bt^{\nnatural}$ be the $k$-linear map defined by 
\begin{equation}
 \varphi(xu^{2p})= -\hat{D}xu^{2p+1}, \qquad \varphi(xu^{2p+1}) = -\hat{x}u^{2p+2}, \qquad x \in C_\bt. 
 \label{eq:Cn-Cl.vphi}
\end{equation}

\begin{lemma}[compare \cite{Ka:Crelle90}]\label{lem:Cn-Cl.nu-tilde-nu}
 We have
 \begin{equation*}
 \partial \nu=0, \qquad \pi^\nnatural_0 \nu =\hat{N}, \qquad \nu \pi_0^\nnatural =1+\dl \varphi + \varphi \dl. 
\end{equation*}
\end{lemma}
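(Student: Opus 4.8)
The statement asserts three identities for the map $\nu(x) = \hat{N}xu^0$ and the homotopy $\varphi$ from~(\ref{eq:Cn-Cl.vphi}). I would prove them in the order listed, since each is a short direct computation using only the definitions~(\ref{eq:CCparacyclic.dl}), (\ref{eq:CCparacyclic.delta}) of $\dl$ and $\delta$ (here $\partial$ denotes $\dl$, the ``long'' differential), the formula $(1-\tau)N = N(1-\tau) = 1-T$, and the key algebraic identity~(\ref{eq:Cn-Cl.N-D}).

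\emph{First}, $\partial\nu = 0$. Since $\nu(x) = \hat{N}xu^0$ lives in $C_\bt u^0$, and $\dl$ vanishes on $C_\bt u^{0}$ by~(\ref{eq:CCparacyclic.dl}) (the case $p=0$), this is immediate: $\partial \nu(x) = \dl(\hat{N}xu^0) = 0$.

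\emph{Second}, $\pi_0^\nnatural \nu = \hat{N}$. By definition $\pi_0^\nnatural$ is the projection onto $C_\bt u^0$, acting as the identity there, so $\pi_0^\nnatural\nu(x) = \pi_0^\nnatural(\hat{N}xu^0) = \hat{N}x$, which is what is claimed.

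\emph{Third} — and this is the only step requiring real computation, hence the main point — the homotopy formula $\nu\pi_0^\nnatural = 1 + \dl\varphi + \varphi\dl$. Both sides are $k$-linear maps $C_\bt^\nnatural \to C_\bt^\nnatural$, so I would verify equality on each summand $C_\bt u^p$ separately, splitting into the cases $p=0$, $p = 2p'+1$ odd, and $p = 2p' \geq 2$ even. On $C_\bt u^0$: the left side is $\nu\pi_0^\nnatural(xu^0) = \hat{N}xu^0$; on the right, $\dl(xu^0) = 0$ so $\varphi\dl = 0$, while $\dl\varphi(xu^0) = \dl(-\hat{D}xu^1) = -(1-\tau)\hat{D}x\, u^0$, giving $1 + \dl\varphi + \varphi\dl = (1 - (1-\tau)\hat{D})x\,u^0 = \hat{N}x\,u^0$ by~(\ref{eq:Cn-Cl.N-D}); the two sides agree. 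For $p \geq 1$ the left side $\nu\pi_0^\nnatural$ vanishes (since $\pi_0^\nnatural$ kills $C_\bt u^p$ for $p\geq 1$), so I must check $1 + \dl\varphi + \varphi\dl = 0$ on $C_\bt u^p$, $p \geq 1$. For $xu^{2p'+1}$ with $p'\geq 0$: $\dl(xu^{2p'+1}) = (1-\tau)xu^{2p'}$, so $\varphi\dl(xu^{2p'+1}) = -\hat{D}(1-\tau)x\,u^{2p'+1}$, while $\dl\varphi(xu^{2p'+1}) = \dl(-\hat{x}u^{2p'+2}) = -N\hat{x}\,u^{2p'+1} = -\hat{N}x\,u^{2p'+1}$; summing with $1$ gives $(1 - \hat{N} - \hat{D}(1-\tau))x\,u^{2p'+1}$, which is $0$ by~(\ref{eq:Cn-Cl.N-D}) (using $\hat D(1-\tau) = (1-\tau)\hat D$ on the relevant degree, which holds since $D_m(\tau)$ and $\tau$ commute). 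For $xu^{2p'}$ with $p' \geq 1$: $\dl(xu^{2p'}) = Nxu^{2p'-1}$, so $\varphi\dl(xu^{2p'}) = -\widehat{Nx}\,u^{2p'}$, and $\dl\varphi(xu^{2p'}) = \dl(-\hat{D}xu^{2p'+1}) = -(1-\tau)\hat{D}x\,u^{2p'}$; summing with $1$ gives $(1 - (1-\tau)\hat{D})x\,u^{2p'} - \widehat{Nx}\,u^{2p'}$. The first term is $\hat{N}x = N\hat x$ by~(\ref{eq:Cn-Cl.N-D}); the care here is the bookkeeping of the degree-dependent scalars ``$\widehat{\phantom{x}}$'' — I must confirm $N\hat x$ (where $\hat x = (m+1)^{-1}x$, $x\in C_m$, and $N$ raises no degree) equals $\widehat{Nx}$. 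Since $N: C_m \to C_m$ preserves degree and $m+1$ is central in $k$, indeed $\widehat{Nx} = (m+1)^{-1}Nx = N(m+1)^{-1}x = N\hat x = \hat N x$, so the two terms cancel and the expression is $0$. The only genuine subtlety throughout is keeping track of these normalization factors $(m+1)^{-1}$ as they pass through degree-preserving operators $\tau$, $N$, $\hat D$ versus the degree-changing differentials; once that is done, everything reduces to~(\ref{eq:Cn-Cl.N-D}).
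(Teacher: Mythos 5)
Your proposal is correct and follows essentially the same route as the paper: all three identities are verified directly from the definitions of $\dl$, $\delta$, $\varphi$, with the homotopy formula checked separately on $C_\bt u^0$, $C_\bt u^{2p}$ ($p\geq 1$), and $C_\bt u^{2p+1}$, each case reducing to the identity $\hat{N}+(1-\tau)\hat{D}=1$ of~(\ref{eq:Cn-Cl.N-D}). Your extra care with the degree normalizations (e.g.\ $\widehat{Nx}=\hat N x$, $\hat D(1-\tau)=(1-\tau)\hat D$) is implicit in the paper's computation and is handled correctly.
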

\begin{proof}
 As the range of $\nu$ is contained in $C_\bt u^0\subset \ker\dl$, we have $\dl \nu=0$. Let $x\in C_\bt$. We have 
 $\pi^\nnatural_0 \nu(x)=\pi^\nnatural_0(\hat{N}xu^0)=\hat{N}x$, and so $\pi^\nnatural_0 \nu =\hat{N}$.  
 By using~(\ref{eq:Cn-Cl.N-D}) we also see that $(1+\varphi \dl + \dl \varphi)(x u^0)$ is equal to
\begin{equation*}
  xu^0 - \dl (\hat{D}xu)= \left[1-(1-\tau)\hat{D}\right]xu^0 =\hat{N}xu^0=\nu(x)=\nu\pi_0^\nnatural(xu^0). 
\end{equation*}
If $p\geq 1$, then  $(\varphi \dl + \dl \varphi)(x u^{2p})$ is equal to 
\begin{equation*}
 \varphi(Nxu^{2p-1}) - \dl( \hat{D}xu^{2p+1})= -\left( \hat{N}+(1-\tau)\hat{D}\right) xu^{2p}=-xu^{2p}.
\end{equation*}
This gives $(1+\varphi \dl + \dl \varphi)(x u^{2p})=0= \nu\pi_0^\nnatural(xu^{2p})$. In addition, if $p\geq 0$, then 
  $(\varphi \dl + \dl \varphi)(x u^{2p+1})$ is equal to 
\begin{equation*}
 \varphi\left[(1-\tau)xu^{2p}\right] - \dl(\hat{x}u^{2p+1})= -\left((1-\tau)\hat{D}+\hat{N}\right) xu^{2p+1}=-xu^{2p+1}.
\end{equation*}
Thus, as above,  we have $(1+\varphi \dl + \dl \varphi)(x u^{2p+1})=0=\nu\pi_0^\nnatural(xu^{2p+1})$. All this shows that $1+\varphi \dl + \dl \varphi$ agrees with $\nu\pi_0^\nnatural$ on all $C^\nnatural_\bt$. The proof is complete. 
\end{proof}

As it turns out, Lemma~\ref{lem:Cn-Cl.pinn-chain-map} and Lemma~\ref{lem:Cn-Cl.nu-tilde-nu} allows us to apply Lemma~\ref{lem:perturbation.Delta-zero} to the para-twin complexes 
$(C^\nnatural_{\bt}, \dl, \delta)$ and $(C_\bt, 0,b)$ and the maps $\pi_0^\nnatural: C_\bt^\nnatural \rightarrow C_\bt$ and $\nu: C_\bt \rightarrow C_\bt^\nnatural$. 
More precisely, in the notation of Section~\ref{sec:Perturbation} the $\Delta$-operator of $(C_\bt, 0,b)$ is zero and it follows from  Lemma~\ref{lem:CCparacyclic.square}  that the $\Delta$-operator of $(C^\nnatural_{\bt}, \dl, \delta)$ is zero as well. In addition, by Lemma~\ref{lem:Cn-Cl.pinn-chain-map} and Lemma~\ref{lem:Cn-Cl.nu-tilde-nu} the map $\pi_0^\nnatural$ is compatible with the $\delta$-operators, the map $\nu$ is  compatible with the $\dl$-operators and $\pi_0\varphi=0$. Therefore, the assumptions of Lemma~\ref{lem:perturbation.Delta-zero} are fulfilled.  

Let $\varphi^\nnatural: C_{\bt}^\nnatural \rightarrow C_{\bt+1}^\nnatural$  and $\nu^\nnatural: C_\bt^\lambda \rightarrow C_{\bt}^\nnatural$ be the $k$-linear maps defined by
\begin{equation}
\varphi^\nnatural =  \sum_{j\geq 0} (\varphi \delta)^j \varphi \qquad \text{and} \qquad  \nu^\nnatural_{0}= (1+\varphi^\nnatural \delta) \nu^\lambda= \sum_{j\geq 0} (\varphi \delta)^j \nu. 
\label{eq:Cn-Cl.wbvphinn-wbnunn}
\end{equation} 
We also introduce the $k$-linear map $\mu^\nnatural:C_\bt \rightarrow C_\bt^\nnatural$ given by 
\begin{equation*}
 \mu^\nnatural (x)= \sum_{j\geq 0} (\varphi \delta)^j (\hat{x}u^0), \qquad x \in C_\bt. 
\end{equation*}
Note that $\nu^\nnatural_{0} (x) = \mu^\nnatural (Nx)$, since $\nu(x)=N\hat{x}u^0$. In addition, for  $x\in C_m$, we set $\hat{b}'x=m^{-1}bx$, with the convention that 
$\hat{b}'x=0$ when $m=0$. 

\begin{lemma}\label{lem:Cn-Cl.Cnn-C}
Let $C$ be a para-precyclic $k$-module, and assume that $k\supset \Q$.  
\begin{enumerate}
 \item We have 
 \begin{gather}
 (\dl +\delta) \nu^\nnatural_{0}  = \nu^\nnatural_{0} b - (1-T) \mu^\nnatural b' \hat{D},
 \label{eq:Cn-Cl.pi0-nu0nn-chain}\\
\pi_0^\nnatural \nu^\nnatural_{0} =1 -(1-\tau) \hat{D} ,\quad 
\nu^\nnatural_{0} \pi_0^\nnatural =1+(\dl +\delta) \varphi^\nnatural + \varphi^\nnatural (\dl +\delta),
\label{eq:Cn-Cl.pi0nn-nunn-inverses}\\
\nu^\nnatural_{0} (1-\tau)=(1-T)\mu^\nnatural,\qquad
\pi^\nnatural_0 \varphi^\nnatural =0.    
\label{eq:Cn-Cl.nunn-tau}  
\end{gather}

 \item For all $x\in C_\bt$, we have
 \begin{equation}
 \nu^\nnatural_{0}(x) = \sum_{j\geq 0}(-1)^j ( 1-\hat{D}bu)(\hat{b}'\hat{D}b)^j \hat{N} xu^{2j}.
 \label{eq:CnCl.nunn}  
\end{equation}
\end{enumerate}
\end{lemma}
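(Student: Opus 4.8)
The plan is to verify the four identities in part (1) by applying Lemma~\ref{lem:perturbation.Delta-zero} to the para-twin complexes $(C^\nnatural_\bt,\dl,\delta)$ and $(C_\bt,0,b)$, with the roles of $(f,g,\varphi)$ played by $(\pi_0^\nnatural,\nu,\varphi)$, and then to extract the closed formula~(\ref{eq:CnCl.nunn}) by a direct inductive computation. As observed in the paragraph preceding the lemma, the hypotheses of Lemma~\ref{lem:perturbation.Delta-zero} are met: both $\Delta$-operators vanish (for $(C_\bt,0,b)$ trivially, for $(C^\nnatural_\bt,\dl,\delta)$ by Lemma~\ref{lem:CCparacyclic.square}), $\nu$ is $\dl$-compatible and $\pi_0^\nnatural\varphi=0$ by Lemma~\ref{lem:Cn-Cl.nu-tilde-nu}, and $\pi_0^\nnatural$ is $\delta$-compatible by Lemma~\ref{lem:Cn-Cl.pinn-chain-map}. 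With the notation of Section~\ref{sec:Perturbation}, the perturbed maps $\tilde\varphi$ and $\tilde g=(1+\tilde\varphi\delta)g$ are exactly $\varphi^\nnatural$ and $\nu^\nnatural_0$ as defined in~(\ref{eq:Cn-Cl.wbvphinn-wbnunn}), and Remark~\ref{rmk:Perturbation.f-tdelta-compatible} gives $\tilde f=\pi_0^\nnatural$ and $\tilde\delta=b\,\pi_0^\nnatural\nu=b\hat N$. So the conclusions~(\ref{eq:Perturbation.fg-chain-maps-D0})--(\ref{eq:Perturbation.ftvphi-D0}) yield $\pi_0^\nnatural\varphi^\nnatural=0$ (last of~(\ref{eq:Cn-Cl.nunn-tau})), $\nu^\nnatural_0\pi_0^\nnatural=1+(\dl+\delta)\varphi^\nnatural+\varphi^\nnatural(\dl+\delta)$ (second of~(\ref{eq:Cn-Cl.pi0nn-nunn-inverses})), and $\pi_0^\nnatural\nu^\nnatural_0=\pi_0^\nnatural\nu=\hat N=1-(1-\tau)\hat D$ by~(\ref{eq:Cn-Cl.N-D}) (first of~(\ref{eq:Cn-Cl.pi0nn-nunn-inverses})).

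For the two remaining identities in part (1) — equations~(\ref{eq:Cn-Cl.pi0-nu0nn-chain}) and the first of~(\ref{eq:Cn-Cl.nunn-tau}) — I would compute directly from the definition $\nu^\nnatural_0=\sum_{j\geq0}(\varphi\delta)^j\nu$. First, $\nu^\nnatural_0(1-\tau)=\sum_j(\varphi\delta)^j N\hat x\,u^0$ evaluated on $(1-\tau)x$: since $N(1-\tau)=1-T$ and $\mu^\nnatural(x)=\sum_j(\varphi\delta)^j(\hat x u^0)$, one gets $\nu^\nnatural_0(1-\tau)x=\sum_j(\varphi\delta)^j\widehat{(1-T)x}\,u^0=(1-T)\mu^\nnatural(x)$, using that $1-T$ commutes with $\tau$ (hence with $\varphi$, $\delta$ and the scaling $\widehat{\cdot}$). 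For~(\ref{eq:Cn-Cl.pi0-nu0nn-chain}) I would start from the perturbed chain-map identity $(\dl+\delta)\nu^\nnatural_0=\nu^\nnatural_0\tilde\delta=\nu^\nnatural_0 b\hat N$ supplied by~(\ref{eq:Perturbation.fg-chain-maps-D0}) (read with $\tilde f=\pi_0^\nnatural$), then rewrite $b\hat N$. Since $Nb=b'N$ and $\hat N=1-(1-\tau)\hat D$, we have $b\hat N=bN\,\widehat{\phantom{x}}\cdots$; more carefully, on $C_m$, $N\hat b'=\hat b' N$ only up to the degree shift, so the clean route is $\hat N b = b-(1-\tau)\hat D b$ is false in general because $b$ lowers degree and $\hat D$ involves $(m+1)^{-1}$; instead use $N\hat D b' = $ the relevant combination and the commutation $b(1-\tau)=(1-\tau)b'$ to produce the correction term $-(1-T)\mu^\nnatural b'\hat D$. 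The bookkeeping of the degree-dependent normalizing scalars $(m+1)^{-1}$, $m^{-1}$ against the differentials is the one genuinely fiddly part; I would handle it by evaluating on a homogeneous $x\in C_m$ and tracking the scalar explicitly.

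For part (2), the closed formula~(\ref{eq:CnCl.nunn}), I would unwind $\varphi\delta$ acting on $C_\bt u^{2j}$. From~(\ref{eq:CCparacyclic.delta}) and~(\ref{eq:Cn-Cl.vphi}): on $yu^{2p}$, $\delta(yu^{2p})=by\,u^{2p}$ then $\varphi(by\,u^{2p})=-\hat D(by)\,u^{2p+1}$, then $\delta(-\hat D by\,u^{2p+1})=b'\hat D by\,u^{2p+1}$ (note $\delta$ on odd level is $-b'$, giving a sign $+$ after the $-$ in $\varphi$), then $\varphi(b'\hat D by\,u^{2p+1})=-\widehat{b'\hat D by}\,u^{2p+2}=-\hat b'\hat D by\,u^{2p+2}$ up to scalar normalization. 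Iterating, $(\varphi\delta)$ advances the $u$-power by $2$ and multiplies by $-\hat b'\hat D b$ after the first application, while the very first application of $\varphi$ to $\nu(x)=\hat N x\,u^0$ contributes $-\hat D b$ then the next $\delta$ recombines — this produces the stated pattern $(1-\hat D bu)(\hat b'\hat D b)^j\hat N x\,u^{2j}$ with alternating signs $(-1)^j$. I would prove this by induction on $j$, checking the base case $j=0$ gives $\hat N x\,u^0$ (the term $-\hat D b u\hat N x$ being the $u^1$-component produced by $\varphi\delta$ before truncation, matched against the next bracket) and verifying the single-step recursion $(\varphi\delta)\big[(1-\hat D bu)(\hat b'\hat D b)^j\hat N x u^{2j}\big]=-(1-\hat D bu)(\hat b'\hat D b)^{j+1}\hat N x u^{2j+2}$ by the computation above. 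The main obstacle throughout is purely combinatorial: keeping the scalars $\widehat{\phantom{x}}$ (which depend on the ambient degree, and the degree shifts as $b$, $b'$ act) aligned with the operator identities $Nb=b'N$, $b(1-\tau)=(1-\tau)b'$, and $N_m(\tau)-(m+1)=(\tau-1)D_m(\tau)$; I expect no conceptual difficulty beyond bookkeeping, since everything is forced once the perturbation lemma is in place.
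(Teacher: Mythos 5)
Your proposal follows the paper's own route: the paper proves part (1) exactly by applying Lemma~\ref{lem:perturbation.Delta-zero} to $(C^\nnatural_\bt,\dl,\delta)$ and $(C_\bt,0,b)$ with $(f,g,\varphi)=(\pi_0^\nnatural,\nu,\varphi)$, reading off $\pi_0^\nnatural\nu_0^\nnatural=\pi_0^\nnatural\nu=\hat{N}=1-(1-\tau)\hat{D}$, the homotopy formula for $\nu_0^\nnatural\pi_0^\nnatural$, and $\pi_0^\nnatural\varphi^\nnatural=0$, getting $\nu_0^\nnatural(1-\tau)=(1-T)\mu^\nnatural$ from $\nu_0^\nnatural=\mu^\nnatural N$, deriving~(\ref{eq:Cn-Cl.pi0-nu0nn-chain}) from $(\dl+\delta)\nu_0^\nnatural=\nu_0^\nnatural\tilde{\delta}$ with $\tilde{\delta}=b\,\pi_0^\nnatural\nu$, and proving part (2) by iterating $\varphi\delta$. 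Two small repairs to your write-up. First, the scalar bookkeeping you worry about in~(\ref{eq:Cn-Cl.pi0-nu0nn-chain}) never arises: since $\hat{N}=1-(1-\tau)\hat{D}$ holds on each $C_m$, just left-multiply by $b$ to get $\tilde{\delta}=b\hat{N}=b-b(1-\tau)\hat{D}=b-(1-\tau)b'\hat{D}$, and then $\nu_0^\nnatural(1-\tau)b'\hat{D}=(1-T)\mu^\nnatural b'\hat{D}$; no degree-shifted hats enter, and this is precisely the paper's two-line computation. Second, the single-step recursion you state for the induction in part (2) is false as written: applying $\varphi\delta$ to $(1-\hat{D}bu)(\hat{b}'\hat{D}b)^j\hat{N}xu^{2j}$ produces components in $u^{2j+1}$ and $u^{2j+2}$, whereas $-(1-\hat{D}bu)(\hat{b}'\hat{D}b)^{j+1}\hat{N}xu^{2j+2}$ has components in $u^{2j+2}$ and $u^{2j+3}$. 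The correct induction (and the paper's) is on the two separate identities $(\varphi\delta)^{2j}(xu^{0})=(-1)^j(\hat{b}'\hat{D}b)^jxu^{2j}$ and $(\varphi\delta)^{2j+1}(xu^{0})=(-1)^{j+1}\hat{D}b(\hat{b}'\hat{D}b)^jxu^{2j+1}$, which follow at once from your one-step computations of $\varphi\delta$ on $C_\bt u^{2p}$ and $C_\bt u^{2p+1}$; summing $\sum_{k\geq 0}(\varphi\delta)^k$ over $\hat{N}xu^0$ and regrouping consecutive even/odd terms then gives~(\ref{eq:CnCl.nunn}).
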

\begin{proof}
In the current setup~(\ref{eq:Perturbation.ftg-inverses-D0}) and~(\ref{eq:Perturbation.ftvphi-D0}) give
\begin{equation*}
 \pi_0^\nnatural \nu^\nnatural_{0} = \pi_0^\nnatural \nu, \qquad \nu^\nnatural_{0} \pi_0^\nnatural =1+(\dl +\delta) \varphi^\nnatural + \varphi^\nnatural (\dl +\delta), \qquad 
 \pi^\nnatural_0 \varphi^\nnatural =0.
\end{equation*}
As by~(\ref{eq:Cn-Cl.N-D}) and Lemma~\ref{lem:Cn-Cl.nu-tilde-nu} we have $\pi_0^\nnatural \nu=\hat{N}=1-(1-\tau)\hat{D}$, we see that $\pi_0^\nnatural \nu^\nnatural_{0} =1-(1-\tau)\hat{D}$. Thus, in the notation of~(\ref{eq:Perturbation.fdelta-D0}) we have 
\begin{equation*}
 \tilde{\delta}= b \pi_0^\nnatural \nu = b - b (1-\tau)\hat{D}= b-(1-\tau)b'\hat{D}. 
\end{equation*}
We also observe that 
\begin{equation*}
 \nu^\nnatural_{0} (1-\tau) =\mu^\nnatural N(1-\tau)= \mu^\nnatural (1-T) = (1-T) \mu^\nnatural. 
\end{equation*}
Therefore, the chain map property~(\ref{eq:Perturbation.fg-chain-maps-D0}) gives 
\begin{equation*}
 (\dl+\delta) \nu^\nnatural_{0} =  \nu^\nnatural_{0}(0+\tilde{\delta})=  \nu^\nnatural_{0}b- \nu^\nnatural_{0}(1-\tau)b'\hat{D} = \nu^\nnatural_{0} b -(1-T) \mu^\nnatural b' \hat{D}. 
\end{equation*}

It remains to prove~(\ref{eq:CnCl.nunn}). Let $x\in C_\bt$. We have 
\begin{gather}
 (\varphi \delta)(xu^{2p})= \varphi (bxu^{2p+1})= -\hat{D}bxu^{2p+1}, 
   \label{eq:CnCl.vphi-delta-even}\\
  (\varphi \delta)(xu^{2p+1})=   -\varphi (b'xu^{2p+1})= \hat{b'}x u^{2p+2}. 
   \label{eq:CnCl.vphi-delta-odd}
\end{gather}
An induction then shows, for all $j\geq 0$, we have 
\begin{equation}
  (\varphi \delta)^{2j}(xu^{0})= (-1)^j (\hat{b}'\hat{D}b)^j x u^{2j}, \qquad (\varphi \delta)^{2j+1}(xu^{0})= 
  (-1)^{j+1} \hat{D} b(\hat{b}'\hat{D}b)^j x u^{2j+1}.
  \label{eq:CnCl.vphi-delta-j} 
\end{equation}
Thus, 
\begin{equation}
 \mu^\nnatural(x)= \sum_{j\geq 0} (\varphi \delta)^j (\hat{x}u^0)=  \sum_{j\geq 0}(-1)^j ( 1-\hat{D}bu)(\hat{b}'\hat{D}b)^j \hat{x}u^{2j}.
 \label{eq:CnCl.munn} 
\end{equation}
As  $\nu^\nnatural_{0} = \mu^\nnatural N$ we obtain~(\ref{eq:CnCl.nunn}). The proof is complete. 
\end{proof}

\begin{remark}
 For future purpose we record the following formulas for the chain homotopy $\varphi^\nnatural$ that we get from~(\ref{eq:CnCl.vphi-delta-even})--(\ref{eq:CnCl.vphi-delta-odd}). Namely, for all $x\in C_\bt$, we have 
\begin{gather}
 \varphi^\nnatural(xu^{2p})=  \sum_{j\geq 0} (-1)^{j+1}(1+\hat{b}'u)(\hat{D}b\hat{b}')^j\hat{D}x u^{2p+2j+1},
 \label{eq:Cn-Cl.wbvphinn1}\\
  \varphi^\nnatural(xu^{2p+1})= \sum_{j\geq 0} (-1)^{j+1}(1-\hat{D}b u)(\hat{b}'\hat{D}b)^j \hat{x}u^{2p+2j+2}.
   \label{eq:Cn-Cl.wbvphinn2}
\end{gather}
\end{remark}

As $R^T_\bt\subset \ran(1-\tau)$, the chain map $\pi^\nnatural:C_\bt^\nnatural \rightarrow C_\bt^\lambda$  descends to a unique $k$-linear chain map 
\begin{equation*}
 \wb{\pi}^\nnatural: C^\nnatural_{T,\bt} \longrightarrow C_\bt^\lambda, \qquad \wb{\pi}^\nnatural \pi_T = \pi^\natural =\pi^\lambda \pi_0^\nnatural.
\end{equation*}
Moreover, it follows from~(\ref{eq:Cn-Cl.nunn-tau}) that $\nu^\nnatural_{0} (\ran (1-\tau))\subset R^T_\bt$, and so the $k$-linear map $\nu^\nnatural_{0}:C_\bt \rightarrow C_\bt^\nnatural$ 
also descends to a $k$-linear map, 
\begin{equation}
 \wb{\nu}^\nnatural: C_\bt^\lambda \longrightarrow C^\nnatural_{T,\bt}, \qquad \wb{\nu}^\nnatural \pi^\lambda = \pi_T \nu^\nnatural_{0}.
 \label{eq:Cnn-Cl.wbnunn}
\end{equation}
It also follows from~(\ref{eq:Cn-Cl.wbvphinn1})--(\ref{eq:Cn-Cl.wbvphinn2}) that the homotopy $\varphi^\nnatural$ is compatible with the $T$-operators. Therefore, it  descends to a unique $k$-linear map $\wb{\varphi}^\nnatural: C^\nnatural_{T,\bt} \rightarrow C^\nnatural_{T,\bt+1}$ such that $\wb{\varphi}^\nnatural \pi_T= \pi_T \wb{\varphi}^\nnatural$. 

\begin{proposition}\label{prop:Cn-Cl.CTnn}
Suppose that $C$ is a para-precyclic $k$-module and $k\supset \Q$. Then the $k$-linear map $\wb{\nu}^\nnatural: C_\bt^\lambda \rightarrow C^\nnatural_{T,\bt}$ is a chain map such that
\begin{equation*}
 \wb{\pi}^\nnatural \wb{\nu}^\nnatural=1, \qquad   \wb{\nu}^\nnatural \wb{\pi}^\nnatural=1 + 1+(\dl +\delta) \wb{\varphi}^\nnatural + \wb{\varphi}^\nnatural (\dl +\delta), 
 \qquad  \wb{\pi}^\nnatural \wb{\varphi}^\nnatural =0. 
\end{equation*}
In particular, we obtain a deformation retract of $C^\nnatural_T$ to $C^\lambda$. 
\end{proposition}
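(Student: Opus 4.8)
The plan is to descend the data of Lemma~\ref{lem:Cn-Cl.Cnn-C} along the projections $\pi_T$ and $\pi^\lambda$, using the $T$-compatibility statements that are already recorded there. First I would note that by Lemma~\ref{lem:Cn-Cl.pinn-chain-map} the map $\pi^\nnatural=\pi^\lambda\pi_0^\nnatural$ vanishes on $\ran(1-\tau)\supset R^T_\bt$ in the target coordinate and kills $\dl$; hence $\pi^\nnatural$ factors through $C^\nnatural_{T,\bt}$ to give the chain map $\wb{\pi}^\nnatural$ as in the excerpt. Dually, the relation $\nu^\nnatural_0(1-\tau)=(1-T)\mu^\nnatural$ from~(\ref{eq:Cn-Cl.nunn-tau}) shows $\nu^\nnatural_0$ sends $\ran(1-\tau)$ into $R^T_\bt=\ker\pi_T$, so $\pi_T\nu^\nnatural_0$ kills $\ran(1-\tau)$ and therefore descends to the $k$-linear map $\wb{\nu}^\nnatural:C^\lambda_\bt\to C^\nnatural_{T,\bt}$ defined by~(\ref{eq:Cnn-Cl.wbnunn}); similarly $\varphi^\nnatural$ descends to $\wb{\varphi}^\nnatural$ by~(\ref{eq:Cn-Cl.wbvphinn1})--(\ref{eq:Cn-Cl.wbvphinn2}).

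Next I would check that $\wb{\nu}^\nnatural$ is a chain map. The point is the correction term in~(\ref{eq:Cn-Cl.pi0-nu0nn-chain}): $(\dl+\delta)\nu^\nnatural_0=\nu^\nnatural_0 b-(1-T)\mu^\nnatural b'\hat{D}$. Composing with $\pi_T$ and using that $(1-T)\mu^\nnatural b'\hat D$ lands in $R^T_\bt=\ker\pi_T$, the extra term dies, so $\pi_T(\dl+\delta)\nu^\nnatural_0=\pi_T\nu^\nnatural_0 b$. Since $(\dl+\delta)$ and $b$ descend to $C^\nnatural_{T,\bt}$ and $C^\lambda_\bt$ respectively, and $\wb{\nu}^\nnatural\pi^\lambda=\pi_T\nu^\nnatural_0$, this says $(\dl+\delta)\wb{\nu}^\nnatural\pi^\lambda=\wb{\nu}^\nnatural b\pi^\lambda$, and surjectivity of $\pi^\lambda$ gives $(\dl+\delta)\wb{\nu}^\nnatural=\wb{\nu}^\nnatural b$, i.e. $\wb{\nu}^\nnatural$ is a chain map.

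The remaining three identities come from pushing down the three identities in~(\ref{eq:Cn-Cl.pi0nn-nunn-inverses})--(\ref{eq:Cn-Cl.nunn-tau}). For $\wb{\pi}^\nnatural\wb{\nu}^\nnatural=1$: from $\pi_0^\nnatural\nu^\nnatural_0=1-(1-\tau)\hat D$ we get $\pi^\nnatural\nu^\nnatural_0=\pi^\lambda\pi_0^\nnatural\nu^\nnatural_0=\pi^\lambda(1-(1-\tau)\hat D)=\pi^\lambda$ since $\pi^\lambda$ kills $\ran(1-\tau)$; translating through $\wb{\pi}^\nnatural\pi_T=\pi^\nnatural$ and $\wb{\nu}^\nnatural\pi^\lambda=\pi_T\nu^\nnatural_0$ yields $\wb{\pi}^\nnatural\wb{\nu}^\nnatural\pi^\lambda=\pi^\lambda$, hence $\wb{\pi}^\nnatural\wb{\nu}^\nnatural=1$. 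For the homotopy $\wb{\nu}^\nnatural\wb{\pi}^\nnatural=1+(\dl+\delta)\wb{\varphi}^\nnatural+\wb{\varphi}^\nnatural(\dl+\delta)$: apply $\pi_T$ on the left and nothing on the right of $\nu^\nnatural_0\pi_0^\nnatural=1+(\dl+\delta)\varphi^\nnatural+\varphi^\nnatural(\dl+\delta)$, then use that all four operators $\pi_T,\nu^\nnatural_0,\dl+\delta,\varphi^\nnatural$ are $T$-compatible and that $\pi_T\pi_0^\nnatural=\wb{\pi}^\nnatural\pi_T$ modulo the identification of $\pi^\lambda\pi_0^\nnatural$ with $\pi^\nnatural$ — more carefully, one uses that $\nu^\nnatural_0\pi_0^\nnatural$ and the right-hand side both descend to endomorphisms of $C^\nnatural_{T,\bt}$ and that the descent of $\nu^\nnatural_0\pi_0^\nnatural$ is $\wb{\nu}^\nnatural\wb{\pi}^\nnatural$ (since $\pi_0^\nnatural$ descends on $C^\nnatural_{T,\bt}$ to the map whose further descent along $\pi^\lambda$ is $\wb{\pi}^\nnatural$). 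Finally $\wb{\pi}^\nnatural\wb{\varphi}^\nnatural=0$ is the descent of $\pi^\nnatural\varphi^\nnatural=\pi^\lambda\pi_0^\nnatural\varphi^\nnatural=0$ from~(\ref{eq:Cn-Cl.nunn-tau}). The last sentence then follows by definition of a deformation retract.

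The one genuinely delicate point — and the place I would spend the most care — is the bookkeeping in the homotopy identity: one must verify that $\pi_0^\nnatural$, which is \emph{not} itself a chain map on $C^\nnatural_\bt$, nevertheless descends to a well-defined $k$-linear map $C^\nnatural_{T,\bt}\to C^\lambda_\bt$ equal to $\wb{\pi}^\nnatural$ (this is exactly Lemma~\ref{lem:Cn-Cl.pinn-chain-map} together with $R^T_\bt\subset\ran(1-\tau)$), and that under this descent the operator identity $\nu^\nnatural_0\pi_0^\nnatural=1+(\dl+\delta)\varphi^\nnatural+\varphi^\nnatural(\dl+\delta)$ of~(\ref{eq:Cn-Cl.pi0nn-nunn-inverses}) becomes precisely $\wb{\nu}^\nnatural\wb{\pi}^\nnatural=1+(\dl+\delta)\wb{\varphi}^\nnatural+\wb{\varphi}^\nnatural(\dl+\delta)$ with no stray terms. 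Once the compatibility of each of $\pi_T$, $\nu^\nnatural_0$, $\varphi^\nnatural$, $\dl+\delta$, $b$ with $T$ and with the relevant projections is laid out, everything else is a routine diagram chase.
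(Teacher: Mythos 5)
Your proposal is correct and follows essentially the same route as the paper's proof: one descends the identities of Lemma~\ref{lem:Cn-Cl.Cnn-C} along $\pi_T$ and $\pi^\lambda$, using that the correction terms $(1-T)\mu^\nnatural b'\hat{D}$ and the failures of exactness lie in $R^{T,\nnatural}_\bt=\ker\pi_T$, together with the $T$-compatibility of $\varphi^\nnatural$ and the relations $\wb{\nu}^\nnatural\pi^\lambda=\pi_T\nu_0^\nnatural$, $\wb{\pi}^\nnatural\pi_T=\pi^\lambda\pi_0^\nnatural$, and surjectivity of $\pi^\lambda$ and $\pi_T$. The bookkeeping point you flag is handled in the paper exactly as you indicate, so there is no gap.
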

\begin{proof}
 We observe that $[(\dl+\delta) \wb{\nu}^\nnatural -  \wb{\nu}^\nnatural b] \pi^\lambda$ is equal to 
\begin{equation*}
  (\delta +\delta) \wb{\nu}^\nnatural \pi^\lambda - \wb{\nu}^\nnatural \pi^\lambda b 
  =  (\delta +\delta) \pi_T\nu^\nnatural_{0} \pi^\lambda - \pi_T\nu^\nnatural_{0} b 
 = \pi_T \left[  (\delta +\delta) \nu^\nnatural_{0} \pi^\lambda - \nu^\nnatural_{0} b\right].
\end{equation*}
As by~(\ref{eq:Cn-Cl.pi0-nu0nn-chain}) the range of $ (\delta +\delta) \nu^\nnatural_{0} \pi^\lambda - \nu^\nnatural_{0} b$ is contained in $R^{T,\nnatural}_\bt$, we see that $[ (\dl+\delta) \wb{\nu}^\nnatural -  \wb{\nu}^\nnatural b] \pi^\lambda=0$. This implies that $(\dl+\delta) \wb{\nu}^\nnatural =  \wb{\nu}^\nnatural b$ on $C^\lambda_\bt$, i.e., $ \wb{\nu}^\nnatural$ is a chain map. 

We have $\wb{\pi}^\nnatural \wb{\nu}^\nnatural \pi^\lambda = \wb{\pi}^\nnatural \pi_T \nu^\nnatural_{0} = \pi^\lambda \pi_0^\nnatural \nu^\nnatural_{0}$. Therefore, by using~(\ref{eq:Cn-Cl.pi0nn-nunn-inverses}) we get 
\begin{equation}
 \wb{\pi}^\nnatural \wb{\nu}^\nnatural \pi^\lambda= \pi^\lambda [1-(1-\tau)\hat{D}]= \pi^\lambda.
 \label{eq:Cn-Cl.wbpi-wnunn} 
\end{equation}
It then follows that $\wb{\pi}^\nnatural \wb{\nu}^\nnatural \pi^\lambda=1$ on on $C^\lambda_\bt$. 

We have $\wb{\nu}^\nnatural \wb{\pi}^\nnatural \pi_T= \wb{\nu}^\nnatural \pi^\lambda \pi_0^\nnatural = \pi_T \nu_0^\nnatural \pi_0^\nnatural$. Combining this with~(\ref{eq:Cn-Cl.pi0nn-nunn-inverses}) then  gives 
\begin{equation*}
 \wb{\nu}^\nnatural \wb{\pi}^\nnatural \pi_T= \pi_T \left[ 1+ (\dl+\delta) \varphi^\nnatural + \varphi^\nnatural(\dl+\delta) \right] = \pi_T + 
  (\dl+\delta) \pi_T\varphi^\natural + \pi_T\varphi^\nnatural (\dl+\delta). 
\end{equation*}
As $ (\dl+\delta) \pi_T\varphi^\nnatural= (\dl+\delta) \wb{\varphi}^\nnatural \pi_T$ and $ \pi_T\varphi^\nnatural (\dl+\delta) =  \wb{\varphi}^\nnatural \pi_T (\dl+\delta) =  \wb{\varphi}^\nnatural (\dl+\delta) \pi_T$, we get 
\begin{equation*}
        \wb{\nu}^\nnatural \wb{\pi}^\nnatural \pi_T= [ 1+ (\dl+\delta) \wb{\varphi}^\nnatural + \wb{\varphi}^\nnatural(\dl+\delta)]\pi_T. 
\end{equation*}
This implies that $ \wb{\nu}^\nnatural \wb{\pi}^\nnatural = 1+ (\dl+\delta) \wb{\varphi}^\nnatural + \wb{\varphi}^\nnatural(\dl+\delta)$ on $C^\nnatural_{T,\bt}$. Likewise, as by~(\ref{eq:Cn-Cl.nunn-tau}) we have $\wb{\pi}^\nnatural \wb{\varphi}^\nnatural \pi_T= \wb{\pi}^\nnatural \pi_T\varphi^\nnatural = \pi_T \pi_0^\nnatural \varphi^\nnatural =0$, we see that 
$\wb{\pi}^\nnatural \wb{\varphi}^\nnatural \pi_T= 0$ on  on $C^\nnatural_{T,\bt}$. The proof is complete. 
\end{proof}

It is worth specializing Proposition~\ref{prop:Cn-Cl.CTnn} to precyclic $k$-modules. In this case $\wb{\pi}^\nnatural$ is just the original projection $\pi^\nnatural$ and 
$\wb{\varphi}^\nnatural$ agrees with the original chain homotopy  is $\varphi^\nnatural$. Furthermore, as $T=1$ the 
equality~(\ref{eq:Cn-Cl.pi0-nu0nn-chain}) gives $(\dl +\delta) \nu^\nnatural_{0}  = \nu^\nnatural_{0} b$. Thus, in the precyclic case it is immediate that $\nu^\nnatural_{0}$  descends to a unique $k$-linear chain map, 
\begin{equation}
 \nu^\nnatural: C^\lambda_\bt \longrightarrow C^\nnatural_\bt, \qquad  \nu^\nnatural\pi^\lambda =\nu_0^\nnatural. 
 \label{eq:Cnn-Cl.nunn}
\end{equation}
Therefore, we arrive at the following statement. 

\begin{corollary}[see also~\cite{Ka:Crelle90}]
 Suppose that $C$ is a precyclic $k$-module and $k\supset \Q$. Then we have 
 \begin{equation}
 \pi^\nnatural \nu^\nnatural=1, \qquad   \nu^\nnatural \pi^\nnatural=1 + 1+(\dl +\delta) \varphi^\nnatural + \varphi^\nnatural (\dl +\delta), \qquad \pi^\nnatural \varphi^\nnatural=0. 
 \label{eq:Cn-Cl.DR-nunn} 
\end{equation}
In particular, this provides us with a deformation retract of $C^\nnatural$ to $C^\lambda$. 
\end{corollary}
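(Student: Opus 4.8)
The plan is to deduce the statement as the $T=1$ specialization of Proposition~\ref{prop:Cn-Cl.CTnn}, together with the observations recorded in the paragraph preceding the corollary. First I would note that, since $C$ is precyclic, we have $T=1$, hence $R^T_\bt=\ran(1-T)=0$ in every degree. Consequently $C^\nnatural_{T,\bt}=C^\nnatural_\bt$, the canonical projection $\pi_T:C^\nnatural_\bt\rightarrow C^\nnatural_{T,\bt}$ is the identity, and the maps $\wb{\pi}^\nnatural$, $\wb{\varphi}^\nnatural$ constructed before Proposition~\ref{prop:Cn-Cl.CTnn} reduce to the original $\pi^\nnatural$ and $\varphi^\nnatural$. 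Likewise $\wb{\nu}^\nnatural$ becomes the map $\nu^\nnatural:C^\lambda_\bt\rightarrow C^\nnatural_\bt$ of~(\ref{eq:Cnn-Cl.nunn}); its existence is already justified in the text, since with $1-T=0$ the correction term in~(\ref{eq:Cn-Cl.pi0-nu0nn-chain}) drops out and we get $(\dl+\delta)\nu_0^\nnatural=\nu_0^\nnatural b$, so that $\nu_0^\nnatural$ descends to a chain map on $C^\lambda_\bt=C_\bt/\ran(1-\tau)$.

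Next I would invoke Proposition~\ref{prop:Cn-Cl.CTnn} directly. Substituting the identifications above into its conclusion yields $\pi^\nnatural\nu^\nnatural=1$, $\nu^\nnatural\pi^\nnatural=1+(\dl+\delta)\varphi^\nnatural+\varphi^\nnatural(\dl+\delta)$, and $\pi^\nnatural\varphi^\nnatural=0$, which is exactly~(\ref{eq:Cn-Cl.DR-nunn}). Alternatively, if one prefers to keep the argument self-contained, the same three identities follow immediately from Lemma~\ref{lem:Cn-Cl.Cnn-C}: with $T=1$, (\ref{eq:Cn-Cl.pi0nn-nunn-inverses}) reads $\pi_0^\nnatural\nu^\nnatural_0=1-(1-\tau)\hat{D}$ and $\nu^\nnatural_0\pi_0^\nnatural=1+(\dl+\delta)\varphi^\nnatural+\varphi^\nnatural(\dl+\delta)$, while (\ref{eq:Cn-Cl.nunn-tau}) gives $\pi_0^\nnatural\varphi^\nnatural=0$; composing with $\pi^\lambda$ and using $\pi^\lambda(1-\tau)=0$ passes these to the quotient $C^\lambda_\bt$. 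Finally, the existence of a chain map $\nu^\nnatural$ with $\pi^\nnatural\nu^\nnatural=1$ and $\nu^\nnatural\pi^\nnatural$ homotopic to the identity is precisely the definition of a deformation retract of $(C^\nnatural_\bt,\dl+\delta)$ onto $C^\lambda$, giving the last assertion.

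I do not expect any genuine obstacle here: the corollary is purely a matter of checking that each object appearing in Proposition~\ref{prop:Cn-Cl.CTnn} degenerates correctly when $T=1$. The only point requiring a little care is confirming that $\nu_0^\nnatural$ and $\varphi^\nnatural$ really do descend to well-defined maps on $C^\lambda_\bt$ and $C^\nnatural_\bt$ respectively --- but this is already contained in Lemma~\ref{lem:Cn-Cl.Cnn-C} (the inclusions $\nu^\nnatural_0(\ran(1-\tau))\subset R^T_\bt=0$ and $\pi^\nnatural_0\varphi^\nnatural=0$) and in the remark on the explicit formulas~(\ref{eq:Cn-Cl.wbvphinn1})--(\ref{eq:Cn-Cl.wbvphinn2}), so no new computation is needed.
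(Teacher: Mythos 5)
Your proposal is correct and follows essentially the same route as the paper: the corollary is obtained by specializing Proposition~\ref{prop:Cn-Cl.CTnn} to the precyclic case, observing that $T=1$ forces $R^T_\bt=0$, so $\wb{\pi}^\nnatural=\pi^\nnatural$, $\wb{\varphi}^\nnatural=\varphi^\nnatural$, and $\nu_0^\nnatural$ descends to the chain map $\nu^\nnatural$ of~(\ref{eq:Cnn-Cl.nunn}) since the correction term in~(\ref{eq:Cn-Cl.pi0-nu0nn-chain}) and the obstruction $\nu_0^\nnatural(1-\tau)=(1-T)\mu^\nnatural$ both vanish. Your alternative self-contained derivation from Lemma~\ref{lem:Cn-Cl.Cnn-C} is also fine, but it is only a repackaging of the same computations that underlie Proposition~\ref{prop:Cn-Cl.CTnn}.
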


\begin{remark}
 When $C$ is a precyclic module it is well known that the projection $\pi^\nnatural:C^\nnatural_\bt\rightarrow C^\lambda_\bt$ is a quasi-isomorphism~\cite{Co:CRAS83} (see also~\cite{Lo:CH, LQ:CMH84}). 
\end{remark}  
 
 \begin{remark}
 When $C$ is a precyclic $k$-module we  recover the deformation retract of $C^\nnatural$ to $C^\lambda$ produced by Kassel~\cite{Ka:Crelle90} as follows. For $m\geq 0$ set $\tilde{D}_m(X)= \sum_{j=0}^m jX^j\in k[X]$, and let $\check{D}:C_\bt \rightarrow C_\bt$ be the $k$-linear map defined by
 \begin{equation*}
 \check{D}x=\tilde{D}_m(\tau)\hat{x}, \qquad x\in C_m, \ m\geq 0. 
\end{equation*}
We have $\tilde{D}_m(\tau)=\sum_{j=0}^m (m-j)\tau^{m-j}=\tau^m D_m(\tau^{-1})$. By combining this with~(\ref{eq:Cn-Cl.N-D}) and the equality $\tau^{m+1}=1$, it can be shown that, when $C$ is precyclic, we have 
\begin{equation*}
 \hat{N} - (1-\tau)\check{D}=1. 
\end{equation*}
Therefore, in the precyclic case, we obtain an alternative chain homotopy satisfying~(\ref{eq:Cn-Cl.DR-nunn}) by substituting $-\check{D}$ for $
\hat{D}$ in the formula~(\ref{eq:Cn-Cl.vphi}) for $\varphi$. This gives back the chain homotopy used by Kassel~\cite{Ka:Crelle90}. We then recover his formula for the homotopy inverse of $\pi^\nnatural$ by substituting  $-\check{D}$ for $\hat{D}$ in the formula~(\ref{eq:CnCl.nunn}). 
\end{remark}

Suppose that $C$ is a quasi-precyclic $k$-module, so that $C_\bt=C^T_\bt \oplus R^T_\bt$. Let $\pi^T:C_\bt \rightarrow C_\bt$ be the projection onto $C^T_\bt$ defined by this splitting. This is a para-precyclic $k$-module map, and so it gives rise to a chain map $\pi: C_\bt^\nnatural \rightarrow C_\bt^\nnatural$. By Proposition~\ref{prop:para-precyclic.CTCT-Cnn} this chain map is chain homotopic to the identity map. Namely, we have $\pi^T=1+(\dl+\delta)h^\nnatural + h^\nnatural (\dl+\delta)$, where $h^\nnatural$ is given by~(\ref{eq:para-precyclic.homotopy-hnn}). 
We observe that~(\ref{eq:Cn-Cl.nunn-tau}) implies that $\pi^T \nu^\nnatural_{0}(1-\tau)=\pi^T(1-T)\mu^\nnatural=0$, and so $\pi^T \nu^\nnatural_{0}$  descends to a unique $k$-linear map, 
\begin{equation}
 \nu^{T,\nnatural}: C^\lambda_\bt \longrightarrow C^\nnatural_\bt, \qquad \nu^{T,\nnatural}\pi^\lambda = \pi^T \nu^\nnatural_{0}.
 \label{eq:Cnn-Cl.nuTnn}
\end{equation}
We also let $\varphi^{T,\nnatural}: C_\bt^\nnatural \rightarrow C_{\bt+1}^\nnatural$ be the $k$-linear map defined by
 \begin{equation*}
  \varphi^\nnatural = h^\nnatural + \varphi^\nnatural \pi^T= h^\nnatural + \pi^T \varphi^\nnatural.  
\end{equation*}

\begin{proposition}\label{prop:CnCl.Cnn-quasi}
 Assume that $C$ is a quasi-precyclic $k$-module and $k\supset \Q$. Then the $k$-linear map $\nu^{T,\nnatural}:C^\lambda \rightarrow C_\bt^\nnatural$ is a chain map such that 
 \begin{equation*}
 \pi^\nnatural \nu^{T,\nnatural} =1, \qquad  \nu^{T,\nnatural}\pi^\nnatural =1 +(\dl +\delta)\varphi^{T,\nnatural} +\varphi^{T,\nnatural}  (\dl +\delta), \qquad  
 \pi^\nnatural\varphi^{T,\nnatural}=0.
\end{equation*}
In particular, we obtain a deformation retract of $C^\nnatural$ to $C^\lambda$. 
 \end{proposition}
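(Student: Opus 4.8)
The plan is to verify the four assertions of the proposition --- that $\nu^{T,\nnatural}$ is a chain map, that $\pi^\nnatural\nu^{T,\nnatural}=1$, the homotopy formula for $\nu^{T,\nnatural}\pi^\nnatural$, and $\pi^\nnatural\varphi^{T,\nnatural}=0$ --- directly, by pulling everything back along the canonical surjection $\pi^\lambda:C_\bt\to C_\bt^\lambda$ and feeding in the identities of Lemma~\ref{lem:Cn-Cl.Cnn-C} together with the homotopy formula $\pi^T=1+(\dl+\delta)h^\nnatural+h^\nnatural(\dl+\delta)$ of Proposition~\ref{prop:para-precyclic.CTCT-Cnn}. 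The argument runs in close parallel with the proof of Proposition~\ref{prop:Cn-Cl.CTnn}; the only new ingredient is the projection $\pi^T:C_\bt\to C_\bt$ onto $C^T_\bt$ supplied by the quasi-precyclic splitting $C_\bt=C^T_\bt\oplus R^T_\bt$.

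First I would record three elementary facts. Because the splitting is $T$-invariant, any $k$-linear operator commuting with $T$ preserves both $\ker(1-T)$ and $\ran(1-T)$, hence commutes with $\pi^T$; in particular $\pi^T$ commutes with $\dl$, $\delta$, $u^{-1}$, and with the $T$-compatible maps $\varphi^\nnatural$ and $h^\nnatural$. Moreover $\pi^T(1-T)=0$, since $\ran(1-T)=R^T_\bt=\ker\pi^T$; and $\pi^\lambda\pi^T=\pi^\lambda$, since $\ker\pi^T=\ran(1-T)=\ran\!\big((1-\tau)N\big)\subseteq\ran(1-\tau)=\ker\pi^\lambda$. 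Combining the first two facts with~(\ref{eq:Cn-Cl.nunn-tau}) gives $\pi^T\nu_0^\nnatural(1-\tau)=\pi^T(1-T)\mu^\nnatural=0$, which is what makes $\pi^T\nu_0^\nnatural$ descend to the map $\nu^{T,\nnatural}$ of~(\ref{eq:Cnn-Cl.nuTnn}); likewise $\varphi^{T,\nnatural}=h^\nnatural+\pi^T\varphi^\nnatural=h^\nnatural+\varphi^\nnatural\pi^T$ is well defined.

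For the chain-map property I would apply $\pi^T$ on the left of~(\ref{eq:Cn-Cl.pi0-nu0nn-chain}); since $\pi^T$ commutes with $\dl+\delta$ and $\pi^T(1-T)=0$, the defect term $(1-T)\mu^\nnatural b'\hat{D}$ is annihilated and one is left with $(\dl+\delta)\pi^T\nu_0^\nnatural=\pi^T\nu_0^\nnatural b$. Composing on the right with $\pi^\lambda$, using $b\pi^\lambda=\pi^\lambda b$ and surjectivity of $\pi^\lambda$, gives $(\dl+\delta)\nu^{T,\nnatural}=\nu^{T,\nnatural}b$. This is the one point where the quasi-precyclic hypothesis is genuinely needed, and the step I expect to require the most care --- one must see that the error term in~(\ref{eq:Cn-Cl.pi0-nu0nn-chain}) is of exactly the form $(1-T)(\cdots)$ that $\pi^T$ kills. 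For $\pi^\nnatural\nu^{T,\nnatural}=1$, I would compute $\pi^\nnatural\nu^{T,\nnatural}\pi^\lambda=\pi^\lambda\pi_0^\nnatural\pi^T\nu_0^\nnatural=\pi^\lambda\pi^T\big(1-(1-\tau)\hat{D}\big)=\pi^\lambda\pi^T=\pi^\lambda$, using $[\pi^T,\pi_0^\nnatural]=0$, the first identity of~(\ref{eq:Cn-Cl.pi0nn-nunn-inverses}), $[\pi^T,\tau]=0$ with $\pi^\lambda(1-\tau)=0$, and $\pi^\lambda\pi^T=\pi^\lambda$; surjectivity of $\pi^\lambda$ finishes it.

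For the homotopy identity I would write $\nu^{T,\nnatural}\pi^\nnatural=\pi^T\nu_0^\nnatural\pi_0^\nnatural=\pi^T\big[1+(\dl+\delta)\varphi^\nnatural+\varphi^\nnatural(\dl+\delta)\big]$ by the second identity of~(\ref{eq:Cn-Cl.pi0nn-nunn-inverses}), commute $\pi^T$ past $\dl+\delta$ and $\varphi^\nnatural$, and replace the leading $\pi^T$ by $1+(\dl+\delta)h^\nnatural+h^\nnatural(\dl+\delta)$ from Proposition~\ref{prop:para-precyclic.CTCT-Cnn}; regrouping yields $1+(\dl+\delta)\varphi^{T,\nnatural}+\varphi^{T,\nnatural}(\dl+\delta)$. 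Finally $\pi^\nnatural\varphi^{T,\nnatural}=\pi^\lambda\pi_0^\nnatural\big(h^\nnatural+\pi^T\varphi^\nnatural\big)=0$, because $\pi_0^\nnatural h^\nnatural=0$ --- the range of $h^\nnatural$ lies in odd powers of $u$ by~(\ref{eq:para-precyclic.homotopy-hnn}) --- and $\pi_0^\nnatural\varphi^\nnatural=0$ by~(\ref{eq:Cn-Cl.nunn-tau}), after commuting $\pi^T$ through. The three displayed identities then exhibit a deformation retract of $C^\nnatural$ to $C^\lambda$, completing the argument.
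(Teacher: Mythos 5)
Your proposal is correct and follows essentially the same route as the paper: descend via $\pi^\lambda$ using $\nu^{T,\nnatural}\pi^\lambda=\pi^T\nu_0^\nnatural$, kill the defect term in~(\ref{eq:Cn-Cl.pi0-nu0nn-chain}) with $\pi^T(1-T)=0$, apply~(\ref{eq:Cn-Cl.pi0nn-nunn-inverses}) and the homotopy formula $\pi^T=1+(\dl+\delta)h^\nnatural+h^\nnatural(\dl+\delta)$ for the remaining identities. The only (immaterial) variation is that you justify $\pi^\nnatural h^\nnatural=0$ by noting the range of $h^\nnatural$ lies in odd $u$-powers, whereas the paper uses that it lies in $R^{T,\nnatural}\subset\ran(1-\tau)$.
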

\begin{proof}
Like the canonical projection $\pi_T:C_\bt \rightarrow C_{T,\bt}$, the projection $\pi^T$ is a para-precyclic $k$-linear map that is annihilated by $R^T_\bt$. Thus, by using the equality $\nu^{T,\nnatural}\pi^\lambda = \pi^T \nu^\nnatural_{0}$ and arguing along similar lines as that of the proof of 
Proposition~\ref{prop:Cn-Cl.CTnn} it can be shown that 
$\nu^{T,\nnatural}:C^\lambda \rightarrow C_\bt^\nnatural$ is a chain map.  In addition, as $\pi^\nnatural \pi^T=\pi^\nnatural$, in the same way as in~(\ref{eq:Cn-Cl.wbpi-wnunn}) we have 
\begin{equation*}
 \pi^\nnatural \nu^{T,\nnatural} \pi^\lambda = \pi^\nnatural \pi^T \nu^\nnatural_{0} = \pi^\lambda \pi_0^\nnatural \nu^\nnatural_{0} = \pi^\lambda [1-(1-\tau)\hat{D}]=\pi^\lambda. 
\end{equation*}
 Therefore, we see that $ \pi^\nnatural \nu^{T,\nnatural}=1$ on $C_\bt^\lambda$. 
 
 We also have $\nu^{T,\nnatural}\pi^\nnatural = \nu^{T,\nnatural}\pi^\nnatural \pi^\nnatural_0= \pi^T \nu^\nnatural_0 \pi_0^\nnatural$. Thus, by using~(\ref{eq:Cn-Cl.pi0nn-nunn-inverses}) we get
 \begin{equation*}
  \nu^{T,\nnatural}\pi^\nnatural = \pi^T \left[ 1 +(\dl +\delta)\varphi^{\nnatural} +\varphi^{\nnatural}  (\dl +\delta)\right] = \pi^T + (\dl +\delta)(\pi^T\varphi^{\nnatural}) + 
 (\pi^T\varphi^{\nnatural}) (\dl +\delta). 
\end{equation*}
Combining this with the homotopy formula $\pi^T=1+(\dl+\delta)h^\nnatural + h^\nnatural (\dl+\delta)$ then gives 
 \begin{align*}
  \nu^{T,\nnatural}\pi^\nnatural & = 1+ (\dl +\delta)(h^\nnatural+ \varphi^{\nnatural}\pi^T) + 
 (h^\nnatural+ \varphi^{\nnatural}\pi^T) (\dl +\delta) \\
 & = 1 +(\dl +\delta)\varphi^{T,\nnatural} +\varphi^{T,\nnatural}  (\dl +\delta). 
\end{align*}

Thanks to~(\ref{eq:Cn-Cl.nunn-tau}) we have $\pi^\nnatural \varphi^\nnatural= \pi^\lambda \pi_0^\nnatural \varphi^\nnatural=0$. 
As mentioned in Remark~\ref{rmk:para-precyclic.hnn-special} the range of $h^\nnatural$ is contained in $R^{T,\nnatural} \subset \ran(1-\tau)$, and so 
$\pi^\nnatural h^\nnatural=0$.  Thus, $\pi^\nnatural  \varphi^{T,\nnatural} =  \pi^\nnatural  h^\nnatural +  \pi^\nnatural   \varphi^\nnatural \pi^T =  0$. 
The proof is complete.
\end{proof}

\section{Comparison of $C^\natural$ and $C^\lambda$}  \label{sec:Cn-Cl}
When $C$ is a precyclic $k$-module it is well known that if $k\supset \Q$, then the chain complexes $C^\natural$ and $C^\lambda$ are quasi-isomorphic~\cite{Co:MFO81, Co:CRAS83, Co:IHES85} (see also~\cite{LQ:CMH84}). In this section, we shall combine the results of the previous two sections to compare $C^\natural$ and $C^\lambda$  in the case of $H$-unital para-precyclic modules. 

Throughout this section we assume that $k\supset \Q$, and let $C=(C_\bt, d,s,t)$ be an $H$-unital preparacyclic $k$-module. 
Let $\pi_0:C_\bt^\natural\rightarrow C_\bt$ be the natural projection onto the zeroth degree summand $C_\bt u^0=C_\bt$. By composing it with the projection $\pi^\lambda:C_\bt\rightarrow C_\bt^\lambda$ we get natural projection $\pi^\natural :=\pi^\lambda \pi_0^\natural: C_\bt^\natural \rightarrow C_\bt^\lambda$. Thus, 
\begin{equation}
 \pi^\natural(xu^0)=x^\lambda, \qquad  \pi^\natural(xu^p)=0, \quad p\geq 1. 
 \label{eq:CnCl.pin}
\end{equation}
Note that $\pi^\natural_0=\pi^\nnatural_0 I$ and $\pi^\natural=\pi^\nnatural I$. We also observe that
 \begin{equation}
 \pi^\natural_0 b= b\pi^\natural_0,  \qquad  \pi^\natural_0 B= B\pi_0^\natural,  \qquad   \pi^\natural b= b\pi^\natural,  \qquad  \pi^\natural B=0.
 \label{eq:CnCl.pin-chain-map} 
\end{equation}
In particular, this implies that $\pi^\natural: C_\bt^\natural \rightarrow C_\bt^\lambda$ is a chain map.

Let $\nu^\natural_{0}: C_\bt \rightarrow C_\bt^\natural$ and $\varphi^\natural: C_\bt^\natural \rightarrow C_\bt$ be the $k$-linear maps defined by
\begin{equation*}
 \nu^\natural_{0} = J \nu^\nnatural_{0}, \qquad \varphi^\natural= J \varphi^\nnatural I.
\end{equation*}
We also set $\mu^\natural = J \mu^\nnatural$, where $\mu^\nnatural: C_\bt \rightarrow C^\nnatural$ is given by~(\ref{eq:CnCl.munn}).

\begin{lemma}\label{lem:Cn-Cl.nu0}
 Suppose that $C$ is an $H$-unital para-precyclic $k$-module and $k\supset \Q$.
\begin{enumerate}
 \item We have
\begin{gather}
 (b+Bu^{-1})\nu^\natural_{0}=\nu^\natural_{0} b -(1-T)\mu^\natural b'\hat{D},
 \label{eq:Cn-Cl.nun0-chain-map}\\
 \pi_0^\natural \nu^\natural_{0} =1 -(1-\tau) \left[ 1+ s'\hat{D} b\hat{N}\right],
 \label{eq:Cn-Cl.pio-nun0}\\
 \nu^\natural_{0} \pi^\natural_0 =1 + (b+Bu^{-1}) \varphi^\natural + \varphi^\natural(b+Bu^{-1}),\\
 \nu^\natural_{0} (1-\tau)= (1-T) \mu^\natural, \qquad \pi_0^\natural \varphi^\natural =-(1-\tau) s'\hat{b} \hat{D} \pi_0^\natural. 
 \label{eq:Cn-Cl.nun-tau}
\end{gather}

\item For all $x\in C_\bt$, we have 
 \begin{equation}
                 \nu^\natural_{0}(x) =\sum_{j\geq 0}(-1)^j \left[ 1-(1-\tau)s'\hat{D}b\right](\hat{b}'\hat{D}b)^j \hat{N} x u^{j}. 
                 \label{eq:CnCl.nun}
  \end{equation}
\end{enumerate}
\end{lemma}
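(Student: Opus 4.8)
The plan is to deduce Lemma~\ref{lem:Cn-Cl.nu0} from Lemma~\ref{lem:Cn-Cl.Cnn-C} by transporting everything through the $S$-deformation retract $(I,J,h)$ of $C^\nnatural$ onto $C^\natural$ constructed in Section~\ref{sec:Cn-Cnn} (Proposition~\ref{prop:CnnCn.deformation-retract} and Lemma~\ref{lem:CnnCn.computation-thJLdl}). The idea is exactly that of the proof of Lemma~\ref{lem:Cn-Cl.Cnn-C}: apply Lemma~\ref{lem:perturbation.Delta-zero} one more time, this time to the para-twin complexes $(C^\natural_\bt, b, Bu^{-1})$ and $(C_\bt, 0, b)$, with the maps $\pi_0^\natural: C_\bt^\natural \rightarrow C_\bt$ and $\nu_0^\natural = J\nu_0^\nnatural: C_\bt \rightarrow C_\bt^\natural$. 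However, rather than re-running the perturbation machinery from scratch, I would observe that the composite data are already perturbed: $\nu_0^\natural$, $\varphi^\natural$, $\mu^\natural$ are obtained by sandwiching the already-computed $C^\nnatural$-level maps between $I$ and $J$, so the $C^\natural$-level identities follow by sandwiching the $C^\nnatural$-level identities of Lemma~\ref{lem:Cn-Cl.Cnn-C} between $I$ and $J$ and using $JI = 1$, $\pi^\nnatural_0 I = \pi^\natural_0$ (from~(\ref{eq:CnCl.pin})), and the chain-map relations~(\ref{lem:CnnCn.deformation-retract-IJ})--(\ref{eq:CnnCn.computation-thJLdl}).

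\textbf{Part (1): the four identities.} First I would record the auxiliary facts $\pi_0^\natural = \pi_0^\nnatural I$ (already noted before~(\ref{eq:CnCl.pin-chain-map})), $J\nu_0^\nnatural \pi_0^\natural = J \nu_0^\nnatural \pi_0^\nnatural I$, and that $J$, $I$, $h$ satisfy~(\ref{lem:CnnCn.deformation-retract-IJ}): $JI = 1$ and $IJ = 1 + (\dl+\delta)h + h(\dl+\delta)$, with $Jh = hI = h^2 = 0$. For~(\ref{eq:Cn-Cl.nun0-chain-map}): compute $(b + Bu^{-1})\nu_0^\natural = J(\dl+\delta)\nu_0^\nnatural$ using that $J$ is an $S$-map from $(C^\nnatural_\bt, \dl+\delta)$ to $(C^\natural_\bt, b+Bu^{-1})$ (Proposition~\ref{prop:CnnCn.deformation-retract}, recalling $\tilde{\dl} = Bu^{-1}$ from Lemma~\ref{lem:CnnCn.computation-thJLdl}), then substitute~(\ref{eq:Cn-Cl.pi0-nu0nn-chain}) and use $J(1-T)\mu^\nnatural b'\hat D = (1-T)J\mu^\nnatural b'\hat D = (1-T)\mu^\natural b'\hat D$ since $J$ commutes with $T$. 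For~(\ref{eq:Cn-Cl.pio-nun0}): $\pi_0^\natural \nu_0^\natural = \pi_0^\nnatural I J \nu_0^\nnatural = \pi_0^\nnatural(1 + (\dl+\delta)h + h(\dl+\delta))\nu_0^\nnatural$; since $\dl\nu_0^\nnatural = 0$ and $\pi_0^\nnatural\delta = b\pi_0^\nnatural$ while $\pi_0^\nnatural(\dl+\delta)h$ needs to be expanded using the explicit forms of $I$, $J$, $h$ and $\nu_0^\nnatural$ from~(\ref{eq:Cnn-Cn.I-map})--(\ref{eq:Cnn-Cn.J-map}) and~(\ref{eq:CnCl.nunn}) — this is where the explicit term $(1-\tau)[1 + s'\hat D b \hat N]$ emerges, and I would carry out the bookkeeping here directly rather than abstractly. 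For $\nu_0^\natural \pi_0^\natural = 1 + (b+Bu^{-1})\varphi^\natural + \varphi^\natural(b+Bu^{-1})$: expand $J\nu_0^\nnatural \pi_0^\nnatural I = J(1 + (\dl+\delta)\varphi^\nnatural + \varphi^\nnatural(\dl+\delta))I$ via~(\ref{eq:Cn-Cl.pi0nn-nunn-inverses}), use $JI = 1$, and push $J$, $I$ past $(\dl+\delta)$ using that both are $S$-maps to get $(b+Bu^{-1})(J\varphi^\nnatural I) + (J\varphi^\nnatural I)(b+Bu^{-1})$. Finally~(\ref{eq:Cn-Cl.nun-tau}) follows from~(\ref{eq:Cn-Cl.nunn-tau}) by the same sandwiching, plus a short computation of $\pi_0^\natural \varphi^\natural = \pi_0^\nnatural I J \varphi^\nnatural I$ using the explicit formulas.

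\textbf{Part (2): the closed formula~(\ref{eq:CnCl.nun}).} Here I would simply apply $J$ to the explicit expansion~(\ref{eq:CnCl.nunn}) of $\nu_0^\nnatural(x)$. From~(\ref{eq:Cnn-Cn.J-map}), $J(yu^{2p}) = yu^p$ and $J(yu^{2p+1}) = (1-\tau)s'y u^p$, so applying $J$ term by term to $\sum_j (-1)^j(1 - \hat D b u)(\hat b'\hat D b)^j \hat N x u^{2j}$ splits each summand (the factor $1 - \hat D b u$ contributing an even part $1$ and an odd part $-\hat D b$ shifted by $u$) and collapses the powers of $u$ from $2j$, $2j+1$ down to $j$; the odd piece acquires the prefactor $(1-\tau)s'$, producing exactly $[1 - (1-\tau)s'\hat D b](\hat b'\hat D b)^j \hat N x u^j$. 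This is a direct computation.

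\textbf{Main obstacle.} The genuine friction is in~(\ref{eq:Cn-Cl.pio-nun0}) and the formula for $\pi_0^\natural\varphi^\natural$ in~(\ref{eq:Cn-Cl.nun-tau}): these are the places where the abstract perturbation identities are not enough and one must compute the correction term $\pi_0^\nnatural(\dl+\delta)h\,\nu_0^\nnatural$ explicitly, tracking how $h = s'(\cdot)u^{(\text{odd})}$ interacts with $\hat D$, $b$, $\hat N$ and the $(1-\tau)$ factors coming from $\dl$ on odd-degree pieces. One must be careful that $h$ is supported on odd $u$-powers while $\nu_0^\nnatural$ lands partly on even and partly on odd powers, so the interaction does not vanish. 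Everything else reduces cleanly to sandwiching Lemma~\ref{lem:Cn-Cl.Cnn-C} between the $S$-maps $I$ and $J$.
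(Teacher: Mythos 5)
Your proposal is correct and takes essentially the same route as the paper: the paper likewise obtains all the identities by sandwiching Lemma~\ref{lem:Cn-Cl.Cnn-C} between the $S$-maps $J$ and $I$ of Proposition~\ref{prop:CnnCn.deformation-retract} (using $JI=1$, $\pi_0^\natural=\pi_0^\nnatural I$, and the $T$- and $S$-compatibility of $J$), and proves~(\ref{eq:CnCl.nun}) by applying $J$ term by term to~(\ref{eq:CnCl.nunn}) exactly as you describe. The only minor divergence is at~(\ref{eq:Cn-Cl.pio-nun0}): instead of expanding the correction term $\pi_0^\nnatural(\dl+\delta)h\,\nu_0^\nnatural$ coming from $IJ=1+(\dl+\delta)h+h(\dl+\delta)$, the paper simply reads the formula off the closed expression~(\ref{eq:CnCl.nun}) together with $\hat{N}+(1-\tau)\hat{D}=1$; both computations give the same result, so there is no gap in your argument.
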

\begin{proof}
 By~(\ref{eq:Cn-Cl.nunn-tau}) we have $\nu^\natural_{0} (1-\tau)=J\nu^\nnatural_{0} (1-\tau)= J(1-T)  \mu^\nnatural = (1-T)\mu^\natural$. As $J$ is a chain map we also have $(b+Bu^{-1})\nu^\natural_{0}= (b+Bu^{-1})J\nu^\nnatural_{0} =
 J(\dl+\delta)\nu^\nnatural_{0}$. Combining this with~(\ref{eq:Cn-Cl.pi0-nu0nn-chain}) then gives 
\begin{equation*}
 (b+Bu^{-1})\nu^\natural_{0}= J\nu^\nnatural_{0} b-(1-T) J\mu^\nnatural b'D= \nu^\natural_{0} b -(1-T)\mu^\natural b'\hat{D}. 
\end{equation*}
Moreover, by using~(\ref{eq:Cn-Cl.pi0nn-nunn-inverses}) we see that $\nu^\natural_{0} \pi_0^\natural$ is equal to 
\begin{align*}
 J( \nu^\nnatural_{0} \pi_0^\nnatural) I & = J \left[ 1 +(\dl +\delta)\varphi^\nnatural +\varphi^\nnatural  (\dl +\delta)\right] J \\
 &= JI + (b+Bu^{-1}) J\varphi^\nnatural I + J \varphi^\nnatural I (b+Bu^{-1}) \\
 & = 1 + (b+Bu^{-1}) \varphi^\natural + \varphi^\natural (b+Bu^{-1}). 
\end{align*}

It follows from the formulas~(\ref{eq:Cnn-Cn.J-map}) and~(\ref{eq:CnCl.nunn}) for $J$ and $\nu^\nnatural_{0}$ that, given any $x\in C_\bt$, we have
 \begin{align*}
 \nu^\natural_{0}(x) & = \sum_{j \geq 0} (-1)^j J\left[ (\hat{b}'\hat{D}b)^j \hat{N}xu^{2j} -\hat{D} b(\hat{b}'\hat{D}b)^j \hat{N}xu^{2j+1}\right]\\ 
 &= \sum_{j\geq 0}(-1)^j \left[ 1-(1-\tau)s'\hat{D}b\right](\hat{b}'\hat{D}b)^j \hat{N}u^{j}. 
\end{align*}
This proves~(\ref{eq:CnCl.nun}). Combining this with the equality $\hat{N}+(1-\tau)\hat{D}=1$ further gives
\begin{equation}
 \pi_0\nu^\natural_{0} =\hat{N}-(1-\tau)s'\hat{D}b\hat{N}= 1-(1-\tau)\left[ \hat{D}+ s'\hat{D}b\hat{N}\right].
 \label{eq:Cn-Cl.pio-nun0-hN}
\end{equation}

We have $\pi_0^\natural \varphi^\natural = \pi^\natural_0 J \varphi^\nnatural I$. It follows from~(\ref{eq:Cnn-Cn.J-map}) that we have 
\begin{equation}
 \pi_0^\natural J= \pi_0^\nnatural +(1-\tau)s' \pi_0^\nnatural u^{-1}. 
 \label{eq:Cn-Cl.pinn0J-pinn0}
\end{equation}
Let $x\in C_\bt$. The formulas~(\ref{eq:Cn-Cl.wbvphinn1})--(\ref{eq:Cn-Cl.wbvphinn2}) implies that $\varphi^\nnatural(xu^p)$ is contained in $\oplus_{q\geq p+1} C_\bt u^q$, and so $\varphi^\nnatural(xu^p)=0$ when $p\geq 1$. As~(\ref{eq:Cn-Cl.vphi}) shows that $\varphi(xu^0)= -\hat{D} xu \bmod  \oplus_{q\geq 2} C_\bt u^q$, we also get 
\begin{equation*}
 \pi_0^\natural J\varphi^\nnatural (xu^0)= - \pi_0^\nnatural J\left(\hat{D} xu\right) =-(1-\tau) s' \hat{D} x= -(1-\tau) \hat{D} \pi_0^\nnatural (xu^0). 
\end{equation*}
Therefore, we see that $\pi_0^\natural J\varphi^\nnatural = -(1-\tau) \hat{D} \pi_0^\nnatural $. As $\pi_0^\nnatural I=\pi_0^\natural$, we then obtain 
\begin{equation*}
 \pi_0^\natural \varphi^\natural = \pi^\natural_0 J \varphi^\nnatural I =  -(1-\tau) \hat{D}s' \pi_0^\nnatural I =  -(1-\tau) \hat{D} \pi_0^\natural. 
\end{equation*}
The proof is complete. 
\end{proof}

Let $\pi_T:C_\bt \rightarrow C_{T,\bt}$ be the canonical projection of $C_\bt$ onto $C_{T,\bt}$. As this is a map of $H$-unital para-precyclic $k$-modules, it gives rise to a chain map  $\pi_T:C^\natural \rightarrow C^\natural_{T,\bt}$. 
Therefore, in the same way as with the projection $\pi^\nnatural$ above, the projection $\pi^\natural:C^\natural_\bt \rightarrow C_\bt^\lambda$  descends to a unique $k$-linear chain map, 
\begin{equation*}
 \wb{\pi}^\natural: C_{T,\bt}^\natural \longrightarrow C^\lambda_\bt, \qquad \wb{\nu}^\natural \pi^\lambda =\pi_T \pi^\natural. 
\end{equation*}
 
It follows from~(\ref{eq:Cn-Cl.nun-tau}) that $\pi_T \nu^\natural_0(1-\tau)=\pi_T(1-T)\mu^\natural =0$. Therefore, the $k$-linear map $\pi_T\nu_0^\natural:C_\bt \rightarrow C^\natural_{T,\bt}$  descends to a unique $k$-linear map,
\begin{equation}
 \wb{\nu}^\natural: C_\bt^\lambda \longrightarrow C^\natural_{T,\bt}, \qquad \wb{\nu}^\natural \pi^\lambda=\pi_T\nu_0^\natural.
  \label{eq:Cn-Cl.wbnun} 
\end{equation}
In fact, as $\nu_0^\natural = J \nu^\nnatural_0$, we have $\pi_T\nu_0^\natural= \pi_T J \nu^\nnatural_0 = J\pi_T \nu^\nnatural_0 = J \wb{\nu}^\nnatural$. Therefore, we see that $\wb{\nu}^\natural =J \wb{\nu}^\nnatural$. 
In particular, as $J$ is a chain map, and $\wb{\nu}^\nnatural$ is a chain map as well by Proposition~\ref{prop:Cn-Cl.CTnn}, we see that 
$ \wb{\nu}^\natural: C_\bt^\lambda \rightarrow C^\natural_{T,\bt}$ is a chain map. 

In addition, as the chain homotopy $\varphi^\natural: C^\natural_{\bt} \rightarrow C^\natural_{\bt+1}$ is compatible with the $T$-operator, it  descends to a unique $k$-linear map $\wb{\varphi}^{\natural}: C^\natural_{T,\bt} \rightarrow C^\natural_{T,\bt+1}$ such that $\wb{\varphi}^{\nnatural} \pi_T = \pi_T \varphi^{\natural}$. 
By using Lemma~\ref{lem:Cn-Cl.nu0} and arguing along similar lines as that of the proof of Proposition~\ref{prop:Cn-Cl.CTnn} we then obtain the following result.

\begin{proposition}\label{prop:CnCl.CTn}
 Suppose that $C$ is an $H$-unital para-precyclic $k$-module and $k\supset \Q$. Then
\begin{equation*}
\wb{\pi}^\natural \wb{\nu}^\natural= 1, \qquad \wb{\nu}^\natural \wb{\pi}^\natural= 1+ (b+Bu^{-1}) \wb{\varphi}^\natural + \wb{\varphi}^\natural (b+Bu^{-1}), \qquad 
\wb{\pi}^\natural \wb{\varphi}^\natural=0. 
\end{equation*} 
In particular, we obtain a deformation retract of $C^\natural_T$ to $C^\lambda$.          
\end{proposition}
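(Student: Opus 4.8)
The plan is to repeat, almost verbatim, the argument used for Proposition~\ref{prop:Cn-Cl.CTnn}, with the pair $(\pi^\nnatural,\nu_0^\nnatural,\varphi^\nnatural,\dl+\delta)$ replaced by $(\pi^\natural,\nu_0^\natural,\varphi^\natural,b+Bu^{-1})$ and Lemma~\ref{lem:Cn-Cl.Cnn-C} replaced by Lemma~\ref{lem:Cn-Cl.nu0}. As already observed in the paragraph preceding the statement, $\wb{\nu}^\natural=J\wb{\nu}^\nnatural$ is a composite of chain maps, hence a chain map; so the only thing left to establish is the three displayed identities. Each of them will be obtained by pulling back along one of the two canonical surjections $\pi^\lambda\colon C_\bt\to C_\bt^\lambda$ (for maps out of $C^\lambda$) and $\pi_T\colon C_\bt^\natural\to C^\natural_{T,\bt}$ (for maps out of $C^\natural_T$), and then quoting Lemma~\ref{lem:Cn-Cl.nu0} together with the two elementary facts that $\pi^\lambda$ annihilates $\ran(1-\tau)$ and that $b$, $B$, $u^{-1}$ all commute with $T$, hence with $\pi_T$.

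For $\wb{\pi}^\natural\wb{\nu}^\natural=1$: using $\wb{\nu}^\natural\pi^\lambda=\pi_T\nu_0^\natural$ from~(\ref{eq:Cn-Cl.wbnun}) and the defining relation $\wb{\pi}^\natural\pi_T=\pi^\lambda\pi_0^\natural$ one gets $\wb{\pi}^\natural\wb{\nu}^\natural\pi^\lambda=\pi^\lambda\pi_0^\natural\nu_0^\natural$; by~(\ref{eq:Cn-Cl.pio-nun0}) one has $\pi_0^\natural\nu_0^\natural=1-(1-\tau)\bigl[1+s'\hat{D}b\hat{N}\bigr]$, and since $\pi^\lambda$ kills $\ran(1-\tau)$ this equals $\pi^\lambda$; surjectivity of $\pi^\lambda$ then gives $\wb{\pi}^\natural\wb{\nu}^\natural=1$ on $C_\bt^\lambda$. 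The identity $\wb{\pi}^\natural\wb{\varphi}^\natural=0$ is dealt with in exactly the same way: $\wb{\pi}^\natural\wb{\varphi}^\natural\pi_T=\wb{\pi}^\natural\pi_T\varphi^\natural=\pi^\lambda\pi_0^\natural\varphi^\natural$, and by the second equality in~(\ref{eq:Cn-Cl.nun-tau}) the range of $\pi_0^\natural\varphi^\natural$ lies in $\ran(1-\tau)$, hence is killed by $\pi^\lambda$; since $\pi_T$ is onto, $\wb{\pi}^\natural\wb{\varphi}^\natural=0$.

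The middle identity is the one where the $T$-compatibility of $\varphi^\natural$ (recorded just before the statement) enters. Precomposing $\wb{\nu}^\natural\wb{\pi}^\natural$ with $\pi_T$ and using $\wb{\nu}^\natural\pi^\lambda=\pi_T\nu_0^\natural$ and $\wb{\pi}^\natural\pi_T=\pi^\lambda\pi_0^\natural$ gives $\wb{\nu}^\natural\wb{\pi}^\natural\pi_T=\pi_T\nu_0^\natural\pi_0^\natural$. Lemma~\ref{lem:Cn-Cl.nu0} supplies $\nu_0^\natural\pi_0^\natural=1+(b+Bu^{-1})\varphi^\natural+\varphi^\natural(b+Bu^{-1})$; since $b$, $B$, $u^{-1}$ commute with $\pi_T$, and $\pi_T\varphi^\natural=\wb{\varphi}^\natural\pi_T$, the right-hand side equals $\bigl[1+(b+Bu^{-1})\wb{\varphi}^\natural+\wb{\varphi}^\natural(b+Bu^{-1})\bigr]\pi_T$, and surjectivity of $\pi_T$ yields $\wb{\nu}^\natural\wb{\pi}^\natural=1+(b+Bu^{-1})\wb{\varphi}^\natural+\wb{\varphi}^\natural(b+Bu^{-1})$ on $C^\natural_{T,\bt}$. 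Combining the three identities shows that $(\wb{\pi}^\natural,\wb{\nu}^\natural)$ is a deformation retract of $C^\natural_T$ onto $C^\lambda$, which is the assertion.

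The only genuinely delicate point — and it is the same one as in Proposition~\ref{prop:Cn-Cl.CTnn} — is that $\nu_0^\natural$, $\pi^\natural$ and $\varphi^\natural$ are chain maps (resp. commute with $T$) only \emph{modulo} $\ran(1-\tau)$, resp. modulo $R^T_\bt$, so one must first check that their descents $\wb{\nu}^\natural$, $\wb{\pi}^\natural$, $\wb{\varphi}^\natural$ to the quotients $C_\bt^\lambda$ and $C^\natural_{T,\bt}$ are well defined before the bookkeeping above makes sense. This is precisely what~(\ref{eq:Cn-Cl.nun0-chain-map}), the first equality in~(\ref{eq:Cn-Cl.nun-tau}), the $T$-compatibility of $\varphi^\natural$, and~(\ref{eq:CnCl.pin-chain-map}) guarantee — all of which are already in hand from Lemma~\ref{lem:Cn-Cl.nu0} and the preceding discussion. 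Once well-definedness is in place no further computation is required; in particular, in contrast with the next subsection, the direct-sum splitting $C_\bt=C^T_\bt\oplus R^T_\bt$ is not used here, only $H$-unitality and $k\supset\Q$.
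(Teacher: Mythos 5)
Your argument is correct and is essentially the paper's own proof: the paper simply invokes Lemma~\ref{lem:Cn-Cl.nu0} and says to argue "along similar lines" as Proposition~\ref{prop:Cn-Cl.CTnn}, which is exactly the transport along $\pi^\lambda$ and $\pi_T$ that you carry out, including the use of~(\ref{eq:Cn-Cl.pio-nun0}) for $\wb{\pi}^\natural\wb{\nu}^\natural=1$ and of the second equality in~(\ref{eq:Cn-Cl.nun-tau}) for $\wb{\pi}^\natural\wb{\varphi}^\natural=0$. Your closing remarks on well-definedness of the descended maps and on the fact that only $H$-unitality and $k\supset\Q$ (not the splitting $C_\bt=C^T_\bt\oplus R^T_\bt$) are needed are likewise consistent with the paper.
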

\begin{proof}
 By using Lemma~\ref{lem:Cn-Cl.nu0} and arguing along similar lines as that of the proof of Proposition~\ref{prop:Cn-Cl.CTnn} it can be shown that $\wb{\pi}^\natural \wb{\nu}^\natural= 1$ and $\wb{\nu}^\natural \wb{\pi}^\natural= 1+ (b+Bu^{-1}) \wb{\varphi}^\natural + \wb{\varphi}^\natural (b+Bu^{-1})$. Moreover, by using~(\ref{eq:Cn-Cl.nun-tau}) we see that 
 $\wb{\pi}^\natural \wb{\varphi}^\natural \pi_T= \wb{\pi}^\natural  \pi_T \varphi^\natural =\pi^\lambda \pi_0^\natural \varphi^\natural = - \pi^\lambda (1-\tau) s'\hat{b} \hat{D}=0$. It then follows that $\wb{\pi}^\natural \wb{\varphi}^\natural = 0$ on $C^\natural_{T,\bt}$. The proof is complete. 
\end{proof}

It is interesting to specialize Proposition~\ref{prop:CnCl.CTn} to $H$-unital precyclic $k$-modules. In this case $\wb{\pi}^\natural$ is just the projection $\pi^\natural$ and $\wb{\varphi}^\natural$ agrees with the chain homotopy $\varphi^\natural$. 
Furthermore, as $T=1$ the equalities~(\ref{eq:Cn-Cl.nun0-chain-map}) and~(\ref{eq:Cn-Cl.nun-tau}) ensure us that $\nu^\natural_0(1-\tau)=0$ and $\nu_0^\natural b = (b+Bu^{-1}) \nu_0^\natural$. Thus, it is immediate that $\nu_0^\natural$ descends to a unique $k$-linear chain map, 
\begin{equation}
 \nu^\natural :C^\lambda_\bt \longrightarrow C^\natural_\bt, \qquad  \nu^\natural  \pi^\lambda =  \nu^\natural_0.
 \label{eq:Cn-Cl.nun} 
\end{equation}
Therefore, we obtain the following statement. 

\begin{corollary}
 Suppose that $C$ is an $H$-unital precyclic $k$-module and $k\supset \Q$. Then we have 
\begin{equation*}
 \pi^\natural \nu^\natural= 1, \qquad \nu^\natural \pi^\natural= 1+ (b+Bu^{-1}) \varphi^\natural + \varphi^\natural (b+Bu^{-1}), \qquad \pi^\natural \varphi^\natural=0. 
\end{equation*} 
In particular, we obtain a deformation retract of $C^\natural$ to $C^\lambda$.          
\end{corollary}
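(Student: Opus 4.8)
The plan is to obtain this corollary by specializing Proposition~\ref{prop:CnCl.CTn} to the precyclic case $T=1$. For a precyclic $k$-module one has $T=1$ on every $C_m$, hence $R^T_\bt=\ran(1-T)=0$ and $C_{T,\bt}=C_\bt$. Consequently $C^\natural_{T,\bt}=C^\natural_\bt$, the canonical projection $\pi_T\colon C_\bt\to C_{T,\bt}$ is the identity, the descended projection $\wb{\pi}^\natural$ of Section~\ref{sec:Cn-Cl} is literally the projection $\pi^\natural$ of~(\ref{eq:CnCl.pin}), and the descended homotopy $\wb{\varphi}^\natural$ is literally $\varphi^\natural=J\varphi^\nnatural I$.

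Next I would check that $\nu_0^\natural$ descends to $C^\lambda_\bt$. Since $1-T=0$, the last identity of~(\ref{eq:Cn-Cl.nun-tau}) gives $\nu_0^\natural(1-\tau)=(1-T)\mu^\natural=0$, so $\nu_0^\natural$ annihilates $\ran(1-\tau)$ and descends to a unique $k$-linear map $\nu^\natural\colon C^\lambda_\bt\to C^\natural_\bt$ with $\nu^\natural\pi^\lambda=\nu_0^\natural$, as in~(\ref{eq:Cn-Cl.nun}). Moreover, with $1-T=0$ the identity~(\ref{eq:Cn-Cl.nun0-chain-map}) reads $(b+Bu^{-1})\nu_0^\natural=\nu_0^\natural b$; composing with $\pi^\lambda$ and using its surjectivity shows $\nu^\natural$ is a chain map from $(C^\lambda_\bt,b)$ to $(C^\natural_\bt,b+Bu^{-1})$. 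Under the identifications of the previous paragraph this $\nu^\natural$ is exactly the map $\wb{\nu}^\natural$ of Proposition~\ref{prop:CnCl.CTn}, since $\wb{\nu}^\natural\pi^\lambda=\pi_T\nu_0^\natural=\nu_0^\natural$.

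Now I would simply invoke Proposition~\ref{prop:CnCl.CTn}: its three conclusions $\wb{\pi}^\natural\wb{\nu}^\natural=1$, $\wb{\nu}^\natural\wb{\pi}^\natural=1+(b+Bu^{-1})\wb{\varphi}^\natural+\wb{\varphi}^\natural(b+Bu^{-1})$ and $\wb{\pi}^\natural\wb{\varphi}^\natural=0$ become, after substituting $\wb{\pi}^\natural=\pi^\natural$, $\wb{\nu}^\natural=\nu^\natural$, $\wb{\varphi}^\natural=\varphi^\natural$, precisely the three asserted identities. The ``in particular'' statement is then immediate: $\pi^\natural$ is a chain map by~(\ref{eq:CnCl.pin-chain-map}), $\nu^\natural$ is a chain map by the above, $\pi^\natural\nu^\natural=1$, and $\nu^\natural\pi^\natural$ is chain homotopic to the identity of $C^\natural_\bt$ via $\varphi^\natural$, which is exactly a deformation retract of $C^\natural$ onto $C^\lambda$ in the sense of Section~\ref{sec:Para-S-Mod}.

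The only genuinely substantial input is Proposition~\ref{prop:CnCl.CTn} itself (and, behind it, Lemma~\ref{lem:Cn-Cl.nu0}, Proposition~\ref{prop:Cn-Cl.CTnn}, and the comparison of $C^\natural$ with $C^\nnatural$ via $\nu^\natural=J\wb{\nu}^\nnatural$), so the only real work here is the bookkeeping of the two middle paragraphs: verifying that every barred object of Proposition~\ref{prop:CnCl.CTn} reduces to its unbarred counterpart when $T=1$ and that the correction terms proportional to $1-T$ in Lemma~\ref{lem:Cn-Cl.nu0} drop out. I expect no computational obstacle. Alternatively, one could argue directly by composing the deformation retract of $C^\nnatural$ onto $C^\lambda$ from Section~\ref{sec:Cnn-Cl} with the deformation retract of $C^\nnatural$ onto $C^\natural$ of Proposition~\ref{prop:CnnCn.deformation-retract}, but specializing Proposition~\ref{prop:CnCl.CTn} is the shortest route, and it also recovers the deformation retract of Kassel~\cite{Ka:Crelle90}.
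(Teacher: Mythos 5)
Your proposal is correct and follows essentially the same route as the paper: the paper also obtains the corollary by specializing Proposition~\ref{prop:CnCl.CTn} to the precyclic case $T=1$, noting that $\wb{\pi}^\natural=\pi^\natural$ and $\wb{\varphi}^\natural=\varphi^\natural$, and using~(\ref{eq:Cn-Cl.nun0-chain-map}) and~(\ref{eq:Cn-Cl.nun-tau}) to see that $\nu_0^\natural$ kills $\ran(1-\tau)$ and descends to the chain map $\nu^\natural$ of~(\ref{eq:Cn-Cl.nun}). Your bookkeeping of the barred objects reducing to their unbarred counterparts matches the paper's (brief) argument.
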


Suppose now that $C$ is an $H$-unital quasi-precyclic $k$-module, so that $C_\bt=C_\bt^T \oplus R_\bt^T$. Let $\pi^T:C_\bt \rightarrow C_\bt$ be the projection of $C_\bt$ on $C^T_\bt$ associated with this splitting. This is an $H$-unital para-precyclic $k$-module map, and so it yields a chain map $C_\bt^\natural \rightarrow C^\natural_\bt$. By Proposition~\ref{prop:Parachain.quasi-mixed-DR} this chain map is $S$-homotopy equivalent to the identity map. 
Namely, $\pi^T=1+(b+Bu^{-1})h + h (b+Bu^{-1})$, where $h:C_\bt^\natural \rightarrow C^\natural_{\bt+1}$ is given by~(\ref{eq:paracyclic.h}). 
Note also that by~(\ref{eq:Cn-Cl.nun-tau}) we have $\pi^T \nu^\natural_0 (1-\tau)= \pi^T (1-T) \mu^\natural=0$, and so  $\pi^T \nu^\natural_0$  descends to a unique $k$-linear map, 
\begin{equation}
 \nu^{T,\natural}: C_\bt^\lambda \longrightarrow C^\natural_\bt, \qquad  \nu^{T,\natural} \pi^\lambda= \pi^T \nu_0^\natural.
 \label{eq:Cn-Cl.nuTn} 
\end{equation}
We also let $\varphi^{T,\natural}: C_\bt^\natural \rightarrow C_{\bt+1}^\natural$ be the $k$-linear map defined by
 \begin{equation*}
  \varphi^{T,\natural} = h+ \varphi^\natural \pi^T= h+ \varphi^\natural \pi^T. 
\end{equation*}

\begin{proposition}\label{prop:CnCl.Cn-quasi}
 Assume that $C$ is an $H$-unital quasi-precyclic $k$-module and $k\supset \Q$. Then the $k$-linear map  $\nu^{T,\natural}:C^\lambda_\bt \rightarrow C^\natural_\bt$ is a chain map such that
\begin{equation*}
 \pi^\natural \nu^{T,\natural}= 1, \qquad \nu^{T,\natural} \pi^\natural= 1+ (b+Bu^{-1}) \varphi^{T,\natural} + \varphi^{T,\natural} (b+Bu^{-1}), \qquad \pi^\natural \varphi^{T,\natural}=0.  
\end{equation*} 
In particular, we obtain a deformation retract of $C^\natural$ to $C^\lambda$.          
\end{proposition}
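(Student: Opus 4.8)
The plan is to mimic the proof of Proposition~\ref{prop:CnCl.Cnn-quasi} and the proof of Proposition~\ref{prop:Cn-Cl.CTnn}, transporting everything along the deformation retract $(I,J,h)$ between $C^\nnatural$ and $C^\natural$ of Proposition~\ref{prop:CnnCn.deformation-retract}. First I would record that, since $C$ is an $H$-unital quasi-precyclic $k$-module, the splitting $C_\bt=C^T_\bt\oplus R^T_\bt$ is a splitting of $H$-unital para-precyclic $k$-modules (Remark~\ref{rmk:para-precyclic.splitting-quasi-precyclic}), so the associated projection $\pi^T:C_\bt\to C_\bt$ is a map of $H$-unital para-precyclic $k$-modules. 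Hence it induces a chain map $\pi^T:C^\natural_\bt\to C^\natural_\bt$, and by Proposition~\ref{prop:Parachain.quasi-mixed-DR}(3) (applicable since the parachain complex of $C$ is a quasi-mixed complex) one has the $S$-homotopy $\pi^T=1+(b+Bu^{-1})h+h(b+Bu^{-1})$ with $h$ given by~(\ref{eq:paracyclic.h}). The identity~(\ref{eq:Cn-Cl.nun-tau}) gives $\nu^\natural_0(1-\tau)=(1-T)\mu^\natural$, so $\pi^T\nu^\natural_0(1-\tau)=\pi^T(1-T)\mu^\natural=0$ and $\pi^T\nu^\natural_0$ descends to the map $\nu^{T,\natural}$ of~(\ref{eq:Cn-Cl.nuTn}).

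The second step is to check $\nu^{T,\natural}$ is a chain map. Here I would argue exactly as in the proof of Proposition~\ref{prop:Cn-Cl.CTnn}: apply $[(b+Bu^{-1})\nu^{T,\natural}-\nu^{T,\natural}b]$ to $\pi^\lambda$, pull $\pi^T$ through (it commutes with $b$ and $B$ as a parachain complex map, and with $u^{-1}$), and use~(\ref{eq:Cn-Cl.nun0-chain-map}) to see that the relevant difference $(b+Bu^{-1})\nu^\natural_0-\nu^\natural_0 b = -(1-T)\mu^\natural b'\hat D$ has range in $R^{T,\natural}_\bt=\ker\pi^T$. So $\pi^T$ annihilates the error term and $\nu^{T,\natural}$ is a chain map.

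The third step is the two identities. For $\pi^\natural\nu^{T,\natural}=1$: since $\pi^\natural\pi^T=\pi^\natural$ (as $\pi^\natural$ kills $R^T_\bt\subset\ran(1-\tau)$, and $\pi^\natural=\pi^\lambda\pi^\natural_0$), we get $\pi^\natural\nu^{T,\natural}\pi^\lambda=\pi^\natural\pi^T\nu^\natural_0=\pi^\lambda\pi^\natural_0\nu^\natural_0=\pi^\lambda[1-(1-\tau)(1+s'\hat D b\hat N)]=\pi^\lambda$ using~(\ref{eq:Cn-Cl.pio-nun0}), hence $\pi^\natural\nu^{T,\natural}=1$ on $C^\lambda_\bt$. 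For the homotopy identity: $\nu^{T,\natural}\pi^\natural=\nu^{T,\natural}\pi^\natural\pi^\natural_0=\pi^T\nu^\natural_0\pi^\natural_0$, and by~(\ref{eq:Cn-Cl.pio-nun0})--(\ref{eq:Cn-Cl.nun-tau}), $\nu^\natural_0\pi^\natural_0=1+(b+Bu^{-1})\varphi^\natural+\varphi^\natural(b+Bu^{-1})$, so $\nu^{T,\natural}\pi^\natural=\pi^T+(b+Bu^{-1})(\pi^T\varphi^\natural)+(\pi^T\varphi^\natural)(b+Bu^{-1})$; inserting $\pi^T=1+(b+Bu^{-1})h+h(b+Bu^{-1})$ and collecting terms gives $\nu^{T,\natural}\pi^\natural=1+(b+Bu^{-1})\varphi^{T,\natural}+\varphi^{T,\natural}(b+Bu^{-1})$ with $\varphi^{T,\natural}=h+\varphi^\natural\pi^T$. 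Finally $\pi^\natural\varphi^{T,\natural}=\pi^\natural h+\pi^\natural\varphi^\natural\pi^T$: by Lemma~\ref{lem:Cn-Cl.nu0} we have $\pi^\natural_0\varphi^\natural=-(1-\tau)s'\hat b\hat D\pi^\natural_0$, which has range in $\ran(1-\tau)$, so $\pi^\natural\varphi^\natural=0$; and the range of $h$ from~(\ref{eq:paracyclic.h}) lies in $R^T_\bt u^0\oplus\cdots\subset\oplus\ran(1-\tau)u^p$ (equivalently $Bh=hB=0$ and $\ran h\subset\ker\pi^\lambda$-type summands), so $\pi^\natural h=0$; hence $\pi^\natural\varphi^{T,\natural}=0$.

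The expected main obstacle is the chain-map verification in step two: one must be careful that $\pi^T$ genuinely commutes with both $b$ and $Bu^{-1}$ on $C^\natural_\bt$ — this uses that $\pi^T$ is a parachain complex map, which in turn rests on $C$ being an $H$-unital \emph{quasi-precyclic} module so that the splitting is a splitting of para-precyclic modules and the induced $B$ on the summands is compatible with $\pi^T$. Everything else is a straightforward transcription, using the already-proved identities of Lemma~\ref{lem:Cn-Cl.nu0}, Proposition~\ref{prop:Parachain.quasi-mixed-DR}, and the pattern of Proposition~\ref{prop:Cn-Cl.CTnn} and Proposition~\ref{prop:CnCl.Cnn-quasi}. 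I would close by noting that the conclusion, together with Proposition~\ref{prop:Para-S-Mod.periodic-homotopy}, also yields a $T$-deformation retract at the periodic level, though that is not part of the stated claim.
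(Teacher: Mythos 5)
Your proposal is correct and follows essentially the same route as the paper: the paper's proof likewise invokes Lemma~\ref{lem:Cn-Cl.nu0} and the pattern of Propositions~\ref{prop:Cn-Cl.CTnn} and~\ref{prop:CnCl.Cnn-quasi} (using that $\pi^T$ is an $H$-unital para-precyclic module map commuting with $b+Bu^{-1}$ and killing $\ran(1-T)$, and that $\pi^T=1+(b+Bu^{-1})h+h(b+Bu^{-1})$), and then checks $\pi^\natural h=0$ via $\ran h\subset R^{T}_\bt\subset\ran(1-\tau)$ and $\pi^\natural\varphi^\natural=0$ via~(\ref{eq:Cn-Cl.nun-tau}), exactly as you do. The only blemishes are cosmetic (e.g.\ the slip $\nu^{T,\natural}\pi^\natural\pi^\natural_0$ for $\nu^{T,\natural}\pi^\lambda\pi^\natural_0$, which the paper itself also makes in the $\nnatural$ case), so your argument is a faithful, slightly more detailed version of the paper's.
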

\begin{proof}
 By using Lemma~\ref{lem:Cn-Cl.nu0} and arguing along similar lines as that of the proof of Proposition~\ref{prop:CnCl.Cnn-quasi} we see that 
 $\pi^\natural \nu^{T,\natural}= 1$ and $\nu^{T,\natural} \pi^\natural= 1+ (b+Bu^{-1}) \varphi^{T,\natural} + \varphi^{T,\natural} (b+Bu^{-1})$. 
 As mentioned in Remark~\ref{rmk:Para-S-Module.homotopy-special-C'}, the range of $h$ is contained in $R^{T,\natural} \subset \ran (1-\tau)$, and so
 $\pi^\natural h =0$. By using~(\ref{eq:Cn-Cl.nun-tau}) we also get 
 $\pi^\natural \varphi^\natural = \pi^\lambda \pi_0^\natural \varphi^\natural = - \pi^\lambda (1-\tau) s'\hat{b} \hat{D}=0$. Therefore, we see that 
$\pi^\natural  \varphi^{T,\natural }=  \pi^\natural  h +\pi^\natural  \varphi^\natural \pi^T =  0$.
The proof is complete.
\end{proof}

\section{The Periodicity Operator}\label{sec:S}
It was observed by Kassel~\cite{Ka:Crelle90} that, given any precyclic $k$-module $C$ with $k\supset \Q$,  the deformation retract of $C^\nnatural$ to Connes' complex $C^\lambda$ allows us to get an alternative description of the periodicity operator of Connes~\cite{Co:MFO81, Co:CRAS83, Co:IHES85} in cyclic homology. 

In this section, we further elaborate on Kassel's approach to the periodicity operator. We shall relate the periodicity operator to the comparison results of the previous sections between $C^\nnatural$ and $C^\lambda$ for para-precyclic modules,  and between $C^\natural$ and $C^\lambda$ in the $H$-unital case. We will also give a few applications in periodic cyclic homology. 

One feature of the approach of~\cite{Ka:Crelle90} is the use of a special chain homotopy in the construction of the deformation retract of $C^\nnatural$ and $C^\lambda$ in the precyclic case. A special chain homotopy need not be available in general in the setting of  para-precyclic modules (\emph{cf}.~Remark~\ref{rmk:Perturbation.special}). 
We shall bypass this issue by using the almost chain homotopy inverse $\nu_0^\nnatural$ of Section~\ref{sec:Cnn-Cl}. Not only will this allow us to proceed in the para-precyclic case, this will also lead us to a simpler formula for the periodicity operator and an equality with Connes' periodicity at the level of chains, rather than at the cyclic homology level (compare~\cite{Ka:Crelle90, Lo:CH}). 

\subsection{The periodicity operator on $C^\lambda_\bt$} Suppose that $k\supset \Q$, and let $C$ be a (pre)cyclic $k$-module. The cyclic homology of $C$ is a module over the cyclic cohomology of $k$. By using Connes' cyclic chain complex $C^\lambda$, this module structure is implemented at the chain level by combining the canonical identification $C_\bt\simeq k \otimes_k C_\bt$ with the cap product with the Connes cyclic cochain complex of $k$ (see~\cite{Co:MFO81, Co:CRAS83, Co:IHES85}). In fact, the cyclic cohomology of $k$ is a polynomial ring over $k$ generated by the 2-cocycle such that $\sigma(1,1,1)=1$. The action of $\sigma$ on $C_\bt^\lambda$ is implemented by the chain map $S:C^\lambda_\bt \rightarrow C^\lambda_{\bt-2}$ given by
\begin{equation}
 S (x^\lambda) = \frac{1}{(m-1)m} \sum_{0\leq i<j\leq m} (-1)^{i+j} (d_id_jx)^\lambda, \qquad x\in C_m, \ m\geq 2.
 \label{eq:S.Connes-Periodicity} 
\end{equation}
This is Connes' \emph{periodicity operator}. In particular, this turns the chain complex $C^\lambda$ into an $S$-module. In addition, this operator fits into Connes' exact sequence in cyclic homology, 
\begin{equation*}
 \cdots \rightarrow H_\bt(C) \stackrel{I}{\longrightarrow} H^\lambda_\bt(C)  \stackrel{S}{\longrightarrow} H^\lambda_{\bt-2} (C) \stackrel{B}{\longrightarrow} H_{\bt-1} (C)\rightarrow \cdots,
\end{equation*}
where $H_\bt(C)$ (resp., $H^\lambda_\bt(C)$) is the Hochschild (resp., cyclic) homology of $C$ (see~\cite{Co:MFO81, Co:CRAS83, Co:IHES85}).

From now on, we assume that $k\supset \Q$, and let $C=(C_\bt, d,t)$ be a para-precyclic $k$-module. It is not possible to find a chain map $S:C_\bt^\lambda\rightarrow C_{\bt-2}^\lambda$ such that  $S\pi^\nnatural=\pi^\nnatural u^{-2}$ or $S\wb{\pi}^\nnatural=\wb{\pi}^\nnatural u^{-2}$. Nevertheless, by Proposition~\ref{prop:Cn-Cl.CTnn} 
the chain map $\wb{\pi}^\nnatural$ has a right-inverse and chain homotopy left-inverse. Namely, the embedding  $\wb{\nu}^\nnatural: C^\lambda_\bt \rightarrow  C^\nnatural_{T,\bt}$ given by~(\ref{eq:CnCl.nunn}) and~(\ref{eq:Cnn-Cl.wbnunn}). Therefore, it is natural to seek for a chain map $S:C_\bt^\lambda\rightarrow C_{\bt-2}^\lambda$ such that 
\begin{equation}
    \wb{\nu}^\nnatural S = u^{-2} \wb{\nu}^\nnatural \qquad \text{for all $x\in C_\bt$}. 
    \label{eq:Spinn-nunnS}
\end{equation}
As we shall see this equation is satisfied by a unique $k$-linear chain map of degree~$-2$ and will provide us with a solution of  $S\pi^\nnatural=\pi^\nnatural u^{-2}$ up to 
homotopy (see Proposition~\ref{prop:S.para-precyclic} below). 

Let $S_0:C_\bt \rightarrow C_{\bt-2}$ be the $k$-linear map defined by 
\begin{equation*}
 S_0 x= \pi^\nnatural_0 \big(u^{-2} \nu^\nnatural_{0} (x)\big), \qquad x \in C_\bt. 
\end{equation*}
We also introduce the $k$-linear map $\psi^\nnatural_0:C_\bt^\nnatural\rightarrow C_{\bt-1}$ given by
\begin{equation*}
    \psi_0^\nnatural = \pi_0^\nnatural \left(u^{-2} \varphi^\nnatural \right).
\end{equation*}
It follows from~(\ref{eq:Cn-Cl.wbvphinn1})--(\ref{eq:Cn-Cl.wbvphinn2}) that we have
\begin{equation}
 \psi_0^\nnatural (xu^0)=-\hat{b}'\hat{D} x, \qquad  \psi_0^\nnatural (xu)=-\hat{x}, \qquad  \psi_0^\nnatural (xu^p)=0, \quad p\geq 2. 
 \label{eq:S.psinn0}
\end{equation}

\begin{lemma}
Set $\xi =-\hat{b}'\hat{D} b$ and $\eta=-\hat{D}b$. Then 
\begin{gather}
 S_0 =\xi \hat{N}, \qquad S_0(1-\tau)x=(1-T)\xi \hat{x}, \quad x \in C_\bt,
 \label{eq:S.S0-tau}\\
 u^{-2} \nu^\nnatural_{0}- \nu^\nnatural_{0} S_0 = (1-T)\mu^\nnatural \eta \hat{D}b' ,
 \label{eq:S.nunn0-Smap}\\
 S_0 \pi_0^\nnatural -\pi_0^\nnatural u^{-2}= b\psi_0^\nnatural + \psi_0^\nnatural (\dl +\delta) + (1-\tau) \pi_0^\nnatural (u^{-3} \varphi^\nnatural),
 \label{eq:S.pinn0-Smap} \\
S_0 b =bS_0 +(1-\tau) \eta\xi  \hat{N} +(1-T) \xi \hat{b}'\hat{D}.
\label{eq:S.S0-chain} 
\end{gather}
\end{lemma}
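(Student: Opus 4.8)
The plan is to prove the four displayed identities by a more-or-less direct, but carefully organized, calculation, exploiting the definitions $S_0 = \pi_0^\nnatural u^{-2} \nu_0^\nnatural$, $\psi_0^\nnatural = \pi_0^\nnatural u^{-2} \varphi^\nnatural$, and the explicit series formulas~(\ref{eq:CnCl.nunn}), (\ref{eq:CnCl.munn}), (\ref{eq:Cn-Cl.wbvphinn1})--(\ref{eq:Cn-Cl.wbvphinn2}), together with the chain-map-type identities already established in Lemma~\ref{lem:Cn-Cl.Cnn-C}. The starting observation is that $\pi_0^\nnatural u^{-2}$ picks out exactly the degree-$2$ component of an element of $C_\bt^\nnatural$ (a term $xu^2$), so each of the four identities is really a statement about that single component of $\nu_0^\nnatural$, of $u^{-2}\nu_0^\nnatural$, etc.

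First I would establish~(\ref{eq:S.S0-tau}). From~(\ref{eq:CnCl.nunn}) the coefficient of $u^2$ in $\nu_0^\nnatural(x)$ is $-(1-\hat D b u)(\hat b' \hat D b)\hat N x$ — so its $u^2$-part (dropping the $u$-term) is $-\hat b' \hat D b \hat N x = \xi\hat N x$, giving $S_0 = \xi\hat N$. The relation $S_0(1-\tau)x = (1-T)\xi\hat x$ then follows from $\hat N(1-\tau) = N(1-\tau)(m+1)^{-1}\cdots$; more precisely one uses $\nu_0^\nnatural(1-\tau) = (1-T)\mu^\nnatural$ from~(\ref{eq:Cn-Cl.nunn-tau}), applies $\pi_0^\nnatural u^{-2}$ to both sides, and reads off the $u^2$-component of $\mu^\nnatural$ from~(\ref{eq:CnCl.munn}), which is $-\hat D b u\cdot(\ldots)$ plus $(\hat b'\hat D b)\hat x \,u^2$; being slightly careful, the $u^2$-part is $\xi\hat x$, as needed. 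For~(\ref{eq:S.nunn0-Smap}), I would apply the operator identity $u^{-2}\nu_0^\nnatural - \nu_0^\nnatural S_0$ to a general $x$ using the series~(\ref{eq:CnCl.nunn}) for both $\nu_0^\nnatural$ and (after substituting $S_0 = \xi\hat N$) for $\nu_0^\nnatural S_0$; the point is that $\nu_0^\nnatural \xi\hat N$ telescopes against $u^{-2}\nu_0^\nnatural$ and the only surviving discrepancy is the boundary term, which one identifies with $(1-T)\mu^\nnatural\eta\hat D b'$ using $\nu_0^\nnatural = \mu^\nnatural N$ and $N(1-\tau) = 1-T$ together with the commutation $b'\hat D = \hat D b'$-type bookkeeping.

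For~(\ref{eq:S.pinn0-Smap}), the strategy is to apply $\pi_0^\nnatural u^{-2}$ to the homotopy identity~(\ref{eq:Cn-Cl.pi0nn-nunn-inverses}), namely $\nu_0^\nnatural\pi_0^\nnatural = 1 + (\dl+\delta)\varphi^\nnatural + \varphi^\nnatural(\dl+\delta)$: the left side becomes $\nu_0^\nnatural\pi_0^\nnatural$ pre-composed appropriately, producing $S_0\pi_0^\nnatural$ on the nose after using $S_0 = \pi_0^\nnatural u^{-2}\nu_0^\nnatural$, while the right side produces $\pi_0^\nnatural u^{-2}$, plus $\pi_0^\nnatural u^{-2}(\dl+\delta)\varphi^\nnatural + \pi_0^\nnatural u^{-2}\varphi^\nnatural(\dl+\delta)$. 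The second of these is exactly $\psi_0^\nnatural(\dl+\delta)$. The first one requires care: $\pi_0^\nnatural u^{-2}\dl$ and $\pi_0^\nnatural u^{-2}\delta$ do not simply factor, because $\dl$ and $\delta$ do not commute with $\pi_0^\nnatural u^{-2}$ the way $u^{-2}$ does; using $\pi_0^\nnatural\dl = (1-\tau)\pi_0^\nnatural u^{-1}$ from Lemma~\ref{lem:Cn-Cl.pinn-chain-map} and $\pi_0^\nnatural\delta = b\pi_0^\nnatural$, one rewrites $\pi_0^\nnatural u^{-2}\dl\varphi^\nnatural = (1-\tau)\pi_0^\nnatural u^{-3}\varphi^\nnatural$ and $\pi_0^\nnatural u^{-2}\delta\varphi^\nnatural = b\pi_0^\nnatural u^{-2}\varphi^\nnatural = b\psi_0^\nnatural$, which assembles into exactly the right-hand side of~(\ref{eq:S.pinn0-Smap}). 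Finally~(\ref{eq:S.S0-chain}) is obtained by commuting $b$ past $S_0 = \xi\hat N$: one needs $\hat N b = $ (something involving $\hat b'$ and $N$) and $\xi b$ versus $b\xi$; alternatively, and more cleanly, apply $\pi_0^\nnatural u^{-2}$ to~(\ref{eq:Cn-Cl.pi0-nu0nn-chain}), $(\dl+\delta)\nu_0^\nnatural = \nu_0^\nnatural b - (1-T)\mu^\nnatural b'\hat D$, use the same $\pi_0^\nnatural\dl$, $\pi_0^\nnatural\delta$ rewrites on the left, and collect the $(1-\tau)$ term and the $(1-T)$ term.

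The main obstacle I anticipate is purely bookkeeping: keeping straight the degree shifts and the $(m+1)^{-1}$ normalizations hidden inside $\hat x$, $\hat N$, $\hat D$, $\hat b'$, and making sure that $\pi_0^\nnatural u^{-2}$ and $\pi_0^\nnatural u^{-3}$ are applied to the correct homogeneous components of the series~(\ref{eq:CnCl.nunn})--(\ref{eq:Cn-Cl.wbvphinn2}). There is no conceptual difficulty — every identity is forced once one applies $\pi_0^\nnatural u^{-2}$ (or $u^{-3}$) to an identity already proved in Lemma~\ref{lem:Cn-Cl.Cnn-C} — but the risk is a sign error or an off-by-one in the polynomial identity $\hat N + (1-\tau)\hat D = 1$~(\ref{eq:Cn-Cl.N-D}) when it interacts with the extra factor of $u^{-1}$ coming from $S_0$ having degree $-2$ rather than $-1$. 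I would therefore organize the proof so that~(\ref{eq:S.S0-tau}) is settled first and then reused freely in the other three, and I would state explicitly the auxiliary commutation relations $\pi_0^\nnatural u^{-2}\dl = (1-\tau)\pi_0^\nnatural u^{-3}$ and $\pi_0^\nnatural u^{-2}\delta = b\pi_0^\nnatural u^{-2}$ at the outset so the remaining manipulations are mechanical.
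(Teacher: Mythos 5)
Your proposal is correct and follows essentially the same route as the paper's proof: read off $S_0=\xi\hat{N}$ (and $\pi_0^\nnatural u^{-2}\mu^\nnatural=\xi\,\hat{\cdot}$, $\pi_0^\nnatural u^{-3}\nu_0^\nnatural=\eta\xi\hat{N}$) from the series~(\ref{eq:CnCl.nunn}) and~(\ref{eq:CnCl.munn}), and obtain~(\ref{eq:S.pinn0-Smap}) and~(\ref{eq:S.S0-chain}) by applying $\pi_0^\nnatural u^{-2}$ to the identities of Lemma~\ref{lem:Cn-Cl.Cnn-C}, using $[\,u^{-2},\dl+\delta\,]=0$ together with $\pi_0^\nnatural \dl=(1-\tau)\pi_0^\nnatural u^{-1}$ and $\pi_0^\nnatural\delta=b\pi_0^\nnatural$, exactly as in the paper. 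The only loose wording is your account of~(\ref{eq:S.nunn0-Smap}): the two series do not telescope to a boundary term, but differ in every degree by $(1+\eta u)\xi^j(1-\hat{N})\xi\hat{N}u^{2j}$, and one concludes by inserting $1-\hat{N}=(1-\tau)\hat{D}$ from~(\ref{eq:Cn-Cl.N-D}), recognizing the sum as $\mu^\nnatural$ applied to a fixed operator, and pushing $(1-\tau)$ through $b'$ onto $\hat{N}$ to produce $(1-T)$ --- which is precisely what the tools you cite accomplish, so this is a description issue rather than a gap.
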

\begin{proof}
 As by~(\ref{eq:CnCl.nunn}) we have $\nu^\nnatural_{0}= \sum_{j\geq 0} (1+\eta u) \xi^j \hat{N} u^{2j}$, we get
 \begin{equation}
 u^{-2} \nu^\nnatural_{0} = \sum_{j \geq 1} (1+\eta u) \xi^j \hat{N} u^{2j-2}=  \sum_{j \geq 0} (1+\eta u) \xi^{j+1} \hat{N} u^{2j}.
 \label{eq:S.u2nunn} 
\end{equation}
It then follows that $S_0=\pi^\nnatural_0 (u^{-2} \nu^\nnatural_{0})= \xi \hat{N}$. Furthermore, given any $x\in C_\bt$, we have
\begin{equation*}
 S_0(1-\tau) x =  \xi N(1-\tau) \hat{x} = \xi(1-T) \hat{x}=(1-T) \xi \hat{x}. 
\end{equation*}

As $S_0=  \xi \hat{N}$ we have $\nu^\nnatural_{0} S_0=  \sum_{j\geq 0} (1+\eta u) \xi^j \hat{N}\xi \hat{N} u^{2j}$. Combining this with~(\ref{eq:S.u2nunn}) and using the equality 
$\hat{N}+(1-\tau)\hat{D}=1$ gives
\begin{equation*}
\nu^\nnatural_{0} S_0 -u^{-2} \nu^\nnatural_{0}   = \sum_{j \geq 0} (1+\nu u) \xi^j (\hat{N}-1) \xi \hat{N} =  -\sum_{j\geq 0} (1+\eta u) \xi^j \hat{D}(1-\tau) \xi \hat{N}.
\end{equation*}
As by~(\ref{eq:CnCl.munn}) we have $\mu^\nnatural (x)=    \sum_{j\geq 0} (1+\eta u) \xi^j \hat{x}$, $x\in C_\bt$, we get
\begin{equation*}
\nu^\nnatural_{0} S_0 -u^{-2} \nu^\nnatural_{0}  = -\mu^\nnatural D(1-\tau) \xi \hat{N}= \mu^\nnatural D(1-\tau) \hat{b}'\hat{D} b \hat{N}.
\end{equation*}
Thanks to the equality $(1-\tau)b=(1-\tau)b'$ we have 
\begin{equation*}
 D(1-\tau) \hat{b}'\hat{D} b= \hat{D}b(1-\tau)\hat{D} b= \hat{D}b\hat{D}(1-\tau) b=  \hat{D}b\hat{D} b'(1-\tau). 
\end{equation*}
Thus, 
\begin{equation*}
\nu^\nnatural_{0} S_0 -u^{-2} \nu^\nnatural_{0}  = \mu^\nnatural \hat{D}b\hat{D} b'(1-\tau) \hat{N} = \mu^\nnatural \hat{D}b\hat{D} \hat{b}'(1-T) = (1-T)\mu^\nnatural \eta 
\hat{D} \hat{b}'. 
\end{equation*}

We have $S_0\pi_0^\nnatural -\pi_0^\nnatural u^{-2}= \pi_0^\nnatural (u^{-2} \nu_0^\nnatural)\pi_0^\nnatural -\pi_0^\nnatural u^{-2}=  
\pi_0^\nnatural u^{-2}(\nu_0^\nnatural\pi_0^\nnatural -1)$. Therefore, by using~(\ref{eq:Cn-Cl.pi0nn-nunn-inverses}) and Lemma~\ref{lem:Cn-Cl.pinn-chain-map} we obtain
\begin{align*}
 S_0\pi_0^\nnatural -\pi_0^\nnatural u^{-2} & = \pi_0^\nnatural u^{-2}\left[ (\dl+\delta)\varphi^\nnatural + \varphi^\nnatural (\dl+\delta)\right] \\
 & = b ( \pi_0^\nnatural u^{-2} \varphi^\nnatural)  + ( \pi_0^\nnatural u^{-2} \varphi^\nnatural)(\dl+\delta) + 
 (\pi_0^\nnatural (\dl+\delta) -b \pi_0^\nnatural)  u^{-2} \varphi^\nnatural \\
 &= b\psi_0^\nnatural + \psi_0^\nnatural (\dl +\delta) + (1-\tau) \pi_0^\nnatural (u^{-3} \varphi^\nnatural). 
\end{align*}

It remains to prove~(\ref{eq:S.S0-chain}). By using~(\ref{eq:Cn-Cl.pi0-nu0nn-chain}) we see that $S_0 b$ is equal to
\begin{align}
  \pi^\nnatural_0 \left(u^{-2} \nu^\nnatural_{0} b \right)& =  \pi^\nnatural_0 \left(u^{-2} (\dl+\delta) \nu^\nnatural_{0}\right) + \pi_0^\nnatural \left( u^{-2} \mu^\nnatural (1-T)b'\hat{D}\right) \nonumber \\
  & =
  \pi^\nnatural_0 (\dl+\delta)\left( u^{-2} \nu^\nnatural_{0} \right) + (1-T)\pi_0^\nnatural \left( u^{-2} \mu^\nnatural \right)b'\hat{D}.
  \label{eq:S.S0b} 
\end{align}
In the same way as in~(\ref{eq:S.u2nunn}) we have $u^{-2} \mu^\nnatural(x) = \sum_{j \geq 0} (1+\eta u) \xi^{j+1} \hat{x} u^{2j}$, $x\in C_\bt$, and so  we see that
$\pi_0^\nnatural ( u^{-2} \mu^\nnatural(x))= \xi \hat{x}$. Thus, 
\begin{equation}
  (1-T)\pi_0^\nnatural \left( u^{-2} \mu^\nnatural \right)b'\hat{D}= (1-T) \xi \hat{b}' \hat{D}. 
  \label{eq:S.1-Tu2munn}
\end{equation}
By using~(\ref{eq:Cn-Cl.pi0-nu0nn-chain}) we also see that $ \pi^\nnatural (\dl+\delta)( u^{-2} \nu^\nnatural_{0})$ is equal to
\begin{equation*}
  b\pi_0^\nnatural \left( u^{-2} \nu^\nnatural_{0} \right)+ (1-\tau) \pi_0^\nnatural ( u^{-3} \nu^\nnatural_{0} ) 
  = b S_0 + (1-\tau) \pi_0^\nnatural \left( u^{-3} \nu^\nnatural_{0} \right) . 
\end{equation*}
By using~(\ref{eq:S.u2nunn}) we also get $u^{-3}\nu^\nnatural_{0} = \eta \xi \hat{N} + \sum_{j \geq 1} (1+\eta u) \xi^{j+1} \hat{N} u^{2j-1}$, and so 
$\pi_0^\nnatural ( u^{-3} \nu^\nnatural_{0}) =\eta \xi \hat{N}$. Therefore, we see that $ \pi^\nnatural (\dl+\delta)\left( u^{-2} \nu^\nnatural_{0} \right)=  b S_0 + \eta \xi \hat{N}$. Combining this with~(\ref{eq:S.S0b}) and (\ref{eq:S.1-Tu2munn}) gives~(\ref{eq:S.S0-chain}). The proof is complete.  
\end{proof}

As~(\ref{eq:S.S0-tau}) implies that $S_0$ maps $\ran (1-\tau)$ to $R^T_\bt \subset \ran(1-\tau)$, we see that $S_0$ descends to a unique $k$-linear map,
\begin{equation}
 S:C_\bt^\lambda \longrightarrow C_{\bt-2}, \qquad S\pi^\lambda = \pi^\lambda S_0.
 \label{eq:S}
\end{equation}
We also let $\psi^\nnatural:C_\bt^\nnatural \rightarrow C_{\bt-1}^\lambda$ be the $k$-linear map defined by $\psi^\nnatural=\pi^\lambda \psi_0^\nnatural$. The $T$-compatibility of $\psi_0^\nnatural$ then implies that $\psi^\nnatural$ descends to a unique $k$-linear map 
$\wb{\psi}^\nnatural:  C^\nnatural_{T,\bt} \rightarrow C_{\bt-1}^\lambda$ such that $\wb{\psi}^\nnatural \pi_T= \psi^\nnatural=\pi^\lambda \psi_0^\nnatural$. 

\begin{proposition}\label{prop:S.para-precyclic}
 Let $C$ be a para-precyclic $k$-module with $k\supset \Q$. 
\begin{enumerate}
\item The $k$-linear map $S:C_\bt^\lambda \rightarrow C_{\bt-2}$ is the unique  chain map such that $\wb{\nu}^\nnatural S=u^{-2} \wb{\nu}^\nnatural$. In particular, 
$(C^\lambda, S)$ is an $S$-module and the chain map $\wb{\nu}^\nnatural:C^\lambda_\bt \rightarrow C^\nnatural_{T,\bt}$ is an $S$-map. 

\item The chain maps $\pi^\nnatural: C_\bt^\nnatural \rightarrow C_\bt^\lambda$ and  $\wb{\pi}^\nnatural: C^\nnatural_{T,\bt} \rightarrow C_\bt^\lambda$ are $S$-maps up to homotopy. Namely, 
\begin{equation}
 S \pi^\nnatural -\pi^\nnatural u^{-2}= b\psi^\nnatural + \psi^\nnatural (\dl +\delta), \qquad  
 S \wb{\pi}^\nnatural -\wb{\pi}^\nnatural u^{-2}= b\wb{\psi}^\nnatural + \wb{\psi}^\nnatural (\dl +\delta).
 \label{eq:S.pinn-S-map}
\end{equation}

 \item When $C$ is quasi-precyclic, the chain map $\nu^{T,\nnatural}: C^\lambda_\bt \rightarrow C_{\bt}^\nnatural$ given by~(\ref{eq:Cnn-Cl.nuTnn}) is an $S$-map. 
\end{enumerate}
\end{proposition}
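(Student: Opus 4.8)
The plan is to prove the three assertions of Proposition~\ref{prop:S.para-precyclic} in order, leveraging the identities established for $S_0$, $\nu_0^\nnatural$, $\varphi^\nnatural$, $\mu^\nnatural$, and $\psi_0^\nnatural$ in the preceding lemma, together with Lemma~\ref{lem:Cn-Cl.Cnn-C} and Proposition~\ref{prop:Cn-Cl.CTnn}.

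For part (1), I would first check that $S$ is a chain map. Pushing the identity~(\ref{eq:S.S0-chain}) down to $C^\lambda_\bt$, the two terms $(1-\tau)\eta\xi\hat N$ and $(1-T)\xi\hat b'\hat D$ both lie in $\ran(1-\tau)$ (the first obviously, the second because $R^T_\bt\subset\ran(1-\tau)$), hence vanish in $C^\lambda_{\bt-2}$; this gives $Sb=bS$ on $C^\lambda_\bt$. Next, to verify $\wb\nu^\nnatural S=u^{-2}\wb\nu^\nnatural$: compose with $\pi^\lambda$ on the right. By definition $\wb\nu^\nnatural\pi^\lambda=\pi_T\nu_0^\nnatural$ and $S\pi^\lambda=\pi^\lambda S_0$; the left side becomes $\pi_T\nu_0^\nnatural S_0$, and $u^{-2}\wb\nu^\nnatural\pi^\lambda=\pi_T u^{-2}\nu_0^\nnatural$ (since $u^{-2}$ commutes with the quotient map $\pi_T$ at the $C^\nnatural$ level). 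By~(\ref{eq:S.nunn0-Smap}) the difference $u^{-2}\nu_0^\nnatural-\nu_0^\nnatural S_0$ equals $(1-T)\mu^\nnatural\eta\hat D\hat b'$, which lies in $R^{T,\nnatural}_\bt=\ker\pi_T$, so $\pi_T$ kills it. Hence $\wb\nu^\nnatural S=u^{-2}\wb\nu^\nnatural$. Uniqueness follows because $\wb\nu^\nnatural$ is a chain-homotopy left-inverse of $\wb\pi^\nnatural$ with $\wb\pi^\nnatural\wb\nu^\nnatural=1$ (Proposition~\ref{prop:Cn-Cl.CTnn}): if $S'$ is another such map, then $\wb\nu^\nnatural(S-S')=0$, whence composing with $\wb\pi^\nnatural$ (plus the fact that $\wb\pi^\nnatural\wb\nu^\nnatural=1$) gives $S=S'$; more precisely, from $\wb\nu^\nnatural S=\wb\nu^\nnatural S'$ one gets $S=\wb\pi^\nnatural\wb\nu^\nnatural S=\wb\pi^\nnatural\wb\nu^\nnatural S'=S'$ after noting $S=\wb\pi^\nnatural u^{-2}\wb\nu^\nnatural$. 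That $\wb\nu^\nnatural$ is then an $S$-map is exactly the established relation $\wb\nu^\nnatural S=u^{-2}\wb\nu^\nnatural$. Since $S$ is a chain map and $d^2=0$ on $C^\lambda$, $(C^\lambda,b,S)$ is an $S$-module.

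For part (2), I would descend the chain-level identity~(\ref{eq:S.pinn0-Smap}) to the quotient. Applying $\pi^\lambda$ on the left of $S_0\pi_0^\nnatural-\pi_0^\nnatural u^{-2}=b\psi_0^\nnatural+\psi_0^\nnatural(\dl+\delta)+(1-\tau)\pi_0^\nnatural(u^{-3}\varphi^\nnatural)$, the last term dies because $\pi^\lambda$ annihilates $\ran(1-\tau)$; using $\pi^\lambda S_0=S\pi^\lambda$, $\pi^\lambda\pi_0^\nnatural=\pi^\nnatural$, $\pi^\lambda\psi_0^\nnatural=\psi^\nnatural$ and $b\pi^\lambda=\pi^\lambda b$ gives the first identity in~(\ref{eq:S.pinn-S-map}). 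For the second, compose with $\pi_T$ on the right: since $\psi^\nnatural$ and $\pi^\nnatural$ factor through $\pi_T$ (as $\psi_0^\nnatural$ is $T$-compatible and $\pi^\nnatural$ vanishes on $R^T\subset\ran(1-\tau)$), the first identity transports verbatim to the $\wb\pi^\nnatural,\wb\psi^\nnatural$ versions.

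For part (3), when $C$ is quasi-precyclic one has $C_\bt=C^T_\bt\oplus R^T_\bt$ with projection $\pi^T$, and $\nu^{T,\nnatural}\pi^\lambda=\pi^T\nu_0^\nnatural$ by~(\ref{eq:Cnn-Cl.nuTnn}). I would compute $\nu^{T,\nnatural}S\pi^\lambda=\pi^T\nu_0^\nnatural S_0$ and $u^{-2}\nu^{T,\nnatural}\pi^\lambda=\pi^T u^{-2}\nu_0^\nnatural$ (using $[u^{-2},\pi^T]=0$, which holds since $\pi^T$ is a para-precyclic map). By~(\ref{eq:S.nunn0-Smap}) the difference is $\pi^T(1-T)\mu^\nnatural\eta\hat D\hat b'=0$ because $\pi^T$ annihilates $R^T_\bt\supset\ran(1-T)$. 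Hence $\nu^{T,\nnatural}S=u^{-2}\nu^{T,\nnatural}$, i.e.\ $\nu^{T,\nnatural}$ is an $S$-map. The main obstacle is purely bookkeeping: correctly tracking which error terms land in $\ran(1-\tau)$, $R^T_\bt$, or $R^{T,\nnatural}_\bt$ so that the appropriate quotient map annihilates them; once those membership facts are in hand every step is a direct descent of an already-proven chain-level equality.
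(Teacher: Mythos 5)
Your proposal is correct and follows essentially the same route as the paper: each assertion is obtained by descending the chain-level identities (\ref{eq:S.S0-chain}), (\ref{eq:S.nunn0-Smap}) and (\ref{eq:S.pinn0-Smap}) through $\pi^\lambda$, $\pi_T$ or $\pi^T$, using that the error terms lie in $\ran(1-\tau)$ or $\ran(1-T)$, with uniqueness deduced from $\wb{\pi}^\nnatural\wb{\nu}^\nnatural=1$ exactly as in the paper's proof.
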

\begin{proof}
As $S\pi^\lambda =\pi^\lambda S_0$, we have $(Sb -bS)\pi^\lambda = S\pi^\lambda b - b\pi^\lambda S_0=\pi^\lambda (S_0b-bS_0)$.  As~(\ref{eq:S.S0-chain}) implies that 
$\ran (S_0b-bS_0)\subset \ran(1-\tau)$, we deduce that $(Sb -bS)\pi^\lambda=0$. This shows that $Sb=bS$ on $C_\bt^\lambda$, i.e., 
$S:C_\bt^\lambda \rightarrow C_{\bt-2}$ is a chain map. In particular, the pair $(C^\lambda, S)$ is an $S$-module.
 
 By using the equality $\wb{\nu}^\nnatural \pi^\lambda = \pi_T \nu^\nnatural_0$ we also get
 \begin{equation*}
 \left(\wb{\nu}^\nnatural S-u^{-2} \wb{\nu}^\nnatural\right) \pi^\lambda = \wb{\nu}^\nnatural \pi^\lambda S_0 - u^{-2}  \pi_T \nu^\nnatural_0 =\pi_T\big(  \nu^\nnatural_0 S_0- u^{-2}  \nu^\nnatural_0 \big).
\end{equation*}
Combining this with~(\ref{eq:S.nunn0-Smap}) gives $(\wb{\nu}^\nnatural S-u^{-2} \wb{\nu}^\nnatural) \pi^\lambda=-\pi_T (1-T)\mu^\nnatural \eta \hat{D}b' =0$. It then follows that $\wb{\nu}^\nnatural S=u^{-2} \wb{\nu}^\nnatural$ on $C^\lambda_\bt$. Moreover, as $\wb{\nu}^\nnatural$ is a right-inverse of $\wb{\pi}^\nnatural$ on $C_\bt^\lambda$ we get 
\begin{equation*}
 S = \wb{\pi}^\nnatural\wb{\nu}^\nnatural S= \wb{\pi}^\nnatural\left(u^{-2} \wb{\nu}^\nnatural\right). 
\end{equation*}
Similarly, if $S':C_\bt^\lambda \rightarrow C_{\bt-2}^\lambda$ is any chain map such that $\wb{\nu}^\nnatural S'=u^{-2} \wb{\nu}^\nnatural$, then 
$S' =\wb{\pi}^\nnatural(u^{-2} \wb{\nu}^\nnatural)=S$. Thus, $S$ is the unique chain map such that $\wb{\nu}^\nnatural S=u^{-2} \wb{\nu}^\nnatural$. In particular, this implies that  the chain map $\wb{\nu}^\nnatural:C^\lambda_\bt \rightarrow C^\nnatural_{T,\bt}$ is an $S$-map. 

We have $S\pi^\nnatural -\pi^\nnatural u^{-2} = S\pi^\lambda \pi_0^\nnatural -\pi^\lambda \pi_0^\nnatural u^{-2} = \pi^\lambda (S_0 \pi_0^\nnatural -\pi_0^\nnatural u^{-2})$. Using~(\ref{eq:S.pinn0-Smap}) we get
\begin{equation*}
 S\pi^\nnatural -\pi^\nnatural u^{-2} = \pi^\lambda b \psi_0^\nnatural + \pi^\lambda  \psi_0^\nnatural (\dl+\delta)=  b\psi^\nnatural + \psi^\nnatural (\dl +\delta). 
\end{equation*}
As $ (S \wb{\pi}^\nnatural -\wb{\pi}^\nnatural u^{-2})\pi_T= S\pi^\nnatural -\pi^\nnatural u^{-2}$ and $\psi^\nnatural=\wb{\psi}^\nnatural \pi_T$, we further get
\begin{equation*}
 (S \wb{\pi}^\nnatural -\wb{\pi}^\nnatural u^{-2})\pi_T=  b(\wb{\psi}^\nnatural \pi_T) + (\wb{\psi}^\nnatural \pi_T(\dl +\delta)= 
 \left[b\wb{\psi}^\nnatural + \wb{\psi}^\nnatural (\dl +\delta)\right]\pi_T. 
\end{equation*}
It then follows that $S \wb{\pi}^\nnatural -\wb{\pi}^\nnatural u^{-2}= b\wb{\psi}^\nnatural + \wb{\psi}^\nnatural (\dl +\delta)$ on $C^\nnatural_{T,\bt}$. Therefore, we see that the chain maps $\pi^\nnatural$ and  $\wb{\pi}^\nnatural$ are $S$-maps up to homotopy, 

Suppose now that $C$ is a quasi-precyclic $k$-module. By its very definition~(\ref{eq:Cnn-Cl.nuTnn}) the chain map $\nu^{T,\nnatural}: C^\lambda_\bt \rightarrow C_{\bt}^\nnatural$ is such that $\nu^{T,\nnatural}\pi^\lambda = \pi^T \nu_0^\natural$, where $\pi^T:C_\bt \rightarrow C_\bt$ is the projection on $C^T_\bt$ defined by the splitting 
$C_\bt=C_\bt^T\oplus R^T_\bt$. Therefore, as above we have  
\begin{equation*}
 \nu^{T,\nnatural} S - u^{-2} \nu^{T,\nnatural}= \pi^T( \nu_0^\nnatural S-u^{-2}\nu_0^\nnatural)=  \pi^T(1-T)\mu^\nnatural\eta \hat{D} b =0.
\end{equation*}
This shows that $\nu^{T,\nnatural}S = u^{-2} \nu^{T,\nnatural}$ on $C^\lambda_\bt$, and so  $\nu^{T,\natural}$ is an $S$-map. The proof is complete. 
\end{proof}

When $C$ is a precyclic $k$-module the chain maps $\wb{\nu}^\nnatural$ and $\nu^{T,\nnatural}$ both agree with the chain map  $\nu^\nnatural: C^\lambda_\bt \rightarrow C_{\bt}^\nnatural$ given by~(\ref{eq:Cnn-Cl.nunn}). Therefore, in this case we obtain the statement. 

\begin{corollary}[compare~\cite{Ka:Crelle90}]\label{cor:S.precyclic} 
 Suppose that $C$ is a precyclic $k$-module with $k\supset \C$. 
 \begin{enumerate}
 \item $S:C_\bt^\lambda \rightarrow C_{\bt-2}$ is the unique  chain map such that $\nu^\nnatural S=u^{-2}\nu^\nnatural$. 
 
 \item The chain map $\nu^\nnatural: C^\lambda_\bt \rightarrow C_{\bt}^\nnatural$ is an $S$-map. 
\end{enumerate}
\end{corollary}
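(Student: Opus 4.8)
The plan is to obtain this as a direct specialization of Proposition~\ref{prop:S.para-precyclic} to the case $T=1$. When $C$ is precyclic we have $R^T_\bt=\ran(1-T)=0$, and hence $C^\nnatural_{T,\bt}=C^\nnatural_\bt$ with $\pi_T=\id$. Under this identification the map $\wb{\nu}^\nnatural$ of~(\ref{eq:Cnn-Cl.wbnunn}), which is characterised by $\wb{\nu}^\nnatural\pi^\lambda=\pi_T\nu^\nnatural_0$, coincides with the chain map $\nu^\nnatural$ of~(\ref{eq:Cnn-Cl.nunn}), characterised by $\nu^\nnatural\pi^\lambda=\nu^\nnatural_0$; likewise $\wb{\pi}^\nnatural=\pi^\nnatural$ and $\wb{\psi}^\nnatural=\psi^\nnatural$. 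Moreover a precyclic module is in particular quasi-precyclic, the splitting $C_\bt=C^T_\bt\oplus R^T_\bt$ being $C_\bt=C_\bt\oplus 0$ with $\pi^T=\id$, so the map $\nu^{T,\nnatural}$ of~(\ref{eq:Cnn-Cl.nuTnn}) likewise reduces to $\nu^\nnatural$. Recall also that, by Remark~\ref{def:para-precyclic.cyclic-bicomplex}, in this case $C^\nnatural$ is an honest $S$-module, so every chain-map and $S$-map assertion below has its literal meaning.

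With these identifications in place, part~(1) of the corollary is nothing but part~(1) of Proposition~\ref{prop:S.para-precyclic}: $S$ is a chain map with $\nu^\nnatural S=u^{-2}\nu^\nnatural$, and it is the only one. For uniqueness I would use the equality $\pi^\nnatural\nu^\nnatural=1$ from~(\ref{eq:Cn-Cl.DR-nunn}): if $S'$ is any chain map with $\nu^\nnatural S'=u^{-2}\nu^\nnatural$, then composing on the left with $\pi^\nnatural$ gives $S'=\pi^\nnatural u^{-2}\nu^\nnatural=S$. Part~(2) is then immediate: $\nu^\nnatural$ is a chain map by the corollary following Proposition~\ref{prop:Cn-Cl.CTnn}, and the relation $\nu^\nnatural S=u^{-2}\nu^\nnatural$ just established says precisely that it intertwines the $S$-operators of $C^\lambda$ and $C^\nnatural$, so it is an $S$-map; equivalently, this is part~(3) of Proposition~\ref{prop:S.para-precyclic} applied to the (quasi-)precyclic module $C$.

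I do not anticipate a genuine difficulty here. The only point requiring care is the bookkeeping in the first paragraph --- checking that each decorated map ($\wb{\nu}^\nnatural$, $\wb{\pi}^\nnatural$, $\wb{\psi}^\nnatural$, $\nu^{T,\nnatural}$) really does collapse to its undecorated counterpart once $\pi_T=\pi^T=\id$ --- which is a routine unwinding of the defining identities~(\ref{eq:Cnn-Cl.nunn}), (\ref{eq:Cnn-Cl.wbnunn}) and~(\ref{eq:Cnn-Cl.nuTnn}). Once this is done the corollary is a transcription of Proposition~\ref{prop:S.para-precyclic}, and no new computation with the operators $\hat{D}$, $\hat{N}$, $\varphi^\nnatural$ is needed.
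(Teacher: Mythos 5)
Your proposal is correct and follows the paper's own route: the paper deduces the corollary by observing that in the precyclic case $T=1$, so $\wb{\nu}^\nnatural$ and $\nu^{T,\nnatural}$ both collapse to $\nu^\nnatural$ and the statement is the specialization of Proposition~\ref{prop:S.para-precyclic}. Your extra uniqueness check via $\pi^\nnatural \nu^\nnatural=1$ is the same argument used inside the proof of that proposition, so nothing is genuinely different.
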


\begin{remark}\label{rmk:S.Kassel-hS0}
 We recover the original version of Corollary~\ref{cor:S.precyclic} in~\cite{Ka:Crelle90} by replacing $\hat{D}$ by $-\check{D}$ in the formula~(\ref{eq:Cn-Cl.vphi}) for $\varphi$ (where $\check{D}$ is given by~(\ref{eq:CnCl.D})), converting $\varphi$ into a special chain homotopy $\check{\varphi}$ (\emph{cf}.~Remark~\ref{rmk:Perturbation.special}), and then substituting $\check{\varphi}$ for $\varphi$ in the definitions of $\nu^\nnatural$ and $S_0$. This amounts to replace $S_0$ by the operator,
\begin{equation}
 \check{S}_0:= \hat{b}'(1-\tau) \check{D}^2 b\hat{N}.
 \label{eq:S.hS0} 
\end{equation}
In fact, when $C$ is precyclic it can be shown that $S_0$ and $\check{S}_0$ descend to the same operator on $C^\lambda_\bt$ (see Remark~\ref{rmk:S.S0-cS0}), and so in the precyclic  case we recover the $S$-operator of~\cite{Ka:Crelle90}. 
\end{remark}

Let $C=(C_\bt, d, s, t)$ be an $H$-unital para-precyclic $k$-module. We shall now re-interpret the operator $S$ in terms of the maps $\pi^\natural$ and $\nu_0^\natural$. 

In the following, we let $\psi^\natural :C_\bt^\natural \rightarrow C_{\bt-1}^\lambda$ and $\wb{\psi}^\natural :C_{T,\bt}^\natural \rightarrow C_{\bt-1}$ be the $k$-linear maps defined by $\psi^\natural= \psi^\nnatural I$ and $\wb{\psi}^\natural= \wb{\psi}^\nnatural I$. In fact, it follows from~(\ref{eq:S.psinn0}) that $\psi^\natural=\wb{\psi} \pi_T=\pi^\lambda \psi_0^\natural$, where $\psi_0^\natural: C_\bt^\natural \rightarrow C_{\bt-1}$ is the $k$-linear map given by
\begin{equation}
 \psi_0^\natural(xu^0)=-b'\hat{D}x, \qquad  \psi_0^\natural(xu)=-\hat{x}-b'\hat{D}s'Nx, \qquad \psi_0^\natural(xu^p)=0, \ p\geq 2. 
 \label{eq:S.psin0}
\end{equation}

\begin{proposition}\label{prop:S.nun}
Let $C$ be an $H$-unital para-precyclic $k$-module with $k\supset \Q$. 
 \begin{enumerate}
\item For all $x\in C_\bt$, we have
\begin{equation}
 S(x^\lambda) = \pi^\natural \big( u^{-1}\nu_0^\natural(x)\big). 
 \label{eq:S.nun-S}
\end{equation}

 \item The chain map $\wb{\nu}^\natural: C^\lambda_\bt \rightarrow C_{T,\bt}^\natural$ is an $S$-map.
 
 \item The chain maps $\pi^\natural:C_\bt^\natural \rightarrow C_\bt^\lambda$ and $\wb{\pi}^\natural:C_{T,\bt}^\natural \rightarrow C_\bt^\lambda$ are $S$-maps up to homotopy. Namely,
\begin{equation*}
 S \pi^\natural -\pi^\natural u^{-1}= b\psi^\natural + \psi^\natural (b+Bu^{-1}), \qquad  
 S \wb{\pi}^\natural -\wb{\pi}^\natural u^{-1}= b\wb{\psi}^\natural + \wb{\psi}^\natural (b+Bu^{-1}).
\end{equation*}
 
 \item When $C$ is quasi-precyclic, the chain map $\nu^{T,\natural}: C^\lambda_\bt \rightarrow C_{\bt}^\natural$ given by~(\ref{eq:Cn-Cl.nuTn}) is an $S$-map. 
 
  \item When $C$ is precyclic, the chain map $\nu^\natural: C^\lambda_\bt \rightarrow C_{\bt}^\natural$ given by~(\ref{eq:Cn-Cl.nun}) is an $S$-map.
\end{enumerate}
\end{proposition}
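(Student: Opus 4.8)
\textbf{Proof proposal for Proposition~\ref{prop:S.nun}.}
The plan is to transport everything through the $S$-deformation retract $(I,J,h)$ of Proposition~\ref{prop:CnnCn.deformation-retract} connecting $C^\nnatural$ and $C^\natural$, and then invoke the results already established for the $C^\nnatural$-side in Proposition~\ref{prop:S.para-precyclic}. The point is that $\pi^\natural = \pi^\nnatural I$, $\nu_0^\natural = J\nu_0^\nnatural$, $\wb{\nu}^\natural = J\wb{\nu}^\nnatural$, $\psi^\natural = \psi^\nnatural I$, $\wb\psi^\natural = \wb\psi^\nnatural I$, so each identity on the $C^\natural$-side should be a consequence of the corresponding $C^\nnatural$-identity composed with $I$ (on one side) and using $JI=1$ (on the other).

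First I would prove (1). Since $JI = 1$ and $I u^{-1}= u^{-2} I$ (recorded in the proof of Proposition~\ref{prop:CnnCn.deformation-retract}), we have $u^{-1}\nu_0^\natural(x) = u^{-1} J\nu_0^\nnatural(x) = J u^{-2} \nu_0^\nnatural(x)$, hence $\pi^\natural\big(u^{-1}\nu_0^\natural(x)\big)= \pi^\nnatural I J u^{-2}\nu_0^\nnatural(x)$. Here I would use that $I J = 1 + (\dl+\delta)h + h(\dl+\delta)$ and that $\pi^\nnatural$ is a chain map (Lemma~\ref{lem:Cn-Cl.pinn-chain-map}) together with $\pi^\nnatural h = 0$ (which follows from the special-homotopy property $J h = 0$ of Proposition~\ref{prop:CnnCn.deformation-retract}, since $\pi^\nnatural = \pi^\lambda\pi_0^\nnatural$ and $\pi_0^\nnatural h$ vanishes — one checks $\pi_0^\nnatural h(xu^{2p+1})= \pi_0^\nnatural(s'xu^{2p+1})=0$). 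So $\pi^\nnatural I J u^{-2}\nu_0^\nnatural = \pi^\nnatural u^{-2}\nu_0^\nnatural + (\text{terms killed by }\pi^\nnatural h=0\text{ and }\pi^\nnatural\dl = 0)$; the surviving term descends, by definition of $S_0$ and $S$, to $S(x^\lambda)$. A small amount of care is needed with the $\pi^\nnatural\dl$-term: $\pi^\nnatural(\dl+\delta)h\,u^{-2}\nu_0^\nnatural$ contributes $\pi^\nnatural\delta h\,u^{-2}\nu_0^\nnatural = b\pi^\nnatural h\, u^{-2}\nu_0^\nnatural = 0$ after using $\pi^\nnatural\delta = b\pi^\nnatural$ and $\pi^\nnatural h = 0$ — so in fact all correction terms vanish and (1) follows cleanly.

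Parts (2)–(5) I would deduce as formal consequences. For (2): $\wb\nu^\natural = J\wb\nu^\nnatural$ and $S$ satisfies $\wb\nu^\nnatural S = u^{-2}\wb\nu^\nnatural$ (Proposition~\ref{prop:S.para-precyclic}(1)), so $\wb\nu^\natural S = J\wb\nu^\nnatural S = J u^{-2}\wb\nu^\nnatural = u^{-1} J\wb\nu^\nnatural = u^{-1}\wb\nu^\natural$ (again using $Ju^{-2}=u^{-1}J$), which is exactly the $S$-map condition. For (3): compose the two homotopy identities of Proposition~\ref{prop:S.para-precyclic}(2) with $I$ on the right, using $\pi^\natural = \pi^\nnatural I$, $Iu^{-1}=u^{-2}I$, $\psi^\natural=\psi^\nnatural I$, and $I(b+Bu^{-1}) = (\dl+\delta)I$ (which is Lemma~\ref{lem:CnnCn.computation-thJLdl}, i.e.\ $\tilde\dl = Bu^{-1}$ and the chain-map property of $I$), turning $b\psi^\nnatural + \psi^\nnatural(\dl+\delta)$ into $b\psi^\natural + \psi^\natural(b+Bu^{-1})$; the $\wb\pi^\natural$ version is obtained by then modding out by $\pi_T$. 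For (4) and (5): $\nu^{T,\natural}=J\nu^{T,\nnatural}$ and $\nu^\natural = J\nu^\nnatural$, and the $C^\nnatural$-versions are $S$-maps by Proposition~\ref{prop:S.para-precyclic}(3) and Corollary~\ref{cor:S.precyclic}(2) respectively; the same $Ju^{-2}=u^{-1}J$ computation as in (2) finishes it. The main obstacle I anticipate is purely bookkeeping: making sure that in (1) and (3) every auxiliary term produced by $IJ - 1 = (\dl+\delta)h + h(\dl+\delta)$ is genuinely annihilated, which hinges on the special-homotopy relations $Jh = hI = h^2 = 0$ from Proposition~\ref{prop:CnnCn.deformation-retract} together with $\pi^\nnatural h = 0$ and the chain-map/intertwining properties of $I$, $J$, $\pi^\nnatural$; once those are in hand the argument is a sequence of substitutions with no new analytic input.
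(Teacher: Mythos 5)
Your proposal is correct and follows essentially the same route as the paper: every identity is transported from the $C^\nnatural$-level statements of Proposition~\ref{prop:S.para-precyclic} (and Corollary~\ref{cor:S.precyclic}) through the $S$-deformation retract $(I,J,h)$ of Proposition~\ref{prop:CnnCn.deformation-retract}, using $\pi^\natural=\pi^\nnatural I$, $\nu_0^\natural=J\nu_0^\nnatural$, $Ju^{-2}=u^{-1}J$, $Iu^{-1}=u^{-2}I$, and $\psi^\natural=\psi^\nnatural I$. The only cosmetic difference is in part (1), where the paper obtains $\pi^\natural J=\pi^\nnatural$ directly from~(\ref{eq:Cn-Cl.pinn0J-pinn0}) and $\pi^\lambda(1-\tau)=0$, while you derive the equivalent identity $\pi^\nnatural IJ=\pi^\nnatural$ from the homotopy formula together with $\pi^\nnatural h=0$ and $\pi^\nnatural\dl=0$.
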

\begin{proof}
 Let $x\in C_\bt$. We have $S(x^\lambda)=\pi^\lambda S_0x =\pi^\lambda \pi_0^\nnatural (u^{-2}\nu_0^\nnatural(x))$. It follows from~(\ref{eq:Cn-Cl.pinn0J-pinn0}) that  
$\pi^\nnatural \pi_0^\nnatural = \pi^\lambda (\pi^\natural J-(1-\tau) s'\pi_0^\nnatural u^{-1})=\pi^\natural J$. As $J$ is an $S$-map we deduce that 
 \begin{equation*}
 S(x^\lambda)= \pi^\natural J (u^{-2}\nu_0^\nnatural(x))=  \pi^\natural \big( u^{-1}J\nu_0^\nnatural(x)\big)= \pi^\natural \big( u^{-1}\nu_0^\natural(x)\big). 
\end{equation*}

As $\wb{\nu}^\natural = J \wb{\nu}^\nnatural$ and $J$ and $\wb{\nu}^\nnatural$ are both $S$-maps we see that $\wb{\nu}^\natural$ is an $S$-map. When $C$ is quasi-cyclic we have $\nu^{T,\natural}= J \nu^{T,\nnatural}$. As $\nu^{T,\nnatural}$ is an $S$-map we also see that $\nu^{T,\natural}$. When $C$ is precyclic $\wb{\nu}^\natural$ and $\nu^{T,\natural}$ both agree with $\nu^\natural$, and so $\nu^\natural$ is an $S$-map. 

We have $S\pi^\natural - \pi^\natural u^{-1}= S\pi^\nnatural I - \pi^\nnatural I u^{-1}= (S\pi^\nnatural  - \pi^\nnatural u^{-2})I$. Thus, by using~(\ref{eq:S.pinn-S-map}) we get 
\begin{equation*}
 S\pi^\natural - \pi^\natural u^{-1}= b \psi^\nnatural I + \psi^\nnatural (\dl+\delta)I=  b\psi^\natural + \psi^\natural (b+Bu^{-1}). 
\end{equation*}
Likewise, we have $S \wb{\pi}^\natural -\wb{\pi}^\natural u^{-1}= b\wb{\psi}^\natural + \wb{\psi}^\natural (b+Bu^{-1})$. The proof is complete. 
\end{proof}

\subsection{Explicit formulas for $S$}  
We shall now give a simple formula for the operator $S$ and relate it to Connes' original periodicity operator~(\ref{eq:S.Connes-Periodicity}).

\begin{lemma}\label{lem:S.S-d}
 We have $\pi^\lambda b'\hat{D}bN=\pi^\lambda d\hat{D}dN$. 
\end{lemma}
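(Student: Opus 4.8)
The plan is to carry everything into Connes' quotient $C^\lambda_\bt = C_\bt/\ran(1-\tau)$, where $\pi^\lambda$ annihilates $\ran(1-\tau)$, so that $\pi^\lambda\tau = \pi^\lambda$ in every degree. The engine will be a ``reduction to the top face $d$'': from $d_j = t^{j-m}dt^{m-j}$, the identity $d_m t^k = t^k d_{m-k}$ for $0\le k\le m$ (an immediate induction from $td_i = d_{i+1}t$), and the sign convention $\tau = (-1)^m t$ on $C_m$, one obtains $\pi^\lambda d\tau^k = (-1)^k\pi^\lambda d_{m-k}$ on $C_m$, and hence $\pi^\lambda b = (-1)^m\pi^\lambda dN$ and $\pi^\lambda b' = (-1)^m\pi^\lambda d(N-1)$ on $C_m$, where $b = b' + (-1)^m d_m$ and $N$ and the faces act in the appropriate degree. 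First I would record these identities; together with the decomposition $b = b'+(-1)^m d_m$ they encapsulate the only nontrivial use of the paracyclic operator.

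Next I would apply the $b'$-identity to the outermost factor $\pi^\lambda b'$ in $\pi^\lambda b'\hat D bN$. Since $\hat D$ and $N$ are polynomials in $\tau$ in the degree where they act, they commute with each other and with $\tau$, so $N$ may be slid across $\hat D$; rewriting the resulting inner factor $(N-1)bN$ via $b = b'+(-1)^m d_m$ and $b'N = Nb$, the left-hand side splits, after sign bookkeeping, as $\pi^\lambda d\hat D dN$ — which is precisely the right-hand side — plus a ``correction'' term of the shape $\pm\,\pi^\lambda d\hat D\,b'\,(N-1)N$. Thus the lemma reduces to showing this correction term vanishes. I would dispatch it by iterating the same moves — $b = b'+(-1)^m d_m$, $b'N = Nb$, commutation of $\hat D$ and $N$ with $\tau$, $\pi^\lambda\tau = \pi^\lambda$, and finally $b^2 = (b')^2 = 0$ — peeling off the two ``extra'' faces and telescoping $(N-1)N$ until everything collapses; this is essentially the bookkeeping that, in the precyclic case, recovers Kassel's formula.

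The main obstacle is precisely this bookkeeping. Every operator — $\tau$, $N$, $\hat D$, $b$, $b'$, $d$ — lives on a specific $C_q$ with its own sign convention $\tau = (-1)^q t$, so signs must be tracked carefully through each shuffle; I would guard against errors by working with $t$ internally and converting to $\tau$ only at the end. The subtler point is that in the para-precyclic setting one only has $(1-\tau)N = N(1-\tau) = 1-T$, not the collapse $N^2 = (m+1)N$ available when $\tau^{m+1}=1$; consequently the correction term does not simply disappear but has to be forced into $\ran(1-\tau)$ — where $\pi^\lambda$ kills it — by deliberately manufacturing and cancelling factors of $1-\tau$, using $b(1-\tau) = (1-\tau)b'$. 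Once the correction term is seen to land in $\ran(1-\tau)$ after the left multiplications, the identity $\pi^\lambda b'\hat D bN = \pi^\lambda d\hat D dN$ follows.
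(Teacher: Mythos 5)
Your proposal is correct and runs on the same engine as the paper's proof: the face--conjugation identity (your $\pi^\lambda d\tau^k=(-1)^k\pi^\lambda d_{m-k}$ is exactly the paper's $\pi^\lambda d'N=\pi^\lambda b$ with $d'=b-b'$), together with $b'N=Nb$, the sliding rule $\pi^\lambda bP(\tau)=P(1)\pi^\lambda b$ coming from $b(1-\tau)=(1-\tau)b'$, and $b^2=0$; the paper merely organizes the computation as the split $b'=b-d'$, $b=b'+d'$ rather than substituting $(-1)^{m-1}d(N-1)$ for the outer $b'$. The correction term you defer does collapse with exactly your listed moves: $b'(N-1)N=Nb(N-1)$, so after commuting $N$ past $\hat{D}$ and using $\pi^\lambda dN=(-1)^{m-1}\pi^\lambda b$ (in degree $m-1$) it becomes $\pm\,\pi^\lambda b\hat{D}b\,(N-1)$, which vanishes because $\pi^\lambda b\hat{D}b=m^{-1}D_{m-1}(1)\,\pi^\lambda b^2=0$.
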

\begin{proof}
Set $d'= b-b'$, i.e., $d'=(-1)^m d$ on $C_m$. We then have
 \begin{align}
 \pi^\lambda b'\hat{D}bN & = \pi^\lambda b\hat{D}bN - \pi^\lambda d'\hat{D}bN \nonumber \\
 & =  \pi^\lambda b\hat{D}bN - \pi^\lambda d'\hat{D}b'N-\pi^\lambda d'\hat{D}d'N 
 \label{eq:S.pilb'DbN} \\
 &=  \pi^\lambda b\hat{D}bN - \pi^\lambda d'N\hat{D}b+\pi^\lambda d\hat{D}dN, \nonumber
\end{align}
where we have used the fact that $\hat{D}b'N=\hat{D}Nb=N\hat{D}b$. 

The equality $b(1-\tau)=(1-\tau)b'$ implies that $\pi^\lambda b\tau =\pi^\lambda b$. More generally, for any polynomial $P(X)\in k[X]$, we have $\pi^\lambda bP(\tau)=P(1)\pi^\lambda b$. By definition $\hat{D}=m^{-1}D_{m-1}(\tau)$ on $C_{m-1}$, where $D_m(X)$ is given by~(\ref{eq:CnCl.Dm}). Therefore, on $C_m$ we have 
\begin{equation}
 \pi^\lambda b\hat{D}b= m^{-1} \pi^\lambda bD_{m-1}(\tau)b= m^{-1}D_{m-1}(1) \pi^\lambda b^2=0.
 \label{eq:S.pilbDb} 
\end{equation}
In particular, we see that $ \pi^\lambda b\hat{D}bN=0$. 

The relations~(\ref{eq:paracyclic.td}) imply that, on $C_m$ and for $j=0,\ldots,m$,  we have
\begin{equation*}
 d'\tau^j=(-1)^{m+mj}dt^j=(-1)^{m+mj}t^jd_{m-j}=(-1)^{m-j}\tau^j d_{m-j}. 
\end{equation*}
Thus, 
\begin{equation}
 d'N =\sum_{0\leq j \leq m} d'\tau^j= \sum_{0\leq j \leq m}(-1)^{m-j} \tau^j d_{m-j}= \sum_{0\leq j \leq m}(-1)^{j} \tau^{m-j} d_{j}.
 \label{eq:S.d'N}
\end{equation}
Therefore, we have $\pi^\lambda d'N = \sum_{j=0}^m(-1)^{j} \pi^\lambda \tau^{m-j} d_{j}= \sum_{j=0}^m(-1)^{j} \pi^\lambda d_{j}=\pi^\lambda b$. 
Using~(\ref{eq:S.pilbDb}) we then deduce that $\pi^\lambda d'\hat{D}Nb =  \pi^\lambda d'N\hat{D}b = \pi^\lambda b \hat{D}b=0$. Combining this with~(\ref{eq:S.pilb'DbN}) and the equality $ \pi^\lambda b\hat{D}bN=0$ gives $\pi^\lambda b'\hat{D}bN=\pi^\lambda d\hat{D}dN$. The proof is complete.  
\end{proof} 

\begin{remark}\label{rmk:S.S0-cS0}
 By using~(\ref{eq:S.pilbDb}) and the equalities $\check{D}=-\hat{D}+N$ and $(1-\tau)\check{D}=\hat{N}-1$, it can be shown that, when $C$ is precyclic, $\pi^\lambda \check{S}_0= \pi^\lambda S_0$, where $\check{S}_0$ is given by~(\ref{eq:S.hS0}). 
\end{remark}

We are now in a position to prove the following simple formulas for the operator $S$. 

\begin{proposition}\label{prop:S.formulas}
Let $C$ be a para-precyclic $k$-module, and assume that $k\supset \Q$. Then, for all $x\in C_m$, $m\geq 2$, we have 
 \begin{align}
 S(x^\lambda ) & = \frac{-1}{(m-1)}\left(d\hat{D}d\hat{N}x\right)^\lambda \nonumber \\
  &= \frac{1}{(m-1)m} \sum_{0\leq i<j\leq m} (-1)^{i+j} (d_id_jx)^\lambda. 
  \label{eq:S.S-periodicity}
\end{align}
In particular, we recover Connes' periodicity operator when $C$ is a precyclic $k$-module . 
\end{proposition}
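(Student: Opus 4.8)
The plan is to combine three ingredients already at hand: the formula~(\ref{eq:CnCl.nunn}) for $\nu^\nnatural_{0}$, the identity $S_0 = \xi\hat{N} = -\hat{b}'\hat{D}b\hat{N}$ from~(\ref{eq:S.S0-tau}), and the projection-commutation lemma just proved (Lemma~\ref{lem:S.S-d}), namely $\pi^\lambda b'\hat{D}bN = \pi^\lambda d\hat{D}dN$. First I would unwind the definition: on $C_m$ with $m\geq 2$, the hatted maps give $\hat{b}' = \frac{1}{m}b$ and $\hat{N}$ applied to an element of $C_m$ divides by $m+1$; moreover $\hat{D}$ on $C_{m-1}$ equals $\frac{1}{m}D_{m-1}(\tau)$. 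Writing $S_0 x = -\hat{b}'\hat{D}bN\hat{x}$ and pushing through the scalar factors one gets $S(x^\lambda) = \pi^\lambda S_0 x = \frac{-1}{(m-1)}\left(\text{something}\right)^\lambda$; the arithmetic of which factorials $(m-1)$, $m$, $(m+1)$ survive is exactly where one must be careful, since $b:C_m\to C_{m-1}$ and $N$ acts first so the degree bookkeeping matters. Applying Lemma~\ref{lem:S.S-d} then replaces $b'\hat{D}bN$ by $d\hat{D}dN$ modulo $\ran(1-\tau)$, yielding the first displayed equality $S(x^\lambda) = \frac{-1}{m-1}\left(d\hat{D}d\hat{N}x\right)^\lambda$.

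Second, I would expand $d\hat{D}d\hat{N}x$ into face operators. Here I use two things established in the proof of Lemma~\ref{lem:S.S-d}: the relation $d'\tau^j = (-1)^{m-j}\tau^j d_{m-j}$ (equivalently $dN$ expressed via the $d_j$), and the fact that modulo $\ran(1-\tau)$ powers of $\tau$ can be dropped in front. The operator $\hat{N}$ contributes the averaging $\frac{1}{m+1}N$, and $\hat{D} = \frac{1}{m}D_{m-1}(\tau)$ with $D_{m-1}(X) = \sum_{0\leq k\leq m-1}(m-1-k)X^k$; pushing $N$ through and using $\pi^\lambda b P(\tau) = P(1)\pi^\lambda b$ collapses the inner $D$-sum to a single combinatorial coefficient, after which the remaining double sum over the two face maps reorganizes into $\sum_{0\leq i<j\leq m}(-1)^{i+j}(d_id_jx)^\lambda$ with the overall normalization $\frac{1}{(m-1)m}$. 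Matching this against Connes' formula~(\ref{eq:S.Connes-Periodicity}) then gives the second equality and hence the identification with the periodicity operator in the precyclic case.

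The main obstacle I expect is the combinatorial identity in the second step: collapsing $d\,D_{m-1}(\tau)\,d\,N$ modulo $\ran(1-\tau)$ to the symmetric sum $\sum_{i<j}(-1)^{i+j}d_id_j$ with the correct coefficient $\frac{1}{(m-1)m}$. One has to use the simplicial relations~(\ref{eq:simplicial.dd}) to re-index products $d_k d_\ell$, track the sign $(-1)^{i+j}$ carefully through the substitutions $d'\tau^j\mapsto (-1)^{m-j}\tau^j d_{m-j}$ applied twice, and verify that the coefficient $(m-1-k)$ coming from $D_{m-1}$ telescopes against the number of ways a given pair $(i,j)$ arises. The cleanest route is probably to first reduce, via $\pi^\lambda b\hat D b = 0$ (Eq.~(\ref{eq:S.pilbDb})) and $\pi^\lambda d'N=\pi^\lambda b$, to an expression involving only $\pi^\lambda d\,D_{m-1}(\tau)\,d\,N$ with no $\tau$'s left on the outside, and then compare coefficient-by-coefficient with the known expansion of Connes' operator rather than deriving the double sum from scratch. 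Everything else — the scalar factorial bookkeeping in the first step and the invocation of Lemma~\ref{lem:S.S-d} — is routine once the degrees are pinned down.
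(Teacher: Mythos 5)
Your first step is exactly the paper's: write $S(x^\lambda)=\pi^\lambda S_0x$ with $S_0=\xi\hat N=-\hat b'\hat D b\hat N$ from~(\ref{eq:S.S0-tau}), track the scalar $(m-1)^{-1}$ coming from $\hat b'$ acting in degree $m-1$, and invoke Lemma~\ref{lem:S.S-d} to replace $b'\hat D bN$ by $d\hat D dN$ modulo $\ran(1-\tau)$; that part of your proposal is correct and identical in substance to the paper's argument for the first displayed equality.

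The gap is in the second equality, which is the actual content of the proposition: you never establish the congruence $d\,D_{m-1}(\tau)\,d\,N\equiv -(m+1)\sum_{0\leq i<j\leq m}(-1)^{i+j}d_id_j$ modulo $\ran(1-\tau)$ (the paper's~(\ref{eq:S.dDdN-didj2})). The one concrete mechanism you propose for collapsing the inner sum, namely $\pi^\lambda bP(\tau)=P(1)\pi^\lambda b$, does not apply here: that identity rests on $b(1-\tau)=(1-\tau)b'$ and so needs the full Hochschild boundary $b$ on the left, whereas in $\pi^\lambda d\hat D d\hat N$ the leftmost operator is the single face $d$, and $\pi^\lambda d\,\tau^k\neq\pi^\lambda d$ in general; so the $D$-sum does not collapse to a single coefficient this way. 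What actually works (and is what the paper does) is to push the outer face map through the $\tau$-powers using the paracyclic relations~(\ref{eq:paracyclic.td}), so that all $\tau$- and $T$-powers end up on the far left where they can be dropped modulo $\ran(1-\tau)$ (using $1-T=(1-\tau)N$), then split the double sum into the ranges $i<j$ and $j\leq i$, re-index, and use $d_id_j=d_jd_{i+1}$ for $i\geq j$; the coefficient $m+1$ only appears after adding the two contributions (weights of the form $m+j-i$ and $i-j+1$), not from any single telescoping. Your fallback suggestion to ``compare coefficient-by-coefficient with the known expansion of Connes' operator'' is circular: Connes' operator \emph{is} the double sum~(\ref{eq:S.Connes-Periodicity}), so that comparison is precisely the combinatorial identity you are deferring, and in the para-precyclic case there is no independent expansion to compare against. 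As written, the proposal proves the first equality but leaves the second — and hence the identification with the periodicity operator — unproved.
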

\begin{proof}
Let $x\in C_m$, $m\geq 2$. By using~(\ref{eq:S.S0-tau}) and Lemma~\ref{lem:S.S-d} we obtain
\begin{equation*}
S(x^\lambda) = \pi^\lambda S_0x= \pi^\lambda \xi\hat{N}x=-(m-1)^{-1} \pi^\lambda b'\hat{D} b\hat{N} x = -(m-1)^{-1}
 =(d\hat{D}d\hat{N}\hat{x})^\lambda.
\end{equation*}
This gives the first equality in~(\ref{eq:S.S-periodicity}). Recall that $\hat{D}=m^{-1} D_{m-1}(\tau)$, where $D_{m-1}(X)$ is given by~(\ref{eq:CnCl.Dm}). Thus, 
\begin{equation*}
 S(x^\lambda) = \frac{-1}{(m+1)m(m-1)} \left(d D_{m-1}(\tau) dNx\right)^\lambda. 
 \end{equation*}

It is convenient to introduce the following notation. Given $k$-linear maps $f_1:C_m\rightarrow C_{m'}$ and $f_2:C_m\rightarrow C_{m'}$ we shall write $f_1\equiv f_2$ when $f_1-f_2=(1-\tau)g$ for some $k$-linear map $g:C_m\rightarrow C_{m'}$. In particular, this implies that $\pi^\lambda f_1=\pi^\lambda f_2$. Using this notation, we see that in order to prove the 2nd equality in~(\ref{eq:S.S-periodicity}) it is enough to show that on $C_m$ we have
\begin{equation}
 d D_{m-1}(\tau) dN \equiv -(m+1)  \sum_{0\leq i<j\leq m} (-1)^{i+j} d_id_j. 
 \label{eq:S.dDdN-didj2}
\end{equation}
 
It follows from~(\ref{eq:CnCl.Dm}) and~(\ref{eq:S.d'N}) that on $C_m$ the operator $D_{m-1}(\tau)dN$ is equal to 
\begin{equation*}
 \sum_{\substack{0\leq i\leq m-1\\ 0\leq j \leq m}} (-1)^{m+j}(m-1-i) \tau^{i+m-j} d_j =  
 \sum_{\substack{0\leq i\leq m-1\\ 0\leq j \leq m}} (-1)^{m+j}(m-i-1)T \tau^{i-j} d_j.
\end{equation*}
Thus, 
\begin{equation}
 dD_{m-1}(\tau)dN =  -\sum_{\substack{0\leq i\leq m-1\\ 0\leq j \leq m}} (-1)^{m-1+j}(m-i-1)Td_{m-1} \tau^{i-j} d_j \equiv - \Delta^{(1)}- \Delta^{(2)},
 \label{eq:S.dDdN-Delta12}
\end{equation}
 where we have set
 \begin{gather*}
  \Delta^{(1)} = \sum_{0\leq i < j\leq m} (-1)^{m-1+j}(m-i-1)d_{m-1} \tau^{i-j} d_j,\\
   \Delta^{(2)} = \sum_{0\leq j \leq i\leq m-1} (-1)^{m-1+j}(m-i-1)d_{m-1} \tau^{i-j} d_j. \\
\end{gather*}

If $0\leq i<j\leq m$, then on $C_{m-1}$ we have 
\begin{equation*}
 d_{m-1}\tau^{i-j}=d_{m-1}T^{-1} \tau^{m-(j-i)}=(-1)^{m-(i-j)}T^{-1} \tau^{m-(j-i)}d_{j-i-1} \equiv (-1)^{m-i+j}d_{j-i-1}. 
\end{equation*}
Combining this with the change of index $i\rightarrow j-i-1$ gives 
\begin{equation}
 \Delta^{(1)}  \equiv \sum_{0\leq i < j\leq m} (-1)^{i-1}(m-i-1)d_{j-i-1} d_j 
  \equiv \sum_{0\leq i < j\leq m-1} (-1)^{i+j}(m+j-i)d_{i} d_j. 
  \label{eq:S.Delta1}
\end{equation}

If $0\leq j \leq i\leq m-1$, then $d_{m-1}\tau^{i-j}=(-1)^{i-j} \tau^{i-j} d_{m-i+j-1} \equiv (-1)^{i-j} d_{m-i+j-1}$. Thus, 
\begin{align*}
 \Delta^{(2)} & \equiv \sum_{0\leq j \leq i\leq m-1} (-1)^{m+j-1}(m-i-1)d_{m-i+j-1}d_j  \\ 
 & \equiv \sum_{0\leq j \leq i\leq m-1} (-1)^{i+j} (i-j) d_i d_j,
\end{align*}
 where we have used the change of index $i\rightarrow m-i+j-1$ to get the 2nd line. As $d_id_j=d_jd_{i+1}$ for $i\geq j$, we further obtain 
 \begin{equation*}
 \Delta^{(2)}\equiv \sum_{0\leq j \leq i\leq m-1} (-1)^{i+j} (i-j) d_j d_{i+1} \equiv \sum_{0\leq j < i\leq m-1} (-1)^{i+j-1} (i-j-1) d_j d_{i}.
\end{equation*}
 Upon interchanging the indices $i$ and $j$ we then get
\begin{equation*}
 \Delta^{(2)}\equiv \sum_{0\leq j < i\leq m-1} (-1)^{i+j-1} (j-i-1) d_i d_{j}\equiv \sum_{0\leq j < i\leq m-1} (-1)^{i+j} (i-j+1) d_i d_{j}. 
\end{equation*}
 Combining this with~(\ref{eq:S.dDdN-Delta12}) and~(\ref{eq:S.Delta1}) gives~(\ref{eq:S.dDdN-didj2}). As mentioned above this proves the 2nd equality in~(\ref{eq:S.S-periodicity}). The proof is complete. 
\end{proof}

\subsection{Applications to periodic cyclic homology} 
In Connes gave 
There is a well known formula expressing the periodicity operator $S$ in terms of the $(b, B)$-operators in cyclic cohomology due to Connes~\cite[Lemma~II.34]{Co:IHES85}. As an application of Proposition~\ref{prop:S.nun} we shall  obtain a dual version of Connes' formula for arbitrary $H$-unital para-precyclic $k$-modules.

Let $C=(C_\bt, d,s,t)$ be an $H$-unital para-precyclic $k$-module. We have
\begin{equation*}
 B(1-\tau)=(1-\tau)s'N(1-\tau) =(1-\tau)s(1-T)=(1-T)(1-\tau)s'. 
\end{equation*}
Therefore, the operator $B$ maps $\ran(1-\tau)$ to $R^T_\bt$, and so it descends to a unique $k$-linear map,
\begin{equation*}
 B:C_\bt^\lambda \longrightarrow C_{T,\bt}, \qquad B \pi^\lambda =\pi_T B. 
\end{equation*}
Thus, given any $x\in C_\bt$, we have $B(x^\lambda)=\overline{Bx}=B\wb{x}$ (where, as above, $\wb{\cdot}$ denotes the class in $C_{T,\bt}$). We then have the following dual version of Connes' formula. 

\begin{proposition}\label{prop:S.Connes-formula}
 Suppose that $k\supset \Q$. Let $x\in C_\bt$ be such that $bx\in \ran (1-\tau)$. Then, in $C_{T,\bt}$ we have
\begin{equation*}
 B\circ S(x^\lambda)  = -b\wb{x} \qquad \bmod \ran \big[b(1-\tau)\big]. 
\end{equation*}
 \end{proposition}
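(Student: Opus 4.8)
The plan is to reduce the identity to a chain-level computation in $C^\natural_\bt$ by using the formula $S(x^\lambda)=\pi^\natural\big(u^{-1}\nu_0^\natural(x)\big)$ from Proposition~\ref{prop:S.nun} together with the explicit expression for $\nu_0^\natural$ in~(\ref{eq:CnCl.nun}). First I would apply the operator $B:C^\lambda_\bt\to C_{T,\bt}$ to both sides. Since $B\pi^\natural$ factors through the degree-zero component and $B$ commutes appropriately with the projection, $B\circ S(x^\lambda)$ will be expressible as $\pi_T$ applied to $B$ hitting the $u^0$-part of $u^{-1}\nu_0^\natural(x)$; reading off~(\ref{eq:CnCl.nun}) one sees that the $u^0$-part of $u^{-1}\nu_0^\natural(x)$ is (up to normalizations) $-\big[1-(1-\tau)s'\hat D b\big]\hat b'\hat D b\hat N x$, modulo higher $u$-powers that are killed by $\pi^\natural_0$. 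Thus $B\circ S(x^\lambda)$ becomes, in $C_{T,\bt}$, the class of $-B\hat b'\hat D b\hat N x$ plus a term coming from $B(1-\tau)s'\hat D b(\cdots)$.

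The second step is to simplify these two contributions. For the stray term, note $B(1-\tau)=(1-T)(1-\tau)s'$, so once pushed into $C_{T,\bt}=C_\bt/\ran(1-T)$ it vanishes identically; this disposes of the $(1-\tau)s'\hat D b$ piece entirely. For the main term, I would use the hypothesis $bx\in\ran(1-\tau)$, i.e.\ $\pi^\lambda bx=0$, equivalently $b\hat x\equiv 0$ in the sense that $\hat b' x$ becomes a coboundary after projecting; more precisely the hypothesis lets me replace $b\hat N x$ by something in $\ran(1-\tau)$ up to controlled error, exploiting $b(1-\tau)=(1-\tau)b'$ and $N(1-\tau)=1-T$. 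The goal is to collapse $-B\hat b'\hat D b\hat N x$ down to $-b\wb x$ modulo $\ran[b(1-\tau)]$. Here I expect to invoke $b\beta+\beta b=1$ on $R^T_\bt$ (Lemma~\ref{lem:Parachain.contractibility}) or rather its analogue for the extra-degeneracy relation $b's'+s'b'=1$, together with the identity $B=(1-\tau)s'N$, to rewrite $B\hat b'\hat D b$ in a form where the Hochschild differential $b$ can be factored out on the left.

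The key algebraic engine will be the relations $(1-\tau)N=1-T$, $Nb=b'N$, $b(1-\tau)=(1-\tau)b'$, and $b's'+s'b'=1$ from~(\ref{eq:parachain-paracyclic.bNtau}) and~(\ref{eq:parachain-paracyclic.contracting-homotopy}), combined with the normalization bookkeeping $\hat x=(m+1)^{-1}x$, $\hat b'x=m^{-1}bx$, which forces one to track the degree-dependent constants carefully (this is the same style of computation as in the proof of Proposition~\ref{prop:S.formulas}). The main obstacle I anticipate is precisely this bookkeeping: showing that after all the substitutions the coefficient works out to exactly $-1$ in front of $b\wb x$ and that every residual term genuinely lies in $\ran[b(1-\tau)]$ rather than merely in $\ran(1-\tau)$ or $\ran(1-T)$ — the distinction between these three submodules is what makes the statement delicate, and getting the error terms into the correct one will require using the hypothesis $bx\in\ran(1-\tau)$ at just the right moment. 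Once the constant and the error term are pinned down, the equality $B\circ S(x^\lambda)=-b\wb x$ modulo $\ran[b(1-\tau)]$ in $C_{T,\bt}$ follows, and since this recovers Connes' formula when $T=1$, the computation is consistent with the known case.
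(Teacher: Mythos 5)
Your reduction is set up correctly and matches the paper's starting point: you use $S(x^\lambda)=\pi^\natural\big(u^{-1}\nu_0^\natural(x)\big)$ from Proposition~\ref{prop:S.nun}, so that $B\circ S(x^\lambda)=\pi_T\big[B\,\pi_0^\natural\big(u^{-1}\nu_0^\natural(x)\big)\big]$, and your disposal of the stray piece via $B(1-\tau)=(1-T)(1-\tau)s'$ is fine. The problem is that the heart of the proposition — showing $-\pi_T\big[B\hat{b}'\hat{D}b\hat{N}x\big]=-b\wb{x}$ modulo $\ran[b(1-\tau)]$ — is exactly what you leave as ``anticipated bookkeeping,'' and the route you sketch for it is off target. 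In particular, the hypothesis $bx\in\ran(1-\tau)$ is \emph{not} used to push $b\hat{N}x$ into $\ran(1-\tau)$: that would be useless (and harmful), since $\ran(1-\tau)$ does not vanish in $C_{T,\bt}=C_\bt/\ran(1-T)$ and doing so would discard the very term $-b\wb{x}$ you must keep. The hypothesis is needed at a different spot, namely to kill the contribution of type $\hat{N}bx$: writing $bx=(1-\tau)y$ one gets $\hat{N}bx=(1-T)\hat{y}\in R^T_\bt$, which dies under $\pi_T$. Without locating this correctly, the ``coefficient and error-term'' analysis you defer cannot be completed as described.

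The efficient way to close the gap — and the paper's actual argument — is not to extract $u^0$-components of~(\ref{eq:CnCl.nun}) and manipulate $B\hat{b}'\hat{D}b$ with $b's'+s'b'=1$ from scratch, but to invoke the already-proved almost-chain-map identity~(\ref{eq:Cn-Cl.nun0-chain-map}): applying $\pi_T\pi_0^\natural$ to $Bu^{-1}\nu_0^\natural(x)\equiv -b\nu_0^\natural(x)+\nu_0^\natural(bx) \bmod R^T_\bt$ gives
\begin{equation*}
 B\circ S(x^\lambda)= -b\,\pi_T\big[\pi_0^\natural\nu_0^\natural(x)\big]+\pi_T\big[\pi_0^\natural\nu_0^\natural(bx)\big],
\end{equation*}
after which~(\ref{eq:Cn-Cl.pio-nun0}) (i.e.\ $\pi_0^\natural\nu_0^\natural(x)-x\in\ran(1-\tau)$) turns the first term into $-b\wb{x}$ modulo $\ran[b(1-\tau)]$, and the hypothesis, via $\hat{N}(1-\tau)y=(1-T)\hat{y}$ together with $\pi_0^\natural\nu_0^\natural=[1-(1-\tau)s'\hat{D}b]\hat{N}$, makes the second term vanish. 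If you insist on your component-wise route, what you would end up doing is re-deriving the degree-zero component of Lemma~\ref{lem:Cn-Cl.nu0}; since that lemma is already available, you should cite it rather than treat its content as open bookkeeping. As written, the proposal is an outline with the decisive identity unproved, so it does not yet constitute a proof.
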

\begin{proof}
 Let $x\in C_\bt$ be such that $bx\in \ran (1-\tau)$. Thanks to~(\ref{eq:S.nun-S}) we have
  \begin{equation*}
 B\circ S(x^\lambda)= B\pi^\lambda \pi^\natural_0(u^{-1} \nu^\natural_{0}(x)) =  \pi_T B\pi_0^\natural (u^{-1} \nu^\natural_{0}(x))= \pi_T \pi_0^\natural (Bu^{-1} \nu^\natural_{0}(x)). 
\end{equation*}
 By~(\ref{eq:Cn-Cl.nun0-chain-map}) we have $Bu^{-1}\nu^\natural_{0}(x) = -b\nu^\natural_{0}(x)+ \nu^\natural_{0}(bx) \bmod R^T_\bt$. Thus, 
\begin{equation}
 B\circ S(x^\lambda)=- \pi_T\pi_0^\natural b\nu^\natural_{0}(x) + \pi_T\pi_0^\natural  \nu^\natural_{0}(bx) 
 = - b\pi_T\big[\pi_0^\natural \nu^\natural_{0}(x)\big] + \pi_T\big[\pi_0^\natural \nu^\natural_{0}(bx)\big].   
 \label{eq:Cn-Cl.BS0-b}
\end{equation}
 As~(\ref{eq:Cn-Cl.pio-nun0}) implies that $\pi_0^\natural \nu^\natural_{0}(x)-x \in \ran(1-\tau)$, we see that $ b\pi_T[\pi_0^\natural \nu^\natural_{0}(x)] =-b\wb{x} \bmod \ran [b(1-\tau)]$. 
 By~(\ref{eq:Cn-Cl.pio-nun0-hN}) we also have  $\pi_0^\natural \nu^\natural_{0}= [1-(1-\tau)s'\hat{D} b]\hat{N}$. By assumption $bx=(1-\tau)y$ for some $y\in C_{\bt-1}$. Thus,   \begin{equation*}
\pi_T  \big[\pi_0^\natural \nu^\natural_{0} (bx)\big] =  \pi_T \big\{[1-(1-\tau)s'\hat{D} b]\hat{N}(1-\tau)y\big\}=[1-(1-\tau)s'\hat{D} b]\pi_T\big[(1-T)\hat{y}\big]=0. 
\end{equation*}
 Combining this with~(\ref{eq:Cn-Cl.BS0-b}) gives $ B \circ S(x^\lambda) = -b\wb{x} \bmod \ran b(1-\tau)$. The proof is complete. 
 \end{proof}

We still assume that $C$ is an $H$-unital para-precyclic $k$-module. We then can form its periodic para-complex $C^\sharp=(C^\sharp_\bt, b+B, T)$, where $C^\sharp_\bt$ is defined as in~(\ref{eq:Parachain.periodic}).  When $C$ is precyclic we actually get a chain complex whose homology is denoted by $\HP_\bt(C)$. In particular, $C_T^\sharp$ is a chain complex, and so we can define the periodic cyclic homology $\HP_\bt(C_T)$. We have a natural chain map  $\wb{\pi}^\sharp: C^\sharp_{T,i} \rightarrow \prod_{q\geq 0} C^\lambda_{2q+i}$ given by
\begin{equation}
\wb{\pi}^\sharp\big[ (\wb{x}_{2q+i})_{q\geq 0}\big] =\big(x^\lambda_{2q+i}\big)_{q\geq 0}, \qquad x_{2q+i}\in C_{2q+i}. 
 \label{eq:periodic-homology.pisharp} 
\end{equation}
 
 Suppose that $k\supset \Q$. As $(C^\lambda, S)$ is an $S$-module, we may also form its periodic chain complex $C^{\lambda,\sharp}:=(C^{\lambda,\sharp}_\bt, b)$, where $C^{\lambda,\sharp}_\bt$ is defined as in~(\ref{eq:Para-S-Mod.Csharp2}). We denote by $H^{\lambda,\sharp}_\bt(C)$ its homology. We have an inclusion of chain complexes $C^{\lambda,\sharp}_\bt \subset \prod_{q\geq 0} C_{2q+\bt}^\lambda$. At the homology level this gives rise to a canonical ($\Z_2$-graded) $k$-linear map $H^{\lambda,\sharp}_\bt\rightarrow \prod_{q\geq 0} H_{2q+\bt}^\lambda(C)$, which fits into the exact sequence~(\ref{eq:Para-S-Mod.exact-sequence-periodicity}). In particular its range is precisely the inverse limit $\varprojlim_S H_{2q+\bt}^\lambda(C)$. 

As $k\supset \Q$ we have the chain map $\wb{\nu}^\natural:C^\lambda_\bt \rightarrow C^\natural_{T,\bt}$ given by~(\ref{eq:Cn-Cl.wbnun}). By Proposition~\ref{prop:CnCl.CTn} this is a right-inverse and chain homotopy left-inverse of the canonical chain map $\wb{\pi}^\natural:C_{T,\bt}^\natural \rightarrow C^\lambda_\bt$. By Proposition~\ref{prop:S.nun} this is an $S$-map, and so it gives rise to a chain map $\wb{\nu}^\sharp: C^{\lambda,\sharp}_\bt \rightarrow C^\sharp_{T,\bt}$. This uses the identification $\varprojlim_{u^{-1}} C^\natural_{T,\bt}\simeq  C^\sharp_{T,\bt}$ given by $(\wb{x}_{2q+i})_{q\geq 0} \rightarrow (\pi_0^\natural (\wb{x}_{2q+i}))_{q\geq 0}$. As~(\ref{eq:CnCl.nun}) implies that $\pi_0^\natural \nu_0^\natural= [ 1-(1-\tau)s'\hat{D}b]\hat{N}$, we have 
\begin{equation}
  \wb{\nu}^\sharp \big[(x_{2q+i}^\lambda)_{q\geq 0}\big] = \left(  [ 1-(1-\tau)s'\hat{D}b]\hat{N}\wb{x}_{2q+i}\right)_{q\geq 0} , \qquad (x_{2q+i}^\lambda)_{q\geq 0} \in 
  C^{\lambda,\sharp}_i. 
  \label{eq:periodic-homology.nusharp}
\end{equation}
As $\wb{\pi}^\sharp \wb{\nu}^\sharp =1$ on $C^{\lambda,\bt}_\bt$was  we actually get the following commutative diagram of chain maps, 
\begin{equation}
 \begin{tikzcd}[column sep=tiny]
   & C^\sharp_{T,\bt} \arrow[dr, "\wb{\pi}^\sharp"] & \\
 C^{\lambda,\sharp}_\bt \arrow[ur, "\wb{\nu}^\sharp"]  \arrow[rr, hook]   &  & {\displaystyle \prod_{q\geq 0} C^\lambda_{2q+\bt}}. 
\end{tikzcd}
\label{eq:periodic-homology.chain-diagram}
\end{equation}

\begin{proposition}\label{prop:periodic-homology.quasi-isom}
 Suppose that $k\supset \Q$. 
 \begin{enumerate}
 \item The chain map $\wb{\nu}^\sharp: C^{\lambda,\sharp}_\bt \rightarrow C^\sharp_{T,\bt}$ given by~(\ref{eq:periodic-homology.nusharp}) is a quasi-isomorphism. 
 
 \item The diagram~(\ref{eq:periodic-homology.chain-diagram}) gives rise to the following commutative diagram, 
 \begin{equation}
 \begin{tikzcd}[column sep=tiny]
   & \HP_\bt(C_{T}) \arrow[dr, "\wb{\pi}^\sharp"] & \\
 H^{\lambda,\sharp}_\bt(C) \arrow[ur, "\wb{\nu}^\sharp"]  \arrow[rr, two heads]   & & \varprojlim_S H^{\lambda}_{2q+\bt} (C). 
\end{tikzcd}
\label{eq:periodic-homology.homology-diagram}
\end{equation}

\item  The downward arrow $\wb{\pi}^\sharp: \HP_\bt(C_{T}) \rightarrow \varprojlim_S H^{\lambda}_{2q+\bt} (C)$ is onto. 
\end{enumerate}
 \end{proposition}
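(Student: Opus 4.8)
The plan is to derive all three assertions of Proposition~\ref{prop:periodic-homology.quasi-isom} from the chain-level results already in hand, principally Proposition~\ref{prop:CnCl.CTn}, Proposition~\ref{prop:S.nun}, and Proposition~\ref{prop:Para-S-Mod.quasi-isomorphism-periodic}. The key input is that $\wb{\nu}^\natural\colon C^\lambda_\bt \rightarrow C^\natural_{T,\bt}$ is at once a right-inverse and a chain-homotopy left-inverse of $\wb{\pi}^\natural$ (Proposition~\ref{prop:CnCl.CTn}) \emph{and} an $S$-map (Proposition~\ref{prop:S.nun}, part (2)). Being an $S$-map, it induces a chain map $\wb{\nu}^\sharp\colon C^{\lambda,\sharp}_\bt \rightarrow C^\sharp_{T,\bt}$ between periodic complexes, and the explicit formula~(\ref{eq:periodic-homology.nusharp}) is obtained by unwinding the identification $\varprojlim_{u^{-1}}C^\natural_{T,\bt}\simeq C^\sharp_{T,\bt}$ together with $\pi_0^\natural \nu_0^\natural = [1-(1-\tau)s'\hat{D}b]\hat{N}$ from~(\ref{eq:CnCl.nun}); this is the routine part.

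For part (1), I would argue that $\wb{\nu}^\sharp$ is a quasi-isomorphism by promoting the $S$-homotopy deformation retract of part (1) through Proposition~\ref{prop:Para-S-Mod.periodic-homotopy}. Indeed, $\wb{\pi}^\natural$ is an $S$-map up to homotopy (Proposition~\ref{prop:S.nun}, part (3)), with homotopy $\wb{\psi}^\natural$, so one checks that $\wb{\pi}^\natural$ descends to a chain map $\wb{\pi}^\sharp$ on periodic complexes satisfying $\wb{\pi}^\sharp\wb{\nu}^\sharp=1$; and since the $S$-homotopy $\wb{\varphi}^\natural$ witnessing $\wb{\nu}^\natural\wb{\pi}^\natural=1+(b+Bu^{-1})\wb{\varphi}^\natural+\wb{\varphi}^\natural(b+Bu^{-1})$ is $T$-compatible, Proposition~\ref{prop:Para-S-Mod.periodic-homotopy} yields a homotopy $\wb{\varphi}^\sharp$ with $\wb{\nu}^\sharp\wb{\pi}^\sharp = 1 + (b+B)\wb{\varphi}^\sharp + \wb{\varphi}^\sharp(b+B)$. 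Thus $\wb{\nu}^\sharp$ and $\wb{\pi}^\sharp$ are mutually inverse homotopy equivalences between $C^{\lambda,\sharp}_\bt$ and $C^\sharp_{T,\bt}$, hence in particular $\wb{\nu}^\sharp$ is a quasi-isomorphism.

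For parts (2) and (3), I would pass to homology in diagram~(\ref{eq:periodic-homology.chain-diagram}). The left vertical is the chain-map equivalence just established, so on homology the northwest arrow becomes $\wb{\nu}^\sharp\colon H^{\lambda,\sharp}_\bt(C)\xrightarrow{\ \sim\ }\HP_\bt(C_T)$, an isomorphism. The bottom inclusion $C^{\lambda,\sharp}_\bt \hookrightarrow \prod_{q\geq 0}C^\lambda_{2q+\bt}$ induces, via the exact sequence~(\ref{eq:Para-S-Mod.exact-sequence-periodicity}) for the $S$-module $(C^\lambda,S)$, the canonical map $H^{\lambda,\sharp}_\bt(C)\rightarrow \prod_{q\geq 0}H^\lambda_{2q+\bt}(C)$ whose image is exactly $\varprojlim_S H^\lambda_{2q+\bt}(C)$; this gives the surjective bottom arrow in~(\ref{eq:periodic-homology.homology-diagram}). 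Commutativity of~(\ref{eq:periodic-homology.homology-diagram}) is inherited from commutativity of~(\ref{eq:periodic-homology.chain-diagram}). Finally, $\wb{\pi}^\sharp\colon \HP_\bt(C_T)\rightarrow \varprojlim_S H^\lambda_{2q+\bt}(C)$ is onto because it equals the surjection $H^{\lambda,\sharp}_\bt(C)\twoheadrightarrow \varprojlim_S H^\lambda_{2q+\bt}(C)$ precomposed with the isomorphism $(\wb{\nu}^\sharp)^{-1}$, by the commuting triangle. The main obstacle I anticipate is bookkeeping: carefully verifying that the various $T$-compatibility and $S$-map properties survive the passage $\varprojlim_{u^{-1}}$ and that~(\ref{eq:periodic-homology.nusharp}) is literally the induced map, so that diagram~(\ref{eq:periodic-homology.chain-diagram}) commutes on the nose; once that is pinned down, the homology-level statements are formal.
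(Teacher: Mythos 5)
There is a genuine gap, and it sits exactly at the point the proposition is designed to handle. Your argument for part (1) treats $\wb{\pi}^\natural$ and the homotopy $\wb{\varphi}^\natural$ as if they were compatible with the $S$-operators, but by Proposition~\ref{prop:S.nun} the map $\wb{\pi}^\natural$ is an $S$-map only \emph{up to homotopy}: $S\wb{\pi}^\natural-\wb{\pi}^\natural u^{-1}=b\wb{\psi}^\natural+\wb{\psi}^\natural(b+Bu^{-1})$. A map that is $S$-compatible only up to homotopy does not descend to the inverse-limit periodic complex; this is precisely why in~(\ref{eq:periodic-homology.pisharp}) the map $\wb{\pi}^\sharp$ is defined with target the full product $\prod_{q\geq 0}C^\lambda_{2q+\bt}$ rather than $C^{\lambda,\sharp}_\bt$. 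Consequently the composite $\wb{\nu}^\sharp\wb{\pi}^\sharp$ is not even defined as an endomorphism of $C^\sharp_{T,\bt}$, and Proposition~\ref{prop:Para-S-Mod.periodic-homotopy} does not apply to the pair $(\wb{\nu}^\natural,\wb{\pi}^\natural)$, since it requires \emph{both} maps to be $S$-maps (and the homotopies to pass to the limit, which $\wb{\varphi}^\natural$ is not shown to do). So your claimed homotopy equivalence between $C^{\lambda,\sharp}_\bt$ and $C^\sharp_{T,\bt}$ is unjustified. Part (1) itself is easy to repair by the route you list but do not use: $\wb{\nu}^\natural$ \emph{is} an $S$-map (Proposition~\ref{prop:S.nun}) and a quasi-isomorphism (being a chain homotopy equivalence by Proposition~\ref{prop:CnCl.CTn}), so Proposition~\ref{prop:Para-S-Mod.quasi-isomorphism-periodic} — i.e.\ the $\varprojlim^1$ sequence~(\ref{eq:Para-S-Mod.exact-sequence-periodicity}) and the 5-lemma — gives directly that $\wb{\nu}^\sharp$ is a quasi-isomorphism; no homotopy inverse at the periodic level is needed or available.

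The same oversight leaves a hole in parts (2) and (3): before invoking the commuting triangle you must show that the downward arrow is well defined, i.e.\ that on homology $\wb{\pi}^\sharp$ sends $\HP_\bt(C_T)$ into $\varprojlim_S H^\lambda_{2q+\bt}(C)=\ker(1-S)$. This is not ``inherited from commutativity of~(\ref{eq:periodic-homology.chain-diagram})'', because a class in $\HP_\bt(C_T)$ need not be represented by a cycle coming from $C^{\lambda,\sharp}_\bt$, and at the chain level $\wb{\pi}^\sharp$ of a periodic cycle need not satisfy the $S$-compatibility condition. The paper settles this with an explicit homotopy: assembling the maps $\psi^\natural$ of~(\ref{eq:S.psin0}) into $\wb{\psi}^\sharp$, one proves $S\wb{\pi}^\sharp-\wb{\pi}^\sharp=b\wb{\psi}^\sharp+\wb{\psi}^\sharp(b+B)$ on $C^\sharp_{T,\bt}$, whence $(1-S)\wb{\pi}^\sharp=0$ on $\HP_\bt(C_T)$. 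Once this is in place, the rest of your homological bookkeeping (surjectivity of the bottom map onto $\varprojlim_S H^\lambda_{2q+\bt}(C)$ from the exact sequence, and part (3) via $\wb{\pi}^\sharp=(\text{bottom})\circ(\wb{\nu}^\sharp)^{-1}$) agrees with the paper's proof and is fine.
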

\begin{proof}
By Proposition~\ref{prop:Para-S-Mod.quasi-isomorphism-periodic} the very fact that the $S$-map $\wb{\nu}^\sharp:C_\bt^\lambda \rightarrow C^\sharp_{T,\bt}$ is an $S$-map ensures us that at the periodic level the corresponding chain map $\wb{\nu}^\sharp: C^{\lambda,\sharp}_\bt \rightarrow C^\sharp_{T,\bt}$ is a quasi-isomorphism. 

At the homology level the commutative diagram~(\ref{eq:periodic-homology.chain-diagram}) gives rise to the following commutative diagram, 
 \begin{equation*}
 \begin{tikzcd}[column sep=tiny]
   & \HP_\bt(C_{T}) \arrow[dr, "\wb{\pi}^\sharp"] & \\
 H^{\lambda,\sharp}_\bt(C) \arrow[ur, "\wb{\nu}^\sharp"]  \arrow[rr]   & &\prod_{q\geq 0}  H^{\lambda}_{2q+\bt} (C). 
\end{tikzcd}
\end{equation*}
As mentioned above the range of the bottom horizontal arrow is precisely $\varprojlim_S H^{\lambda}_{2q+\bt} (C)$. Therefore, in order to get the diagram~(\ref{eq:periodic-homology.homology-diagram}) it only remains to show that at the homology level $\wb{\pi}^\sharp$ maps $ \HP_\bt(C_{T})$ to $\varprojlim_S H^{\lambda}_{2q+\bt} (C)$. 

Let $\wb{\psi}^\sharp: C_{T,\bt} \rightarrow \prod_{q\geq 0} C^\lambda_{2q+\bt}$ be the $k$-linear map such that, for any $\wb{x}=(\wb{x}_{2q+i})_{q\geq 0}$ in $C_{T,i}^\sharp$ with $x_{2q+i}\in C_{2q+i}$, we have 
\begin{equation*}
 \psi^\sharp(\wb{x})= \big( \psi^\natural(x_{2q+i}^\natural)\big)_{q\geq 0},
\end{equation*}
where the map $\psi^\natural: C_{\bt}^\natural \rightarrow C_{\bt-1}^\lambda$ is defined as in Section~\ref{sec:S} (see Eq.~(\ref{eq:S.psin0})), 
 and we have set $x_{2q+i}^\natural=x_{2q+i}u^0+ x_{2q-2+i}u + \cdots + x_i u^q \in C^\natural_{2q+i}$.  
\begin{claim*}
 On $C^\sharp_{T,\bt}$ we have
 \begin{equation}
 S\wb{\pi}^\sharp - \wb{\pi}^\sharp = b \psi^\sharp + \wb{\psi}^\sharp  (b+B).
 \label{eq:S.Spi-pi-homotopy} 
\end{equation}
\end{claim*}
\begin{proof}[Proof of the Claim] 
 Let $\wb{x}=(\wb{x}_{2q+i})_{q\geq 0}\in C_{T,i}^\sharp$, $x_{2q+i}\in C_{2q+i}$. We have 
 \begin{equation*}
     \pi^\sharp (\wb{x})= (x^\lambda_{2q+i})_{q\geq 0}= (\pi^\natural(x_{2q+i}))_{q\geq 0}, 
\end{equation*}
where $x_{2q+i}^\natural$ is defined as above. Note that $u^{-1} x_{2q+2+i}^\natural=x_{2q+i}^\natural$. Thus, 
\begin{align*}
 S\wb{\pi}^\sharp( \wb{x})- \wb{\pi}^\sharp( \wb{x}) & = \big( S\pi^\natural(x_{2q+2+i}^\natural)\big)_{q\geq 0} -  \big(\pi^\natural(x_{2q+i}^\natural)\big)_{q\geq 0} \\
 & =  \big( (S\pi^\natural-\pi^\natural u^{-1})(x_{2q+2+i}^\natural)\big)_{q\geq 0}. 
\end{align*}
 Combining this with Proposition~\ref{prop:S.nun} we get
\begin{equation}
S\wb{\pi}^\sharp( \wb{x})- \wb{\pi}^\sharp( \wb{x}) =  \big( b\psi^\natural(x^\natural_{2q+2+i}) \big)_{q\geq 0} +  \big( \psi^\natural[(b+Bu^{-1})x^\natural_{2q+2+i}] \big)_{q\geq 
0} 
 \label{prop:periodic-homology.Spi-piu}
\end{equation}

As $b\psi^\natural(x_i^\natural)=0$, we have $ ( b\psi^\natural(x^\natural_{2q+2+i}))_{q\geq 0} = ( b\psi^\natural(x^\natural_{2q+i}))_{q\geq 0}= b\wb{\psi}^\sharp  (\wb{x})$. 
Likewise, as $\psi^\natural(bx_i^\natural)=0$ we also have  $( \psi^\natural(bx^\natural_{2q+2+i}))_{q\geq 0} =\wb{\psi}^\sharp  (b\wb{x})$. In addition, we have 
\begin{equation*}
  \big( \psi^\natural(Bu^{-1}x^\natural_{2q+2+i}) \big)_{q\geq 0} =  \big( \psi^\natural(Bx^\natural_{2q+i}) \big)_{q\geq 0}= \wb{\psi}^\sharp  (B\wb{x}). 
\end{equation*}
Combining all this with~(\ref{prop:periodic-homology.Spi-piu}) then gives 
\begin{equation*}
 S\wb{\pi}^\sharp( \wb{x})- \wb{\pi}^\sharp( \wb{x}) 
 = b\wb{\psi}^\sharp  (\wb{x}) + \wb{\psi}^\sharp  [(b+B)\wb{x}] . 
\end{equation*}
This proves the claim. 
\end{proof}

The above claim implies that $S\wb{\pi}^\natural -\wb{\pi}^\natural =0$ on $\HP_\bt(C_T)$, and so $\wb{\pi}^\sharp$ maps $ \HP_\bt(C_{T})$ to $\ker(1-S)=
\varprojlim_S H^{\lambda}_{2q+\bt} (C)$. We thus obtain the commutative diagram~(\ref{eq:periodic-homology.homology-diagram}). In that diagram
 the bottom horizontal arrow  is onto and the upward arrow $\wb{\nu}^\sharp: H^{\lambda,\sharp}_\bt(C) \rightarrow \HP_\bt(C_{T})$ is an isomorphism. It then follows that the downward arrow $\wb{\pi}^\sharp: \HP_\bt(C_{T}) \rightarrow  \varprojlim_S H^{\lambda}_{2q+\bt} (C)$ is onto as well. The proof is complete. 
\end{proof}

\section{Applications in Cyclic Cohomology}\label{sec:cohomology} 
In this section, we explain the counterparts of Proposition~\ref{prop:S.nun} and Proposition~\ref{prop:periodic-homology.quasi-isom} in cyclic cohomology and periodic cyclic cohomology. As we shall see this will provide us with explicit ways to convert $(b,B)$-cocycles into cohomologous periodic cyclic cocycles.

\subsection{Para-$S$-comodules} For the sake of exposition's clarity, we  give a brief overview of the dual version of para-$S$-modules. We shall call the corresponding objects \emph{para-$S$-comodules}. As the dual of a left module is a right module we shall work in the category of right $k$-modules. Thus, a  \emph{para-$S$-comodule} is given by a system $(C^\bt, d,S,T)$, where $C^m$, $m\geq 0$, are \emph{right} $k$-modules and $d:C^\bt \rightarrow C^{\bt+1}$ and $S:C^\bt \rightarrow C^{\bt+2}$ are $k$-linear maps and $T:C^\bt\rightarrow C^\bt$ is a $k$-linear automorphism satisfying the relations~(\ref{eq:Para-S-modules-relations}). In particular, when $T=1$ we get a cochain complex $(C^\bt, d)$ and $S$ is a true cochain map. In this case we shall denote by $H^\bt(C)$ the corresponding cohomology. 

In analogy with para-$S$-modules, we shall call \emph{cochain map} any map between para-$S$-comodules that is compatible with the $d$-operators. A \emph{cochain $S$-ma}p is a cochain map which is compatible with the $(S,T)$-operators. We also have natural notions of \emph{cochain homotopies} of cochain maps and 
\emph{cochain $S$-homotopies} of cochain $S$-maps. This provides us with notions of \emph{cochain homotopy equivalence} and \emph{cochain $S$-homotopy equivalence} between para-$S$-comodules, as well as the corresponding notions of deformation retracts and $S$-deformation retract of para-$S$-comodules. 

Our main example of a para-$S$-module is the dual of a para-$S$-module. Given any left $k$-module $\sE$ we denote by $\sE^\dagger$ the dual right $k$-module 
$\Hom_k(\sE, k)$. By duality any $k$-linear map $f:\sE_1\rightarrow \sE_2$ gives rise to a $k$-linear map $f^\dagger: \sE_2^\dagger \rightarrow  \sE_1^\dagger$ such that $f^\dagger(\phi)= \phi \circ f\ \forall \phi \in \sE_2^\dagger$. 
Any para-$S$-module $C=(C_\bt, d, S,T)$ then gives rise to a dual para-$S$-comodule $(C^\bt, d, S,T)$, where $C^m=C_m^\dagger$ and we denote by $(d,S, T)$ the dual maps $(d^\dagger,S^\dagger, T^\dagger)$. 
Any chain map (resp., $S$-map) between para-$S$-modules gives rise to a cochain map (resp., cochain $S$-map) between the corresponding dual para-$S$-comodules. 
Moreover, any chain homotopy equivalence (resp., $S$-homotopy equivalence) of para-$S$-modules gives rise to a cochain homotopy equivalence (resp., cochain $S$-homotopy equivalence) of the dual para-$S$-comodules.

Let $(C^\bt, d, S,T)$ be a para-$S$-comodule. The operator $S:C^\bt \rightarrow C^{\bt+2}$ gives rise to directed systems of right $k$-modules $\{C^{2q+m}\}_{q\geq 0}$, $m\geq 0$, which are compatible with the operators $d$ and $T$. For each $m$, let $C^m_\sharp = \varinjlim_S C^{2q+i}$ be the corresponding direct limit, i.e., 
\begin{equation}
C^m_\sharp = \biggl( \bigoplus _{q\geq 0} C^{2q+i}\biggr) \slash \ran (1-S). 
\label{eq:Para-S-comod.Csharp}
\end{equation}
We have a natural identification $C_\sharp^{m+2} \simeq C_\sharp^{m}$. Under this identification, the operators $d$ and $T$ give rise to operators on $C^0_\sharp \oplus C^1_\sharp$ which are odd and even, respectively. Moreover, we have $d^2=(1-T)$ on $C^0_\sharp \oplus C^1_\sharp$. We thus obtain a $\Z_2$-graded cochain paracomplex $C^\sharp:= (C^0_\sharp \oplus C^1_\sharp, d,T)$, which we shall call the \emph{periodic cochain paracomplex} of $C$. 

Any cochain $S$-map between para-$S$-comodules gives rise to a $T$-compatible cochain map between the corresponding periodic cochain paracomplexes. Furthermore, any cochain $S$-homotopy equivalence of para-$S$-comodules gives rise to a $T$-compatible cochain homotopy equivalence of the corresponding periodic cochain paracomplexes. 

If $C=(C_\bt, d,S, T)$ is a para-$S$-module, then we denote by $C_\sharp$ the periodic cochain paracomplex of its dual para-$S$-comodule. There is a natural duality between $C_\sharp$  and the periodic chain complex $C^\sharp$. Thus, any $S$-map (resp., $S$-homotopy equivalence) between para-$S$-comodules gives rise to a $T$-compatible cochain map (resp., cochain $S$-homotopy equivalence) between the corresponding periodic cochain paracomplexes. 

When $T=1$ we obtain a periodic cochain complex $(C_\sharp^\bt, d)$. We denote by $H_\sharp^\bt(C)$ its cohomology. As the operator $S:C^\bt \rightarrow C^{\bt+2}$ is a cochain map, it descends to a $k$-linear map $S:H_\sharp^\bt(C)\rightarrow H_\sharp^{\bt+2}(C)$, and so it defines a directed system of cohomology modules. The canonical projection $\bigoplus_{q\geq 0} C^{2q+\bt} \rightarrow C_\sharp^\bt$ is an $S$-invariant cochain map, and so it descends to a $k$-linear map from $\varinjlim_S  H^{2q+\bt}(C) \rightarrow H_\sharp^\bt(C)$. As the direct limit functor is exact (see, e.g., \cite{We:IHA}), we actually get a canonical isomorphism, 
\begin{equation}
 {\varinjlim}_S  H^{2q+\bt}(C) \simeq H_\sharp^\bt(C). 
 \label{eq:para-S-comodules.isom-cohomology}
\end{equation}

\subsection{Cyclic cohomology} 
Let $C=(C_\bt, d,s,t)$ be an $H$-unital para-precyclic $k$-module. By duality the $H$-unital precyclic $k$-module $C_T=(C_{T,\bt},d,s,t)$ gives rise to an $H$-unital precyclic $k$-comodule $(C_T^\bt, d,s,t)$, where 
\begin{equation*}
 C_T^m:= \left\{ \phi \in \Hom_k(C_m,k); \ \phi \circ T=\phi\right\}, \qquad m\geq 0. 
\end{equation*}
Here the structural operators $(d,s,t)$ on $C_T^\bt$ are the dual versions of the structural operators $(d,s,t)$ on  $C_{T,\bt}$. We let
 $C_{T,\natural}:=(C_{T,\natural}^\bt, b+B)$ be the corresponding cyclic cochain complex, where 
 \begin{equation}
 C_{T,\natural}^m=C_T^m \oplus C^T_{m-2} \oplus \cdots, \qquad m\geq 0. 
\end{equation}
We denote by $\HC^\bt(C_T)$ the cohomology of $C_{T,\natural}$. The natural duality between the chain complex $C^\natural_{T}$ and the cochain complex $C_{T,\natural}$ is given by
\begin{equation}
 \acou{\phi}{\wb{x}u^p}:= \acou{\phi_{m-2p}}{x}, \qquad x\in C_{m-2p}, \quad \phi=(\phi_{m}, \phi_{m-2},\ldots)\in C^m_{T,\natural}. 
 \label{eq:cohomology.duality-Cn}
\end{equation}

We also let $C_\lambda=(C_\lambda^\bt, b)$ be the Connes cochain complex of $C$, where 
\begin{equation*}
 C^m_\lambda:= \left\{ \phi \in \Hom_k(C_m,k); \ \phi \circ \tau=\phi\right\}, \qquad m\geq 0.
\end{equation*}
We denote by $H_\lambda(C)$ the cohomology of $C_\lambda$. As the $B$-operator on cochains is annihilated by cyclic cochains, we have an inclusion of cochain complexes, 
\begin{equation}
 \wb{\iota}_\natural: C^\bt_\lambda \longrightarrow C_{T,\natural}^\bt, \qquad \wb{\iota}_\natural(\phi)=(\phi,0,\ldots), \quad \phi\in C^m_\lambda. 
 \label{eq:cohomology.canonical-inclusion}
\end{equation}
This is the dual version of the canonical chain map $\wb{\pi}^\natural: C_{T,\bt}^\natural \rightarrow C^\lambda_\bt$. 

Suppose that $k\supset \Q$. By duality the chain map $\wb{\nu}^\natural: C_\bt^\lambda \rightarrow C_{T,\bt}^\natural$ gives rise to a cochain map $\wb{\nu}_\natural:  
C_{T,\natural}^\bt \rightarrow C^\bt_\lambda$. In fact, in view of~(\ref{eq:Cn-Cl.wbnun}), given any  $\phi=(\phi_m, \phi_{m-2},\ldots)\in C^m_{T,\natural}$, $m\geq 0$, we have 
\begin{equation}
 \wb{\nu}_\natural(\phi)= \sum_{m-2j\geq 0} (-1)^j \phi_{m-2j} \circ \left\{ [1-(1-\tau)s'\hat{D}b](\hat{b}'\hat{D}b)^j \hat{N}\right\}. 
 \label{eq:cohomology.nun}
\end{equation}
By Proposition~\ref{prop:CnCl.CTn} the chain map $\wb{\nu}^\natural$ is a right-inverse and chain homotopy left-inverse of the canonical chain map $\wb{\pi}^\natural$. Therefore, by duality we obtain the following result. 

\begin{proposition}\label{prop:cohomology.cyclic}
Assume that $k\supset \Q$. Then the cochain map $\wb{\nu}_\natural$ given by~(\ref{eq:cohomology.nun}) is a left-inverse and cochain homotopy right-inverse of the canonical inclusion~(\ref{eq:cohomology.canonical-inclusion}). 
\end{proposition}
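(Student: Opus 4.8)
The plan is to derive Proposition~\ref{prop:cohomology.cyclic} purely by dualizing Proposition~\ref{prop:CnCl.CTn}, relying on the general principle stated in the excerpt that chain maps, $S$-maps, chain homotopies and deformation retracts between para-$S$-modules dualize to cochain maps, cochain $S$-maps, cochain homotopies and deformation retracts between the dual para-$S$-comodules. Concretely, recall that Proposition~\ref{prop:CnCl.CTn} gives, under the hypothesis $k\supset\Q$ and $C$ an $H$-unital para-precyclic $k$-module, the identities
\begin{equation*}
 \wb{\pi}^\natural \wb{\nu}^\natural= 1, \qquad \wb{\nu}^\natural \wb{\pi}^\natural= 1+ (b+Bu^{-1}) \wb{\varphi}^\natural + \wb{\varphi}^\natural (b+Bu^{-1}), \qquad \wb{\pi}^\natural \wb{\varphi}^\natural=0,
\end{equation*}
where $\wb{\nu}^\natural: C_\bt^\lambda \to C^\natural_{T,\bt}$, $\wb{\pi}^\natural: C^\natural_{T,\bt}\to C^\lambda_\bt$, and $\wb{\varphi}^\natural: C^\natural_{T,\bt}\to C^\natural_{T,\bt+1}$.

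First I would apply the contravariant functor $(-)^\dagger=\Hom_k(-,k)$ to each of these three identities. Since $(fg)^\dagger=g^\dagger f^\dagger$ and $\id^\dagger=\id$, the first identity $\wb{\pi}^\natural\wb{\nu}^\natural=1$ becomes $(\wb{\nu}^\natural)^\dagger(\wb{\pi}^\natural)^\dagger=1$ on $C^\bt_\lambda$; writing $\wb{\nu}_\natural:=(\wb{\nu}^\natural)^\dagger$ and noting that $(\wb{\pi}^\natural)^\dagger=\wb{\iota}_\natural$ is precisely the canonical inclusion~(\ref{eq:cohomology.canonical-inclusion}) (this is asserted in the excerpt just below that equation), this reads $\wb{\nu}_\natural\circ\wb{\iota}_\natural=1$, i.e.\ $\wb{\nu}_\natural$ is a left-inverse of the canonical inclusion. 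Dualizing the second identity, and setting $\wb{\varphi}_\natural:=(\wb{\varphi}^\natural)^\dagger: C_{T,\natural}^{\bt+1}\to C_{T,\natural}^{\bt}$, we get
\begin{equation*}
 \wb{\iota}_\natural\,\wb{\nu}_\natural = 1 + \wb{\varphi}_\natural\,(b+Bu^{-1})^\dagger + (b+Bu^{-1})^\dagger\,\wb{\varphi}_\natural,
\end{equation*}
where $(b+Bu^{-1})^\dagger$ is the coboundary of the cyclic cochain complex $C_{T,\natural}$ (by definition of the dual para-$S$-comodule, the differential on $C^\bt_{T,\natural}$ is the transpose of the differential $b+Bu^{-1}$ on $C^\natural_{T,\bt}$). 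This exhibits $\wb{\iota}_\natural\wb{\nu}_\natural$ as cochain-homotopic to the identity, i.e.\ $\wb{\nu}_\natural$ is a cochain homotopy right-inverse of $\wb{\iota}_\natural$. Dualizing the third identity gives $\wb{\varphi}_\natural\,\wb{\iota}_\natural=0$, a supplementary compatibility (the dual of "special-ness" on that side).

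Second, I would check the explicit formula~(\ref{eq:cohomology.nun}). This is just unwinding the duality pairing~(\ref{eq:cohomology.duality-Cn}) against the formula~(\ref{eq:CnCl.nun}) for $\nu^\natural_0$, which by~(\ref{eq:Cn-Cl.wbnun}) descends to $\wb{\nu}^\natural$: for $\phi=(\phi_m,\phi_{m-2},\ldots)\in C^m_{T,\natural}$ and $y\in C^\lambda_m$ one computes $\acou{\wb{\nu}_\natural(\phi)}{y}=\acou{\phi}{\wb{\nu}^\natural(y)}$, and reading off~(\ref{eq:CnCl.nun}) componentwise in $u$ gives exactly~(\ref{eq:cohomology.nun}) with the alternating signs $(-1)^j$ and the operators $[1-(1-\tau)s'\hat D b](\hat b'\hat D b)^j\hat N$. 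One small point to mind is that $\wb{\nu}^\natural$ lands in $C^\natural_{T,\bt}$ rather than $C^\natural_\bt$, so the pairing is the one of~(\ref{eq:cohomology.duality-Cn}) on the $T$-coinvariant/invariant modules; since $\phi\circ T=\phi$ this is consistent, and the formula~(\ref{eq:CnCl.nun}) for $\nu^\natural_0$ is $T$-equivariant in the appropriate sense so that it descends correctly.

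The main obstacle, such as it is, is purely bookkeeping rather than conceptual: one must be careful that the arrows reverse correctly under $(-)^\dagger$ (left-inverse $\leftrightarrow$ right-inverse, and the order of composition flips), that the homotopy operator $\wb{\varphi}^\natural$ dualizes to a degree $-1$ cochain-homotopy operator $\wb{\varphi}_\natural$ with the homotopy identity reading $\wb\iota_\natural\wb\nu_\natural-1=d\wb\varphi_\natural+\wb\varphi_\natural d$, and that the differential on $C_{T,\natural}$ is genuinely the transpose of $b+Bu^{-1}$ (which is built into the definition of the dual para-$S$-comodule and of $C_{T,\natural}$ given just above). Once these identifications are in place — all of which are instances of the general dualization statements recorded in the "Para-$S$-comodules" subsection — the proof is a one-line invocation of Proposition~\ref{prop:CnCl.CTn} together with the formula~(\ref{eq:CnCl.nun}).

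\begin{proof}
Assume $k\supset\Q$. By Proposition~\ref{prop:CnCl.CTn} the chain map $\wb{\nu}^\natural: C_\bt^\lambda \rightarrow C^\natural_{T,\bt}$ is a right-inverse and chain homotopy left-inverse of the canonical chain map $\wb{\pi}^\natural: C^\natural_{T,\bt} \rightarrow C^\lambda_\bt$, the homotopy being $\wb{\varphi}^\natural$ and satisfying in addition $\wb{\pi}^\natural\wb{\varphi}^\natural=0$. Applying the duality functor $(-)^\dagger$, which is contravariant and reverses compositions, and using that $(\wb{\pi}^\natural)^\dagger$ is the canonical inclusion $\wb{\iota}_\natural$ of~(\ref{eq:cohomology.canonical-inclusion}), we obtain with $\wb{\nu}_\natural:=(\wb{\nu}^\natural)^\dagger$ and $\wb{\varphi}_\natural:=(\wb{\varphi}^\natural)^\dagger$ the identities
\begin{equation*}
 \wb{\nu}_\natural\,\wb{\iota}_\natural = 1, \qquad \wb{\iota}_\natural\,\wb{\nu}_\natural = 1 + (b+Bu^{-1})\,\wb{\varphi}_\natural + \wb{\varphi}_\natural\,(b+Bu^{-1}), \qquad \wb{\varphi}_\natural\,\wb{\iota}_\natural=0,
\end{equation*}
where $b+Bu^{-1}$ now denotes the coboundary of the cyclic cochain complex $C_{T,\natural}$, i.e.\ the transpose of the differential on $C^\natural_{T,\bt}$. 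Thus $\wb{\nu}_\natural$ is a left-inverse of $\wb{\iota}_\natural$ and a cochain homotopy right-inverse of $\wb{\iota}_\natural$. Finally, pairing the formula~(\ref{eq:CnCl.nun}) for $\nu^\natural_0$, which by~(\ref{eq:Cn-Cl.wbnun}) induces $\wb{\nu}^\natural$, against the duality~(\ref{eq:cohomology.duality-Cn}) yields, for $\phi=(\phi_m,\phi_{m-2},\ldots)\in C^m_{T,\natural}$,
\begin{equation*}
 \wb{\nu}_\natural(\phi)= \sum_{m-2j\geq 0} (-1)^j \phi_{m-2j} \circ \left\{ [1-(1-\tau)s'\hat{D}b](\hat{b}'\hat{D}b)^j \hat{N}\right\},
\end{equation*}
which is~(\ref{eq:cohomology.nun}). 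This completes the proof.
\end{proof}
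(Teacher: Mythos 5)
Your proposal is correct and follows essentially the same route as the paper, which obtains the statement precisely by dualizing Proposition~\ref{prop:CnCl.CTn} (the paper notes that $\wb{\pi}^\natural\wb{\nu}^\natural=1$ and the chain homotopy identity dualize, with $\wb{\iota}_\natural=(\wb{\pi}^\natural)^\dagger$, and that the formula~(\ref{eq:cohomology.nun}) comes from pairing~(\ref{eq:CnCl.nun}) with~(\ref{eq:cohomology.duality-Cn})). Your write-up merely spells out the contravariance bookkeeping that the paper leaves implicit.
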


\begin{remark}
 When $C$ is a cyclic module the fact that the canonical inclusion~(\ref{eq:cohomology.canonical-inclusion}) is a quasi-isomorphism goes back to Connes~\cite{Co:CRAS83, Co:IHES85}. 
\end{remark}

The upshot of Proposition~\ref{prop:cohomology.cyclic} is an explicit way to convert any $(b,B)$-cocycle into a cohomologous periodic cocycle. Namely, we have the following statement. 

\begin{corollary}\label{cor:cohomology.cyclic}
Assume that $k\supset \Q$. Then, for every cocycle $\phi=(\phi_m, \phi_{m-2},\ldots)\in C^m_{T,\natural}$, $m\geq 0$, the cyclic cochain $\wb{\nu}_\natural(\phi)$ given by~(\ref{eq:cohomology.nun}) is a cocycle in the same cohomology class as $\phi$. 
\end{corollary}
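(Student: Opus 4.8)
The plan is to read Corollary~\ref{cor:cohomology.cyclic} as an immediate consequence of Proposition~\ref{prop:cohomology.cyclic}. Recall from that proposition that $\wb{\nu}_\natural : C_{T,\natural}^\bt \rightarrow C^\bt_\lambda$ is a cochain map which is a left-inverse and a cochain homotopy right-inverse of the canonical inclusion $\wb{\iota}_\natural : C^\bt_\lambda \hookrightarrow C_{T,\natural}^\bt$. Concretely this means there is a $k$-linear map $H : C_{T,\natural}^\bt \rightarrow C_{T,\natural}^{\bt-1}$ such that
\begin{equation*}
 \wb{\iota}_\natural \wb{\nu}_\natural - \id = (b+B) H + H (b+B) \qquad \text{on $C_{T,\natural}^\bt$},
\end{equation*}
together with $\wb{\nu}_\natural \wb{\iota}_\natural = \id$ on $C^\bt_\lambda$. (Here $H$ is the dual of the chain homotopy $\wb{\varphi}^\natural$ of Proposition~\ref{prop:CnCl.CTn}.)

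First I would unwind what the corollary actually asserts. The cyclic cochain complex $C_\lambda$ sits inside $C_{T,\natural}$ via $\wb{\iota}_\natural$, which sends $\psi \in C^m_\lambda$ to $(\psi, 0, \ldots)$; under this identification a cyclic cocycle is precisely a cocycle of $C_{T,\natural}$ supported in top degree. So the claim ``$\wb{\nu}_\natural(\phi)$ is a cyclic cocycle'' means $b\,\wb{\nu}_\natural(\phi) = 0$, and the claim ``$\wb{\nu}_\natural(\phi)$ is in the same cohomology class as $\phi$'' means $\wb{\iota}_\natural\bigl(\wb{\nu}_\natural(\phi)\bigr)$ and $\phi$ differ by a coboundary in $C_{T,\natural}$.

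Then the two assertions drop out. Since $\wb{\nu}_\natural$ is a cochain map and $(b+B)\phi = 0$, we get $b\,\wb{\nu}_\natural(\phi) = \wb{\nu}_\natural\bigl((b+B)\phi\bigr) = 0$, so $\wb{\nu}_\natural(\phi)$ is a cyclic cocycle. For the cohomology-class statement, apply the homotopy identity to $\phi$: because $(b+B)\phi = 0$,
\begin{equation*}
 \wb{\iota}_\natural \wb{\nu}_\natural(\phi) - \phi = (b+B) H(\phi) + H\bigl((b+B)\phi\bigr) = (b+B) H(\phi),
\end{equation*}
which exhibits $\wb{\iota}_\natural\wb{\nu}_\natural(\phi) - \phi$ as a $(b+B)$-coboundary in $C_{T,\natural}$. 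Hence $\wb{\nu}_\natural(\phi)$, viewed inside $C_{T,\natural}$ via $\wb{\iota}_\natural$, is cohomologous to $\phi$, which is exactly the statement of the corollary; the explicit formula for $\wb{\nu}_\natural(\phi)$ is the one displayed in~(\ref{eq:cohomology.nun}).

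There is essentially no obstacle here: the whole content is in Proposition~\ref{prop:cohomology.cyclic} (equivalently, by duality, in Proposition~\ref{prop:CnCl.CTn}), and the corollary is just the translation of ``cochain homotopy right-inverse'' into the language of cocycles and cohomology classes. The only point requiring a word of care is the identification of $C_\lambda$ with its image under $\wb{\iota}_\natural$ and the observation that $\wb{\iota}_\natural$ is injective, so that ``cohomologous in $C_{T,\natural}$'' is the correct and intended meaning of the assertion; I would state this explicitly to avoid ambiguity but it needs no argument beyond~(\ref{eq:cohomology.canonical-inclusion}).
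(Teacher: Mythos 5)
Your proposal is correct and matches the paper's intent exactly: the paper offers no separate proof, presenting the corollary as the immediate upshot of Proposition~\ref{prop:cohomology.cyclic}, which is precisely what you spell out (the cochain map property gives $b\,\wb{\nu}_\natural(\phi)=0$, and the homotopy identity $\wb{\iota}_\natural\wb{\nu}_\natural-\id=(b+B)H+H(b+B)$ applied to the cocycle $\phi$ exhibits $\wb{\iota}_\natural\wb{\nu}_\natural(\phi)-\phi$ as a coboundary). Your remark on reading ``same cohomology class'' via the inclusion $\wb{\iota}_\natural$ is the intended interpretation and needs no further argument.
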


\subsection{Periodic cyclic cohomology} 
Let $C=(C_\bt, d,s,t)$ be an $H$-unital para-precyclic $k$-module. Under~(\ref{eq:cohomology.duality-Cn}) we have a natural duality between the chain complex $C_T^\natural =(C_{T,\bt}^\natural, b+Bu^{-1})$ and the cochain complex $C_{T,\natural}=(C_{T,\natural}^\bt, b+B)$. Under this duality the dual of the projection 
$u^{-1}:C_{T,\bt}^\natural\rightarrow C_{T,\bt-2}^\natural$ is simply the inclusion of $C^{\bt}_{T,\natural}=  C^{\bt}_{T} \oplus C^{\bt-2}_{T}\oplus \cdots $ into $C^{\bt+2}_{T,\natural}=  C^{\bt+2}_{T} \oplus C^{\bt}_{T}\oplus \cdots $. This turns $C_{T,\natural}$ into an $S$-comodule. Its periodic cochain complex is naturally identified with the $\Z_2$-graded cochain complex $C_{T,\sharp}= (C_{T,\sharp}^\bt, b+B)$, where 
\begin{equation*}
 C_{T,\sharp}^i= \bigoplus_{q\geq 0} C^{2q+i}_T, \qquad i=0,1.
\end{equation*}
More precisely, the identification between $C_{T,\sharp}^i$ and the direct limit $\varinjlim_S C^{2q+i}_T$ is induced from
\begin{equation}
 (\phi_i, \phi_{2+i},\ldots, \phi_{2q_0+i}, 0,\ldots) \longrightarrow (\phi_i^\natural, \phi_{2+i}^\natural,\ldots, \phi_{2q_0+i}^\natural, 0,\ldots),
 \label{eq:cohomology.Cns-Cs}
\end{equation}
where we have set $ \phi_{2q+i}^\natural:=( \phi_{2q+i},\phi_{2q-2+i},\ldots)$.  This identification is the dual version of the identification of $\varprojlim_{u^{-1}} C_{2q+i}$ with $C_i^\sharp$ described in Section~\ref{sec:parachain}. We denote by $\HP^\bt(C_T)$ the cohomology of the cochain complex $C_{T,\sharp}$. This is the cyclic cohomology of the $H$-unital precyclic module $C_T$. 

Suppose that $k\supset \Q$. By duality the chain map $S:C^\lambda_\bt \rightarrow C_{\bt-2}^\lambda$ gives rise to a degree~$2$ cochain map $S:C^{\bt}_\lambda\rightarrow C^{\bt+2}_\lambda$, so that we obtain an $S$-comodule $(C^\lambda_\bt, b, S)$ which is the dual $S$-comodule of the $S$-module $(C^\lambda_\bt, b, S)$. Let $C_{\lambda, \sharp}:=(C_{\lambda, \sharp}^\bt, b, S)$ be its periodic cochain complex, where $C_{\lambda, \sharp}^i$, $i=0,1$, is defined as in~(\ref{eq:Para-S-comod.Csharp}). We denote by $H^\bt_{\lambda, \sharp}(C)$ its cohomology. There is a natural duality between $C_{\lambda, \sharp}$ and the periodic chain complex $C^{\lambda,\sharp}$ introduced in Section~\ref{sec:S}.  

By duality the chain map $\wb{\pi}^\sharp: C_{T,\bt}^\sharp \rightarrow \prod_{q\geq 0} C_{2q+\bt}^\lambda$ in~(\ref{eq:periodic-homology.pisharp}) corresponds to the natural cochain inclusion $\wb{\iota}_\sharp: \bigoplus_{q\geq 0} C^{2q+\bt}_\lambda \hookrightarrow C^\bt_{T,\sharp}$. At the cohomology level this gives rise to a ($\Z_2$-graded) $k$-linear map $\wb{\iota}_\sharp: \bigoplus_{q\geq 0} H^{2q+\bt}_\lambda (C) \rightarrow \HP^\bt(C_{T})$. Although at the level of cochains the inclusion $\wb{\iota}_\sharp$ is not $S$-invariant, at the cohomology level we get an $S$-invariant map. 
This can be seen by using the dual version of Proposition~\ref{prop:S.Connes-formula} (\emph{cf}.\ \cite{Co:IHES85}). 
Alternatively, we know from~(\ref{eq:S.Spi-pi-homotopy}) that  the chain map $(1-S)\wb{\pi}^\sharp$ is chain homotopic to $0$. By duality $\wb{\iota}_\natural(1-S)$ is cochain homotopic to $0$, and hence $\wb{\iota}_\natural(1-S)=0$ on $\bigoplus_{q\geq 0} H^{2q+\bt}_\lambda (C)$. 
Anyway, we see that at the cohomology level the map $\wb{\iota}_\sharp$ descends to a $k$-linear map, 
\begin{equation*}
 \wb{\iota}^\sharp: {\varinjlim}_S H^\lambda_{2q+\bt} \longrightarrow \HP^\bt(C_T). 
\end{equation*}
This map was shown by Connes~\cite{Co:CRAS83, Co:IHES85} (at least when $k=\C$ and $C$ is the cyclic space of a unital $\C$-algebra). 

We shall now use Proposition~\ref{prop:periodic-homology.quasi-isom} and Proposition~\ref{prop:cohomology.cyclic} to reinterpret Connes' isomorphism theorem and exhibit an explicit inverse of $ \wb{\iota}^\sharp$ for $H$-unital precyclic modules.

Let $C=(C,d,s,t)$ be an $H$-unital precyclic $k$-module with $k\supset \Q$. It follows from Proposition~\ref{prop:S.nun} that the cochain map  $\wb{\nu}_\natural: C_{T,\natural}^\bt \rightarrow C^\bt_\lambda$ is a cochain $S$-map. At the periodic level the corresponding cochain map $\wb{\nu}_\sharp: C_{T,\sharp}^{\bt}  
\rightarrow C_{\lambda,\sharp}^\bt$ is the dual version of the chain map $\wb{\nu}^\sharp: C^{\lambda,\sharp}_\bt \rightarrow C^\sharp_{T,\bt}$.
Thus, given any periodic cochain $\phi=(\phi_i, \phi_{2+i}, \ldots )\in C_{T,\sharp}^{i}$,  we have 
\begin{equation}
 \wb{\nu}_\sharp(\phi)= \text{class of} \ \big( \phi_{2q+i} \circ  \big\{[1-(1-\tau)s'\hat{D}b] \hat{N}\big\}\big)_{q\geq 0} \ \text{mod}\ \ran(1-S).
 \label{eq:cohomology.nusharp} 
\end{equation}
By duality the commutative diagram of chain maps~(\ref{eq:periodic-homology.chain-diagram}) then gives rise to the following commutative diagram of cochain maps, 
\begin{equation}
 \begin{tikzcd}[column sep=small]
  & C_{T,\sharp}^{\bt}  \arrow[dl, "\wb{\nu}_\sharp" ']  & \\
 C_{\lambda,\sharp}^\bt     & &  {\displaystyle \arrow[ll, two heads]   \arrow[ul, "\wb{\iota}_\sharp" ']  \bigoplus_{q\geq 0} C^\lambda_{2q+\bt} .} 
\end{tikzcd}
\label{eq:periodic-cohomology.cochain-diagram}
\end{equation}

\begin{proposition}\label{prop:periodic-cohomology.HP}
Let $C=(C,d,s,t)$ be an $H$-unital precyclic $k$-module with $k\supset \Q$.
 \begin{enumerate}
\item The cochain map $\wb{\nu}_\sharp: C_{T,\sharp}^{\bt}  \rightarrow C_{\lambda,\sharp}^\bt$ given by~(\ref{eq:cohomology.nusharp}) is a quasi-isomorphism. 

 \item The following diagram is commutative, 
 \begin{equation}
 \begin{tikzcd}[column sep=tiny]
  & \HP^\bt(C_{T})  \arrow[dl, "\wb{\nu}_\sharp" ']  & \\
 H_{\lambda,\sharp}^\bt (C) &  & \arrow[ll, "\sim" '] \arrow[ul, "\wb{\iota}_\sharp" ']   \varinjlim_S H^\lambda_{2q+\bt}(C) ,  
\end{tikzcd}
\label{eq:periodic-cohomology.cohomology-diagram}
\end{equation}
where the bottom horizontal arrow is the canonical isomorphism~(\ref{eq:para-S-comodules.isom-cohomology}). 

 \item The Connes map $\wb{\iota}_\sharp: {\varinjlim}_S H^\lambda_{2q+\bt} \rightarrow \HP^\bt(C_T)$ is an isomorphism. 
\end{enumerate}
In other words, the cochain map $\wb{\nu}_\sharp$ allows us to invert Connes' map $\wb{\iota}_\sharp: {\varinjlim}_S H^\lambda_{2q+\bt} \rightarrow \HP^\bt(C_T)$ under the canonical identification ${\varinjlim}_S H^\lambda_{2q+\bt} (C)\simeq   H_{\lambda,\sharp}^\bt (C)$. In particular, this allows us to realize Connes' isomorphism by means of an explicit quasi-isomorphism. 
\end{proposition}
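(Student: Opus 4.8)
The plan is to obtain Proposition~\ref{prop:periodic-cohomology.HP} by dualizing Proposition~\ref{prop:periodic-homology.quasi-isom} and invoking the general machinery of para-$S$-comodules set up in the preceding subsection. The essential observation is that all three items follow formally once we record that the relevant chain-level maps between $C^{\lambda,\sharp}_\bt$, $C^\sharp_{T,\bt}$ and $\prod_{q\ge0} C^\lambda_{2q+\bt}$ dualize to the cochain-level maps in the diagram~(\ref{eq:periodic-cohomology.cochain-diagram}), and that in the precyclic case ($T=1$ so that $C_T=C$) the direct-limit functor is exact, which is what converts a quasi-isomorphism at the periodic chain level into a quasi-isomorphism at the periodic cochain level.

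First I would prove part (1). By Proposition~\ref{prop:S.nun}(3)--(5) (or, more precisely, its specialization to precyclic modules and the fact that $\nu^\natural$ descends to $\wb\nu^\natural$ when $T=1$) the chain map $\wb\nu^\natural:C^\lambda_\bt\to C^\natural_{T,\bt}$ is an $S$-map, hence by duality $\wb\nu_\natural:C_{T,\natural}^\bt\to C^\bt_\lambda$ is a cochain $S$-map between $S$-comodules. It therefore induces a cochain map $\wb\nu_\sharp:C_{T,\sharp}^\bt\to C_{\lambda,\sharp}^\bt$ on periodic cochain complexes, with the explicit formula~(\ref{eq:cohomology.nusharp}) read off from~(\ref{eq:cohomology.nun}) and the identification~(\ref{eq:cohomology.Cns-Cs}). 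To see $\wb\nu_\sharp$ is a quasi-isomorphism, I would dualize: Proposition~\ref{prop:periodic-homology.quasi-isom}(1) gives that $\wb\nu^\sharp:C^{\lambda,\sharp}_\bt\to C^\sharp_{T,\bt}$ is a quasi-isomorphism of chain complexes, and $\wb\nu_\sharp$ is its $k$-linear dual; since we only need the cohomology-level statement and the periodic cochain complexes compute the cohomology of the dual para-$S$-comodules, the dual of a quasi-isomorphism of complexes of free/$k$-modules is a quasi-isomorphism. (If one wants to avoid duality subtleties entirely, one can instead argue directly at the comodule level: $\wb\nu_\natural$ being a cochain $S$-map which is a cochain homotopy equivalence by Proposition~\ref{prop:cohomology.cyclic}, the general fact dual to Proposition~\ref{prop:Para-S-Mod.quasi-isomorphism-periodic}---a cochain $S$-map that is a quasi-isomorphism induces a quasi-isomorphism on periodic cochain complexes---applies, using the exactness of $\varinjlim$ and the functoriality of the exact sequence dual to~(\ref{eq:Para-S-Mod.exact-sequence-periodicity}).)

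Next I would establish the commutative diagram~(\ref{eq:periodic-cohomology.cohomology-diagram}) in part (2). At the cochain level, diagram~(\ref{eq:periodic-cohomology.cochain-diagram}) commutes because it is the $k$-linear dual of diagram~(\ref{eq:periodic-homology.chain-diagram}) (after identifying the dual of the inclusion $C^{\lambda,\sharp}_\bt\hookrightarrow\prod_{q}C^\lambda_{2q+\bt}$ with the canonical surjection $\bigoplus_q C^{2q+\bt}_\lambda\twoheadrightarrow C^\bt_{\lambda,\sharp}$, and the dual of $\wb\pi^\sharp$ with $\wb\iota_\sharp$). Passing to cohomology and using the canonical isomorphism~(\ref{eq:para-S-comodules.isom-cohomology}) $\varinjlim_S H^\lambda_{2q+\bt}\simeq H^\bt_{\lambda,\sharp}(C)$ for the bottom horizontal arrow yields~(\ref{eq:periodic-cohomology.cohomology-diagram}); here I must also note that $\wb\iota_\sharp$ genuinely descends to the direct limit at the cohomology level, which is exactly the content of the remark (using the dual of~(\ref{eq:S.Spi-pi-homotopy}), i.e.\ that $\wb\iota_\natural(1-S)$ is cochain-nullhomotopic) that I would reproduce in a sentence. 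Finally, part (3): in the diagram~(\ref{eq:periodic-cohomology.cohomology-diagram}) the left arrow $\wb\nu_\sharp$ is an isomorphism by part (1), and the bottom arrow is the canonical isomorphism, so the right arrow $\wb\iota_\sharp:\varinjlim_S H^\lambda_{2q+\bt}\to\HP^\bt(C_T)$ is forced to be an isomorphism as well, with explicit inverse the composite $H^\bt_{\lambda,\sharp}(C)\xrightarrow{\sim}\varinjlim_S H^\lambda_{2q+\bt}$ preceded by $\wb\nu_\sharp^{-1}$; this is the asserted ``explicit quasi-isomorphism realizing Connes' isomorphism.''

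The main obstacle I anticipate is bookkeeping rather than conceptual: making the dualization of the periodic-complex identifications precise---in particular, checking that under~(\ref{eq:cohomology.duality-Cn}) the inverse limit $\varprojlim_{u^{-1}}$ appearing in $C^\sharp_{T,\bt}$ dualizes to the direct limit $\varinjlim_S$ appearing in $C^\bt_{T,\sharp}$ (and similarly for the $C^{\lambda,\sharp}$ side), so that $\wb\nu^\sharp$ and $\wb\nu_\sharp$ really are mutually dual and diagram~(\ref{eq:periodic-cohomology.cochain-diagram}) commutes on the nose. Once that compatibility is in place, together with the exactness of $\varinjlim$ invoked for~(\ref{eq:para-S-comodules.isom-cohomology}), everything else is a formal diagram chase and a transcription of formulas.
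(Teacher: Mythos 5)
Your parts (2) and (3), and the parenthetical fallback you give for part (1), are essentially the paper's proof: the paper establishes (1) by fitting $\wb{\nu}_\natural$ into a commutative square whose horizontal arrows are the canonical isomorphisms~(\ref{eq:para-S-comodules.isom-cohomology}) for $C_{T,\natural}$ and $C_\lambda$, and whose left vertical arrow is $\varinjlim\wb{\nu}_\natural$; this arrow is an isomorphism because $\wb{\nu}_\natural$ is an isomorphism on $\HC^\bt(C_T)$ by Proposition~\ref{prop:cohomology.cyclic}, is compatible with the periodicity operators (being a cochain $S$-map), and $\varinjlim$ is exact and functorial. The paper then passes the cochain triangle~(\ref{eq:periodic-cohomology.cochain-diagram}) to cohomology and concludes (3) by two-out-of-three, exactly as you do.

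Your \emph{primary} route to part (1), however --- ``dualize Proposition~\ref{prop:periodic-homology.quasi-isom}(1)'' --- has a genuine gap, and the bookkeeping worry you flag at the end is fatal rather than cosmetic. First, $C_{T,\sharp}^\bt$ is a direct sum (a direct limit), while $C^\sharp_{T,\bt}$ is a direct product (an inverse limit); the $k$-linear dual of a product is not the sum of the duals, so $\wb{\nu}_\sharp$ is not literally the dual $(\wb{\nu}^\sharp)^\dagger$ --- the paper only provides a duality \emph{pairing} between $C_\sharp$ and $C^\sharp$, not an identification of $C_\sharp$ with the full dual of $C^\sharp$. Second, even granting such an identification, $k$ is an arbitrary (possibly noncommutative) unital ring containing $\Q$ and the modules are not assumed projective or free, so the dual of a quasi-isomorphism need not be a quasi-isomorphism; your caveat ``complexes of free $k$-modules'' invokes a hypothesis that is not available. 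The cure is precisely your fallback, which is the paper's argument: stay on the comodule side, where the relevant limit is a direct limit and hence exact (no $\varprojlim^1$ term arises, unlike in the chain-level statement you wanted to dualize), and use that the cochain $S$-map $\wb{\nu}_\natural$ is a cohomology isomorphism to conclude on $\varinjlim_S$. So the proposal becomes correct once you delete the dualization argument for (1) and promote the parenthetical remark to the actual proof.
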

\begin{proof}
As the cochain map $\wb{\nu}_\sharp: C_{T,\sharp}^{\bt}  \rightarrow C_{\lambda,\sharp}^\bt$ arises from the cochain $S$-map $\wb{\nu}_\natural: C_{T,\natural}^\bt \rightarrow C^\bt_\lambda$, we have a commutative diagram, 
\begin{equation*}
     \begin{tikzcd}
               \varinjlim  \HC^\bt(C_T)  \arrow[r, "\sim" ]  \arrow[d, "\wb{\nu}_\natural" ']   & \HP^\bt(C_{T})  \arrow[d, "\wb{\nu}_\sharp" ']  \\
                \varinjlim_S H^\lambda_{2q+\bt}(C)  \arrow[r, "\sim" ]  &  H_{\lambda,\sharp}^\bt (C).   
     \end{tikzcd}
\end{equation*}
Here the bottom horizontal arrow is the canonical isomorphism~(\ref{eq:para-S-comodules.isom-cohomology}) and the top horizontal arrow arises from this isomorphism under the identification of $C_{T,\sharp}^\bt$ and $\varinjlim C_{T,\natural}^\bt$ given by~(\ref{eq:cohomology.Cns-Cs}).
We know by Proposition~\ref{prop:cohomology.cyclic} 
 that $\wb{\nu}_\natural: \HC^\bt(C_{T}) 
 \rightarrow H_\lambda^\bt(C)$ is an isomorphism. This isomorphism is compatible with the periodicity operators in cohomology. 
 Therefore, by functoriality of the direct limit, the left vertical arrow $\wb{\nu}_\natural: \varinjlim  \HC^\bt(C_T) \rightarrow  \varinjlim_S H^\lambda_{2q+\bt}(C)$  is an isomorphism. As the horizontal arrows are both isomorphisms, it then follows that the right vertical arrow is an isomorphism as well. That is, the cochain map $\wb{\nu}_\sharp: C_{T,\sharp}^{\bt}  \rightarrow C_{\lambda,\sharp}^\bt$ is a quasi-isomorphism.
 
At the cohomology level the commutative diagram of cochain maps~(\ref{eq:periodic-cohomology.cochain-diagram}) gives rise to the following commutative diagram, 
\begin{equation*}
   \begin{tikzcd}[column sep=tiny]
          & \HP^\bt(C_{T})  \arrow[dl, "\wb{\nu}_\sharp" ']  & \\
          H_{\lambda,\sharp}^\bt (C) &  & \arrow[ll] \arrow[ul, "\wb{\iota}_\sharp" ']  \bigoplus_{q\geq 0}  H^\lambda_{2q+\bt}(C).  
  \end{tikzcd}
\end{equation*}
As the bottom horizontal arrow and the map $\wb{\iota}_\sharp$ both descend to maps on $\varinjlim_S H^\lambda_{2q+\bt}(C)$, we obtain the commutative diagram~(\ref{eq:periodic-cohomology.cohomology-diagram}). In the diagram~(\ref{eq:periodic-cohomology.cohomology-diagram}) the bottom vertical arrow is an isomorphism. As shown above, the downward arrow $\wb{\nu}_\sharp:\HP^\bt(C_{T})  \rightarrow  H_{\lambda,\sharp}^\bt (C)$ is an isomorphism. It then follows that the upward arrow $\wb{\iota}_\sharp: {\varinjlim}_S H^\lambda_{2q+\bt}(C) \rightarrow \HP^\bt(C_T)$ is an isomorphism.  The proof is complete. 
 \end{proof}

\begin{remark}\label{rmk:S.Connes-Chern}
Cyclic cohomology is the natural receptacle of the Chern character in $K$-homology, a.k.a.~Connes-Chern of character~\cite{Co:IHES85}. The original definition of the Connes-Chern character by Connes~\cite{Co:IHES85} is in terms of cyclic cocycles. However, we often get better information from its representations by $(b,B)$-cocycles~\cite{CM:CMP93, CM:GAFA95, CM:CMP98}. It would be interesting to see what light on these representations could be shed from the convertion results mentioned above.  
\end{remark}

\end{document}